\theoremstyle{plain} 
\def\Tleq{\ThisStyle{\mathrel{%
  \stackinset{r}{.7pt+.15\LMpt}{t}{.1\LMpt}{\rule{.4pt}{1.1\LMex+.2ex}}{\SavedStyle\leqslant}%
}}}
\def\TleqE{\ThisStyle{\mathrel{%
  \stackinset{r}{6pt+.15\LMpt}{t}{.1\LMpt}{\rule{.4pt}{1.1\LMex+.2ex}}{\SavedStyle\leqslant_\mathcal{E}}%
}}}
\def\TleqEunder{\ThisStyle{\mathrel{%
  \stackinset{r}{5.4pt+.15\LMpt}{t}{.1\LMpt}{\rule{.4pt}{1.1\LMex+.2ex}}{\SavedStyle\leqslant_\mathcal{E}}%
}}}
\def\TldotE{\ThisStyle{\mathrel{%
  \stackinset{r}{6pt+.15\LMpt}{t}{.1\LMpt}{\rule{.4pt}{1.1\LMex+.2ex}}{\SavedStyle\lessdot_\mathcal{E}}%
}}}
\def\TldotEunder{\ThisStyle{\mathrel{%
  \stackinset{r}{5.4pt+.15\LMpt}{t}{.1\LMpt}{\rule{.4pt}{1.1\LMex+.2ex}}{\SavedStyle\lessdot_\mathcal{E}}%
}}}
\definecolor{midnightblue}{rgb}{0.1, 0.1, 0.44}
\definecolor{plum}{rgb}{0.56, 0.27, 0.52}
\definecolor{Plum}{rgb}{0.56, 0.27, 0.52}
\definecolor{patriarch}{rgb}{0.5, 0.0, 0.5}
\definecolor{darkgreen}{rgb}{0.0, 0.2, 0.13}
\definecolor{darkcerulean}{rgb}{0.03, 0.27, 0.49}
\definecolor{jade}{rgb}{0.0, 0.66, 0.42}
\definecolor{bleudefrance}{rgb}{0.19, 0.55, 0.91}
\definecolor{lava}{rgb}{0.81, 0.06, 0.13}
\newcommand{\rep}{\operatorname{rep}}
\newcommand{\Cat}{\operatorname{Cat}}
\newcommand{\Int}{\operatorname{Int}}
\newcommand{\GenJF}{\operatorname{GenJF}}
\newcommand{\GenRep}{\operatorname{GenRep}}
\newcommand{\JF}{\operatorname{JF}}
\newcommand{\vJF}{\pmb{\operatorname{JF}}}
\newcommand{\add}{\operatorname{add}}
\newcommand{\Ker}{\operatorname{Ker}}
\newcommand{\Coker}{\operatorname{Coker}}
\newcommand{\Hom}{\operatorname{Hom}}
\newcommand{\Id}{\operatorname{Id}}
\newcommand{\ind}{\operatorname{ind}}
\newcommand{\NEnd}{\operatorname{\mathsf{N}End}}
\newcommand{\End}{\operatorname{End}}
\newcommand{\Ext}{\operatorname{Ext}}
\newcommand{\vdim}{\operatorname{\pmb{\dim}}}
\newcommand{\Ind}{\operatorname{Ind}}
\newcommand{\A}{\mathbf A}
\newcommand{\B}{\mathbf B}
\newcommand{\E}{\mathbf E}
\newcommand{\Tilt}{\operatorname{Tilt}}
\newcommand{\Mat}{\operatorname{Mat}}
\newcommand{\GL}{\operatorname{GL}}
\newcommand{\vGL}{\pmb{\operatorname{GL}}}
\newcommand{\proj}{\operatorname{proj}}
\newcommand{\inj}{\operatorname{inj}}
\newcommand{\Gen}{\operatorname{Gen}}
\newcommand{\Rad}{\operatorname{Rad}}
\newcommand{\Sub}{\operatorname{Sub}}
\newcommand{\GS}{\operatorname{GS}}
\newcommand{\opC}{\operatorname{\pmb{\mathscr{C}}}}
\newcommand{\opJ}{\operatorname{\pmb{\mathscr{J}}}}
\newcommand{\epip}{ \xymatrix{   \ar@{-->>}[r] &  \\} }
\newcommand{\epi}{ \xymatrix{   \ar@{>>}[r] &  \\} }
\newcommand{\p}{ \xymatrix{   \ar@{-->}[r] &  \\} }
\newcommand{\CK}{\mathrm{CK}}
\def\domleq{\ThisStyle{\mathrel{%
  \stackinset{r}{.75pt+.15\LMpt}{t}{.1\LMpt}{\rule{.3pt}{1.1\LMex+.2ex}}{\SavedStyle\leqslant}%
}}}
\author[B.~Dequêne]{Benjamin Dequêne}
\address[B.~Dequêne]{School of Mathematics, University of Leeds}
\email{b.d.dequene@leeds.ac.uk}
\author[S.~Roy]{Sunny Roy}
\address[S.~Roy]{Département de mathématiques, Université de Sherbrooke}
\email{sunny.roy@usherbrooke.ca}
\date{\today}
\title[Exact struc. and CJR categories in type $A$]{Exact structures and maximal canonically Jordan recoverable subcategories for modules over type $A$ algebras}
\declaretheorem[numberwithin=section,name=Theorem,
refname={Theorem,Theorems},
Refname={Theorem,Theorems}]{theorem}
\declaretheorem[numberlike=theorem,name=Lemma,
refname={Lemma,Lemmas},
Refname={Lemma,Lemmas}]{lemma}
\declaretheorem[numberlike=theorem,name=Proposition,
refname={Proposition,Propositions},
Refname={Proposition,Propositions}]{prop}
\declaretheorem[numberlike=theorem,name=Corollary,
refname={Corollary,Corollaries},
Refname={Corollary,Corollaries}]{cor}
\declaretheorem[numberlike=theorem,name=Conjecture,
refname={Conjecture,Conjectures},
Refname={Conjecture,Conjectures}]{conj}
\declaretheorem[style=definition,numberlike=theorem,name=Definition,
refname={Definition,Definitions},
Refname={Definition,Definitions}]{definition}
\declaretheorem[style=definition,numberlike=theorem,name=Convention,
refname={Convention,Conventions},
Refname={Convention,Conventions}]{conv}
\declaretheorem[style=definition,numberlike=theorem,name=Example,
refname={Example,Examples},
Refname={Example,Examples}]{example}
\declaretheorem[style=definition,numberlike=theorem,name=Example,
refname={Example,Examples},
Refname={Example,Examples}]{ex}
\declaretheorem[style=remark,numberlike=theorem,name=Remark,
refname={Remark,Remarks},
Refname={Remark,Remarks}]{remark}
\newcommand{\new}[1]{\textit{\textbf{\color{patriarch}{#1}}}}
\newcommand{\llrr}[1]{\llbracket #1 \rrbracket}
\begin{document}

\begin{abstract}
On one hand, exact structures were introduced by D. Quillen in the '70s. They can be defined as collections of short exact sequences in a fixed abelian category satisfying additional properties. 

On the other hand, in a recent work, A. Garver, R. Patrias, and H. Thomas introduced Jordan recoverability. Given a bounded quiver $(Q,R)$, a full additive subcategory of $\rep(Q,R)$ is said to be Jordan recoverable if any $X \in \mathscr{C}$ can be recovered, up to isomorphism, from the Jordan form of its generic nilpotent endomorphisms. Such a subcategory $\mathscr{C}$ is said to be canonically Jordan recoverable if, moreover, there exists a precise algebraic procedure that allows one to get back $X \in \mathscr{C}$ from that same Jordan form data.

We introduce a new family of operators, called Gen-Sub operators $\GS_\mathcal{E}$, parametrized by the exact structures $\mathcal{E}$ of abelian categories. After showing some properties of those operators in hereditary abelian categories, by focusing on the setting of modules over path algebras of type $A$ quivers endowed with the diamond exact structure $\mathcal{E}_\diamond$, we establish that the maximal canonically Jordan recoverable subcategories are precisely of the form $\operatorname{GS}_{\mathcal{E}_\diamond}(T)$ for some tilting objects $T$.
\end{abstract}

\maketitle

\tableofcontents

\section{Introduction}
\label{sec:Intro}
\pagestyle{plain}

\subsection{Exact structures on hereditary abelian categories}
\label{ss:ExtStrHAbCat}

Fix $\mathscr{A}$ an abelian category. An \new{exact structure} on an abelian category \(\mathscr{A}\) is a collection \(\mathcal{E}\) of short exact sequences of the form
\[\begin{tikzcd}
	0 & A & B  & C &  0
	\arrow[from=1-1, to=1-2]
	\arrow["f",tail, from=1-2, to=1-3]
	\arrow["g",two heads,from=1-3, to=1-4]
	\arrow[from=1-4, to=1-5]
\end{tikzcd}\]
where \(f\) is an \(\mathcal{E}\)-monomorphism, \(g\) is an \(\mathcal{E}\)-epimorphism, and the sequence adheres to axioms ensuring closure under specific categorical operations (see \cref{def:exact} for details). Originally introduced by D. Quillen in \cite{Q73}, they provide an axiomatic framework that extends the utility of homological algebra techniques to a context constrained by a predefined class of short exact sequences. This framework is notably versatile, allowing exact structures to be characterized in multiple ways: as subfunctors of the bifunctor \(\text{Ext}^1_{\mathscr{A}}\), through projective objects relative to the chosen structure, or via the inclusion of specific Auslander-Reiten sequences. 

A category \(\mathscr{A}\) is \new{hereditary} if it satisfies \(\text{Ext}^i_{\mathscr{A}}(X, Y) = 0\) for all objects \(X, Y \in \mathscr{A}\) and all \(i \geq 2\). Vanishing of higher extensions simplifies homological analysis, as all extensions are captured by \(\text{Ext}^1\). In this paper, we focus on hereditary abelian categories, and, more specifically, on the category \(\text{rep}(Q)\) of finite-dimensional representations of some $ADE$ Dynkin type quiver \(Q\). In this setting, exact structures enable us to control which sequences are deemed ``admissible", thereby isolating subcategories with distinct algebraic and combinatorial features. 

While the standard maximal exact structure on an abelian category encompasses all short exact sequences, alternative structures, such as the \new{diamond exact structure} \(\mathcal{E}_\diamond\) on $\rep(Q)$, introduced in \cite{BHRS24} (see \cref{def:Ediamond}), allows us to refine our focus to sequences that reflect the combinatorial properties of type $A$ quivers.

A pivotal concept in this work is the \new{Gen-Sub operator} \(\text{GS}_\mathcal{E}\), which, given an exact structure \(\mathcal{E}\) and a subcategory \(\mathscr{C} \subseteq \mathscr{A}\), constructs the smallest subcategory containing \(\mathscr{C}\) that is closed under operations such as taking \(\mathcal{E}\)-subobjects and \(\mathcal{E}\)-quotients (see \cref{ss:GenSuboperators} for a formal definition). In the context of hereditary abelian categories, \(\text{GS}_\mathcal{E}\) iteratively builds subcategories by incorporating terms from \(\mathcal{E}\)-exact sequences, offering a systematic method to generate subcategories with stability properties. 

A subcategory $\mathscr{C} \subseteq \mathscr{A}$ will be called
\new{$\mathcal{E}$-adapted} if it behaves well with respect to the exact
structure $\mathcal{E}$, allowing us to relate classical constructions to
their relative counterparts. More precisely, $\mathscr{C}$ is
$\mathcal{E}$-adapted if every short exact sequence in $\mathscr{C}$ is
$\mathcal{E}$-exact, that is, it belongs to the exact structure $\mathcal{E}$.
A formal definition will be given later (see Definition \ref{def:Eadpted}).

\subsection{Canonical Jordan recoverability}
\label{ss:CJRintro}

A. Garver, R. Patrias, and H. Thomas \cite{GPT19} introduced the notion of Jordan recoverability. 

Fix $\mathbb{K}$ an algebraically closed field. Consider $(Q,R)$ a finite connected bounded quiver. Let $E$ be a finite-dimensional representation of $Q$ over $\mathbb{K}$. We study the set $\NEnd(E)$ of nilpotent endomorphisms of $E$. Any $N \in \NEnd(E)$ induces, at each vertex $q$ of $Q$, a nilpotent endomorphism $N_q$ of $E_q$. We can extract from $N$ a sequence of integer partitions $\lambda_q \vdash \dim(E_q)$ from the Jordan block sizes of the Jordan form of each $N_q$. Write $\vJF(N) = (\lambda_q)_{q \in Q_0}$. There exists an open dense set (for the Zariski topology) of $\NEnd(E)$ on which $\vJF$ is constant. Denote by $\GenJF(E)$ this constant, and we refer to it as the \new{generic Jordan form} of $E$.

The map $\GenJF$ is an invariant on the representations of $(Q,R)$, but not a complete one. A full subcategory $\mathscr{C}$, closed under sums and summands, is said to be \new{Jordan recoverable} whenever $\GenJF$ is complete on representations in $\mathscr{C}$.

Usually, checking whether a subcategory is Jordan recoverable is not an easy task. Under some assumptions, we can define an algebraic inverse to $\GenJF$ restricted to a fixed subcategory. We say that a subcategory is \new{canonically Jordan recoverable} if such an algebraic inverse exists. See \cref{ss:CJR} for more details.

In the case where $Q$ is an $A_n$ type quiver, and $R = \varnothing$, we have the following characterization of canonically Jordan recoverable subcategories.

\begin{theorem}[{\cite{D23}}] \label{thm:CJRAlg}
    Let $Q$ be an $A_n$ type quiver for some $n \in \mathbb{N}^*$ and $\mathscr{C}  \subseteq \rep(Q)$ subcategory closed under sums and summands. $\mathscr{C}$ is canonically Jordan recoverable if and only if for any nonsplit short exact sequence \[\begin{tikzcd}
	0 & E & F  & G &  0,
	\arrow[from=1-1, to=1-2]
	\arrow[tail, from=1-2, to=1-3]
	\arrow[two heads,from=1-3, to=1-4]
	\arrow[from=1-4, to=1-5]
\end{tikzcd}\] whenever $E,G\in \mathscr{C}$, the representation $F$ must be decomposable.
\end{theorem}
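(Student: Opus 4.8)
The plan is to phrase everything through the canonical recovery procedure attached to $\rep(Q)$ in \cref{ss:CJR} — the algebraic inverse candidate to $\GenJF$. It is a fixed assignment, independent of $\mathscr{C}$, sending a tuple of partitions to a representation, and $\mathscr{C}$ is canonically Jordan recoverable precisely when this procedure, applied to $\GenJF(X)$, returns $X$ up to isomorphism for every $X\in\mathscr{C}$. The structural fact about it that I would isolate, for $Q$ of type $A$, is a description of its fixed points: a representation $M$ is the output of the procedure on input $\GenJF(M)$ if and only if $M$ admits no \emph{nontrivial extension assembly}, i.e.\ there is no isomorphism $M\cong E\oplus G\oplus M'$ together with a nonsplit short exact sequence $0\to E\to F\to G\to 0$ whose middle term $F$ is indecomposable. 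In particular an indecomposable representation admits no such assembly — any decomposition of it forces an outer term to be the whole module and the sequence to split — so indecomposables are always recovered.

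The one genuinely representation-theoretic input I would need is that the generic Jordan form is additive along short exact sequences in type $A$: for every short exact sequence $0\to E\to F\to G\to 0$ in $\rep(Q)$ one has $\GenJF(F)=\GenJF(E\oplus G)$. I would verify this from the combinatorial model of $\GenJF$ over a type $A$ quiver, where the partition at each vertex is read off the multiset of interval summands — equivalently off a generic nilpotent endomorphism assembled from the $\Hom$-spaces between interval modules — together with the explicit structure of short exact sequences between representations of a type $A$ quiver. (This does not say that $\GenJF$-equivalence is a congruence for $\oplus$, which fails; it only identifies $F$ with $E\oplus G$ for an actual short exact sequence.)

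Granting these two ingredients, both implications are short. For the forward direction, suppose $\mathscr{C}$ is canonically Jordan recoverable and, for contradiction, that some nonsplit short exact sequence $0\to E\to F\to G\to 0$ has $E,G\in\mathscr{C}$ and $F$ indecomposable. Then $E,G\neq 0$, since a short exact sequence with a zero outer term splits, so $X:=E\oplus G$ is a decomposable object of $\mathscr{C}$; but $\GenJF(X)=\GenJF(F)$ by additivity, and the procedure applied to $\GenJF(F)$ returns the indecomposable $F\not\cong X$ — contradicting recoverability. For the converse, suppose every nonsplit short exact sequence with both ends in $\mathscr{C}$ has decomposable middle, and assume for contradiction that some $M\in\mathscr{C}$ is not returned by the procedure. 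Then $M$ has a nontrivial extension assembly $M\cong E\oplus G\oplus M'$ with $0\to E\to F\to G\to 0$ nonsplit and $F$ indecomposable; but $E$ and $G$ are direct summands of $M\in\mathscr{C}$, hence lie in $\mathscr{C}$, and $F$ is indecomposable, contradicting the hypothesis. So $\mathscr{C}$ is canonically Jordan recoverable.

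I expect the main difficulty to be the fixed-point description invoked above. Establishing it means understanding the fibres of $\GenJF$ in type $A$ well enough to see that each fibre contains exactly one representation with no nontrivial extension assembly and that this is the representation the procedure outputs; additivity alone is not enough here, precisely because $\GenJF$-equivalence is not a congruence for $\oplus$. I would carry this out by running the algorithm of \cref{ss:CJR} step by step and showing that at each stage its output is forced exactly when no extension assembly is available to reassemble the data already built — which is where the interval combinatorics of a type $A$ quiver does the real work.
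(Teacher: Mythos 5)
Your strategy---characterizing the fixed points of $\GenRep\circ\GenJF$ and comparing $\GenJF$ across short exact sequences---is genuinely different from the route used here, which imports the combinatorial characterization of \cref{thm:typeACJRcombi} (canonical Jordan recoverability is equivalent to $\Int(\mathscr{C})$ being adjacency-avoiding) from the prior work and translates it through \cref{lem:adj=arrow}. However, the proposal has two concrete problems. First, the ``additivity'' lemma is false as stated. Take $Q = 1 \to 2 \to 3$ and the diamond exact sequence $0 \to X_{\llrr{2,3}} \to X_{\llrr{2}} \oplus X_{\llrr{1,3}} \to X_{\llrr{1,2}} \to 0$: a direct computation shows every nilpotent endomorphism of the middle term vanishes, so $\GenJF(X_{\llrr{2}} \oplus X_{\llrr{1,3}}) = \bigl((1),(1,1),(1)\bigr)$, whereas the generic nilpotent endomorphism of $X_{\llrr{2,3}} \oplus X_{\llrr{1,2}}$ has a rank-one component at vertex $2$, giving $\GenJF(X_{\llrr{2,3}} \oplus X_{\llrr{1,2}}) = \bigl((1),(2),(1)\bigr)$. (Were your lemma true, no subcategory containing all four modules of a diamond could be canonically Jordan recoverable, contradicting \cref{thm:CJRmaxcomb}.) The single instance you actually use---$F$ indecomposable, so all three modules are thin---does hold, but only for the trivial reason that every $N_q$ is then a nilpotent scalar; and even there you still owe the argument that $\GenRep$ of the common value is not $E\oplus G$ (for instance because $E\oplus G$ lies in the orbit closure of $F$ inside the set of representations compatible with $N$, hence cannot be its generic point).

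Second, and more seriously, the ``fixed-point description'' is not a lemma you can defer: it \emph{is} the converse direction of the theorem. \cref{ss:CJR} does not define $\GenRep$ by an algorithm you can ``run step by step''; it is defined non-constructively as the generic point of a variety, so proving that every module not recovered by $\GenRep\circ\GenJF$ admits an extension assembly with indecomposable middle requires precisely the analysis of the fibres of $\GenJF$ over sums of interval modules (via the adjacency-avoiding sets $\opJ(\B,\E)$ of \cref{prop:allmaxadjavo}, or an equivalent combinatorial device) that constitutes the actual proof in the cited work. Moreover, even granting that description for $M = E\oplus G$, your reduction from a general $M \cong E\oplus G\oplus M'$ needs a further argument that adjoining the summand $M'$ preserves both the coincidence of generic Jordan forms and the non-genericity of the split object---neither follows from what you have. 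As written, the proposal reduces the theorem to an unproved claim of essentially the same depth.
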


There exists a combinatorial characterization of such a subcategory $\mathscr{C}$ based on the behavior of the intervals of $\{1,\ldots,n\}$ corresponding bijectively to the indecomposable representations in $\mathscr{C}$. We even have a precise description of the maximal ones for inclusion. We refer to \cref{ss:CJR} for more details.

\subsection{Main results}
\label{ss:Main}

First, we study the Gen-Sub operators applied to the tilting objects of $\mathscr{A}$, and we establish the following result.

\begin{theorem}
\label{thm:maximalmain}
 Let $T \in \mathscr{A}$ be a tilting object. Then $\GS_\mathcal{E}(T)$ is the unique maximal $\mathcal{E}$-adapted subcategory of $\mathscr{A}$ closed under extensions which contains $T$.
\end{theorem}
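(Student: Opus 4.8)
The plan is to realize $\GS_\mathcal{E}(T)$ as the \emph{greatest} element of the family
\[
\mathcal{K} \;=\; \{\, \mathscr{D}\subseteq\mathscr{A} \ :\ \mathscr{D}\text{ is }\mathcal{E}\text{-adapted, closed under extensions, and }T\in\mathscr{D} \,\},
\]
since exhibiting a greatest element at once yields a unique maximal one. Thus I would split the argument into two parts: first, that $\GS_\mathcal{E}(T)\in\mathcal{K}$; second, the \emph{reverse inclusion} $\mathscr{D}\subseteq\GS_\mathcal{E}(T)$ for every $\mathscr{D}\in\mathcal{K}$, which is the direction that genuinely uses that $T$ is tilting and which is the true content of maximality. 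That $T\in\GS_\mathcal{E}(T)$ is immediate from the construction in \cref{ss:GenSuboperators}. That $\GS_\mathcal{E}(T)$ is $\mathcal{E}$-adapted and closed under extensions is where the hereditary hypothesis first enters: I would deduce both from the structural properties of the Gen-Sub operator established earlier, the point being that the vanishing of $\Ext^{\geq 2}_{\mathscr{A}}$ forces closure under $\mathcal{E}$-subobjects and $\mathcal{E}$-quotients to interact coherently with $\mathcal{E}$-extensions, so that no internal non-$\mathcal{E}$ conflation and no extension of two of its objects escapes $\GS_\mathcal{E}(T)$. This places $\GS_\mathcal{E}(T)$ in $\mathcal{K}$.

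For the reverse inclusion I would first record the two consequences of the hypotheses on $\mathscr{D}$ that drive everything. Since $\mathscr{D}$ is closed under extensions, the middle term of any short exact sequence whose outer terms lie in $\mathscr{D}$ again lies in $\mathscr{D}$; and since $\mathscr{D}$ is $\mathcal{E}$-adapted, any short exact sequence of $\mathscr{A}$ all of whose terms lie in $\mathscr{D}$ is automatically an $\mathcal{E}$-conflation. In particular, as $\add T\subseteq\mathscr{D}$, every extension of an object of $\mathscr{D}$ by an object of $\add T$ is carried by an $\mathcal{E}$-conflation sitting inside $\mathscr{D}$.

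I would then connect an arbitrary $X\in\mathscr{D}$ to $\add T$ using the torsion pair $(\Gen T,\mathscr{F})$ attached to the tilting object $T$. If $\Ext^1_{\mathscr{A}}(T,X)\neq 0$, a Bongartz-type universal extension furnishes a sequence $0\to X\to E\to T''\to 0$ with $T''\in\add T$ and $E\in\Gen T$; by extension-closure $E\in\mathscr{D}$, and by $\mathcal{E}$-adaptedness the sequence is an $\mathcal{E}$-conflation, so it suffices to place the torsion object $E$ in $\GS_\mathcal{E}(T)$ and then invoke closure of $\GS_\mathcal{E}(T)$ under $\mathcal{E}$-subobjects. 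For a torsion object $E\in\mathscr{D}\cap\Gen T$, the tilting property supplies a resolution of $E$ by $\add T$---of length at most two, because $\mathscr{A}$ is hereditary---which I would realise as a chain of $\mathcal{E}$-conflations with all terms in $\mathscr{D}$, so that $E$ is obtained from $\add T\subseteq\GS_\mathcal{E}(T)$ by iterated $\mathcal{E}$-quotients. Since $\GS_\mathcal{E}(T)$ is closed under $\mathcal{E}$-subobjects and $\mathcal{E}$-quotients, this yields $X\in\GS_\mathcal{E}(T)$, and hence $\mathscr{D}\subseteq\GS_\mathcal{E}(T)$.

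The main obstacle I anticipate is precisely the bookkeeping in this last step: guaranteeing that every intermediate term occurring in the $\add T$-resolution of a torsion object stays inside $\mathscr{D}$, so that $\mathcal{E}$-adaptedness may legitimately be applied to each short exact sequence of the chain (equivalently, that each such sequence is an $\mathcal{E}$-conflation). I would control this using the torsion pair to ensure the intermediate kernels remain in $\Gen T$, the fact that for a right $\add T$-approximation $T_0\twoheadrightarrow E$ one has $\Ext^1_{\mathscr{A}}(T,\Ker)=0$ so the kernel is again torsion, the extension-closure of $\mathscr{D}$ to absorb the middle terms produced by the universal extensions, and the hereditary hypothesis to cap the resolution length; the properties of $\GS_\mathcal{E}$ from \cref{ss:GenSuboperators} then let the chain be assembled entirely within $\GS_\mathcal{E}(T)$. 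Once every member of $\mathcal{K}$ is shown to lie inside $\GS_\mathcal{E}(T)$, combined with $\GS_\mathcal{E}(T)\in\mathcal{K}$ this makes $\GS_\mathcal{E}(T)$ the greatest, hence unique maximal, element of $\mathcal{K}$, proving \cref{thm:maximalmain}.
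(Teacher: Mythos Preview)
Your strategy is correct and parallels the paper's proof of \cref{thm:maximal}, though the construction of the key intermediate object differs. The paper works directly from the tilting axiom (coresolutions $0\to P\to T_0\to T_1\to 0$ of projectives) together with the projective resolution $0\to P_1\to P_0\to V\to 0$ of an arbitrary $V\in\mathscr{C}$: two successive pushout diagrams yield an object $W\in\mathscr{C}$ with $W\in\Sub_\mathcal{E}(\add T)$ and $V\in\Gen_\mathcal{E}(W)$, so that $V\in\GS_\mathcal{E}^2(T)$. You instead pass through the torsion class $\Gen T$: a Bongartz universal extension produces $E\in\mathscr{D}\cap\Gen T$ with $X\in\Sub_\mathcal{E}(E)$, and a right $\add T$-presentation of $E$ shows $E\in\Gen_\mathcal{E}(\add T)$. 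The two arguments are dual (the paper realises objects in $\Gen_\mathcal{E}(\Sub_\mathcal{E}(\add T))$, you in $\Sub_\mathcal{E}(\Gen_\mathcal{E}(\add T))$); yours is arguably more conceptual but needs a finiteness hypothesis on $\Ext^1(T,X)$ for the universal extension to land in $\add T$, which the paper's pushout argument avoids by using only enough projectives and the definition of tilting. Your anticipated obstacle is milder than you fear: heredity forces the kernel $K$ of a minimal right $\add T$-approximation $T_0\twoheadrightarrow E$ to satisfy both $\Ext^1(T,K)=0$ (since the approximation is right $\add T$) and $\Ext^1(K,T)=0$ (from $\Ext^2(E,T)=0$), so $K$ is Ext-projective in $\Gen T$ and hence $K\in\add T\subseteq\mathscr{D}$; the resolution therefore has length one, both outer terms already lie in $\mathscr{D}$, and no iteration is needed.
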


Then, by considering $Q$ an $ADE$ Dynkin type quiver, and by fixing an exact structure $\mathcal{E}$ on $\rep(Q)$, we defined $\mathcal{E}$-mutations on tilting objects (see \cref{ss:GSandTiltmut}), and an equivalence relation $\approx_\mathcal{E}$ on tilting objects of $\rep(Q)$ as follows. Given two tilting objects $T,T' \in \rep(Q)$, we write $T \approx_\mathcal{E} T'$ whenever $T'$ can be obtained from $T$ by applying a finite sequence of $\mathcal{E}$-mutations. We write $[T]_{\approx_{\mathcal{E}}}$ for the equivalence class of $T$ for $\approx_\mathcal{E}$. We show that, for any tilting object $T \in \rep(Q)$, we can describe $\GS_\mathcal{E}(T)$ as the category additively generated by the tilting objects in $[T]_{\approx_{\mathcal{E}}}$ (see \cref{th:Treaching}).

Reducing ourselves to type $A$ quiver representations, we combine the exact structure point of view with the different descriptions known for the maximal canonically Jordan recoverable subcategories to prove conjectures stated in \cite{D23}, and hence, strongly link those categories to tilting theory. 

\begin{theorem} \label{thm:main1}
    Let $Q$ be an $A_n$ type quiver for some $n \in \mathbb{N}^*$. A subcategory $\mathscr{C} \subseteq \rep(Q)$ is a maximal canonically Jordan recoverable subcategory if and only if there exists a tilting representation $T \in \rep(Q)$ such that $\mathscr{C} = \GS_{\mathcal{E}_\diamond}(T)$. Moreover, for such a tilting representation $T$, we have \[\mathscr{C} = \add \left(\bigoplus_{T' \in [T]_{\approx_{\mathcal{E}_\diamond}}} T'\right).\]
\end{theorem}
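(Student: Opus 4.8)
The plan is to pivot between four ingredients: the homological characterisation of canonical Jordan recoverability recalled in \cref{thm:CJRAlg}, the intrinsic description of $\GS_{\mathcal{E}_\diamond}(T)$ for tilting $T$ from \cref{thm:maximalmain}, its reformulation through the $\mathcal{E}_\diamond$-mutation classes in \cref{th:Treaching}, and the combinatorial classification of maximal canonically Jordan recoverable subcategories recalled in \cref{ss:CJR} (following \cite{D23}). The first task is a \emph{bridge lemma}: for $Q$ of type $A_n$ and a full additive subcategory $\mathscr{C} \subseteq \rep(Q)$, the condition of \cref{thm:CJRAlg} (every nonsplit short exact sequence with both ends in $\mathscr{C}$ has decomposable middle term) is equivalent to the condition that every nonsplit short exact sequence with both ends in $\mathscr{C}$ is $\mathcal{E}_\diamond$-exact. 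This rests on the following fact, to be extracted from \cref{def:Ediamond}: over a type $A$ quiver, where every indecomposable is an interval module, a nonsplit short exact sequence $0 \to E \to F \to G \to 0$ has decomposable middle term $F$ if and only if it is $\mathcal{E}_\diamond$-exact --- the ``diamond'' generators $0 \to M \to M' \oplus M'' \to M''' \to 0$ attached to two overlapping intervals are exactly the building blocks of such sequences. I will then relate this ``every internal extension is a diamond'' condition to $\mathcal{E}_\diamond$-adaptedness (closure under $\mathcal{E}_\diamond$-subobjects and $\mathcal{E}_\diamond$-quotients) together with closure under extensions, so as to bring \cref{thm:maximalmain} into play.

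With the bridge lemma in hand, the ``if'' direction is short: for any tilting $T$, \cref{thm:maximalmain} says $\GS_{\mathcal{E}_\diamond}(T)$ is $\mathcal{E}_\diamond$-adapted and extension-closed, so by the bridge lemma it satisfies the condition of \cref{thm:CJRAlg} and is canonically Jordan recoverable; its maximality among canonically Jordan recoverable subcategories then follows by enlarging any strictly larger such subcategory to a maximal one, invoking the ``only if'' direction (below) to write the latter as $\GS_{\mathcal{E}_\diamond}(T')$, and using the uniqueness clause of \cref{thm:maximalmain} applied to the tilting object $T \in \GS_{\mathcal{E}_\diamond}(T')$ to get $\GS_{\mathcal{E}_\diamond}(T') \subseteq \GS_{\mathcal{E}_\diamond}(T)$, hence equality. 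For the ``only if'' direction I would start from a maximal canonically Jordan recoverable $\mathscr{C}$, use the combinatorial description of \cref{ss:CJR} to show that $\mathscr{C}$ is extension-closed and $\mathcal{E}_\diamond$-adapted and that it contains a tilting object $T$ (in type $A$ such maximal families are indexed by the same triangulation-type data as basic tilting representations); then \cref{thm:maximalmain} gives $\mathscr{C} \subseteq \GS_{\mathcal{E}_\diamond}(T)$, while $\GS_{\mathcal{E}_\diamond}(T)$ is itself canonically Jordan recoverable by the ``if'' direction, so maximality of $\mathscr{C}$ forces $\mathscr{C} = \GS_{\mathcal{E}_\diamond}(T)$. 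The final ``moreover'' statement is then exactly \cref{th:Treaching}.

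The main obstacle is the bridge lemma, and within it the precise identification of the $\mathcal{E}_\diamond$-exact sequences in type $A$ from \cref{def:Ediamond} and the matching of ``decomposable middle'' with the diamond generators. The secondary delicate point --- also needed for the ``only if'' direction --- is reconciling the two a priori different maximality notions: maximality among canonically Jordan recoverable subcategories versus ``the maximal $\mathcal{E}_\diamond$-adapted extension-closed subcategory containing a fixed tilting object''; concretely, one must show that every maximal canonically Jordan recoverable subcategory is extension-closed and does contain a tilting object. I expect both to follow from a careful comparison of the interval combinatorics of \cref{ss:CJR} with the diamond relations of \cref{def:Ediamond} and the $\mathcal{E}_\diamond$-mutation moves of \cref{ss:GSandTiltmut}; the remaining steps are the formal manipulations with \cref{thm:maximalmain} and \cref{th:Treaching} indicated above.
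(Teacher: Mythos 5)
Your proposal follows essentially the same route as the paper: the ``bridge lemma'' is exactly the paper's restatement (via \cref{thm:exttypeA}) that canonical Jordan recoverability is equivalent to all internal extensions lying in $\mathcal{E}_\diamond$, i.e.\ to $\mathcal{E}_\diamond$-adaptedness; the two ``delicate points'' you flag are precisely the two propositions the paper proves from the $\opC(\B,\E)$ interval combinatorics (extension-closure of maximal canonically Jordan recoverable subcategories, and the existence of a tilting object, the latter by an explicit construction of a tilting interval set $\pmb{\mathscr{T}}(\B,\E)$); and the conclusion is then drawn from \cref{thm:maximalmain} and \cref{th:Treaching} exactly as you describe. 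The only cosmetic difference is that your maximality argument for the ``if'' direction routes through the ``only if'' direction and the uniqueness clause of \cref{thm:maximalmain}, where the paper deduces it directly, but both are valid.
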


\subsection{Outline of the paper}
\label{ss:Outline} We begin, in \cref{sec:Quiver}, by laying crucial recalls to set our background framework: quiver representations and, specifically, type $A$ quiver representations. Those recalls enable us to incorporate relevant examples throughout the article, which will gradually lead us to our main results.

In \cref{sec:ExStr}, given an abelian category $\mathscr{A}$ endowed with an exact structure $\mathcal{E}$, we introduce the Gen-Sub operators which allow one to construct nicely behaved subcategories relative to $\mathcal{E}$ from any subcategory of $\mathscr{A}$. In particular, by assuming that $\mathscr{A}$ is hereditary and Krull--Schmidt, we show that they preserve both $\mathcal{E}$-adaptation and extension-closure properties. We also prove \cref{thm:maximalmain}.

Afterwards, in \cref{sec:JR}, we recall the notions of Jordan recoverability and canonical Jordan recoverability for subcategories of representations over bounded quivers, and we recall the main results from previous works on type $A$ quivers, namely, a combinatorial and an algebraic characterization of canonically Jordan recoverable subcategories of type $A$ quiver representations.

This brings us to \cref{sec:TypeA} where we gather results from previous sections to prove \cref{thm:main1}. We first study the link between $\mathcal{E}$-mutations and $\GS_\mathcal{E}$, for any given exact structure $\mathcal{E}$, on tilting representations of any $ADE$ Dynkin type quiver, before focusing on type $A$ quivers for using results on canonically Jordan recoverable subcategories. Finally, in \cref{sec:further}, we discuss directions of research that we will likely pursue in the near future.

\section{Quiver representations}
\label{sec:Quiver}
\pagestyle{plain}

In this section, we recall the essential definitions that will serve as our main framework throughout this article. 

\subsection{Vocabulary}
\label{ss:Vocab}

A \new{quiver} is a quadruple $Q = (Q_0,Q_1,s,t)$ where:
\begin{enumerate}[label=$\bullet$, itemsep=1mm]
    \item $Q_0$ is a set called \emph{vertex set};
    \item $Q_1$ is a set called \emph{arrow set}; and,
    \item $s,t : Q_0 \longrightarrow Q_1$ are functions called \emph{source} and \emph{target functions}.
\end{enumerate}
Such a quiver $Q$ is said to be \new{finite} whenever $Q_0$ and $Q_1$ are finite sets. In the following, we assume that all our quivers are finite.

A \new{path} of $Q$ is either a formal variable $e_q$ for some $q \in Q_0$ called \emph{lazy path at $q$}, or a finite nonempty word $p = \alpha_k \cdots \alpha_1$ on $Q_0$ such that, for all $i \in \{1,\ldots,k-1\}$, we have $s(\alpha_{i+1}) = t(\alpha_i)$. Given such a path $p$, we define:
\begin{enumerate}[label=$\bullet$, itemsep=1mm]
    \item its \emph{source} $s(p)$ to be $s(\alpha_1)$ (we set $s(e_q) = q$);
    \item its \emph{target} $t(p)$ to be $t(\alpha_k)$ (we set $t(e_q) = q$); and,
    \item its \textit{length} $\ell(p)$ to be $k$ (we set $\ell(e_q) = 0$).
\end{enumerate}
For any arrow $\alpha \in Q_1$, we consider its formal inverse $\alpha^{-1}$ such that $s(\alpha^{-1}) = t(\alpha)$ and $t(\alpha^{-1}) = s(\alpha)$. We denote by $\overline{Q_1}$ the set of formal inverses of arrows in $Q_1$. A \new{walk} of $Q$ is either a lazy path or a finite nonempty word $w = \beta_k \cdots \beta_1$ on $Q_1 \sqcup \overline{Q_1}$ such that, for all $i \in \{1,\ldots,k-1\}$, we have $s(\beta_{i+1}) = t(\beta_i)$. We extend the source, target, and length functions to walks of $Q$ similarly to the case of paths of $Q$. A quiver $Q$ is said to be \new{connected} whenever for any pair $(a,b) \in Q_0$, there exists a walk $w$ of $Q$ such that $s(w) = a$ and $t(w)=b$. From now on, we assume that all our quivers are connected.

\subsection{Representations}
\label{ss:Reps}

Let $\mathbb{K}$ be a field. For geometric purposes, we assume that $\mathbb{K}$ is algebraically closed. The \new{path algebra} of $Q$, denoted by $\mathbb{K}Q$, is the $\mathbb{K}$-vector space generated as a basis by the set of all the paths of $Q$, together with a multiplication which acts as concatenation on the paths: meaning \[p_2 \cdot p_1 = \begin{cases}
    p_2p_1 & \text{if } s(p_2) = t(p_1) \text{; and,} \\
    0 & \text{otherwise.}
\end{cases}\] For any $m \in \mathbb{N}$, we denote by $\mathbb{K}Q_{\geqslant m}$ the (bi-sided) ideal of $\mathbb{K}Q$ generated by all the paths of length greater than or equal to $m$. An ideal $I \subseteq \mathbb{K}Q$ is said to be \new{admissible} whenever there exists an integer $m \geqslant 2$ such that $\mathbb{K}Q_{\geqslant m} \subseteq  I \subseteq \mathbb{K} Q_{\geqslant 2}$. A \new{relation} on $Q$ is a (finite) $\mathbb{K}$-linear combination of paths of length at least two, and having a common source and a common sink. For any subset of relations $R$ on $Q$, we denote by $\langle R \rangle$ the ideal of $\mathbb{K}Q$ generated by $R$. Call \new{bounded quiver} any pair $(Q,R)$ where $Q$ is a quiver and $R$ is a set of relations on $Q$.

Given a bounded quiver $(Q,R)$, we define a \new{representation} of $Q$ (over $\mathbb{K}$) as a pair $E=((E_q)_{q \in Q_0}, (E_\alpha)_{\alpha \in Q_1})$ such that:
\begin{enumerate}[label=$\bullet$, itemsep=1mm]
    \item $E_q$ is a $\mathbb{K}$-vector space, for all $q \in Q_0$;
    \item  $E_\alpha : E_{s(\alpha)} \longrightarrow E_{t(\alpha)}$ is a $\mathbb{K}$-linear ma, for any $\alpha \in Q_1$; and,
    \item by setting $E_p = E_{\alpha_k} \circ \cdots \circ E_{\alpha_1}$ for any path $p = \alpha_k \ldots \alpha_1$ of $Q$, whenever we have a relation $k_1 p_1 + \ldots + k_s p_s$ on $Q$ in $R$, then $k_1 E_{p_1} + \ldots  + k_s E_{p_s} = 0$.
\end{enumerate}
This representation $E$ is \new{finite-dimensional} if, for all $q \in Q_0$, $E_q$ is finite-dimensional. Its \new{dimension vector} $\vdim(E)$ is the vector $(\dim(E_q))_{q \in Q_0} \in \mathbb{N}^{\#Q_0}$. Now, we assume that all the representations we consider are finite-dimensional. Given $E$ and $F$ two representations of $Q$, we define a \new{morphism} $\varphi : E \longrightarrow F$ as a collection $\varphi = (\varphi_q)_{q \in Q_0}$ for linear maps $\varphi_q : E_q \longrightarrow E_q$ such that, for all $\alpha \in Q_1$, we have $\varphi_{t(\alpha)} E_\alpha = F_\alpha \varphi_{s(\alpha)}$. We say that $E$ and $F$ are \new{isomorphic} whenever there exists a bijective morphism $\varphi : E \longrightarrow F$, or equivalently, if there exists a morphism $\varphi : E \longrightarrow F$ such that, for any $q \in Q_0$, $\varphi_q$ is an isomorphism of $\mathbb{K}$-vector spaces. In such a case, we write $E\cong F$. We obtain the structure of a category by endowing the collection of representations of $Q$ with the morphisms between them. In the following, we denote this category by $\rep(Q,R)$. In the case where $R = \varnothing$, we denote this category by $\rep(Q)$. Note that $\rep(Q, R)$ depends on the field $\mathbb{K}$.

 Any (basic) finite-dimensional $\mathbb{K}$-algebra is isomorphic to the quotient algebra $\mathbb{K}Q/\langle R \rangle$ for some bounded quiver $(Q,R)$ where $Q$ is finite, and $\langle R \rangle$ is admissible. Moreover, the category $\rep(Q,R)$ is equivalent to the category of finitely generated left module over $\mathbb{K}Q/\langle R \rangle$. In particular, it implies that $\rep(Q,R)$ is an abelian Krull--Schmidt category with enough projective objects. We refer the reader to \cite{ASS06} for more details. 
 
 Write $E \oplus F$ for the direct sum of $E$ and $F$ in $\rep(Q,R)$. We say that a nonzero representation $X \in \rep(Q,R)$ is \new{indecomposable} if whenever we have $X \cong E \oplus F$ with some $E,F \in \rep(Q,R)$, then $E \cong 0$ or $F \cong 0$. Write $\ind(Q,R)$ for the collection of isomorphism classes of indecomposable representations of $(Q,R)$. Whenever $R = \varnothing$, we denote this collection by $\ind(Q)$. A \new{representation-finite type} bounded quiver is a bounded quiver that has finitely many isomorphism classes of indecomposable representations. 

 For any $\pmb{d} = (d_q)_{q \in Q_0} \in \mathbb{N}^{\#Q_0}$, we consider the following affine space
 \[\rep((Q,R),\pmb{d}) = \prod_{\alpha \in Q_1} \Hom \left(\mathbb{K}^{d_{s(\alpha)}}, \mathbb{K}^{d_{t(\alpha)}} \right) = \prod_{\alpha \in Q_1}  \Mat_{d_{t(\alpha)} \times d_{s(\alpha)}} (\mathbb{K}),\] and refer to it as the \new{representation space} of $(Q,R)$ with dimension vector $\pmb{d}$. Indeed, by choosing a basis for each of the vector spaces, we can identify the representations of $(Q,R)$ with the points of $\rep((Q,R), \pmb{d})$. The isomorphism classes of representations with vector dimension $\pmb{d}$ correspond exactly to the orbits of the group action given by the algebraic group $\vGL_{\pmb{d}}(\mathbb{K}) = \prod_{q \in Q_0} \GL_{d_q}(\mathbb{K})$, which acts on $\rep((Q,R), \pmb{d})$ by changing bases at each vertex.

\subsection{ADE Dynkin type quivers}
\label{ss:Dynkin}

In this paper, we show examples of our results on representation-finite quivers. We shortly recall a complete characterization of them due to P. Gabriel \cite{G72}.

\begin{definition} \label{def:Dynkin}
    Let $n \in \mathbb{N}^*$. A quiver $Q$ is said to be:
    \begin{enumerate}[label=$\bullet$, itemsep=1mm]
        \item of \new{$A_n$ type} if the underlying graph of $Q$ is of the following shape.
         \begin{center}
		\begin{tikzpicture}[-,line width=0.5mm,>= angle 60,color=black,scale=0.8]
			\node (1) at (0,0){$1$};
			\node (2) at (2,0){$2$};
			\node (3) at (4,0){$\cdots$};
			\node (4) at (6,0){$n$};
			\draw (1) -- (2) -- (3) -- (4);
		\end{tikzpicture}
	\end{center}
        \item of \new{$D_n$ type}, for $n \geqslant 4$, if the underlying graph of $Q$ is of the following shape.
        \begin{center}
		\begin{tikzpicture}[-,line width=0.5mm,>= angle 60,color=black,scale=0.8]
			\node (1) at (0,0){$1$};
			\node (2) at (2,0){$2$};
			\node (3) at (4,0){$\cdots$};
			\node (4) at (6,0){$n-2$};
            \node (5) at (7.4142, 1.4142){$n-1$};
            \node (6) at (7.4142, -1.4142){$n$};
			\draw (1) -- (2) -- (3) -- (4) -- (5);
            \draw (4) -- (6);
		\end{tikzpicture}
	\end{center}
        \item of \new{$E_6$}, \new{$E_7$} or \new{$E_8$ type} if the shape of the underlying graph of $Q$ is one of the following.
        \begin{center}
        \scalebox{0.7}{\begin{tikzpicture}[-,line width=0.5mm,>= angle 60,color=black,scale=0.7]
 		\node (1) at (0,0){$1$};
 		\node (3b) at (4,2){$4$};
 		\node (2) at (2,0){$2$};
 		\node (3) at (4,0){$3$};
 		\node (4) at (6,0){$5$};
 		\node (5) at (8,0){$6$};
 		\draw (1) -- (2) -- (3) -- (4) -- (5);
 		\draw (3b) -- (3);
 	      \begin{scope}[xshift=10cm]
 		\node (1) at (0,0){$1$};
 		\node (3b) at (4,2){$4$};
 		\node (2) at (2,0){$2$};
 		\node (3) at (4,0){$3$};
 		\node (4) at (6,0){$5$};
 		\node (5) at (8,0){$6$};
 		\node (6) at (10,0){$7$};
 		\draw (1) -- (2) -- (3) -- (4) -- (5) -- (6);
 		\draw (3b) -- (3);
 	\end{scope}
            \begin{scope}[xshift = 5cm, yshift = -3cm]
 		\node (1) at (0,0){$1$};
 		\node (3b) at (4,2){$4$};
 		\node (2) at (2,0){$2$};
 		\node (3) at (4,0){$3$};
 		\node (4) at (6,0){$5$};
 		\node (5) at (8,0){$6$};
 		\node (6) at (10,0){$7$};
 		\node (7) at (12,0){$8$};
 		\draw (1) -- (2) -- (3) -- (4) -- (5) -- (6) -- (7);
 		\draw (3b) -- (3);
        \end{scope}
 	\end{tikzpicture}}
     \end{center}
    \end{enumerate}
We say that $Q$ is an \new{$ADE$ Dynkin type} quiver whenever $Q$ is one of the above types.
\end{definition}

We have the following main result.

\begin{theorem}[{\cite{G72}}] \label{thm:Gab} Let $Q$ be a quiver. Then $Q$ is a representation-finite type quiver if and only if $Q$ is an $ADE$ Dynkin type quiver.
\end{theorem}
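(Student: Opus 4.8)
The plan is to establish the two implications by essentially independent arguments: ``representation-finite $\Rightarrow$ $ADE$'' by a geometric dimension count that forces a quadratic form to be positive definite, and ``$ADE$ $\Rightarrow$ representation-finite'' through the Bernstein--Gelfand--Ponomarev reflection functors together with the finiteness of the associated root system.

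For the forward direction I would first observe that a representation-finite quiver must be acyclic: a loop at a vertex $q$ already produces the pairwise non-isomorphic indecomposables supported at $q$ whose structure map is a single Jordan block of size $m$, for every $m \geq 1$, and an oriented cycle can be collapsed to this situation. I then attach to $Q$ its Tits form $q_Q(\pmb d) = \sum_{i \in Q_0} d_i^2 - \sum_{\alpha \in Q_1} d_{s(\alpha)} d_{t(\alpha)}$, and note that $q_Q(\pmb d) = \dim \vGL_{\pmb d}(\mathbb K) - \dim \rep((Q,\varnothing),\pmb d)$ for every $\pmb d \in \mathbb N^{\#Q_0}$. Since $Q$ is representation-finite, the irreducible affine variety $\rep((Q,\varnothing),\pmb d)$ is a finite union of $\vGL_{\pmb d}$-orbits and so contains a dense one, $\mathcal O_E$; comparing dimensions, and using that $\operatorname{Aut}(E)$ is Zariski-open in $\End(E)$, gives $q_Q(\pmb d) = \dim \End(E) \geq 1$ for all $\pmb d \in \mathbb N^{\#Q_0}\setminus\{0\}$, i.e.\ $q_Q$ is weakly positive. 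Weak positivity rules out multiple arrows and forces the underlying graph $\bar Q$ to be a tree; and if $\bar Q$ were a connected simply-laced tree other than $A_n, D_n, E_6, E_7, E_8$ it would contain an induced extended-Dynkin subtree, whose minimal positive radical vector (extended by zero) would contradict $q_Q \geq 1$. By the classical classification of positive-definite simply-laced unit forms one concludes that $Q$ is $ADE$.

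For the converse, assume $Q$ is $ADE$; in particular $\bar Q$ is a tree, so $Q$ is acyclic and admits a sink-admissible ordering $q_1, \dots, q_n$ of its vertices (each $q_j$ a sink of the quiver obtained from $Q$ by reversing the arrows at $q_1, \dots, q_{j-1}$). I would then invoke the reflection functors $\Sigma_q^{\pm}$ at sinks and sources: each $\Sigma_q^{+}$ restricts to an equivalence between the full subcategories of objects without an $S_q$ summand in $\rep(Q)$ and in $\rep(\sigma_q Q)$ (the quiver with the arrows at $q$ reversed), inducing a bijection between the indecomposables $\not\cong S_q$ on the two sides that acts on dimension vectors by the simple reflection $s_q$ of the root lattice of $\bar Q$; in particular it preserves $\End$, hence indecomposability. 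Iterating along the chosen ordering with the Coxeter functor $C^+ = \Sigma_{q_n}^{+} \cdots \Sigma_{q_1}^{+}$, one shows that $C^+$ eventually kills any indecomposable $X$ — here $q_Q$ being positive definite (the $ADE$ hypothesis), the Coxeter transformation $c$ has finite order and no nonzero fixed vector, so the iterates $c^k(\vdim X)$ cannot all remain in $\mathbb N^{\#Q_0}$ — and unwinding the process exhibits $X$ as $\Sigma_{q_{i_1}}^{-} \cdots \Sigma_{q_{i_r}}^{-}(S_{q_{i_{r+1}}})$. Thus $\vdim X$ is obtained from a simple root by simple reflections and is a positive root of the $ADE$ root system of $\bar Q$; moreover $q_Q(\vdim X) = \dim \End(X) - \dim \Ext^1(X,X) = 1$ forces $\End(X) = \mathbb K$, $\Ext^1(X,X) = 0$, and (by openness of the orbit in the irreducible variety $\rep((Q,\varnothing),\vdim X)$) a unique indecomposable per dimension vector. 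Since the Weyl group — hence the set of positive roots — of an $ADE$ root system is finite, $\rep(Q)$ has only finitely many indecomposables.

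The genuinely delicate step is the converse: constructing the reflection functors with their exactness and adjunction properties, verifying their action on dimension vectors, and, above all, proving that iterating the Coxeter functor terminates — which is precisely the place where positive-definiteness of $q_Q$ (equivalently, the $ADE$ hypothesis) enters essentially. By contrast the forward direction is comparatively soft once the implication chain ``finite type $\Rightarrow$ dense orbit $\Rightarrow$ $q_Q$ weakly positive'' is in place, the remaining input being the purely combinatorial classification of Dynkin graphs via positive-definiteness of their quadratic form. Alternatively, the converse can be carried out by directly knitting the finite Auslander--Reiten quiver of $\rep(Q)$.
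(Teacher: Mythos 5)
The paper does not prove this statement: it is Gabriel's theorem, imported verbatim by citation to \cite{G72} (with only the remark that it is stated for $R=\varnothing$), so there is no in-paper argument to compare yours against. Your sketch is the standard two-pronged proof and is correct as an outline. For the forward direction, the chain ``finitely many orbits $\Rightarrow$ dense orbit in the irreducible variety $\rep((Q,\varnothing),\pmb d)$ $\Rightarrow$ $q_Q(\pmb d)=\dim\End(E)\geqslant 1$'' is Tits' argument, and the reduction of weak positivity to the $ADE$ classification via excluded extended Dynkin subgraphs is the standard combinatorial step; note that your preliminary acyclicity observation is actually subsumed by this (a loop or an unoriented cycle already violates $q_Q\geqslant 1$ on the obvious vector), so it is harmless but redundant. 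For the converse, the BGP reflection-functor argument is correctly laid out, including the one genuinely delicate point: that $1+c+\cdots+c^{h-1}=0$ (equivalently, finite order of $c$ together with the absence of nonzero fixed vectors, both consequences of positive definiteness of $q_Q$) forces the Coxeter orbit of $\vdim X$ to leave $\mathbb N^{\#Q_0}$, whence every indecomposable is a positive root and is unique in its dimension vector by $q_Q(\vdim X)=\dim\End(X)-\dim\Ext^1(X,X)=1$ and openness of the orbit. The ingredients you invoke without proof (construction and exactness of $\Sigma_q^{\pm}$, their action on dimension vectors, the classification of positive-definite simply-laced unit forms) are exactly the standard black boxes of this proof, and you flag them appropriately; filling them in would reproduce the textbook treatment rather than anything specific to this paper.
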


\begin{remark}
    This result holds among bounded quivers $(Q,R)$ with $R = \varnothing$.
\end{remark}

In the following, we recall the precise description of $\rep(Q)$ for $Q$ a type $A$ quiver. The reader can find those results in \cite{S14}.
 
Let $n \in \mathbb{N}^*$. The \new{intervals} of $\{1, \ldots, n\}$ are the sets $\llrr{i,j} := \{i, i+1, \ldots,j \}$ given by all $1 \leqslant i \leqslant j \leqslant n$. If $i=j$, we write $\llrr{i,i} = \llrr{i}$. Denote by $\mathcal{I}_n$ the set of intervals in $\{1, \ldots n\}$. For $K = \llrr{i,j} \in \mathcal{I}_n$, set $b(K) = i$ and $e(K) = j$. Call \new{interval set} any subset of $\mathcal{I}_n$.
	\begin{definition}\label{def:intervalbotandtop}  Let $Q$ be an $A_n$ type quiver. Consider $K,L \in \mathcal{I}_n$ such that $K \subseteq L$. We say that:
		\begin{enumerate}[label = $\bullet$]
			\item $K$ is \new{above} $L$ (\new{relative to} $Q$) if the following two assertions are satisfied:\begin{enumerate}[label = $\bullet$]
				\item $b(K) = b(L)$ or we have the arrow $b(K)-1 \longleftarrow b(K)$ in $Q$;
				\item $e(K) = e(L)$ or we have the arrow $e(K) \longrightarrow e(K)+1$ in $Q$.
			\end{enumerate}
			\item $K$ is \new{below} $L$ (\new{relative to} $Q$) if the following two assertions are satisfied:\begin{enumerate}[label = $\bullet$]
				\item $b(K) = b(L)$ or we have the arrow $b(K)-1 \longrightarrow b(K)$ in $Q$;
				\item $e(K) =e(L)$ or we have the arrow $e(K) \longleftarrow e(K)+1$ in $Q$.
			\end{enumerate}
		\end{enumerate}
	\end{definition}
	\begin{ex} Consider the following quiver.
		\begin{center}
			\begin{tikzpicture}[->,line width=0.5mm,>= angle 60,color=black,scale=0.8]
				\node (Q) at (-1,0){$Q =$};
				\node (1) at (0,0){$1$};
				\node (2) at (2,0){$2$};
				\node (3) at (4,0){$3$};
				\draw (1) -- (2);
				\draw (2) -- (3);
			\end{tikzpicture}
		\end{center}
		Then $\llrr{2}$ is above $\llrr{2,3}$ and below $\llrr{1,2}$.
	\end{ex}
	Note that any interval is above and below itself, relative to all $A_n$ type quivers.
	Let $Q$ be a quiver of $A_n$ type. To any interval $K \in  \mathcal{I}_n$, we consider $X_K$ the representation of $Q$ defined as it follows:
	\begin{enumerate}[label = $\bullet$]
		\item $(X_K)_q = \mathbb{K}$ if $q \in K$, $(X_K)_q = 0$ otherwise;
		\item $(X_K)_{\alpha} = \Id_\mathbb{K}$ if $\alpha$ is such that $\{s(\alpha), t(\alpha)\} \subseteq K$, $(X_K)_{\alpha} = 0$ otherwise;
	\end{enumerate}
	Note that $X_K$ is an indecomposable representation of $Q$ for all $K \in \mathcal{I}_n$.
	\begin{theorem}\label{thm:indecandmorphtypeA} Let $Q$ be an $A_n$ type quiver. \begin{enumerate}[label = $(\alph*)$]
			\item \label{indectypeA} The isomorphism classes of indecomposable representations of $Q$ are in bijection with $\mathcal{I}_n$; more precisely, they are described by indecomposable representations $X_K$ for $K \in \mathcal{I}_n$;
			\item \label{morphtypeA} The homomorphism space between two indecomposable representations of $Q$ is of dimension at most one; more precisely, $\Hom(X_K, X_L)$ is nonzero if and only if  there exists an interval $J$ such that $J$ is above $K$ and below $L$ relative to $Q$; if such an interval exists, it is unique and 
			$\Hom(X_K,X_L)$ consists of scalar multiples of the morphism $\phi = (\phi_q)_{q \in Q_0}$ such that $\phi_q = \Id_\mathbb{K}$ if $q \in J$ and $\phi_q = 0$ otherwise. 
		\end{enumerate}
	\end{theorem}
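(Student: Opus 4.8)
The plan is to treat the two parts in order: first the classification of indecomposables in part~(a), then the $\Hom$-space description in part~(b), which will use the explicit models $X_K$. For part~(a), two of the three assertions are quick. I would check that $\End_{\rep(Q)}(X_K)\cong\mathbb{K}$: an endomorphism is a tuple of scalars on the lines $(X_K)_q$, $q\in K$, and commutation with the identity transition maps forces all of them to agree, since the vertices of the interval $K$ span a connected subquiver; hence $\End(X_K)$ is a field, so it is local and $X_K$ is indecomposable. Moreover $\vdim(X_K)$ is the indicator vector of $K$, so $X_K\cong X_L$ already forces $K=L$. The substantive point is that every indecomposable is some $X_K$, and I would deduce it from the stronger claim that \emph{every} finite-dimensional representation of $Q$ is a direct sum of copies of the $X_K$'s, proved by induction on $n$. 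The base case $n=1$ is $\rep(Q)=\Vect$. For the inductive step, using the standard $\mathbb{K}$-linear duality $\rep(Q)\simeq\rep(Q^{\op})^{\op}$ — which preserves indecomposability and sends $X_K$ over $Q$ to $X_K$ over $Q^{\op}$ — I may assume the leaf vertex $n$ of $Q$ is a sink, with unique arrow $\alpha\colon n-1\to n$. Restricting a representation $E$ to the $A_{n-1}$ subquiver $Q'$ on $\{1,\dots,n-1\}$ and applying the inductive hypothesis writes $E|_{Q'}\cong\bigoplus_i X_{K_i}$, and since each $K_i\subseteq\{1,\dots,n-1\}$ the summands meeting vertex $n-1$ are exactly those with $K_i=\llrr{b_i,n-1}$. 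All remaining data of $E$ is then the single map $E_\alpha\colon E_{n-1}=\bigoplus_i(X_{K_i})_{n-1}\to E_n$.

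The task is then to put $E_\alpha$ into a normal form: using the automorphisms of $E|_{Q'}$ coming from the $\Hom$-spaces among the summands ending at $n-1$, together with a base change of $E_n$, one arranges $E_\alpha$ so that it matches a subset of the ``top'' vectors of those summands bijectively with part of a basis of $E_n$ and annihilates the rest; reading off the resulting block decomposition then exhibits $E$ as a direct sum of the ``connected'' modules $X_{\llrr{b_i,n}}$, of the leftover $X_{K_i}$'s, and of copies of the simple $X_{\llrr{n}}$. \textbf{I expect this normal-form step to be the main obstacle:} which column operations on $E_\alpha$ extend to automorphisms of $E|_{Q'}$ is controlled by which $\Hom(X_{K_i},X_{K_j})$ are nonzero, and this depends on the orientation near the $b_i$, so the case bookkeeping needs care. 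A less computational alternative would reduce an arbitrary orientation to the linear one $1\to2\to\cdots\to n$ by BGP reflection functors at sinks and sources — these are bijections on indecomposables away from the reflected simple and act on dimension vectors by Weyl-group reflections, hence permute the interval vectors — and then invoke the classical description of the indecomposables over the Nakayama algebra $\mathbb{K}(1\to\cdots\to n)$ as the uniserial modules, whose dimension vectors are exactly the intervals; I would present whichever turns out shorter.

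For part~(b), I would argue directly from the models. A morphism $\phi\colon X_K\to X_L$ is a tuple $\phi_q\colon(X_K)_q\to(X_L)_q$, which must vanish unless $q\in K\cap L$. If $K\cap L=\varnothing$ then $\Hom(X_K,X_L)=0$ and no interval lies in both $K$ and $L$, so there is nothing to prove; otherwise $K\cap L$ is an interval, and since all transition maps of $X_K$ and of $X_L$ restrict there to the identity, commutativity forces $\phi_q$ to be a single common scalar $c$ for all $q\in K\cap L$ (and $0$ outside), which already gives $\dim\Hom(X_K,X_L)\leq1$. It remains to decide when the tuple with $c=1$ is a morphism, i.e.\ when the commutation squares at the at most two arrows straddling the boundary of $K\cap L$ hold; I would run a short case analysis on the orientation of the arrow between $b(K\cap L)-1$ and $b(K\cap L)$, and symmetrically at the other endpoint, and check that the square commutes there exactly when the corresponding endpoint condition of \cref{def:intervalbotandtop} holds for $J:=K\cap L$ — the ``above $K$'' condition at an end where $K$ reaches further, the ``below $L$'' one where $L$ does, and no condition at an end where they agree. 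This both identifies $J:=K\cap L$ as the required interval when one exists and shows the displayed morphism is the one with $c=1$; uniqueness of $J$ then follows since any interval above $K$ and below $L$ lies inside $K\cap L$, and a strictly smaller one would, at a moved endpoint $j$, need simultaneously an arrow $j-1\longleftarrow j$ (to be above $K$) and $j-1\longrightarrow j$ (to be below $L$), which is impossible.
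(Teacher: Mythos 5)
The paper offers no proof of this theorem: it is recalled as background and attributed to \cite{S14}, so there is no in-house argument to measure yours against; what follows assesses your proposal on its own terms. Part (b) is complete and correct: reducing any morphism $X_K\to X_L$ to a single scalar on $K\cap L$ via the internal identity transition maps, then checking the two boundary squares against the orientation of the arrows at $b(K\cap L)-1$ and $e(K\cap L)+1$, does recover exactly the ``above $K$ and below $L$'' conditions of \cref{def:intervalbotandtop} for $J=K\cap L$, and your argument that any admissible $J$ must equal $K\cap L$ is right. In part (a), the indecomposability of $X_K$ and the injectivity of $K\mapsto X_K$ are fine, so everything hinges on completeness of the list, and there your first route stops precisely where the work begins. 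The normal-form step for $E_\alpha$ is not incidental bookkeeping: you need to know that between any two summands $X_{\llrr{b_i,n-1}}$, $X_{\llrr{b_j,n-1}}$ with $b_i\neq b_j$ there is a nonzero morphism in exactly one direction (determined by the orientation of the arrow between $\max(b_i,b_j)-1$ and $\max(b_i,b_j)$), and that it is an isomorphism at vertex $n-1$; this gives a Borel subgroup acting on the source of $E_\alpha$, and only then can you invoke a Bruhat-type normal form and read off the summands $X_{\llrr{b_i,n}}$, the surviving $X_{K_i}$, and copies of $X_{\llrr{n}}$. As written, that step is announced rather than performed, and it quietly uses part (b) for the smaller quiver, so the logical order of your two parts should be reversed or the induction made joint. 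Your BGP alternative does close the gap by standard means, but state the missing half-sentence: reflection functors identify every non-simple indecomposable of $Q$ with a unique indecomposable of the linearly oriented quiver, whose indecomposables are the uniserial modules with interval dimension vectors, and uniqueness of the indecomposable per dimension vector then forces it to be $X_K$. With either route completed, the proof is correct; since the paper merely cites the result, yours is necessarily a different (namely, an actual) argument.
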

	In the light of the previous result : \begin{enumerate}[label = $\bullet$]
		\item for all interval sets $\mathscr{J} \subseteq \mathcal{I}_n$ and all quivers $Q$ of $A_n$ type, write $\Cat_Q(\mathscr{J})$ for the subcategory of $\rep(Q)$ additively generated by $X_K$ for $K \in \mathscr{J}$;
		\item for all quivers $Q$ of $A_n$ type and for all nonzero subcategories $\mathscr{C}$ of $\rep(Q)$, let $\Int(\mathscr{C})$ be the interval set of $\mathcal{I}_n$ consisting of intervals $K$ such that $X_K \in \mathscr{C}$.
	\end{enumerate}
	Under \cref{conv:addsubcat}, the subcategories we are considering are additively generated by $X_K$ for $K \in \mathscr{J}$ for some  $\mathscr{J} \subset \mathcal{I}_n$.
	
	Hence, for any $A_n$ type quiver $Q$, for all $\mathscr{J} \subseteq \mathcal{I}_n$ and for all subcategories $\mathscr{C} \subseteq \rep(Q)$, we have $\mathscr{J} = \Int(\Cat_Q(\mathscr{J})) \text{ and } \mathscr{C} = \Cat_Q(\Int(\mathscr{C})).$

    Now we recall the description of all the short exact sequences between indecomposable representations.

    \begin{theorem} \label{thm:exttypeA} Let $Q$ be an $A_n$ type quiver. Let $D,F \in \ind(Q)$ and consider a non-split exact sequence \[\begin{tikzcd}
	0 & D & E & F &  0.
	\arrow[from=1-1, to=1-2]
	\arrow[tail, from=1-2, to=1-3]
	\arrow[two heads,from=1-3, to=1-4]
	\arrow[from=1-4, to=1-5]
\end{tikzcd} \]
      Then exactly one of the following assertions holds:
      \begin{enumerate}[label=$(\alph*)$,itemsep=1mm]
          \item We have $E \in \ind(Q)$, and if we consider $K,L \in \mathcal{I}_n$ such that $D \cong X_K$ and $F \cong X_L$, then $K \cup L \in \mathcal{I}_n$, $K \cap L = \varnothing$ and $E \cong X_{K \cup L}$; or,
          \item We have $E \cong E_1 \oplus E_2$ with $E_1,E_2 \in \ind(Q)$, and there exist $K,L \in \mathcal{I}_n$ with $K \cap L \neq \varnothing$ such that both assertions hold:
          \begin{enumerate}[label=$\bullet$,itemsep=1mm]
          \item $K,L,K \cap L$ and $K \cup L$ are four different intervals in $\mathcal{I}_n$; and,
          \item either $\{D,F\} = \{X_K, X_L\}$ and $\{E_1,E_2\} = \{X_{K\cap L}, X_{K \cup L} \}$, or \\
          $\{D,F\} = \{X_{K \cap L}, X_{K \cup L}\}$ and $\{E_1,E_2\} = \{X_{K}, X_{L} \}$.
          \end{enumerate}
          In this case, we call it a \new{diamond exact sequence.} 
      \end{enumerate}
    \end{theorem}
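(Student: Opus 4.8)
The plan is to run a case analysis driven entirely by the dimension vector $\vdim E = \vdim X_K + \vdim X_L$, where we fix $D \cong X_K$ and $F \cong X_L$ with $K, L \in \mathcal{I}_n$ via \cref{thm:indecandmorphtypeA}; thus $\dim E_q \in \{0,1,2\}$ at every vertex $q$, with $\dim E_q = 2$ exactly on $K \cap L$, and $\supp E = K \cup L$ as a subset of $Q_0$. First I would show that $K \cup L$ must be an interval: otherwise $\supp E$ is a disjoint union of two intervals, $E$ splits as $E' \oplus E''$ along them, the indecomposable $D$ (having connected support $K$) is killed by the projection onto the part not meeting $K$ and hence embeds into the other part, and a short chase shows $D$ is in fact a direct summand of $E$ with complement $\cong F$ — contradicting non-splitness. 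Next, if $E$ is indecomposable, then $E \cong X_J$ for an interval $J$, so $\vdim X_J = \vdim X_K + \vdim X_L$ is a $0/1$ vector, forcing $K \cap L = \varnothing$ and $J = K \cup L$: this is alternative $(a)$. Since in $(b)$ the object $E$ is a sum of two nonzero indecomposables, $(a)$ and $(b)$ are mutually exclusive, and it remains to treat $E$ decomposable and derive $(b)$.

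Assume $E$ decomposable. As type $A$ path algebras are representation-directed, $\Ext^1(X_K, X_K) = 0$, so $K \neq L$. If $K \cap L = \varnothing$, then $E$ is thin with support the interval $K \cup L$, so $E = \bigoplus_i X_{J_i}$ with the intervals $J_i$ partitioning $K \cup L$; no structure map of $E$ that vanishes — a ``break'' between two consecutive $J_i$ — can be nonzero on the subobject $D \cong X_K$, which forces $K$ to lie inside a single $J_i$ and, dually (via the quotient $E/D \cong F$), forces there to be exactly two summands, with $E \cong X_K \oplus X_L \cong D \oplus F$ and $D$ a genuine summand; hence the sequence splits, a contradiction. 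So $K \cap L \neq \varnothing$. A short explicit computation also rules out the case where $K$ and $L$ are nested and share an endpoint (the relevant $\Ext^1$ vanishes because a syzygy term has no homomorphism into the other module). Hence $K$ and $L$ are either properly crossing or strictly nested, and in either case $K$, $L$, $K \cap L$, $K \cup L$ are four distinct intervals with $K \cap L \neq \varnothing$, forming the ``crossing pair'' $\{K,L\}$ and the ``nested pair'' $\{K \cap L, K \cup L\}$, one of which equals $\{D, F\}$.

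It then remains to identify $E$ as the direct sum of the two interval modules in the partner pair. Writing $E = \bigoplus_{i=1}^m X_{J_i}$ and restricting the monomorphism $D \hookrightarrow E$ and the epimorphism $E \twoheadrightarrow F$ to each summand, I would use that by \cref{thm:indecandmorphtypeA} every $\Hom$-space between indecomposables is at most one-dimensional and spanned by an ``identity on a subinterval'' map — together with the resulting explicit description of the image and kernel of any nonzero map of interval modules — to show: no component of either map may vanish without splitting the sequence; the image of $E \twoheadrightarrow F$ cannot be exhausted using $m \geq 3$ summands; and the two surviving summands must carry the dimension vectors of the partner pair, hence are the two interval modules of that pair. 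Alternatively — and more cleanly — one first checks (via the Auslander--Reiten formula and \cref{thm:indecandmorphtypeA}) that $\dim_\mathbb{K} \Ext^1(X_L, X_K) \leq 1$ for all intervals $K, L$, so that a non-split extension of $F$ by $D$ is unique up to equivalence; it then suffices to exhibit one non-split sequence whose middle term is the partner pair, which is done by writing down the evident ``Mayer--Vietoris'' maps (all components $0$ or $\Id$), exactness at each vertex being the identity $\vdim X_K + \vdim X_L = \vdim X_{K \cap L} + \vdim X_{K \cup L}$. This gives exactly alternative $(b)$.

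The conceptual content is light; the real work — and the main obstacle — is the bookkeeping over the orientations of the edges of $Q$ near the endpoints $b(K), e(K), b(L), e(L)$. These orientations decide which of the two pairs is $\{D,F\}$, are needed to dispose of the degenerate nested-with-shared-endpoint configuration, and single out which of the (a priori four) candidate short exact sequences — middle term $X_K \oplus X_L$ or $X_{K \cap L} \oplus X_{K \cup L}$, with either outer term as the sub — actually occurs. Organising these cases uniformly, ideally by reading $\Ext^1$ and the middle terms directly off the Auslander--Reiten quiver of $\rep(Q)$ rather than edge by edge, is where most of the effort goes.
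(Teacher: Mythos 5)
First, a point of reference: the paper does not prove \cref{thm:exttypeA} at all — it is recalled, together with \cref{thm:indecandmorphtypeA}, from the literature (the preamble to that subsection points the reader to \cite{S14}) — so there is no in-paper argument to measure yours against, and your proposal has to stand on its own. On its own it is sound: the dimension-vector bookkeeping correctly gives connectedness of $\supp E$ (else the inclusion of $D$ is a split monomorphism), case $(a)$ when $E$ is indecomposable, the exclusion of $K\cap L=\varnothing$ when $E$ is decomposable, the exclusion of $K=L$ by rigidity, and the reduction of case $(b)$ to identifying the middle term of the unique nontrivial extension class; the bound $\dim_{\mathbb K}\Ext^1(X_L,X_K)\leqslant 1$ does follow from the Auslander--Reiten formula together with part $(b)$ of \cref{thm:indecandmorphtypeA}, so that step is legitimate.

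Two things should be tightened. First, the sentence ``$K$ and $L$ are either properly crossing or strictly nested, and in either case $K,L,K\cap L,K\cup L$ are four distinct intervals'' is false as written in the nested case, where $\{K\cap L,\,K\cup L\}=\{K,L\}$; what you actually use (and say in the following clause) is that one always renames so that $(K,L)$ denotes the properly crossing pair $b(K)<b(L)\leqslant e(K)<e(L)$, and $\{D,F\}$ is then one of the two partner pairs — make that renaming explicit, since the statement of the theorem is phrased exactly this way. Second, be aware that your ``cleaner'' alternative does not actually avoid the orientation casework, it only relocates it: to exhibit one non-split sequence with the partner pair in the middle, and with $D$ as the sub and $F$ as the quotient in the direction you were handed, you must check via \cref{thm:indecandmorphtypeA} which of the four candidate ``Mayer--Vietoris'' complexes consists of genuine morphisms for the given orientation of the arrows adjacent to $b(L)$ and $e(K)$; the same check disposes of the nested-with-shared-endpoint configuration (where $\Ext^1$ vanishes in both directions). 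That bookkeeping is routine and you have correctly identified it as the only remaining work, so I see no step that would fail — but it is the entire content of the proof, not a residue of it.
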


\begin{definition}[Diamond exact sequence]
Let $Q$ be an $A_n$ type quiver. A non-split short exact sequence
\[
0 \longrightarrow D \longrightarrow E \longrightarrow F \longrightarrow 0
\]
in $\rep(Q)$ is called a \emph{diamond exact sequence} if
\begin{enumerate}[label=(\roman*), itemsep=1mm]
    \item $D,F \in \ind(Q)$;
    \item $E$ is decomposable and,
    \[
    E \cong E_1 \oplus E_2 \quad \text{with } E_1,E_2 \in \ind(Q).
    \]
\end{enumerate}
\end{definition}

\section{Gen-Sub operators on exact categories}
\label{sec:ExStr}
\pagestyle{plain}

Let $\mathscr{A}$ be a hereditary abelian category with enough projectives. 

\begin{conv} \label{conv:addsubcat}
    Our subcategories are \emph{full}, \emph{additive} and closed under isomorphisms. By additive subcategories, we mean subcategories that are closed under sums and summands.
\end{conv}

In this section, after a few meaningful recalls on exact structures, we define the \emph{Gen-Sub operator}. By fixing $\mathcal{E}$ as an exact structure on $\mathscr{A}$, it allows one to construct a subcategory $\mathscr{D} \supseteq \mathscr{C}$ satisfying crucial properties relative to $\mathcal{E}$ from any subcategory $\mathscr{C}$. We illustrate the following definitions with examples within the framework of $\rep (Q)$, where $Q$ is a type $A$ quiver. 

\subsection{Exact structures}
\label{ss:genonexactstructure}

Let $\mathscr{A}$ be an abelian category. 
Recall that two short exact sequences in $\mathscr{A}$ \[\begin{tikzcd}
	\xi : & 0 & D & E & F &  0 \\
    \varsigma: & 0 & X & Y & Z &  0
	\arrow[from=1-2, to=1-3]
	\arrow["f",tail, from=1-3, to=1-4]
	\arrow["g",two heads,from=1-4, to=1-5]
	\arrow[from=1-5, to=1-6]
	\arrow[from=2-2, to=2-3]
	\arrow["i",tail, from=2-3, to=2-4]
	\arrow["j",two heads,from=2-4, to=2-5]
	\arrow[from=2-5, to=2-6]
\end{tikzcd} \] are said to be {\em isomorphic} whenever there exists a triplet of isomorphisms \[\left(\begin{tikzcd}
	D & X
	\arrow[Plum,"\phi",from=1-1, to=1-2]
\end{tikzcd}, \begin{tikzcd}
	E & Y
	\arrow[Plum,"\psi",from=1-1, to=1-2]
\end{tikzcd}, \begin{tikzcd}
	F & Z
	\arrow[Plum,"\mu",from=1-1, to=1-2]
\end{tikzcd} \right)\] such that every square in the following diagram commutes.
\[\begin{tikzcd}
	\xi : & 0 & D & E & F &  0 \\
    \varsigma: & 0 & X & Y & Z &  0
	\arrow[from=1-2, to=1-3]
	\arrow["f",tail, from=1-3, to=1-4]
        \arrow[Plum,"\phi",from=1-3, to=2-3]
	\arrow["g",two heads,from=1-4, to=1-5]
        \arrow[Plum,"\psi",from=1-4, to=2-4]
	\arrow[from=1-5, to=1-6]
        \arrow[Plum,"\mu",from=1-5, to=2-5]
	\arrow[from=2-2, to=2-3]
	\arrow["i",tail, from=2-3, to=2-4]
	\arrow["j",two heads,from=2-4, to=2-5]
	\arrow[from=2-5, to=2-6]
\end{tikzcd} \] 
From now on, by abusing notations, we identify a short exact sequence with its isomorphism class of short exact sequences. Fix a subcollection $\mathcal{E}$ of short exact sequences in $\mathscr{A}$. A short exact sequence \[\begin{tikzcd}
	\xi: & 0 & A & E & B &  0
	\arrow[from=1-2, to=1-3]
	\arrow["f",tail, from=1-3, to=1-4]
	\arrow["g",two heads,from=1-4, to=1-5]
	\arrow[from=1-5, to=1-6]
\end{tikzcd} \] is said to be \new{$\mathcal{E}$-exact} (or \new{$\mathcal{E}$-admissible}) if $\xi \in \mathcal{E}$. In such a case, we say that the map $f$ is said to be an \new{$\mathcal{E}$-monomorphism}, and the map $g$ is said to be an \new{$\mathcal{E}$-epimorphism}. Considering the short exact sequences up to isomorphism, the definitions of $\mathcal{E}$-exact sequences, $\mathcal{E}$-monomorphisms, and short $\mathcal{E}$-epimorphisms are interdependent, uniquely determining each other within this framework.

Quillen's original formulation of exact structures thus encapsulates these relationships, providing a robust foundation for applying homological constructs in settings that deviate from traditional abelian categories. Quillen’s
definition can be rephrased as follows.

\begin{definition}\label{def:exact}
A collection $\mathcal{E}$ of short exact sequence in $\mathscr{A}$ is an \emph{exact structure} on $\mathscr{A}$ whenever $\mathcal{E}$ satisfies the following axioms:
\begin{enumerate}[label=$(\mathsf{ES} \arabic*)$,itemsep=1mm]
    \item \label{ES1} All the split short exact sequences are in $\mathcal{E}$;
  
    \item \label{ES2} The collection of $\mathcal{E}$-monomorphisms and the one of $\mathcal{E}$-epimorphisms are both closed under compositions;
    
    \item\label{ES3} $\mathcal{E}$ is closed under the pushouts along any morphism: that is, any short exact sequence \[\begin{tikzcd}
	0 & X & Y & Z & 0 \\ 0 & D & PO & Z & 0 ,
	\arrow[from=1-1, to=1-2]
	\arrow["f",tail, from=1-2, to=1-3]
	\arrow[two heads,from=1-3, to=1-4]
	\arrow[from=1-4, to=1-5]
    \arrow[from=2-1, to=2-2]
	\arrow[tail, from=2-2, to=2-3]
	\arrow[two heads,from=2-3, to=2-4]
	\arrow[from=2-4, to=2-5]
    \arrow["h",from=1-2, to=2-2]
	\arrow[from=1-3, to=2-3]
    \arrow[equal,from=1-4, to=2-4]
 \end{tikzcd} \] where $f$ is an $\mathcal{E}$-monomorphism gives a short exact sequence is in $\mathcal{E}$ whenever taking the pushout along any morphism $h$;
    
\item \label{ES4} $\mathcal{E}$ is closed under the pullbacks along any morphism: that is, any short exact sequence \[\begin{tikzcd}
	0 & X & Y & Z & 0 \\ 0 & X & PB & F & 0 ,
	\arrow[from=1-1, to=1-2]
	\arrow[tail, from=1-2, to=1-3]
	\arrow["g",two heads,from=1-3, to=1-4]
	\arrow[from=1-4, to=1-5]
    \arrow[from=2-1, to=2-2]
	\arrow[tail, from=2-2, to=2-3]
	\arrow[two heads,from=2-3, to=2-4]
	\arrow[from=2-4, to=2-5]
    \arrow[equal,from=1-2, to=2-2]
	\arrow[from=2-3, to=1-3]
    \arrow["h",from=2-4, to=1-4]
 \end{tikzcd} \]where $g$ is an $\mathcal{E}$-epimorphism gives a short exact sequence in $\mathcal{E}$ whenever taking the pullback along any morphism $h$.
\end{enumerate}
If $\mathcal{E}$ is an exact structure on $\mathscr{A}$, then we refer to the pair $(\mathscr{A},\mathcal{E})$ as an \new{exact category}. We also refer to the exact structure containing all short exact sequences as $\mathcal{E}_{\max}$, and the one containing only the split short exact sequences as $\mathcal{E}_{\min}$. We denote by $\mathcal{E}(-,-)$ the additive subbifunctor of $\Ext^1_\mathscr{A}(-,-)$ that consists of extensions belonging to $\mathcal{E}$.

\end{definition}

\begin{definition}\label{def:Eprojectivesinjectives}
Let $(\mathscr{A}, \mathcal{E})$ be an exact category.
\begin{enumerate}[label=$\bullet$,itemsep=1mm]
    \item An object $P$ in $\mathscr{A}$ is \new{$\mathcal{E}$-projective} if for every $\mathcal{E}$-epimorphism $g: Y \to Z$, the map $g_* = \Hom_\mathscr{A}(P,g)$ is an epimorphism across all $Y, Z$ in $\mathscr{A}$. Equivalently, $P$ is $\mathcal{E}$-projective if the functor $\Hom_\mathscr{A}(P,-)$ preserves the exactness of $\mathcal{E}$-admissible sequences. We write $\proj_\mathcal{E}(\mathscr{A})$ for the collection of $\mathcal{E}$-projective objects in $\mathscr{A}$. We observe that $\proj_{\mathcal{E}_{\max}}(\mathscr{A}) = \proj(\mathscr{A})$ where $\mathcal{E}_{\max}$ is the exact structure containing all the short exact sequences. 
    \item An object $I$ in $\mathscr{A}$ is \new{$\mathcal{E}$-injective} if for every $\mathcal{E}$-monomorphism $f: X \to Y$, the map $f^* = \Hom_\mathscr{A}(f,I)$ is an epimorphism across all $X, Y$ in $\mathscr{A}$. Equivalently, $I$ is $\mathcal{E}$-injective if the functor $\Hom_\mathscr{A}(-,I)$ preserves the exactness of $\mathcal{E}$-admissible sequences. We write $\inj_{\mathcal{E}}(\mathscr{A})$ for the collection of $\mathcal{E}$-injective objects in $\mathscr{A}$. We observe that $\inj_{\mathcal{E}_{\max}}(\mathscr{A}) = \inj(\mathscr{A})$.
\end{enumerate}
\end{definition}

In the following, we focus on a relevant example of exact structure for $\mathscr{A} = \rep(Q)$ for $Q$ a type $A$ quiver, called the \emph{diamond exact structure}.

\begin{definition} \label{def:Ediamond}
Let $Q$ be an $A_n$ type quiver. We define the collection of short exact sequences $\mathcal{E}_{\diamond}$ given by the additive bifunctor $\mathcal{E_\diamond}(-,-)$ that is uniquely determined by
\[
\mathcal{E}_\diamond(N,M) = \{ \eta \in \Ext^1(N,M) \; | \; \eta \mbox{ is split or a diamond exact sequence}\}.\] with $M,N \in \rep Q$ indecomposable.
\end{definition}

\bigskip

The following result ensures that $\mathcal{E}_\diamond$ is indeed an exact structure of $\rep(Q)$.

\begin{prop}[\cite{BHRS24}]
 \label{prop:EdiamondExactStructureTypeA}
     Let $n \in \mathbb{N}^*$, and $Q$ be an $A_n$ type quiver. The collection $\mathcal{E}_{\diamond}$ is an exact structure on $\mathscr{A} = \rep(Q)$.
 \end{prop}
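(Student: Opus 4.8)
The plan is to verify directly that the collection $\mathcal{E}_\diamond$, as defined via the subbifunctor of $\Ext^1$ determined by its values on indecomposables, satisfies the four axioms \ref{ES1}--\ref{ES4} of \cref{def:exact}. The starting point is the classical fact (see e.g. \cite{DRSSK99} or the general theory of subfunctors of $\Ext^1$) that a collection of short exact sequences in an abelian category of the form $\{\,\xi \in \Ext^1(N,M) \mid \xi \in \mathcal{E}(N,M)\,\}$, where $\mathcal{E}(-,-)$ is an \emph{additive subbifunctor} of $\Ext^1_\mathscr{A}(-,-)$, automatically satisfies \ref{ES1}, \ref{ES3}, and \ref{ES4}: additivity handles the split sequences (the zero element of every group $\mathcal{E}(N,M)$), and the bifunctoriality in each variable encodes exactly closure under pushout and pullback, since pushout along $h$ is the map $\mathcal{E}(N,h)$ (or rather $\Ext^1(N,h)$ restricted) and pullback along $h$ is $\mathcal{E}(h,M)$. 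Thus the bulk of the work reduces to two things: (i) checking that the prescription on indecomposables genuinely extends to a well-defined additive subbifunctor on all of $\rep(Q)$, and (ii) checking axiom \ref{ES2}, closure of $\mathcal{E}_\diamond$-monomorphisms (resp. epimorphisms) under composition, which is the one axiom that is \emph{not} formal from the subbifunctor description.

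For step (i): since $\rep(Q)$ is hereditary and Krull--Schmidt, and $\Ext^1$ is additive in each variable, any assignment $N,M \mapsto \mathcal{E}_\diamond(N,M) \subseteq \Ext^1(N,M)$ on indecomposables extends uniquely to an additive subbifunctor provided it is compatible with the direct-sum decompositions — i.e. provided that for $N = \bigoplus N_i$, $M = \bigoplus M_j$, the subgroup one must take is $\mathcal{E}_\diamond(N,M) = \bigoplus_{i,j} \mathcal{E}_\diamond(N_i,M_j)$ under the canonical decomposition $\Ext^1(N,M) \cong \bigoplus_{i,j}\Ext^1(N_i,M_j)$, and that this is stable under the functorial maps induced by arbitrary morphisms $N'\to N$, $M\to M'$. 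The content here is that the $\mathcal{E}_\diamond$-sequences on indecomposables are closed under the relevant operations. I would invoke \cref{thm:exttypeA}: it gives a complete combinatorial list of the nonsplit extensions between indecomposables, partitioned into ``interval-union'' sequences (case (a), with middle term indecomposable) and diamond sequences (case (b)), and $\mathcal{E}_\diamond$ by definition keeps only the split ones and the diamond ones. One then checks that pushing/pulling a diamond sequence $0\to X_{K\cap L}\to X_K\oplus X_L \to X_{K\cup L}\to 0$ (or its ``other orientation'') along a map between indecomposables either kills it or yields again a split-or-diamond sequence; this is a finite combinatorial verification using part \ref{morphtypeA} of \cref{thm:indecandmorphtypeA}, which controls all homomorphisms between indecomposables by the ``above/below'' condition on intervals.

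For step (ii), axiom \ref{ES2}: suppose $f_1\colon A\to B$ and $f_2\colon B\to C$ are $\mathcal{E}_\diamond$-monomorphisms; one must show the composite $f_2 f_1\colon A\to C$ is an $\mathcal{E}_\diamond$-monomorphism, equivalently that the cokernel sequence $0\to A \to C \to C/A \to 0$ lies in $\mathcal{E}_\diamond$. The standard tool is the $3\times 3$ (nine) lemma / obscure axiom: from the two given $\mathcal{E}$-admissible sequences one builds a commutative diagram with rows and columns short exact, and one reads off that the sequence $0\to B/A \to C/A \to C/B \to 0$ is a pushout of $0\to B \to C \to C/B\to 0$, hence in $\mathcal{E}_\diamond$ by \ref{ES3}, and then that $0\to A\to C\to C/A\to 0$ sits in an extension of $\mathcal{E}_\diamond$-sequences; concluding requires knowing $\mathcal{E}_\diamond$ is closed under such extensions of admissible sequences, which is again formal for subbifunctors of $\Ext^1$ in a hereditary category (the relevant long exact sequence argument). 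I would carry this out at the level of the abelian category directly rather than reducing to indecomposables, since the subbifunctor formalism makes the diagram chase uniform.

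The main obstacle I expect is step (i) — verifying that the indecomposable-level prescription really does glue to an additive subbifunctor, i.e. that ``split or diamond'' is a functorial condition. This is where \cite{BHRS24} presumably does the real combinatorial work, and it is genuinely a matter of case analysis: one must show that under the decomposition of $\Ext^1$ over summands, and under functoriality along arbitrary morphisms (which decompose into components between indecomposables by Krull--Schmidt), the subset of ``diamond plus split'' classes is preserved. Concretely, the delicate point is that a morphism applied to a diamond sequence could a priori produce an ``interval-union'' (case (a)) sequence, which is \emph{not} in $\mathcal{E}_\diamond$; one has to rule this out using the precise interval combinatorics of \cref{thm:exttypeA} and \cref{thm:indecandmorphtypeA}. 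Once subbifunctoriality is established, the remaining axioms \ref{ES1}, \ref{ES3}, \ref{ES4} are immediate and \ref{ES2} follows from the nine-lemma argument sketched above.
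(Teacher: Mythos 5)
First, a structural point: the paper does not prove \cref{prop:EdiamondExactStructureTypeA} at all --- it is imported with a citation to \cite{BHRS24} --- so there is no internal proof to compare your attempt against. Judged on its own terms, your outline is the natural one: realize $\mathcal{E}_\diamond$ as an additive subbifunctor of $\Ext^1$, obtain \ref{ES1}, \ref{ES3} and \ref{ES4} formally from additivity and bifunctoriality, and isolate (i) well-definedness of the subbifunctor and (ii) axiom \ref{ES2} as the two points of substance. Your handling of (i) is right: since each $\Ext^1$ between indecomposables is at most one-dimensional by \cref{thm:indecandmorphtypeA}, the groups $\mathcal{E}_\diamond(N,M)$ are genuine subgroups, and the only thing to verify is that pushing out or pulling back a diamond class along a morphism between indecomposables never produces a nonzero class of interval-union type (case $(a)$ of \cref{thm:exttypeA}); this is a finite interval-combinatorial check using \cref{thm:indecandmorphtypeA}~\ref{morphtypeA}, and you correctly flag it as the place where the real work happens.

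The genuine gap is in step (ii). You claim that, once the $3\times 3$ diagram is assembled, the remaining deduction --- that $0 \to A \to C \to C/A \to 0$ is admissible because $\xi_1 := [0 \to A \to B \to B/A \to 0]$ and $0 \to B/A \to C/A \to C/B \to 0$ are --- ``is again formal for subbifunctors of $\Ext^1$ in a hereditary category.'' It is not. If it were, every additive subbifunctor of $\Ext^1$ over a hereditary category would satisfy \ref{ES2} and hence define an exact structure, and the notion of a \emph{closed} subfunctor (Auslander--Solberg, Dr\"axler--Reiten--Smal{\o}--Solberg) would be vacuous; closedness is precisely the non-formal residue of Quillen's axioms. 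Concretely, with $\eta = [0 \to A \to C \to C/A \to 0]$ and $i : B/A \to C/A$, $p : C/A \to C/B$, heredity gives the exact sequence $\Ext^1(C/B,A) \xrightarrow{p^*} \Ext^1(C/A,A) \xrightarrow{i^*} \Ext^1(B/A,A) \to 0$ with $i^*\eta = \xi_1 \in \mathcal{E}_\diamond(B/A,A)$; to conclude $\eta \in \mathcal{E}_\diamond(C/A,A)$ you would need both that $i^*$ restricted to $\mathcal{E}_\diamond(C/A,A)$ still surjects onto $\mathcal{E}_\diamond(B/A,A)$ and that $p^*$ carries all of $\Ext^1(C/B,A)$ into $\mathcal{E}_\diamond(C/A,A)$, and neither is given. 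Axiom \ref{ES2} therefore requires a dedicated argument specific to $\mathcal{E}_\diamond$ --- for instance exhibiting enough $\mathcal{E}_\diamond$-projectives or $\mathcal{E}_\diamond$-injectives (which forces closedness), invoking the classification of exact structures on a representation-finite Krull--Schmidt category by the Auslander--Reiten sequences they contain, or directly computing composites of $\mathcal{E}_\diamond$-inflations via the interval description. As written, this step is an unsupported assertion, and it is exactly where the hypothesis that the retained nonsplit sequences are the diamonds, rather than an arbitrary subfunctor, must be used.
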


Before defining the \emph{Gen-Sub operators}, we recall and restate some useful results on diagrams where the exact structure $\mathcal{E}$ intervenes, based on the work of T. Bühler \cite{B10}.

\begin{lemma}[$3 \times 3$-lemma] \label{lem:3x3}
Let $(\mathscr{A}, \mathcal{E})$ be an exact category. Consider the following commutative diagram.
\[\begin{tikzcd}
         & 0 & 0 & 0 &  \\
	& A & B  & C &   \\
      & A' & B' & C' &  \\
       & A'' & B'' & C'' &  \\
       & 0 & 0 & 0 & 
	\arrow[from=2-2, to=2-3]
	\arrow[from=2-3, to=2-4]
	\arrow["f",from=3-2, to=3-3]
	\arrow["g",from=3-3, to=3-4]
	\arrow[from=4-2, to=4-3]
	\arrow[from=4-3, to=4-4]
        \arrow[from=4-2, to=5-2]
	\arrow[two heads, from=3-2, to=4-2]
	\arrow[tail,from=2-2, to=3-2]
	\arrow[from=1-2, to=2-2]
        \arrow[from=4-3, to=5-3]
	\arrow[two heads, from=3-3, to=4-3]
	\arrow[tail,from=2-3, to=3-3]
	\arrow[from=1-3, to=2-3]
        \arrow[from=4-4, to=5-4]
	\arrow[two heads, from=3-4, to=4-4]
	\arrow[two heads,from=2-4, to=3-4]
	\arrow[from=1-4, to=2-4]
\end{tikzcd} \]
Assume that:
\begin{enumerate}[label=$\bullet$, itemsep=1mm]
    \item all the short sequences given by the columns are $\mathcal{E}$-exact; and,
    \item one of the following assertions holds:
    \begin{enumerate}[label=$\bullet$,itemsep=1mm]
        \item two out of the three rows, including the middle one, give short $\mathcal{E}$-exact sequences; or,
        \item the short sequences given by the top and the bottom rows are $\mathcal{E}$-exact, and $gf = 0$.
    \end{enumerate}
\end{enumerate}
Then the short sequence given by the remaining row is $\mathcal{E}$-exact.
\end{lemma}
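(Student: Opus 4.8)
The plan is to reduce everything to the classical $3\times 3$-lemma in an exact category as proved by Bühler \cite{B10}, and to use the closure axioms \ref{ES1}--\ref{ES4} to upgrade the weaker hypotheses of our statement to the hypotheses of that classical version. Recall that in Bühler's formulation one typically assumes that all three columns are $\mathcal{E}$-exact and two of the three rows — including the middle one — are $\mathcal{E}$-exact, and concludes that the third row is $\mathcal{E}$-exact. So the only genuine work is to treat the alternative hypothesis ``the top and bottom rows are $\mathcal{E}$-exact and $gf=0$'', and to handle the two sub-cases of ``two of the three rows including the middle one'' uniformly.

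First I would dispose of the case where the middle row and one outer row are assumed $\mathcal{E}$-exact: this is literally Bühler's $3\times 3$-lemma, so nothing is needed beyond citing it (and checking orientation — by duality it suffices to do the case where the top row is the missing one, i.e. the middle and bottom rows are $\mathcal{E}$-exact, and the dual statement handles the bottom). Then I would turn to the remaining case: the top row $0\to A\to B\to C\to 0$ and the bottom row $0\to A''\to B''\to C''\to 0$ are $\mathcal{E}$-exact, all columns are $\mathcal{E}$-exact, and $gf=0$, where $f\colon A'\to B'$ and $g\colon B'\to C'$ are the middle horizontal maps. The goal is to produce the $\mathcal{E}$-exactness of $0\to A'\xrightarrow{f} B'\xrightarrow{g} C'\to 0$. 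The condition $gf=0$ together with exactness of the outer rows and columns forces $f$ to be a kernel of $g$ and $g$ to be a cokernel of $f$ at the level of the underlying abelian category (a standard diagram chase, or rather the standard argument via the snake/$3\times 3$-lemma in the abelian category $\mathscr{A}$ itself), so the middle row is at least a short exact sequence in $\mathscr{A}$; what remains is to show it lies in $\mathcal{E}$. For that, I would show $f$ is an $\mathcal{E}$-monomorphism by exhibiting it as a pullback or composite of $\mathcal{E}$-monomorphisms: the composite $A'\xrightarrow{f} B'\twoheadrightarrow B''$ equals the composite $A'\twoheadrightarrow A''\xrightarrow{} B''$ (where $A'\to A''$ is the $\mathcal{E}$-epi from the left column and $A''\to B''$ is the $\mathcal{E}$-mono from the bottom row), and then use \ref{ES3}/\ref{ES4} to realize $f$ through the appropriate pullback square so that \ref{ES2} applies. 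Dually, $g$ is an $\mathcal{E}$-epimorphism via the square relating $A\to B\to B'$ and the top row; by the interdependence of $\mathcal{E}$-monos, $\mathcal{E}$-epis and $\mathcal{E}$-exact sequences (noted just before \cref{def:exact}), this gives that the middle row is $\mathcal{E}$-exact.

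The main obstacle I expect is the bookkeeping in that last case: one must carefully identify which square in the $3\times 3$ diagram is a pullback and which is a pushout (these are automatic in an abelian category once the relevant rows/columns are exact, but one should state it cleanly), and then chain \ref{ES2}, \ref{ES3}, \ref{ES4} in the right order without circularity — in particular making sure that the factorizations of $f$ and $g$ one uses are through maps already known to be $\mathcal{E}$-monos or $\mathcal{E}$-epis from the hypotheses, not from the conclusion one is trying to prove. A clean way to organize this, which I would adopt, is: (i) form the pullback $P$ of $B'\twoheadrightarrow B''\hookleftarrow A''$ (wait — rather the pullback along $g''\colon B''\to C''$ is not what we want); concretely, take the $\mathcal{E}$-epi $B'\twoheadrightarrow B''$ and pull back the bottom $\mathcal{E}$-mono $A''\hookrightarrow B''$ along it, getting an $\mathcal{E}$-mono into $B'$ with cokernel $C''$; then check this $\mathcal{E}$-mono coincides with $f$ up to the identifications coming from exactness of the left column, using that $A'$ is the kernel of $B'\to B''$ followed by $\to C''$. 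Once $f$ is known to be an $\mathcal{E}$-mono, its cokernel $g$ is automatically an $\mathcal{E}$-epi and the row is $\mathcal{E}$-exact, completing the argument. Throughout, the Krull--Schmidt and hereditary hypotheses on $\mathscr{A}$ play no role; only the exact-category axioms are used.
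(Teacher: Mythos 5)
The paper does not actually prove \cref{lem:3x3}: it is restated from B\"uhler \cite{B10} (Corollary~3.6 there), and B\"uhler's statement already covers the case where the two outer rows are $\mathcal{E}$-exact and the middle row is only a complex, so the extra work you undertake for that case is not needed if one is content to cite. The problem is that the direct argument you do give for the $gf=0$ case contains a genuine error. You pull back the admissible mono $A''\rightarrowtail B''$ along the admissible epi $B'\twoheadrightarrow B''$ and claim the resulting admissible mono into $B'$ ``coincides with $f$'', justified by the assertion that $A'$ is the kernel of the composite $B'\to B''\to C''$. That assertion is false whenever $C\neq 0$: writing $B=\ker(B'\to B'')$, the kernel of $B'\to B''\to C''$ is the preimage $P$ of $A''$ in $B'$, which equals $f(A')+B$, and $P/f(A')\cong B/A\cong C$. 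Equivalently, the subobject you construct has cokernel $C''$ in $B'$, whereas $f$ has cokernel $C'$, and these differ by $C$. So your argument never establishes that $f$ is an $\mathcal{E}$-monomorphism, which is the whole content of this case.

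The standard repair keeps your object $P=\ker(B'\to B''\to C'')$ (an $\mathcal{E}$-mono into $B'$, being the kernel of a composite of $\mathcal{E}$-epis, by \ref{ES2}) but inserts the missing intermediate step: form the pushout $D$ of the two $\mathcal{E}$-monomorphisms $A\rightarrowtail A'$ (left column) and $A\rightarrowtail B$ (top row). By \ref{ES3}, $A'\rightarrowtail D$ is an $\mathcal{E}$-monomorphism with cokernel $C$ (push out the top row along $A\rightarrowtail A'$), and $B\rightarrowtail D$ is one with cokernel $A''$. A diagram chase in the abelian category $\mathscr{A}$ identifies the canonical map $D\to B'$ with the inclusion of $P$, so $D\to B'$ is an $\mathcal{E}$-monomorphism; then $f$ factors as the composite $A'\rightarrowtail D\rightarrowtail B'$ of two $\mathcal{E}$-monomorphisms and is an $\mathcal{E}$-monomorphism by \ref{ES2}. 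Combined with the exactness of the middle row in $\mathscr{A}$ (which you correctly extract from the abelian nine lemma), this finishes the case. The remainder of your proposal --- citing B\"uhler for the ``two rows including the middle'' configurations and noting that the hereditary and Krull--Schmidt hypotheses play no role --- is fine.
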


\begin{remark} \label{rem:dual3x3}
    We can exchange the roles of the rows and the columns to obtain a dual result applied to the columns of such a diagram.
\end{remark}

\begin{lemma}[Obscure axiom] \label{lem:obscure}
Let $(\mathscr{A}, \mathcal{E})$ be an exact category. Let $X,Y \in \mathscr{A}$ and $i : X \longrightarrow Y$ admitting a cokernel. If there exist $Z \in \mathscr{A}$, and $j : Y \longrightarrow Z$ such that $j \circ i$ is an $\mathcal{E}$-monomorphism, then $i$ is an $\mathcal{E}$-monomorphism. Dually, it holds for $\mathcal{E}$-epimorphisms.
\end{lemma}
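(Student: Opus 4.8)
The plan is to establish the statement for $\mathcal{E}$-monomorphisms; the claim for $\mathcal{E}$-epimorphisms then follows by running the same argument in the opposite exact category $(\mathscr{A}^{\op},\mathcal{E}^{\op})$, or equivalently by mirroring it with pullbacks in place of pushouts and invoking \cref{rem:dual3x3}. Since $ji$ is in particular a monomorphism, so is $i$; let $q\colon Y\twoheadrightarrow\bar D$ be the cokernel of $i$, so that $0\to X\xrightarrow{i}Y\xrightarrow{q}\bar D\to 0$ is exact in $\mathscr{A}$. It suffices to show this sequence belongs to $\mathcal{E}$, and the idea is to realise it as the ``missing row'' of a $3\times 3$ diagram built from a single pushout, then appeal to \cref{lem:3x3}.

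First I would form the pushout $P$ of the $\mathcal{E}$-monomorphism $ji\colon X\rightarrowtail Z$ along $i\colon X\to Y$, with legs $k\colon Y\to P$ and $\ell\colon Z\to P$ satisfying $ki=\ell(ji)$. Closure of $\mathcal{E}$-monomorphisms under pushout makes $k$ an $\mathcal{E}$-monomorphism, and a standard abelian-category computation identifies $\Coker(k)$ with $\hat C:=\Coker(ji)$ compatibly with the cokernel map $c\colon Z\twoheadrightarrow\hat C$; write $c'\colon P\twoheadrightarrow\hat C$ for the resulting $\mathcal{E}$-epimorphism, so that $c'\ell=c$ and $c'k=0$. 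The hypothesis enters through the universal property of the pushout applied to the cocone $(j\colon Y\to Z,\ 1_Z\colon Z\to Z)$ — legitimate precisely because $1_Z\circ(ji)=j\circ i$ — which yields $m\colon P\to Z$ with $m\ell=1_Z$ and $mk=j$. Hence $\ell$ is a split monomorphism; let $\pi\colon P\twoheadrightarrow C''$ be its cokernel, a split (hence $\mathcal{E}$-) epimorphism with $\pi\ell=0$.

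Next I would identify the composite $\pi k\colon Y\to C''$ with $q$ up to isomorphism. Since $P$ is a pushout, $\IIm(k)+\IIm(\ell)=P$, whence $\pi k$ is an epimorphism; and since $c'k=0$ while $c'\ell=c$ with $\ker c=\IIm(ji)$, one gets $\IIm(k)\cap\IIm(\ell)=\IIm(ki)$, which together with $\pi\ell=0$ and $ki=\ell(ji)$ gives $\ker(\pi k)=\IIm(i)$. Therefore $\pi k=\gamma q$ for a unique isomorphism $\gamma\colon\bar D\xrightarrow{\sim}C''$. I then assemble the commutative $3\times 3$ diagram whose columns are the $\mathcal{E}$-exact sequences $X\xrightarrow{ji}Z\xrightarrow{c}\hat C$, $Y\xrightarrow{k}P\xrightarrow{c'}\hat C$ and $\bar D\xrightarrow{\gamma}C''\to 0$, whose middle row is the split sequence $Z\xrightarrow{\ell}P\xrightarrow{\pi}C''$, whose bottom row is $\hat C\xrightarrow{=}\hat C\to 0$, and whose top row is $0\to X\xrightarrow{i}Y\xrightarrow{q}\bar D\to 0$. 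All three columns, as well as the middle and bottom rows, lie in $\mathcal{E}$, so \cref{lem:3x3} forces the top row into $\mathcal{E}$; that is, $i$ is an $\mathcal{E}$-monomorphism.

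The main obstacle is not a single deep step but the bookkeeping concentrated in the last two paragraphs: one must extract from the pushout the precise data ($m\ell=1_Z$, $mk=j$, $c'\ell=c$, $c'k=0$), carry out the two image computations that pin down $q\cong\pi k$, and verify that each of the four squares of the $3\times 3$ diagram commutes so that \cref{lem:3x3} genuinely applies. Once the diagram is correctly in place, the conclusion is immediate, and the dual statement for $\mathcal{E}$-epimorphisms requires no further work.
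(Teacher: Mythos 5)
Your argument is correct, but there is nothing in the paper to compare it against: the authors state the obscure axiom as a recalled result and, in the remark that follows, attribute its proof to Yoneda and Keller via Bühler's survey rather than proving it themselves. Your proof is a legitimate self-contained derivation from the axioms \ref{ES1}--\ref{ES4}: the pushout of $ji$ along $i$ gives the $\mathcal{E}$-monomorphism $k$ with $\Coker(k)\cong\Coker(ji)$, the cocone $(j,1_Z)$ splits $\ell$, the two image computations ($\IIm(k)+\IIm(\ell)=P$ and $\IIm(k)\cap\IIm(\ell)=\IIm(ki)$) correctly identify $\pi k$ with $\Coker(i)$, and the resulting $3\times 3$ diagram satisfies the hypotheses of \cref{lem:3x3} (all columns $\mathcal{E}$-exact, middle and bottom rows split), so the top row lands in $\mathcal{E}$. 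This is close in spirit to Bühler's standard proof (pushout along $i$, then a retraction from the universal property), the main difference being that you exploit the ambient abelian category to argue with images and elements, which is available here by \cref{conv:hereditary} and its surrounding setup but would not survive in a general exact category; Bühler's version avoids this by working with admissible morphisms only. The one point worth making explicit if you write this up is that the pushout square of a monomorphism in an abelian category is bicartesian, which is what guarantees both $\Coker(k)\cong\Coker(ji)$ and the intersection formula you use.
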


\begin{remark} \label{rem:obscure}
    This lemma was initially stated as an axiom from D. Quillen's definition of an exact category \cite{Q73}. N. Yoneda proved in his thesis \cite{Y61} that this axiom is a consequence of the other axioms. Later on, B. Keller \cite{K90} rediscovered this redundancy. The name ``obscure axiom" was given by R. W. Thomason \cite{TT07}.
\end{remark}

\subsection{Gen-Sub operators}
\label{ss:GenSuboperators}

Fix $(\mathscr{A},\mathcal{E})$ an exact category. This section introduces a closure operator on subcategories of $\mathscr{A}$. More precisely, given a subcategory $\mathscr{C}$, we will construct the smallest full subcategory $\mathscr{D} \supseteq \mathscr{C}$ such that:
\begin{enumerate}[label=$\bullet$,itemsep=1mm]
    \item $\mathscr{D}$ is closed under cokernels of $\mathcal{E}$-monomorphisms; and,
    \item  $\mathscr{D}$ is closed under kernels of $\mathcal{E}$-epimorphisms.
\end{enumerate}

The \new{$\Gen_\mathcal{E}$-operator} on subcategories of $\mathscr{A}$ is defined as follows: given a subcategory $\mathscr{C} \subseteq \mathscr{A}$, we define $\Gen_\mathcal{E}(\mathscr{C})$ as the full subcategory, closed under sums and summands, of $\mathscr{A}$ containing the cokernel of any $\mathcal{E}$-monomorphism $f : X \longrightarrow Y$ such that $Y \in \mathscr{C}$. The \new{$\Sub_\mathcal{E}$-operator} on subcategories of $\mathscr{A}$ is defined analogously: given a subcategory $\mathscr{C} \subseteq \mathscr{A}$, we define $\Sub_\mathcal{E}(\mathscr{C})$ as the full subcategory, closed by sums and summands, of $\mathscr{A}$ containing the kernel of any $\mathcal{E}$-epimorphism $g : Y \longrightarrow X$  such that $Y \in \mathscr{C}$. Given an object $M \in \mathscr{C}$, by abusing of notation, we set $\Gen_\mathcal{E}(M) = \Gen_\mathcal{E}(\add(M))$ and $\Sub_\mathcal{E}(M) = \Sub_\mathcal{E}(\add(M))$.

Note that $\Gen_{\mathcal{E}_{\max}}(\mathscr{C}) = \Gen(\mathscr{C})$ and $\Sub_{\mathcal{E}_{\max}}(\mathscr{C}) = \Sub(\mathscr{C})$. As $\mathcal{E}$ is an exact structure on $\mathscr{A}$, then $\mathscr{C}$ is a subcategory of both $\Gen_\mathcal{E}(\mathscr{C})$ and $\Sub_\mathcal{E}(\mathscr{C})$.

Given a subcategory $\mathscr{C} \subseteq \mathscr{A}$, we define a sequence of subcategories $(\GS_{\mathcal{E}}^i(\mathscr{C}))_{i\in \mathbb{N}}$ as it follows:
\begin{enumerate}[label=$\bullet$, itemsep=1mm]
    \item we set $\GS_{\mathcal{E}}^0(\mathscr{C}) = \mathscr{C}$; and,
    \item for all $i \geqslant 1$, we define $\GS^{i}_{\mathcal{E}}(\mathscr{C})$ to be the subcategory of $\mathscr{A}$ additively generated by objects in both $\Gen_{\mathcal{E}}(\GS^{i-1}_{\mathcal{E}}(\mathscr{C}))$ and $\Sub_{\mathcal{E}}(\GS^{i-1}_{\mathcal{E}}(\mathscr{C}))$.
\end{enumerate}
By the previous construction, $(\GS_{\mathcal{E}}^i(\mathscr{C}))_{i\in \mathbb{N}}$ is an increasing sequence of additive subcategories of $\mathscr{A}$. Therefore, as $\mathscr{A}$ is abelian, this sequence of subcategories admits a colimit.

\begin{definition} \label{def:GSop}
    The \new{$\GS_\mathcal{E}$-operator} on subcategories of $\mathscr{A}$ is defined as follows: given a subcategory $\mathscr{C} \subseteq \mathscr{A}$, we define $\GS_{\mathcal{E}}(\mathscr{C})$ as the colimit of the sequence of subcategories $(\GS_\mathcal{E}^i(\mathscr{C}))_{i\in \mathbb{N}}$. As before, we set $\GS_\mathcal{E}(M) = \GS_\mathcal{E}(\add(M))$ by abuse of notations.
\end{definition}

Here are some apparent results coming directly from the definition.

\begin{lemma} \label{lem:proponGS}
Let $\mathscr{C} \subseteq \mathscr{A}$. The following assertions hold:
\begin{enumerate}[label=$(\alph*)$,itemsep=1mm]
    \item \label{GSa} The category $\GS_\mathcal{E}(\mathscr{C})$ is the (full additive) subcategory of $\mathscr{A}$ whose objects are the ones appearing in $\GS^i(\mathscr{C})$ for some $i \in \mathbb{N}$.
    \item \label{GSb} The category $\GS_\mathcal{E}(\mathscr{C})$ is the smallest subcategory $\mathscr{A}$ containing $\mathscr{C}$, closed under taking $\mathcal{E}$-subobjects and  $\mathcal{E}$-quotients.
\end{enumerate}
\end{lemma}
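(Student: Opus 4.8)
The plan is to read assertion~(a) off the description of $\GS_{\mathcal{E}}(\mathscr{C})$ as a colimit, and then to derive assertion~(b) from~(a) by a short induction, using only the interdependence between $\mathcal{E}$-monomorphisms and $\mathcal{E}$-epimorphisms recalled in \cref{ss:genonexactstructure}: the kernel of an $\mathcal{E}$-epimorphism is an $\mathcal{E}$-monomorphism, the cokernel of an $\mathcal{E}$-monomorphism is an $\mathcal{E}$-epimorphism, and an $\mathcal{E}$-exact sequence is determined (up to isomorphism) by either of its maps. Here, as in the statement, an \emph{$\mathcal{E}$-subobject} of $B$ means the source of an $\mathcal{E}$-monomorphism into $B$ and an \emph{$\mathcal{E}$-quotient} of $B$ the target of an $\mathcal{E}$-epimorphism out of $B$; note that the resulting argument will use neither the hereditary nor the Krull--Schmidt hypothesis on $\mathscr{A}$.

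For assertion~(a): the sequence $(\GS^{i}_{\mathcal{E}}(\mathscr{C}))_{i \in \mathbb{N}}$ is an increasing chain of full additive subcategories, as noted just before \cref{def:GSop}, so its colimit is obtained by taking the union $\mathscr{D}_{\infty}$ of object classes, once we check that $\mathscr{D}_{\infty}$ is again a full additive subcategory. Fullness is automatic; for additivity, a finite direct sum of objects of $\mathscr{D}_{\infty}$ lies in $\GS^{i}_{\mathcal{E}}(\mathscr{C})$ for $i$ the largest of the finitely many indices involved (using that the chain is increasing and each $\GS^{i}_{\mathcal{E}}(\mathscr{C})$ is closed under sums), and a summand of an object of $\GS^{i}_{\mathcal{E}}(\mathscr{C})$ already lies there. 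Thus $\mathscr{D}_{\infty}$ is the least subcategory containing every $\GS^{i}_{\mathcal{E}}(\mathscr{C})$, that is, their colimit, which is $\GS_{\mathcal{E}}(\mathscr{C})$ by \cref{def:GSop}. I expect no difficulty here.

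For assertion~(b), two things must be checked. First, $\GS_{\mathcal{E}}(\mathscr{C})$ contains $\mathscr{C}$ and is closed under $\mathcal{E}$-subobjects and $\mathcal{E}$-quotients: the containment is clear since $\mathscr{C} = \GS^{0}_{\mathcal{E}}(\mathscr{C})$; and if $B$ is an object of $\GS_{\mathcal{E}}(\mathscr{C})$, then by~(a) it lies in some $\GS^{i}_{\mathcal{E}}(\mathscr{C})$, so every $\mathcal{E}$-quotient of $B$ is (via the identifications above) a cokernel of an $\mathcal{E}$-monomorphism with target in $\GS^{i}_{\mathcal{E}}(\mathscr{C})$, hence lies in $\Gen_{\mathcal{E}}(\GS^{i}_{\mathcal{E}}(\mathscr{C})) \subseteq \GS^{i+1}_{\mathcal{E}}(\mathscr{C}) \subseteq \GS_{\mathcal{E}}(\mathscr{C})$, and dually every $\mathcal{E}$-subobject of $B$ lies in $\Sub_{\mathcal{E}}(\GS^{i}_{\mathcal{E}}(\mathscr{C})) \subseteq \GS^{i+1}_{\mathcal{E}}(\mathscr{C}) \subseteq \GS_{\mathcal{E}}(\mathscr{C})$. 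Second, minimality: given a full additive subcategory $\mathscr{D}$ with $\mathscr{C} \subseteq \mathscr{D}$ that is closed under $\mathcal{E}$-subobjects and $\mathcal{E}$-quotients, I would prove $\GS^{i}_{\mathcal{E}}(\mathscr{C}) \subseteq \mathscr{D}$ by induction on $i$: the case $i = 0$ is the hypothesis, and for the step, $\GS^{i-1}_{\mathcal{E}}(\mathscr{C}) \subseteq \mathscr{D}$ forces every generator of $\Gen_{\mathcal{E}}(\GS^{i-1}_{\mathcal{E}}(\mathscr{C}))$ --- a cokernel of an $\mathcal{E}$-monomorphism with target in $\mathscr{D}$, hence an $\mathcal{E}$-quotient of an object of $\mathscr{D}$ --- and, dually, every generator of $\Sub_{\mathcal{E}}(\GS^{i-1}_{\mathcal{E}}(\mathscr{C}))$ into $\mathscr{D}$, so the additive subcategory $\GS^{i}_{\mathcal{E}}(\mathscr{C})$ they generate is contained in $\mathscr{D}$ (using that $\mathscr{D}$ is closed under sums and summands). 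Taking the union and invoking~(a) yields $\GS_{\mathcal{E}}(\mathscr{C}) = \bigcup_{i} \GS^{i}_{\mathcal{E}}(\mathscr{C}) \subseteq \mathscr{D}$, as required. The main point of care --- and the closest thing to an obstacle, though it is a mild one --- is keeping the index bookkeeping straight and being explicit that $\Gen_{\mathcal{E}}$ and $\Sub_{\mathcal{E}}$ enlarge a subcategory only by $\mathcal{E}$-quotients, $\mathcal{E}$-subobjects, and their direct sums and summands.
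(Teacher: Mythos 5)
Your proof is correct and follows exactly the route the paper intends: the paper gives no written proof, stating only that these assertions come "directly from the definition," and your argument is precisely the careful unwinding of that definition (colimit of an increasing chain for (a), closure plus induction on $i$ for minimality in (b)). Nothing to add.
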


\begin{remark}\label{rem:weakerSerre} A subcategory $\mathscr{D} \subseteq \mathscr{A}$ satisfying \ref{GSb} must be closed under extensions to be a so-called \emph{$\mathcal{E}$-Serre} category. We recall that $\mathscr{D}$ is \new{$\mathcal{E}$-Serre} whenever for any short $\mathcal{E}$-exact sequence
\[\begin{tikzcd}
	\xi: & 0 & E & F & G &  0,
	\arrow[from=1-2, to=1-3]
	\arrow[tail, from=1-3, to=1-4]
	\arrow[two heads,from=1-4, to=1-5]
	\arrow[from=1-5, to=1-6]
\end{tikzcd}\] we have $F \in \mathscr{D}$ if and only if $E,G \in \mathscr{D}$.
\end{remark}

\begin{example} \label{ex:A7typeGS} In \cref{fig:GSCalc1}, we calculate $\GS_{\mathcal{E}_\diamond}(\mathscr{C})$ for a subcategory $\mathscr{C}$ of $\rep(Q)$ and a fixed quiver $Q$ of $A_7$ type. 
\begin{figure}[!ht]
    \leavevmode\\[-0.5ex]  
    \centering
    \begin{tikzpicture}
				\begin{scope}[line width=.5mm,->, >= angle 60]
                    \node at (-.6,0){$Q=$};
					\node (a) at (0,0){$1$};
					\node (b) at (1,0){$2$};
					\node (c) at (2,0){$3$};
					\node (d) at (3,0){$4$};
                    \node (e) at (4,0){$5$};
                    \node (f) at (5,0){$6$};
                    \node (g) at (6,0){$7$};
					\draw (a) -- (b);
					\draw (c) -- (b);
					\draw (c) -- (d);
                    \draw (d) -- (e);
                    \draw (f) -- (e);
                    \draw (f) -- (g);
				\end{scope}
                \begin{scope}[yshift=-2cm, xshift=-.5cm, line width=.3mm, ->, >= angle 60]
					\node (2) at (0,0){\scalebox{.7}{$\llrr{2}$}};
					\node (12) at (1,1){\scalebox{.7}{$\llrr{1,2}$}};
					\node[circle,line width=0.2mm,fill=darkgreen!20,draw] (2345) at (1,-1){\scalebox{.7}{$\llrr{2,5}$}};
					\node (45) at (0,-2){\scalebox{.7}{$\llrr{4,5}$}};
                    \node[circle,line width=0.2mm,double,fill=lava!20,draw] (5) at (-1,-3){\scalebox{.7}{$\llrr{5}$}};
                    \node[circle,line width=0.2mm,double,fill=lava!20,draw] (567) at (0,-4){\scalebox{.7}{$\llrr{5,7}$}};
                    \node (7) at (-1,-5){\scalebox{.7}{
							$\llrr{7}$}};
                    \node (56)[circle,line width=0.2mm,double,fill=lava!20,draw] at (1,-5){\scalebox{.7}{$\llrr{5,6}$}
                                };
                     \node (4) at (3,-5){\scalebox{.7}{$\llrr{4}$}};
                     \node[circle,line width=0.2mm,double,fill=lava!20,draw] (23) at (5,-5){\scalebox{.7}{$\llrr{2,3}$}};
                    \node (1) at (7,-5){\scalebox{.7}{$\llrr{1}$}};
                    \node (456) at (2,-4){\scalebox{.7}{$\llrr{4,6}$}};
                    \node (234) at (4,-4){\scalebox{.7}{$\llrr{2,4}$}};
                     \node[circle,line width=0.2mm,double,fill=lava!20,draw] (123) at (6,-4){\scalebox{.7}{$\llrr{1,3}$}};
                     \node (4567) at (1,-3){\scalebox{.7}{$\llrr{4;7}$}};
                     \node[circle,line width=0.2mm,dashed,fill=bleudefrance!20,draw] (23456) at (3,-3){\scalebox{.7}{$\llrr{2,6}$}};
                     \node (1234) at (5,-3){\scalebox{.7}{$\llrr{1,4}$}};
                     \node[circle,line width=0.2mm,fill=darkgreen!20,draw] (3) at (7,-3){\scalebox{.7}{$\llrr{3}$}};
					\node[circle,line width=0.2mm, double,fill=lava!20,draw] (12345) at (2,0){\scalebox{.7}{$\llrr{1,5}$}};
					\node[circle,line width=0.2mm, dashed,fill=bleudefrance!20,draw] (234567) at (2,-2){\scalebox{.7}{$\llrr{2,7}$}};
                    \node[circle,line width=0.2mm,fill=darkgreen!20,draw] (123456) at (4,-2){\scalebox{.7}{$\llrr{1,6}$}};
					\node (34) at (6,-2){\scalebox{.7}{$\llrr{3,4}$}};
					\node[circle,line width=0.2mm,fill=darkgreen!20,draw] (1234567) at (3,-1){\scalebox{.7}{$\llrr{1,7}$}};
                    \node[circle,line width=0.2mm,fill=darkgreen!20,draw] (3456) at (5,-1){\scalebox{.7}{$\llrr{3,6}$}};
                    \node[circle,line width=0.2mm,fill=darkgreen!20,draw] (34567) at (4,0){\scalebox{.7}{$\llrr{3,7}$}};
                    \node (6) at (6,0){\scalebox{.7}{$\llrr{6}$}};
                    \node[circle,line width=0.2mm,double,fill=lava!20,draw] (345) at (3,1){\scalebox{.7}{$\llrr{3,5}$}};
                    \node (67) at (5,1){\scalebox{.7}{$\llrr{6,7}$}};
					\draw (2) -- (12);
					\draw (2) -- (2345);
					\draw (45) -- (2345);
					\draw (12) -- (12345);
					\draw (2345) -- (12345);
					\draw (2345) -- (234567);
                    \draw (12345) -- (345);

                    \draw (5) -- (45);
                    \draw (7) -- (567);
                    \draw (567) -- (4567);
                    \draw (4567) -- (234567);
                    \draw (234567) -- (1234567);
                    \draw (1234567) -- (34567);
                    \draw (34567) -- (67);
                    \draw (56) -- (456);
                    \draw (456) -- (23456);
                    \draw (23456) -- (123456);
                    \draw (123456) -- (3456);
                    \draw (3456) -- (6);
                    \draw (4) -- (234);
                    \draw (234) -- (1234);
                    \draw (1234) -- (34);
                    \draw (23) -- (123);
                    \draw (123) -- (3);
					
					\draw (5) -- (567);
                    \draw (567) -- (56);
                    \draw (45) -- (4567);
                    \draw (4567) -- (456);
                    \draw (456) -- (4);
                    \draw (234567) -- (23456);
                    \draw (23456) -- (234);
                    \draw (234) -- (23);
                    \draw (12345) -- (1234567);
                    \draw (1234567) -- (123456);
                    \draw (123456) -- (1234);
                    \draw (1234) -- (123);
                    \draw (123) -- (1);
                    \draw (345) -- (34567);
                    \draw (34567) -- (3456);
                    \draw (3456) -- (34);
                    \draw (34) -- (3);
                    \draw (67) -- (6);	
				\end{scope}
    \end{tikzpicture}
    \caption[fragile]{Calculation of $\GS_{\mathcal{E}_\diamond}(\mathscr{C})$ where $\mathscr{C} = \add \left( \protect\begin{tikzpicture}[baseline={(0,-.1)}]
        \node[circle, line width=0.2mm, double, fill=lava!20,minimum size=1em,draw] at (0,0){$ $};
    \end{tikzpicture}\right)$. We get $\GS_{\mathcal{E}_\diamond}^1(\mathscr{C}) = \add \left(\protect\begin{tikzpicture}[baseline={(0,-.1)}]
        \node[circle, line width=0.2mm, double, fill=lava!20,minimum size=1em,draw] at (0,0){$ $};
    \end{tikzpicture} + \protect\begin{tikzpicture}[baseline={(0,-.1)}]
        \node[circle, line width=0.2mm, fill=darkgreen!20,minimum size=1em,draw] at (0,0){$ $};
    \end{tikzpicture}\right)$ and $\GS_{\mathcal{E}_\diamond}^2(\mathscr{C}) = \add \left(\protect\begin{tikzpicture}[baseline={(0,-.1)}]
        \node[circle, line width=0.2mm, double, fill=lava!20,minimum size=1em,draw] at (0,0){};
    \end{tikzpicture} + \protect\begin{tikzpicture}[baseline={(0,-.1)}]
        \node[circle, line width=0.2mm, fill=darkgreen!20,minimum size=1em,draw] at (0,0){$ $};
    \end{tikzpicture} + \protect\begin{tikzpicture}[baseline={(0,-.1)}]
        \node[circle, line width=0.2mm,dashed, fill=bleudefrance!20,minimum size=1em,draw] at (0,0){$ $};
    \end{tikzpicture}\right)$. Here, we obtain that $\GS_{\mathcal{E}_\diamond}(\mathscr{C}) = \GS_{\mathcal{E}_\diamond}^2(\mathscr{C})$.}
    \label{fig:GSCalc1}
\end{figure}
\end{example}

\subsection{Subcategories adapted to an exact structure}
\label{ss:Adaptpreserv} Let $(\mathscr{A},\mathcal{E})$ be an exact category. We recall that $\mathscr{A}$ is \emph{hereditary} if, for all $X,Y \in \mathscr{A}$, and for all $i \in \mathbb{N}_{\geqslant 2}$, we have $\Ext_\mathscr{A}^i(X,Y) = 0$. 

\begin{conv} \label{conv:hereditary}
    From now on, $\mathscr{A}$ is a hereditary abelian category.
\end{conv}

We define the following useful notion on subcategories of $\mathscr{A}$.

\begin{definition} \label{def:Eadpted}
    A subcategory $\mathscr{D} \subseteq \mathscr{A}$ is \new{$\mathcal{E}$-adapted} if for any pair $(X,Y)$ of objects in $\mathscr{D}$, we have $\Ext^1(X,Y) \subseteq \mathcal{E}$
\end{definition}
\begin{example} $ $
\begin{enumerate}[label=$\bullet$, itemsep=1mm]
    \item Any subcategory of $\mathscr{A}$ is $\mathcal{E}_{\max}$-adapted.
    \item A subcategory $\mathscr{C} \subseteq \mathscr{A}$ is $\mathcal{E}_{\min}$-adapted only if for any $(X,Y) \in \mathscr{C}^2$, we have $\Ext_\mathscr{A}^1 (X,Y) = 0$.
\end{enumerate}
\end{example}

In this section, we will show that for any $\mathcal{E}$-adapted subcategory $\mathscr{C} \subseteq \mathscr{A}$, the category $\GS_\mathcal{E}(\mathscr{C})$ is also $\mathcal{E}$-adapted.

\begin{lemma}
\label{lem:GenlocalEadapt}
Consider a subcategory $\mathscr{C} \subseteq \mathscr{A}$. Let $D,E \in \mathscr{C}$ such that $\Ext^1(D,E) \subseteq \mathcal{E}$. Then, for any $X \in \Gen(E)$, the following assertions hold:
\begin{enumerate}[label=$(\alph*)$,itemsep=1mm]
   \item Any $\xi \in \Ext^1(D,X)$ is obtained by a pushout of some $\eta \in \Ext^1(D,E^{\oplus r})$ along an epimorphism $ \begin{tikzcd}
	 g : E^{\oplus r} & X;
	\arrow[two heads,from=1-1, to=1-2]
   \end{tikzcd}$ and,
   \item $\Ext^1(D,X) \subseteq \mathcal{E}$.
\end{enumerate}
Furthermore, if $X \in \Gen_\mathcal{E}(E)$, then $g$ can be chosen as an $\mathcal{E}$-epimorphism.   
\end{lemma}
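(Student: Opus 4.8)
The plan is to unwind the definition of $\Gen$, turn the statement into a surjectivity fact that comes for free from heredity, and then read off (b) from closure of $\mathcal{E}$ under pushouts.

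\emph{Reduction.} First I would observe that $X\in\Gen(E)$ means there are an object $E'\in\add(E)$ and an epimorphism $g\colon E'\twoheadrightarrow X$: by definition $X$ is a summand of $\Coker(f)$ for some monomorphism $f\colon Z\to E'$ with $E'\in\add(E)$, and a summand of a quotient of $E'$ is again a quotient of $E'$. Since $\mathcal{E}(D,-)$ is an additive subbifunctor of $\Ext^1_\mathscr{A}(D,-)$ and $\Ext^1(D,E)=\mathcal{E}(D,E)$, additivity gives $\Ext^1(D,E')=\mathcal{E}(D,E')$, i.e. $\Ext^1(D,E')\subseteq\mathcal{E}$. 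So, abusing notation and writing $E$ for $E'$, we may assume from the outset that there is an epimorphism $g\colon E\twoheadrightarrow X$. In the stronger case $X\in\Gen_\mathcal{E}(E)$ the same unwinding, now with $f$ an $\mathcal{E}$-monomorphism, exhibits $g$ as the composite of the $\mathcal{E}$-epimorphism $E'\twoheadrightarrow\Coker(f)$ with the split projection $\Coker(f)\twoheadrightarrow X$; the latter is an $\mathcal{E}$-epimorphism by \ref{ES1}, hence $g$ is an $\mathcal{E}$-epimorphism by \ref{ES2}. This is the $g$ that will give the final clause.

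\emph{Proof of (a).} Put $K=\Ker g$ and apply $\Hom_\mathscr{A}(D,-)$ to $0\to K\to E\xrightarrow{g}X\to 0$. Since $\mathscr{A}$ is hereditary, $\Ext^2(D,K)=0$, so the induced long exact sequence shows that $g_*\colon\Ext^1(D,E)\to\Ext^1(D,X)$ is surjective. Thus every $\xi\in\Ext^1(D,X)$ is $g_*(\eta)$ for some $\eta\in\Ext^1(D,E)$, and $g_*(\eta)$ is precisely the extension obtained from $\eta$ by pushout along $g$. This is (a).

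\emph{Proof of (b) and the last clause.} With $\xi=g_*(\eta)$ as above, the hypothesis $\Ext^1(D,E)\subseteq\mathcal{E}$ gives $\eta\in\mathcal{E}$, and closure of $\mathcal{E}$ under pushouts \ref{ES3} then gives $\xi\in\mathcal{E}$; since $\xi$ was arbitrary, $\Ext^1(D,X)\subseteq\mathcal{E}$. When $X\in\Gen_\mathcal{E}(E)$ the epimorphism $g$ constructed in the reduction step is an $\mathcal{E}$-epimorphism, which is the ``furthermore'' assertion. The only point requiring genuine care — not really an obstacle — is that reduction step: making precise the passage from $\Gen(E)$ (resp. $\Gen_\mathcal{E}(E)$) to an honest (resp. $\mathcal{E}$-) epimorphism out of an object of $\add(E)$, together with the check $\Ext^1(D,E')\subseteq\mathcal{E}$ and the fact that $\mathcal{E}$-epimorphisms are stable under composition with split epimorphisms. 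Everything past that is the standard heredity-plus-pushout argument.
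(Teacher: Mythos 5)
Your proof is correct and follows essentially the same route as the paper's: apply $\Hom(D,-)$ to the defining short exact sequence, use heredity to get surjectivity of the pushout map $\Ext^1(D,g)$, and invoke closure of $\mathcal{E}$ under pushouts for (b). Your preliminary reduction (replacing $E$ by an object $E'\in\add(E)$, checking $\Ext^1(D,E')\subseteq\mathcal{E}$ via additivity of the subbifunctor, and verifying via \ref{ES1} and \ref{ES2} that $g$ is an $\mathcal{E}$-epimorphism when $X\in\Gen_\mathcal{E}(E)$) just makes explicit details the paper's proof leaves implicit.
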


\begin{proof}
    Assume that $X \in \Gen(E)$.
Then there is a short exact sequence \[\begin{tikzcd}
	0 & K & E^{\oplus r}  & X&  0
	\arrow[from=1-1, to=1-2]
	\arrow[ tail,from=1-2, to=1-3]
	\arrow["g",two heads,from=1-3, to=1-4]
	\arrow[from=1-4, to=1-5]
\end{tikzcd} \] By applying the $\Hom(D,-)$ functor on this short exact sequence, we obtain the following long exact sequence
\[\begin{tikzcd}
	0 & \Hom(D,K) & \Hom(D,E^{\oplus r})  & \Hom(D,X) & \\
     & \Ext^1(D,K) & \Ext^1(D,E^{\oplus r}) & \Ext^1(D,X) & 0,
	\arrow[from=1-1, to=1-2]
	\arrow[tail,from=1-2, to=1-3]
	\arrow[from=1-3, to=1-4]
	\arrow[from=1-4, to=2-2]
    \arrow[from=2-2, to=2-3]
    \arrow[two heads,"g_{\bullet}",from=2-3, to=2-4]
    \arrow[from=2-4, to=2-5]
\end{tikzcd} \] where $g_\bullet = \Ext^1(D,g)$. As $\mathscr{A}$ is hereditary, we have that $\Ext^2(D,K)=0$. So $g_\bullet$ is surjective, and therefore, for any short exact sequence $\xi \in \Ext^1(D,X)$, there exists $\eta  \in \Ext^1(D,E^{\oplus r})$ such that $g_\bullet(\eta) = \xi$. Since $g_\bullet$ is the pushout along $g$, we proved the assertion $(a)$. 

By hypothesis, $\Ext^1(D,E) \subseteq \mathcal{E}$. So we have $\Ext^1(D,X) \subseteq \mathcal{E}$ as $\mathcal{E}$ is an exact structure. If moreover $X \in \Gen_\mathcal{E}(E)$, then $g$ can obviously be chosen as an $\mathcal{E}$-epimorphism. 
\end{proof}

\begin{lemma}
\label{lemma:gendiagram}
Let $\mathscr{C} \subseteq \mathscr{A}$ be a subcategory. Let $E,F \in \mathscr{C}$ with $\Ext^1(E,F) \subseteq \mathcal{E}$. Then, for any object $X$ in $\Gen_\mathcal{E}(E)$, we have $\Ext^1(X,F) \subseteq \mathcal{E}$.  
\end{lemma}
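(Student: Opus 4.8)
The plan is to reduce to the case where $X$ admits a single $\mathcal{E}$-epimorphism from an object of $\add(E)$, and then to promote an arbitrary extension of $F$ by $X$ to an $\mathcal{E}$-admissible one by pulling it back along that epimorphism and applying the obscure axiom (\cref{lem:obscure}).

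\textbf{Reduction step.} By \cref{def:GSop} and the definition of $\Gen_\mathcal{E}$, the hypothesis $X \in \Gen_\mathcal{E}(E) = \Gen_\mathcal{E}(\add(E))$ says that $X$ is a direct summand of a finite direct sum $\bigoplus_i C_i$ in which each $C_i$ is the cokernel of an $\mathcal{E}$-monomorphism whose target lies in $\add(E)$. Since finite direct sums of $\mathcal{E}$-admissible sequences are again $\mathcal{E}$-admissible (a consequence of \ref{ES2} and \ref{ES3}), $\bigoplus_i C_i$ is itself the cokernel of an $\mathcal{E}$-monomorphism into an object $E' \in \add(E)$. Composing the resulting $\mathcal{E}$-epimorphism $E' \to \bigoplus_i C_i$ with the split projection $\bigoplus_i C_i \to X$ — which is $\mathcal{E}$-admissible by \ref{ES1} — and using \ref{ES2}, we obtain an $\mathcal{E}$-epimorphism $g \colon E' \to X$ with $E' \in \add(E)$. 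Finally, $E'$ is a direct summand of $E^m$ for some $m$, and under the natural isomorphism $\Ext^1(E^m, F) \cong \Ext^1(E, F)^{\oplus m}$ the $\mathcal{E}$-admissible classes are exactly those in $\mathcal{E}(E, F)^{\oplus m}$; hence the hypothesis $\Ext^1(E, F) \subseteq \mathcal{E}$ gives $\Ext^1(E^m, F) \subseteq \mathcal{E}$, and closure of $\mathcal{E}$ under pullbacks (\ref{ES4}) then yields $\Ext^1(E', F) \subseteq \mathcal{E}$.

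\textbf{Main step.} Let $\xi \colon 0 \to F \to G \to X \to 0$ represent an arbitrary class in $\Ext^1(X, F)$, with deflation $p \colon G \to X$. Pulling $\xi$ back along $g$ gives a cartesian square whose top row is $g^{*}\xi \colon 0 \to F \to G' \to E' \to 0$; since $g^{*}\xi = \Ext^1(g, F)(\xi)$ lies in $\Ext^1(E', F) \subseteq \mathcal{E}$, the sequence $g^{*}\xi$ is $\mathcal{E}$-admissible, so its deflation $p' \colon G' \to E'$ is an $\mathcal{E}$-epimorphism, and the cartesian square provides a morphism $\phi \colon G' \to G$ with $p \circ \phi = g \circ p'$. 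As $g$ and $p'$ are $\mathcal{E}$-epimorphisms, so is their composite $g \circ p' = p \circ \phi$ by \ref{ES2}. Applying the dual form of the obscure axiom (\cref{lem:obscure}) to the factorization of the $\mathcal{E}$-epimorphism $p \circ \phi$ through $p$, we deduce that $p$ is an $\mathcal{E}$-epimorphism, that is, $\xi \in \mathcal{E}$. Since $\xi$ was arbitrary, $\Ext^1(X, F) \subseteq \mathcal{E}$.

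The only mildly delicate part is the bookkeeping in the reduction step, namely repackaging ``$X$ is a summand of a finite sum of $\mathcal{E}$-quotients of objects of $\add(E)$'' as a single $\mathcal{E}$-epimorphism onto $X$ with source in $\add(E)$; this rests only on \ref{ES1}, \ref{ES2}, \ref{ES3} and the pullback stability of $\mathcal{E}$. In contrast with \cref{lem:GenlocalEadapt}, the present argument does not use that $\mathscr{A}$ is hereditary. The dual statement — with $\Sub_\mathcal{E}$ replacing $\Gen_\mathcal{E}$ and the two arguments of $\Ext^1$ exchanged — follows by the dual argument with $\mathcal{E}$-monomorphisms and the other form of the obscure axiom.
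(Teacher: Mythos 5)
Your proof is correct, but it takes a genuinely different route from the paper's. The paper pulls $\xi$ back along the $\mathcal{E}$-epimorphism coming from $X \in \Gen_\mathcal{E}(E)$, completes this to a full $3\times3$ diagram, and applies \cref{lem:3x3} twice: once to identify the kernel term and once to conclude that the top row $\xi$ is $\mathcal{E}$-exact. You instead finish with the dual form of the obscure axiom (\cref{lem:obscure}): since $p\circ\phi = g\circ p'$ is a composite of $\mathcal{E}$-epimorphisms and hence an $\mathcal{E}$-epimorphism by \ref{ES2}, the map $p$ itself is an $\mathcal{E}$-epimorphism, so $\xi\in\mathcal{E}$. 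This is shorter and avoids the diagram chase entirely. You are also more careful than the paper in the reduction step: the paper simply asserts an $\mathcal{E}$-exact sequence $0\to K\to E\to X\to 0$ with middle term $E$ itself, whereas you justify, from the definition of $\Gen_\mathcal{E}(E)=\Gen_\mathcal{E}(\add(E))$ as a closure under sums and summands, that one can produce a single $\mathcal{E}$-epimorphism $E'\twoheadrightarrow X$ with $E'\in\add(E)$, and that $\Ext^1(E',F)\subseteq\mathcal{E}$ follows from additivity of the subfunctor $\mathcal{E}(-,-)$ together with \ref{ES3} and \ref{ES4}. Neither argument uses the hereditary hypothesis, as you correctly observe. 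What the paper's approach buys is uniformity --- the same $3\times3$ template is reused nearly verbatim in \cref{lemma:subdiagram}, \cref{lemmaExtclosedinduction} and \cref{prop:Extclosed}; what yours buys is brevity and a cleaner isolation of exactly which axioms of an exact structure are needed at each step.
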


\begin{proof}
 Suppose we have the following short exact sequence
 \[\begin{tikzcd}
	\xi: & 0 & F & E' & X&  0.
	\arrow[from=1-2, to=1-3]
	\arrow[tail, from=1-3, to=1-4]
	\arrow[two heads,from=1-4, to=1-5]
	\arrow[from=1-5, to=1-6]
\end{tikzcd}\] Since $X \in \Gen_\mathcal{E}(E)$,
there is a short exact sequence \[\begin{tikzcd}
	0 & K & E^{\oplus r}  & X&  0 & \in \mathcal{E}.
	\arrow[from=1-1, to=1-2]
	\arrow[tail, from=1-2, to=1-3]
	\arrow[two heads,"\varphi",from=1-3, to=1-4]
	\arrow[from=1-4, to=1-5]
\end{tikzcd} \] We can construct the following diagram.
\[\begin{tikzcd}
         & & & 0 &  \\
	0 & F & E'  & X &  0 \\
      & & & E^{\oplus r} & \\
      & & & K & \\
       & & & 0 & 
	\arrow[from=2-1, to=2-2]
	\arrow[tail, from=2-2, to=2-3]
	\arrow[two heads,from=2-3, to=2-4]
	\arrow[from=2-4, to=2-5]
        \arrow[from=5-4, to=4-4]
	\arrow[tail, from=4-4, to=3-4]
	\arrow[two heads,"\varphi",from=3-4, to=2-4]
	\arrow[from=2-4, to=1-4]
\end{tikzcd} \]
We complete the diagram by taking the pullback along $\varphi$ and by using the kernel lifting property. 
\[\begin{tikzcd}
         & 0 & 0 & 0 &  \\
	0 & F & E'  & X &  0 \\
     0 & F & PB & E^{\oplus r} & 0 \\
      0 & 0 & K' & K & 0 \\
       & 0 & 0 & 0 & 
	\arrow[from=2-1, to=2-2]
	\arrow[tail, from=2-2, to=2-3]
	\arrow[two heads,from=2-3, to=2-4]
	\arrow[from=2-4, to=2-5]
        \arrow[from=3-1, to=3-2]
	\arrow[tail, from=3-2, to=3-3]
	\arrow[two heads,from=3-3, to=3-4]
	\arrow[from=3-4, to=3-5]
        \arrow[from=4-1, to=4-2]
	\arrow[tail, from=4-2, to=4-3]
	\arrow[two heads,from=4-3, to=4-4]
	\arrow[from=4-4, to=4-5]
        \arrow[from=5-2, to=4-2]
	\arrow[tail, from=4-2, to=3-2]
	\arrow[equals,from=3-2, to=2-2]
	\arrow[from=2-2, to=1-2]
        \arrow[from=5-3, to=4-3]
	\arrow[tail, from=4-3, to=3-3]
	\arrow[two heads,from=3-3, to=2-3]
	\arrow[from=2-3, to=1-3]
        \arrow[from=5-4, to=4-4]
	\arrow[tail, from=4-4, to=3-4]
	\arrow[two heads,"\varphi",from=3-4, to=2-4]
	\arrow[from=2-4, to=1-4]
\end{tikzcd} \]

As the short sequences given by the three columns and those given by the first two rows of the above diagram are exact, by \cref{lem:3x3}, the short sequence given by the last row is exact too. So $K' \cong K$. 

Moreover, the short exact sequences given by the third column and the middle row are in $\mathcal{E}$ by hypothesis. As $\mathcal{E}$ is an exact structure, the short exact sequence given by the middle column is in $\mathcal{E}$. As the short exact sequence given by the first column and the one provided by the last row are split, they are in $\mathcal{E}$.

By \cref{lem:3x3}, we have that $\xi \in {\mathcal{E}}$. 
\end{proof}

\begin{lemma}
\label{lem:SublocalEadapt}
Consider a subcategory $\mathscr{C} \subseteq \mathscr{A}$, and $E,F \in \mathscr{C}$ such that $\Ext^1(E,F) \subseteq \mathcal{E}$. Then, for any $X \in \Sub(E)$, the following assertions hold:
\begin{enumerate}[label=$(\alph*)$,itemsep=1mm]
   \item Any $\xi \in \Ext^1(X,F)$ is obtained by a pullback of some $\eta \in \Ext^1(E^{\oplus r},F)$ along a monomorphism $ \begin{tikzcd}
	 f: X & E^{\oplus r};
	\arrow[tail,from=1-1, to=1-2]
   \end{tikzcd}$ and,
   \item $\Ext^1(X,F) \subseteq \mathcal{E}$.
\end{enumerate}
Furthermore, if $X \in \Sub_\mathcal{E}(E)$, then $f$ can be chosen as an $\mathcal{E}$-monomorphism.   
\end{lemma}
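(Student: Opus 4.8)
The plan is to get this statement as the exact dual of \cref{lem:GenlocalEadapt}, everything transposing under $\Gen \leftrightarrow \Sub$, $\Hom(D,-) \leftrightarrow \Hom(-,F)$, pushout $\leftrightarrow$ pullback, and epimorphism $\leftrightarrow$ monomorphism; I expect no real obstacle. First I would use $X \in \Sub(E)$ to fix a short exact sequence
\[ 0 \longrightarrow X \xrightarrow{\ f\ } E \longrightarrow C \longrightarrow 0, \]
which may be chosen $\mathcal{E}$-exact when $X \in \Sub_\mathcal{E}(E)$ (so that $f$ is then an $\mathcal{E}$-monomorphism by definition) --- just as $X \in \Gen(E)$ furnished a surjection out of $E$ in the proof of \cref{lem:GenlocalEadapt}.

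Next I would apply the contravariant functor $\Hom(-,F)$ to this sequence, obtaining a long exact sequence whose relevant tail is
\[ \cdots \longrightarrow \Ext^1(C,F) \longrightarrow \Ext^1(E,F) \xrightarrow{\ f^\bullet\ } \Ext^1(X,F) \longrightarrow \Ext^2(C,F), \]
where $f^\bullet = \Ext^1(f,F)$ is, by construction, the operation of pulling an extension of $E$ by $F$ back along $f$.

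Since $\mathscr{A}$ is hereditary (\cref{conv:hereditary}), we have $\Ext^2(C,F) = 0$, so $f^\bullet$ is surjective; hence every $\xi \in \Ext^1(X,F)$ equals $f^\bullet(\eta)$, i.e.\ is the pullback along $f$ of some $\eta \in \Ext^1(E,F)$. This is assertion $(a)$. Assertion $(b)$ then follows at once: by hypothesis $\eta \in \Ext^1(E,F) \subseteq \mathcal{E}$, and $\mathcal{E}$, being an exact structure, is closed under pullbacks along arbitrary morphisms, so $\xi = f^\bullet(\eta) \in \mathcal{E}$; since $\xi \in \Ext^1(X,F)$ was arbitrary, $\Ext^1(X,F) \subseteq \mathcal{E}$. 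The ``furthermore'' clause was already recorded in the first step.

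The only points requiring care are standard: the identification of the connecting map $\Ext^1(f,F)$ with ``pullback along $f$'', and using the same mild reading of $\Sub(E)$ --- that $X$ sits as a subobject inside an object of $\add(E)$ --- that is in force throughout \cref{ss:Adaptpreserv}, exactly as in \cref{lem:GenlocalEadapt}. A purely diagrammatic proof, dualizing the argument used for \cref{lemma:gendiagram} (built on \cref{lem:3x3} and \cref{rem:dual3x3}), is equally possible, but the long exact sequence route above is the shortest, so that is the one I would write up.
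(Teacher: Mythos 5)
Your proposal is correct and follows essentially the same route as the paper: take the defining short exact sequence $0 \to X \xrightarrow{f} E \to C \to 0$, apply $\Hom(-,F)$, use heredity to kill $\Ext^2(C,F)$ so that $f^\bullet = \Ext^1(f,F)$ is surjective, identify $f^\bullet$ with pullback along $f$, and conclude via closure of $\mathcal{E}$ under pullbacks. The handling of the ``furthermore'' clause is also the same as in the paper.
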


\begin{proof}
This is the dual result of Lemma \ref{lem:GenlocalEadapt}.
\end{proof}

\begin{lemma}
\label{lemma:subdiagram}
Let $\mathscr{C} \subseteq \mathscr{A}$ be a subcategory. Let $D,E$ be in $\mathscr{C}$ such that $\Ext^1(D,E) \subseteq \mathcal{E}$. Then, for any object $X$ in $\Sub_\mathcal{E}(E)$, we have $\Ext^1(D,X) \subseteq \mathcal{E}$.
\end{lemma}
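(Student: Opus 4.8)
The plan is to carry out the argument dual to the proof of \cref{lemma:gendiagram}, replacing the pullback used there by a pushout. Fix an arbitrary short exact sequence $\xi\colon 0\to X\to E'\to D\to 0$ representing a class in $\Ext^1(D,X)$; the aim is to show $\xi\in\mathcal{E}$. Since $X\in\Sub_\mathcal{E}(E)$, there is an $\mathcal{E}$-exact sequence $\sigma\colon 0\to X\xrightarrow{\,f\,}\widetilde{E}\to C\to 0$ with $\widetilde{E}\in\add(E)$, and because $\mathcal{E}$ is an additive subbifunctor of $\Ext^1$ with $\Ext^1(D,E)\subseteq\mathcal{E}$, we also get $\Ext^1(D,\widetilde{E})\subseteq\mathcal{E}$.

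Next I would form the pushout $P$ of $f\colon X\to\widetilde{E}$ along the monomorphism $X\to E'$ appearing in $\xi$. As $\mathscr{A}$ is abelian and both $X\to E'$ and $f$ are monomorphisms, the pushout square embeds into a commutative $3\times 3$ diagram all of whose rows and columns are short exact sequences: the top row is $\xi$, the left column is $\sigma$, the middle row is $\zeta\colon 0\to\widetilde{E}\to P\to D\to 0$ (the pushout of $\xi$ along $f$), the middle column is $\tau\colon 0\to E'\to P\to C\to 0$ (the pushout of $\sigma$ along $X\to E'$), and the remaining row and column are the split sequences $0\to C\xrightarrow{\,\sim\,}C\to 0\to 0$ and $0\to D\xrightarrow{\,\sim\,}D\to 0\to 0$. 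Checking that this diagram is well-formed — that the induced maps on the relevant cokernels are isomorphisms and that every square commutes — is the one place that requires a little care; everything afterwards is formal.

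Once the diagram is in place, I would observe that all three of its columns are $\mathcal{E}$-exact: $\sigma$ lies in $\mathcal{E}$ by the choice of $\widetilde{E}$ coming from $X\in\Sub_\mathcal{E}(E)$; the right column is split, hence $\mathcal{E}$-exact; and $\tau$, being a pushout of $\sigma\in\mathcal{E}$, lies in $\mathcal{E}$ because $\mathcal{E}$, being an exact structure, is closed under pushouts. Moreover two of the three rows — the middle row $\zeta$, which lies in $\Ext^1(D,\widetilde{E})\subseteq\mathcal{E}$, and the split bottom row — are $\mathcal{E}$-exact. The $3\times 3$-lemma (\cref{lem:3x3}) then forces the remaining row, $\xi$, to be $\mathcal{E}$-exact. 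Since $\xi$ was an arbitrary element of $\Ext^1(D,X)$, this yields $\Ext^1(D,X)\subseteq\mathcal{E}$.

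The main obstacle is purely organisational: assembling the $3\times 3$ diagram correctly and confirming that it consists of genuine short exact sequences before invoking \cref{lem:3x3}. An alternative that avoids the diagram chase altogether is to note that \cref{lemma:subdiagram} is exactly \cref{lemma:gendiagram} read in the opposite category $(\mathscr{A}^{\mathrm{op}},\mathcal{E}^{\mathrm{op}})$: passing to $\mathscr{A}^{\mathrm{op}}$ interchanges $\Sub_\mathcal{E}$ with $\Gen_\mathcal{E}$, swaps the two arguments of $\Ext^1$, and preserves both heredity and the exact-structure axioms, so no genuinely new argument is needed.
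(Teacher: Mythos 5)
Your argument is correct and is essentially the paper's own proof: both form the pushout of the $\mathcal{E}$-monomorphism $X\rightarrowtail E$ (coming from $X\in\Sub_\mathcal{E}(E)$) along $X\rightarrowtail E'$, assemble the resulting $3\times 3$ diagram, note that all columns and the middle and bottom rows are $\mathcal{E}$-exact, and invoke \cref{lem:3x3} to conclude $\xi\in\mathcal{E}$. Your extra care with $\widetilde{E}\in\add(E)$ and the additivity of $\mathcal{E}(-,-)$, as well as the remark that the statement is \cref{lemma:gendiagram} in the opposite category, are both sound but do not change the substance of the argument.
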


\begin{proof}
This is the dual result of Lemma \ref{lemma:gendiagram}.
\end{proof}

\begin{prop}
\label{prop:Estableinduction}
Consider an exact category $(\mathscr{A}, \mathcal{E})$ and let $\mathscr{C} \subseteq \mathscr{A}$ be a subcategory. If $\mathscr{C}$ is $ {\mathcal{E}}$-adapted, then $\Ext^1\left(\mathscr{C}, \GS^1_{\mathcal{E}}(\mathscr{C})\right)$ and $\Ext^1 \left(\GS^1_{\mathcal{E}}(\mathscr{C}), \mathscr{C} \right)$ are contained in ${\mathcal{E}}$.
\end{prop}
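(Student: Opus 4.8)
The plan is to reduce the statement to the four technical lemmas just proved. Recall that $\GS^1_{\mathcal{E}}(\mathscr{C})$ is the additive subcategory generated by objects lying in both $\Gen_{\mathcal{E}}(\mathscr{C})$ and $\Sub_{\mathcal{E}}(\mathscr{C})$; since $\Ext^1$ is additive in both variables, it suffices to check containment in $\mathcal{E}$ when the arguments are such generating objects together with objects of $\mathscr{C}$.

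\textbf{First claim: $\Ext^1(\mathscr{C}, \GS^1_{\mathcal{E}}(\mathscr{C})) \subseteq \mathcal{E}$.} Fix $D \in \mathscr{C}$ and $X \in \GS^1_{\mathcal{E}}(\mathscr{C})$; by additivity we may take $X$ to be a summand of an object lying in $\Gen_{\mathcal{E}}(\mathscr{C}) \cap \Sub_{\mathcal{E}}(\mathscr{C})$, and since $\mathcal{E}$ is closed under summands of short exact sequences (being a subbifunctor), we may assume $X \in \Gen_{\mathcal{E}}(\mathscr{C})$, so $X \in \Gen_{\mathcal{E}}(E)$ for some $E \in \mathscr{C}$ (again using additivity to pass from $\mathscr{C}$ to a single object, up to summands). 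Since $\mathscr{C}$ is $\mathcal{E}$-adapted we have $\Ext^1(D,E) \subseteq \mathcal{E}$, and then \cref{lem:GenlocalEadapt}$(b)$ gives $\Ext^1(D,X) \subseteq \mathcal{E}$ directly.

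\textbf{Second claim: $\Ext^1(\GS^1_{\mathcal{E}}(\mathscr{C}), \mathscr{C}) \subseteq \mathcal{E}$.} Fix $F \in \mathscr{C}$ and $X \in \GS^1_{\mathcal{E}}(\mathscr{C})$; as above, reducing via additivity and closure under summands, it is enough to treat two cases. If $X \in \Gen_{\mathcal{E}}(E)$ for some $E \in \mathscr{C}$, then $\Ext^1(E,F) \subseteq \mathcal{E}$ by $\mathcal{E}$-adaptation of $\mathscr{C}$, and \cref{lemma:gendiagram} yields $\Ext^1(X,F) \subseteq \mathcal{E}$. If instead $X \in \Sub_{\mathcal{E}}(E)$ for some $E \in \mathscr{C}$, then $\Ext^1(E,F) \subseteq \mathcal{E}$ again, and \cref{lem:SublocalEadapt}$(b)$ gives $\Ext^1(X,F) \subseteq \mathcal{E}$. (The lemma \cref{lemma:subdiagram} is the counterpart needed if one instead wants $\Ext^1(D,X) \subseteq \mathcal{E}$ for $X \in \Sub_{\mathcal{E}}(E)$, completing the symmetric bookkeeping for the first claim when $X$ is approached from the $\Sub_{\mathcal{E}}$ side, though that is subsumed once we observe generators lie in $\Gen_{\mathcal{E}}(\mathscr{C})$.) Combining both cases proves the containment.

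The only genuine subtlety — and the step I would be most careful about — is the reduction from a subcategory argument to a single-object argument and the handling of direct summands: one must argue that if $X$ is a summand of $X'$ with $X' \in \Gen_{\mathcal{E}}(E)$ then every extension of $D$ by $X$ (resp. of $X$ by $F$) is a summand, in the category of short exact sequences, of the corresponding extension involving $X'$, hence lies in $\mathcal{E}$ because $\mathcal{E}$ is a subbifunctor of $\Ext^1$ and therefore closed under retracts of sequences. Once this is granted, everything else is an immediate appeal to \cref{lem:GenlocalEadapt}, \cref{lemma:gendiagram}, and \cref{lem:SublocalEadapt}, with no new diagram chasing required.
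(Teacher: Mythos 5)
Your overall strategy is the paper's: reduce to single generating objects and then invoke the four preceding lemmas. The second claim is handled correctly and completely, and your remark about passing to summands (using that $\mathcal{E}(-,-)$ is an additive subbifunctor closed under pushout/pullback along the split injections and projections) is sound, if anything more careful than the paper, which passes over this silently.

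There is, however, one genuine gap, and it comes from a misreading of the definition of $\GS^1_{\mathcal{E}}(\mathscr{C})$. This category is additively generated by the objects of $\Gen_{\mathcal{E}}(\mathscr{C})$ \emph{together with} those of $\Sub_{\mathcal{E}}(\mathscr{C})$ (the paper's own case split ``$X$ is in $\Gen_{\mathcal{E}}(\mathscr{C})$ or in $\Sub_{\mathcal{E}}(\mathscr{C})$'' and the closure property in \cref{lem:proponGS} make this unambiguous), not by objects lying in the intersection $\Gen_{\mathcal{E}}(\mathscr{C}) \cap \Sub_{\mathcal{E}}(\mathscr{C})$. In general $\Sub_{\mathcal{E}}(\mathscr{C}) \not\subseteq \Gen_{\mathcal{E}}(\mathscr{C})$ --- an $\mathcal{E}$-subobject of an object of $\mathscr{C}$ need not be an $\mathcal{E}$-quotient of one (this is precisely why the $\GS$ iteration is nontrivial) --- so your assertion that the $\Sub_{\mathcal{E}}$ side of the first claim ``is subsumed once we observe generators lie in $\Gen_{\mathcal{E}}(\mathscr{C})$'' is false. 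Concretely, for $D \in \mathscr{C}$ and $X \in \Sub_{\mathcal{E}}(E)$ with $E \in \mathscr{C}$, the containment $\Ext^1(D,X) \subseteq \mathcal{E}$ does require \cref{lemma:subdiagram}; you name the right lemma in your parenthetical but then decline to use it. The repair is one sentence (apply \cref{lemma:subdiagram} in that case, exactly as the paper does), but as written the first claim is only proved for objects coming from the $\Gen_{\mathcal{E}}$ side.
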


\begin{proof}
Consider $X$ an object of $\GS^1_{\mathcal{E}}(\mathscr{C})$. Then $X$ is in $\Gen_{\mathcal{E}}(\mathscr{C})$ or in $\Sub_{\mathcal{E}}(\mathscr{C})$.

Assume that $X \in \Gen_{\mathcal{E}}(\mathscr{C})$. We can then suppose that $X \in \Gen_{\mathcal{E}}(E) \subseteq \Gen(E)$ for $E \in \mathscr{C}$. By hypothesis, we know for any $D,F\in \mathscr{C}$, $\Ext^1(D,E),\Ext^1(E,F) \subseteq \mathcal{E}$. By \cref{lem:GenlocalEadapt}, $\Ext^1(D,X) \subseteq \mathcal{E}$ for any $D \in \GS^{1}_{\mathcal{E}}(\mathscr{C})$. By \cref{lemma:gendiagram}, $\Ext^1(X,F) \subseteq \mathcal{E}$ for any $F \in \mathscr{C}$.

Assume that $X \in \Sub_{\mathcal{E}}(\mathscr{C})$. Similarly, we have that $X \in \Sub_{\mathcal{E}}(E) \subseteq \Sub(E)$ for some $E \in \mathscr{C}$. By hypothesis, we know for any $D,F \in \mathscr{C}$, $\Ext^1(D,E),\Ext^1(E,F) \subseteq \mathcal{E}$. By \cref{lem:SublocalEadapt}, $\Ext^1(X,F) \subseteq \mathcal{E}$ for any $F \in \mathscr{C}$. By \cref{lemma:subdiagram}, $\Ext^1(D,X) \subseteq \mathcal{E}$ for any $D \in \mathscr{C}$.

Therefore, we get to the conclusion that, for any $X\in \GS^1_{\mathcal{E}}(\mathscr{C})$ and $E' \in \mathscr{C}$, we have $\Ext^1(X,E'),\Ext^1(E',X)\subseteq \mathcal{E}$.
\end{proof}

We can now finally prove and state the fact that the $\GS_\mathcal{E}$-operator preserves $\mathcal{E}$-adaptability.

\begin{prop}
\label{prop:EAdapt}
Let $\mathscr{C} \subseteq \mathscr{A}$ be a subcategory. If $\mathscr{C}$ is $\mathcal{E}$-adapted, then $\GS_{\mathcal{E}}(\mathscr{C})$ is $\mathcal{E}$-adapted too.
\end{prop}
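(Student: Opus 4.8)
The plan is to bootstrap from \cref{prop:Estableinduction} by an induction on the index $i$ in the filtration $(\GS^i_\mathcal{E}(\mathscr{C}))_{i \in \mathbb{N}}$. By \cref{lem:proponGS}\ref{GSa}, every object of $\GS_\mathcal{E}(\mathscr{C})$ lies in some $\GS^i_\mathcal{E}(\mathscr{C})$, so it suffices to prove that each $\GS^i_\mathcal{E}(\mathscr{C})$ is $\mathcal{E}$-adapted, since any pair $(X,Y)$ of objects of $\GS_\mathcal{E}(\mathscr{C})$ already lies in a common $\GS^i_\mathcal{E}(\mathscr{C})$ for $i$ large enough (the filtration is increasing). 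The base case $i=0$ is the hypothesis on $\mathscr{C}$.

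For the inductive step, suppose $\GS^{i-1}_\mathcal{E}(\mathscr{C})$ is $\mathcal{E}$-adapted. The point is that $\GS^i_\mathcal{E}(\mathscr{C})$ is, by definition, additively generated by objects lying in $\Gen_\mathcal{E}(\GS^{i-1}_\mathcal{E}(\mathscr{C}))$ or in $\Sub_\mathcal{E}(\GS^{i-1}_\mathcal{E}(\mathscr{C}))$, so I would like to simply apply \cref{prop:Estableinduction} with $\mathscr{C}$ replaced by $\GS^{i-1}_\mathcal{E}(\mathscr{C})$: that proposition gives $\Ext^1(\GS^{i-1}_\mathcal{E}(\mathscr{C}), \GS^1_\mathcal{E}(\GS^{i-1}_\mathcal{E}(\mathscr{C}))) \subseteq \mathcal{E}$ and symmetrically. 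Since $\GS^i_\mathcal{E}(\mathscr{C}) = \GS^1_\mathcal{E}(\GS^{i-1}_\mathcal{E}(\mathscr{C}))$, this controls $\Ext^1$ between $\GS^{i-1}_\mathcal{E}(\mathscr{C})$ and $\GS^i_\mathcal{E}(\mathscr{C})$, but it does \emph{not} directly control $\Ext^1$ between two objects $X, Y$ of $\GS^i_\mathcal{E}(\mathscr{C})$ that both lie strictly outside $\GS^{i-1}_\mathcal{E}(\mathscr{C})$. This mismatch is the main obstacle.

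To overcome it, I would reprove a strengthened version of \cref{prop:Estableinduction}: namely that if $\mathscr{C}$ is $\mathcal{E}$-adapted then $\GS^1_\mathcal{E}(\mathscr{C})$ is itself $\mathcal{E}$-adapted. Tracking the proof of \cref{prop:Estableinduction}, one sees that the four lemmas (\cref{lem:GenlocalEadapt}, \cref{lemma:gendiagram}, \cref{lem:SublocalEadapt}, \cref{lemma:subdiagram}) are really statements about $\Ext^1(\Gen_\mathcal{E}(E), \Sub_\mathcal{E}(F))$-type situations for $E,F \in \mathscr{C}$, and they can be chained: given $X \in \Gen_\mathcal{E}(E)$ and $Y \in \Gen_\mathcal{E}(E')$ with $E, E' \in \mathscr{C}$, first use \cref{lemma:gendiagram} (applied to the pair $E', E$, valid since $\Ext^1(E',E) \subseteq \mathcal{E}$) to get $\Ext^1(Y, E) \subseteq \mathcal{E}$; then note this gives $\Ext^1(Y, \Gen_\mathcal{E}(E)) \subseteq \mathcal{E}$ via \cref{lem:GenlocalEadapt}, hence $\Ext^1(Y,X) \subseteq \mathcal{E}$. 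The four combinations (Gen/Gen, Gen/Sub, Sub/Gen, Sub/Sub) are handled by the analogous four pairings of the lemmas, using at each stage that $\mathscr{A}$ is hereditary so the relevant $\Ext^2$ vanishes and the pushout/pullback arguments go through. Once $\GS^1_\mathcal{E}(-)$ is known to preserve $\mathcal{E}$-adaptedness, the induction closes immediately: $\GS^i_\mathcal{E}(\mathscr{C}) = \GS^1_\mathcal{E}(\GS^{i-1}_\mathcal{E}(\mathscr{C}))$ is $\mathcal{E}$-adapted whenever $\GS^{i-1}_\mathcal{E}(\mathscr{C})$ is, and then $\GS_\mathcal{E}(\mathscr{C}) = \bigcup_i \GS^i_\mathcal{E}(\mathscr{C})$ is $\mathcal{E}$-adapted because any two of its objects sit together in one $\GS^i_\mathcal{E}(\mathscr{C})$.

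Alternatively, and perhaps more cleanly, I could avoid the strengthening by observing that the colimit $\GS_\mathcal{E}(\mathscr{C})$ is a \emph{fixed point}: $\GS^1_\mathcal{E}(\GS_\mathcal{E}(\mathscr{C})) = \GS_\mathcal{E}(\mathscr{C})$ by \cref{lem:proponGS}\ref{GSb}. So it would be enough to show $\GS_\mathcal{E}(\mathscr{C})$ is $\mathcal{E}$-adapted, for which I first establish, by the induction above, that $\Ext^1(\mathscr{C}, \GS_\mathcal{E}(\mathscr{C}))$ and $\Ext^1(\GS_\mathcal{E}(\mathscr{C}), \mathscr{C})$ lie in $\mathcal{E}$, and then feed $\GS_\mathcal{E}(\mathscr{C})$ — which equals its own $\GS^1_\mathcal{E}$ — into the strengthened lemma-chaining argument, now with $\mathscr{C}$ itself replaced by $\GS_\mathcal{E}(\mathscr{C})$. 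Either way, the genuine content is the chaining of the four local lemmas to handle pairs of objects both produced by $\Gen_\mathcal{E}$/$\Sub_\mathcal{E}$, and I expect that to be the only place requiring care.
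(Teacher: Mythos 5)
Your proof is correct and follows essentially the same route as the paper: an induction on the filtration index $i$, with the inductive step handled by chaining the four local lemmas (\cref{lem:GenlocalEadapt}, \cref{lemma:gendiagram}, \cref{lem:SublocalEadapt}, \cref{lemma:subdiagram}), the paper packaging the first link of each chain as an application of \cref{prop:Estableinduction} to $\GS^{k}_{\mathcal{E}}(\mathscr{C})$, which is $\mathcal{E}$-adapted by the induction hypothesis. The ``mismatch'' you identify is resolved there exactly as you propose: the local lemmas only require the hypothesis $\Ext^1(D,E)\subseteq\mathcal{E}$ on the relevant pair, so they may be applied with one argument lying outside $\GS^{k}_{\mathcal{E}}(\mathscr{C})$.
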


\begin{proof}
We prove by induction that $\GS^{i}_{\mathcal{E}}(\mathscr{C})$ is ${\mathcal{E}}$-adpated, for all $i \geq 0$. For $i=0$, the statement holds as $\mathcal{C}$ is $\mathcal{E}$-adapted.

Fix $k \geqslant 0$. Assume that  $\GS^{k}_{\mathcal{E}}(\mathscr{C})$ is ${\mathcal{E}}$-adapted. Suppose that $X,Y \in \GS^{k+1}_{\mathcal{E}}(\mathscr{C})$. Then $X \in \Gen_{\mathcal{E}}(\GS^{k}_{\mathcal{E}}(\mathscr{C}))$ or $X \in \Sub_{\mathcal{E}}(\GS^{k}_{\mathcal{E}}(\mathscr{C}))$.

If $X \in \Gen_{\mathcal{E}}(\GS^{k}_{\mathcal{E}}(\mathscr{C}))$. So $X \in \Gen_{\mathcal{E}}(E) \subseteq \Gen(E)$ for some $E \in \GS^{k}_{\mathcal{E}}(\mathscr{C})$. By induction hypothesis and \cref{prop:Estableinduction}, we get that $\Ext^1(Y,E),\Ext^1(E,Y) \subseteq \mathcal{E}$. Therefore, by \cref{lem:GenlocalEadapt}, $\Ext^1(Y,X) \subseteq \mathcal{E}$. Using \cref{lemma:gendiagram}, we also have $\Ext^1(X,Y) \subseteq \mathcal{E}$.

If $X \in \Sub_{\mathcal{E}}(\GS^{k}_{\mathcal{E}}(\mathscr{C}))$. Let $E \in \GS^{k}_{\mathcal{E}}(\mathscr{C})$ such that $X \in \Sub_{\mathcal{E}}(E) \subseteq \Sub(E)$. By induction hypothesis and \cref{prop:Estableinduction}, we get that both $\Ext^1(Y,E) \subseteq \mathcal{E}$ and $\Ext^1(E,Y) \subseteq \mathcal{E}$. Therefore, by \cref{lemma:subdiagram}, we also have that $\Ext^1(Y,X) \subseteq \mathcal{E}$.

In either case, we prove that $\Ext^1(Y,X) \subseteq \mathcal{E}$. So $\GS^{k+1}_{\mathcal{E}}(\mathscr{C})$ is $\mathcal{E}$-adapted. We get the desired result on $\GS^{i}_{\mathcal{E}}(\mathscr{C})$ for all $i \geq 0$. So, $\GS_{\mathcal{E}}(\mathscr{C})$ is  $\mathcal{E}$-adapted.
\end{proof}

\subsection{Preserving extension closure}
\label{ss:Extclosedpreserv}

In the following, we will show that $\GS_\mathcal{E}(\mathscr{C})$ is extension-closed whenever $\mathscr{C} \subseteq \mathscr{A}$ is an $\mathcal{E}$-adapted subcategory closed under extensions. To this end, we prove the following lemma.

\begin{lemma}
\label{lemmaExtclosedinduction}
Let $\mathscr{C} \subset \mathscr{A}$ be an $\mathcal{E}$-adapted subcategory closed under extensions. If there is a short exact sequence 
\[\begin{tikzcd}
	0 & X & E  & U &  0,
	\arrow[from=1-1, to=1-2]
	\arrow[tail, from=1-2, to=1-3]
	\arrow[two heads,from=1-3, to=1-4]
	\arrow[from=1-4, to=1-5]
\end{tikzcd} \] with $X \in \GS_{\mathcal{E}}(\mathscr{C})$ and $U \in \mathscr{C}$, then $E \in \GS_{\mathcal{E}}(\mathscr{C})$.
\end{lemma}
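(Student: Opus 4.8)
The plan is to argue by induction on the least $i \in \mathbb{N}$ for which $X \in \GS^{i}_{\mathcal{E}}(\mathscr{C})$ — such an $i$ exists by \cref{lem:proponGS}\,\ref{GSa} — proving at level $i$ the statement: every short exact sequence $0 \to X \to E \to U \to 0$ with $X \in \GS^{i}_{\mathcal{E}}(\mathscr{C})$ and $U \in \mathscr{C}$ has $E \in \GS_{\mathcal{E}}(\mathscr{C})$. For $i = 0$ we have $X \in \mathscr{C}$; since $\mathscr{C}$ is $\mathcal{E}$-adapted the given sequence is $\mathcal{E}$-exact, and since $\mathscr{C}$ is closed under extensions, $E \in \mathscr{C} \subseteq \GS_{\mathcal{E}}(\mathscr{C})$.

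For the inductive step, fix $i \geqslant 1$, assume the statement for all smaller values, and let $X \in \GS^{i}_{\mathcal{E}}(\mathscr{C})$. As $\Gen_{\mathcal{E}}(-)$ and $\Sub_{\mathcal{E}}(-)$ are closed under sums and summands, one has $\GS^{i}_{\mathcal{E}}(\mathscr{C}) = \Gen_{\mathcal{E}}(\GS^{i-1}_{\mathcal{E}}(\mathscr{C})) \cap \Sub_{\mathcal{E}}(\GS^{i-1}_{\mathcal{E}}(\mathscr{C}))$, so in particular $X \in \Sub_{\mathcal{E}}(\GS^{i-1}_{\mathcal{E}}(\mathscr{C}))$. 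Combining the fact that a finite direct sum of $\mathcal{E}$-exact sequences is $\mathcal{E}$-exact (so a finite sum of kernels of $\mathcal{E}$-epimorphisms with source in $\GS^{i-1}_{\mathcal{E}}(\mathscr{C})$ is again the kernel of one such $\mathcal{E}$-epimorphism) with \ref{ES1} and \ref{ES2} (a split monomorphism into such a kernel, followed by the inclusion of the kernel, is an $\mathcal{E}$-monomorphism), one obtains an $\mathcal{E}$-exact sequence $0 \to X \xrightarrow{\psi} G \xrightarrow{\pi} C \to 0$ with $G \in \GS^{i-1}_{\mathcal{E}}(\mathscr{C})$. I would then form the pushout $P$ of the monomorphism $X \hookrightarrow E$ (from the given sequence) along $\psi$. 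Since $X \hookrightarrow E$ has cokernel $U$, the pushout produces a short exact sequence $0 \to G \to P \to U \to 0$ with the \emph{same} $U$; since $\psi$ is an $\mathcal{E}$-monomorphism with cokernel $C$, the pushout axiom \ref{ES3} produces an $\mathcal{E}$-exact sequence $0 \to E \to P \to C \to 0$. The induction hypothesis applied to $0 \to G \to P \to U \to 0$ (note $G \in \GS^{i-1}_{\mathcal{E}}(\mathscr{C})$ and $U \in \mathscr{C}$) gives $P \in \GS_{\mathcal{E}}(\mathscr{C})$, and then $0 \to E \to P \to C \to 0$ exhibits $E$ as an $\mathcal{E}$-subobject of an object of $\GS_{\mathcal{E}}(\mathscr{C})$, whence $E \in \GS_{\mathcal{E}}(\mathscr{C})$ by \cref{lem:proponGS}\,\ref{GSb}. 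This closes the induction.

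The one step that needs care, rather than routine pushout bookkeeping, is the passage from ``$X \in \Sub_{\mathcal{E}}(\GS^{i-1}_{\mathcal{E}}(\mathscr{C}))$'' to the clean $\mathcal{E}$-exact sequence $0 \to X \to G \to C \to 0$ with $G \in \GS^{i-1}_{\mathcal{E}}(\mathscr{C})$: the definition of $\Sub_{\mathcal{E}}$ only makes $X$ a direct summand of a finite sum of such kernels, so one must amalgamate those kernels and then absorb the passage to a summand, both of which rest on the standard stability of $\mathcal{E}$-exact sequences under finite direct sums and under composition with split (mono/epi)morphisms (see \cite{B10}). Once that reduction is in place, the argument is a clean induction: the pushout simultaneously yields one ordinary exact sequence feeding the induction hypothesis and one $\mathcal{E}$-exact sequence feeding the closure of $\GS_{\mathcal{E}}(\mathscr{C})$ under $\mathcal{E}$-subobjects.
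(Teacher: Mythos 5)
Your base case and your treatment of the case $X \in \Sub_{\mathcal{E}}(\GS^{i-1}_{\mathcal{E}}(\mathscr{C}))$ are correct and coincide with half of the paper's own argument: the amalgamation of kernels into a single $\mathcal{E}$-monomorphism $X \rightarrowtail G$ with $G \in \GS^{i-1}_{\mathcal{E}}(\mathscr{C})$ is legitimate, and the pushout then yields the ordinary short exact sequence $0 \to G \to P \to U \to 0$ feeding the induction hypothesis together with the $\mathcal{E}$-exact sequence $0 \to E \to P \to C \to 0$ exhibiting $E$ as an $\mathcal{E}$-subobject of $P$.

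The gap is in the reduction to that case. You assert $\GS^{i}_{\mathcal{E}}(\mathscr{C}) = \Gen_{\mathcal{E}}(\GS^{i-1}_{\mathcal{E}}(\mathscr{C})) \cap \Sub_{\mathcal{E}}(\GS^{i-1}_{\mathcal{E}}(\mathscr{C}))$, but $\GS^{i}_{\mathcal{E}}(\mathscr{C})$ is the additive hull of the \emph{union} of these two subcategories; this is how it is used throughout \cref{sec:ExStr}, e.g.\ in the proof of \cref{prop:Estableinduction} (``$X$ is in $\Gen_{\mathcal{E}}(\mathscr{C})$ or in $\Sub_{\mathcal{E}}(\mathscr{C})$''). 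An indecomposable $X$ that enters $\GS^{i}_{\mathcal{E}}(\mathscr{C})$ as an $\mathcal{E}$-quotient of some $F \in \GS^{i-1}_{\mathcal{E}}(\mathscr{C})$ need not lie in $\Sub_{\mathcal{E}}(\GS^{i-1}_{\mathcal{E}}(\mathscr{C}))$; it does lie in $\Sub_{\mathcal{E}}(\GS^{i}_{\mathcal{E}}(\mathscr{C}))$ trivially, but that is one level too late for your induction hypothesis to apply. So your argument simply does not see this case, and it cannot be absorbed by ``routine pushout bookkeeping'': given an $\mathcal{E}$-epimorphism $g : F \twoheadrightarrow X$ with kernel $K$, one must first lift the given extension $0 \to X \to E \to U \to 0$ to an extension $0 \to F \to E'' \to U \to 0$ along $g$, which requires surjectivity of $\Ext^1(U,g)$, i.e.\ the vanishing of $\Ext^2(U,K)$ --- this is exactly where the hereditary hypothesis and \cref{lem:GenlocalEadapt} enter in the paper's proof. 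Once that lift is available, $E'' \in \GS_{\mathcal{E}}(\mathscr{C})$ by the induction hypothesis and $E$ is an $\mathcal{E}$-quotient of $E''$. Adding this second branch would complete your proof, which would then agree with the paper's.
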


\begin{proof}
It is enough to show that, for any $m \in \mathbb{N}$, if we have a short exact sequence \[\begin{tikzcd}
	    0 & X & E  & U &  0,
	\arrow[from=1-1, to=1-2]
	\arrow[tail, from=1-2, to=1-3]
	\arrow[two heads,from=1-3, to=1-4]
	\arrow[from=1-4, to=1-5]
\end{tikzcd} \] with $X\in \GS^m_{\mathcal{E}}(\mathscr{C})$ and $U \in \mathscr{C}$, then $E \in  \GS_{\mathcal{E}}(\mathscr{C})$. We proceed by induction on $m \in \mathbb{N}$. 

If $m=0$, then $X \in \GS^0_{\mathcal{E}}(T) = \mathscr{C}$. As $\mathscr{C}$ is closed under extensions, the result follows. Fix $m \in \mathbb{N}$. Assume that if there is a short exact sequence \[\begin{tikzcd}
	0 & X & E  & U &  0,
	\arrow[from=1-1, to=1-2]
	\arrow[tail, from=1-2, to=1-3]
	\arrow[two heads,from=1-3, to=1-4]
	\arrow[from=1-4, to=1-5]
\end{tikzcd} \] with $X \in \GS^{m}_{\mathcal{E}}(\mathscr{C})$ and $U \in \mathscr{C}$, then $E \in \GS_{\mathcal{E}}(\mathscr{C})$.

Consider a short exact sequence \[\begin{tikzcd}
	0 & Y & E'  & U' &  0,
	\arrow[from=1-1, to=1-2]
	\arrow[tail, from=1-2, to=1-3]
	\arrow[two heads,from=1-3, to=1-4]
	\arrow[from=1-4, to=1-5]
\end{tikzcd} \] with $Y \in \GS^{m+1}_{\mathcal{E}}(\mathscr{C})$ and $U' \in \mathscr{C}$. Then we have that $Y \in \Sub_\mathcal{E}(\GS^{m}_{\mathcal{E}}(\mathscr{C}))$ or $Y \in \Gen_\mathcal{E}(\GS^{m}_{\mathcal{E}}(\mathscr{C})).$

Suppose $Y \in \Gen_\mathcal{E}(\GS^{m}_{\mathcal{E}}(\mathscr{C}))$. Let $F \in \GS^{m}_{\mathcal{E}}(\mathscr{C})$ such that $Y \in \Gen_{\mathcal{E}}(F)$. Note that $\Gen_{\mathcal{E}}(F) \subseteq \Gen(F)$. By \cref{prop:EAdapt}, we have that $\Ext^1(U,F) \subseteq \mathcal{E}$. Using \cref{lem:GenlocalEadapt}, we have:
\begin{enumerate}[label=$\bullet$, itemsep=1mm]
    \item any $\xi \in \Ext^1(U',Y)$ is obtained by a pushout of some $\eta \in \Ext^1(U',F)$ along an $\mathcal{E}$-epimorphism $\begin{tikzcd}
	 g: F & Y;
	\arrow[two heads,from=1-1, to=1-2]
   \end{tikzcd}$ and,
   \item $\Ext^1(U',Y) \subseteq \mathcal{E}$.
\end{enumerate}
We obtain the following commutative diagram.
\[\begin{tikzcd}
         & 0 & 0 & 0 &  \\
	0 & K' & K''  & 0 &  0 \\
     0 & F & E'' & U' & 0 \\
      0 & Y & PO & U' & 0 \\
       & 0 & 0 & 0 & 
	\arrow[from=2-1, to=2-2]
	\arrow[tail, from=2-2, to=2-3]
	\arrow[two heads,from=2-3, to=2-4]
	\arrow[from=2-4, to=2-5]
        \arrow[from=3-1, to=3-2]
	\arrow[tail, from=3-2, to=3-3]
	\arrow[two heads,from=3-3, to=3-4]
	\arrow[from=3-4, to=3-5]
        \arrow[from=4-1, to=4-2]
	\arrow[tail, from=4-2, to=4-3]
	\arrow[two heads,from=4-3, to=4-4]
	\arrow[from=4-4, to=4-5]
        \arrow[from=4-2, to=5-2]
	\arrow[two heads, from=3-2, to=4-2]
	\arrow[tail,from=2-2, to=3-2]
	\arrow[from=1-2, to=2-2]
        \arrow[from=4-3, to=5-3]
	\arrow[two heads, from=3-3, to=4-3]
	\arrow[tail,from=2-3, to=3-3]
	\arrow[from=1-3, to=2-3]
        \arrow[from=4-4, to=5-4]
	\arrow[equals, from=3-4, to=4-4]
	\arrow[tail,from=2-4, to=3-4]
	\arrow[from=1-4, to=2-4]
\end{tikzcd} \]

The short sequences given by the three columns and those given by the last two rows are exact. By \cref{lem:3x3}, the short sequence given by the first row is exact, too. Therefore $K' \cong K''$.

As $F \in \GS^{m}_{\mathcal{E}}(\mathscr{C})$, we have that $E'' \in \GS_{\mathcal{E}}(\mathscr{C})$ by induction hypothesis. So, in the diagram above, the short sequences given by the three rows and those given by the first and third columns are $\mathcal{E}$-exact. By \cref{lem:3x3}, the short sequence given by the middle column is $\mathcal{E}$-exact. Thus $PO \in \Gen_\mathcal{E}(E'') \subseteq \GS_{\mathcal{E}}(\mathscr{C})$.

As every short exact sequence in $\Ext^1(U',Y)$ is obtained by a pushout of a short exact sequence in $\Ext^1(U',F)$, we get that $PO \cong E' \in \GS_{\mathcal{E}}(\mathscr{C})$.

\bigskip

Assume that  $Y \in \Sub_\mathcal{E}(\GS^{m}_{\mathcal{E}}(\mathscr{C}))$. There exists a short exact sequence \[\begin{tikzcd}
	0 & Y & F  & C&  0,
	\arrow[from=1-1, to=1-2]
	\arrow[tail, from=1-2, to=1-3]
	\arrow[two heads,from=1-3, to=1-4]
	\arrow[from=1-4, to=1-5]
\end{tikzcd} \] with $F \in \GS^{m}_{\mathcal{E}}(\mathscr{C})$. We obtain the following pushout diagram.
\[\begin{tikzcd}
         & 0 & 0 & 0 &  \\
	0 & C' & C''  & 0 &  0 \\
     0 & F & PO & U' & 0 \\
      0 & Y & E' & U' & 0 \\
       & 0 & 0 & 0 & 
	\arrow[from=2-1, to=2-2]
	\arrow[tail, from=2-2, to=2-3]
	\arrow[two heads,from=2-3, to=2-4]
	\arrow[from=2-4, to=2-5]
        \arrow[from=3-1, to=3-2]
	\arrow[tail, from=3-2, to=3-3]
	\arrow[two heads,from=3-3, to=3-4]
	\arrow[from=3-4, to=3-5]
        \arrow[from=4-1, to=4-2]
	\arrow[tail, from=4-2, to=4-3]
	\arrow[two heads,from=4-3, to=4-4]
	\arrow[from=4-4, to=4-5]
        \arrow[from=5-2, to=4-2]
	\arrow[tail, from=4-2, to=3-2]
	\arrow[two heads,from=3-2, to=2-2]
	\arrow[from=2-2, to=1-2]
        \arrow[from=5-3, to=4-3]
	\arrow[tail, from=4-3, to=3-3]
	\arrow[two heads,from=3-3, to=2-3]
	\arrow[from=2-3, to=1-3]
        \arrow[from=5-4, to=4-4]
	\arrow[equals, from=4-4, to=3-4]
	\arrow[two heads,from=3-4, to=2-4]
	\arrow[from=2-4, to=1-4]
\end{tikzcd} \]

The short sequences given by the three columns and those given by the last two rows are exact. By \cref{lem:3x3}, the short sequence given by the first row is exact, too. So $C' \cong C''$.

As $F,Y,T \in \GS_{\mathcal{E}}(\mathscr{C})$, the two last rows are in $\mathcal{E}$ by \cref{prop:EAdapt}. Moreover we know that $F \in \GS^{m}_{\mathcal{E}}(\mathscr{C})$, which implies that $PO \in \GS_{\mathcal{E}}(\mathscr{C})$ by induction hypothesis.

The short sequences given by the three rows and those given by the first and third columns are $\mathcal{E}$-exact. By \cref{lem:3x3}, the short sequence given by the middle column is also $\mathcal{E}$-exact. Then, $E' \in \Sub_\mathcal{E}(PO) \subseteq \GS_{\mathcal{E}}(\mathscr{C})$. 

In either case, we get $E' \in \GS_\mathcal{E}(\mathscr{C})$. This completes the proof by induction.
\end{proof}

\begin{prop}
\label{prop:Extclosed}
Let $\mathscr{C} \subseteq \mathscr{A}$ be an $\mathcal{E}$-adapted subcategory closed under extensions. Then $\GS_{\mathcal{E}}(\mathscr{C})$ is closed under extensions.
\end{prop}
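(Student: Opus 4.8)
The plan is to prove, by induction on $n\in\mathbb{N}$, the slightly stronger statement: for every short exact sequence $0\to X\to E\to U\to 0$ with $X\in\GS_\mathcal{E}(\mathscr{C})$ and $U\in\GS^{n}_{\mathcal{E}}(\mathscr{C})$, one has $E\in\GS_\mathcal{E}(\mathscr{C})$. Since every object of $\GS_\mathcal{E}(\mathscr{C})$ lies in some $\GS^{n}_{\mathcal{E}}(\mathscr{C})$ by \cref{lem:proponGS} $(a)$, this immediately yields the proposition: an extension $0\to X\to E\to Y\to 0$ with $X,Y\in\GS_\mathcal{E}(\mathscr{C})$ falls under the statement once $Y$ is located at some finite level. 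The base case $n=0$ is exactly \cref{lemmaExtclosedinduction}, since $\GS^{0}_{\mathcal{E}}(\mathscr{C})=\mathscr{C}$.

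For the inductive step, assume the statement at level $n$ and take $0\to X\to E\to U\to 0$ with $X\in\GS_\mathcal{E}(\mathscr{C})$ and $U\in\GS^{n+1}_{\mathcal{E}}(\mathscr{C})$. Exactly as in the proof of \cref{lemmaExtclosedinduction}, we have $U\in\Gen_\mathcal{E}(F)$ or $U\in\Sub_\mathcal{E}(F)$ for some $F\in\GS^{n}_{\mathcal{E}}(\mathscr{C})$. In both cases the idea is to manufacture an auxiliary object $E'$ fitting into a short exact sequence $0\to X\to E'\to F\to 0$ — so that $E'\in\GS_\mathcal{E}(\mathscr{C})$ by the induction hypothesis, its quotient $F$ being at level $n$ — together with a second short exact sequence linking $E$ and $E'$ whose $\mathcal{E}$-exactness, established with the ($3\times3$)-lemma, forces $E\in\GS_\mathcal{E}(\mathscr{C})$.

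Concretely, if $U\in\Gen_\mathcal{E}(F)$, fix the $\mathcal{E}$-exact sequence $0\to K\to F\xrightarrow{g}U\to 0$ realising it and set $P=E\times_{U}F$, the pullback of $g$ along $E\twoheadrightarrow U$; the two projections are epimorphisms and give $0\to X\to P\to F\to 0$ and $0\to K\to P\to E\to 0$, the first of which yields $P\in\GS_\mathcal{E}(\mathscr{C})$. If instead $U\in\Sub_\mathcal{E}(F)$, fix $0\to U\xrightarrow{\iota}F\to W\to 0$; applying $\Hom(-,X)$ and using $\Ext^{2}(W,X)=0$ (heredity of $\mathscr{A}$, as in the proof of \cref{lem:SublocalEadapt}) shows $\iota^{*}\colon\Ext^{1}(F,X)\to\Ext^{1}(U,X)$ is surjective, so our sequence is the pullback along $\iota$ of some $0\to X\to E'\to F\to 0$ with $E'\in\GS_\mathcal{E}(\mathscr{C})$, and the pullback square produces $0\to E\to E'\to W\to 0$. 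In either case one feeds the relevant sequences into a ($3\times3$)-array — for the $\Gen$ case, rows $0\to 0\to K\to K\to 0$, $0\to X\to P\to F\to 0$, $0\to X\to E\to U\to 0$, with outer columns $0\to 0\to X\to X\to 0$ and $0\to K\to F\to U\to 0$; for the $\Sub$ case the evident dual array with middle column $0\to E\to E'\to W\to 0$ — in which all rows are $\mathcal{E}$-exact (the split ones trivially, the remaining ones because $\GS_\mathcal{E}(\mathscr{C})$ is $\mathcal{E}$-adapted by \cref{prop:EAdapt}) and both outer columns are $\mathcal{E}$-exact, so \cref{lem:3x3} in the dual form of \cref{rem:dual3x3} forces the middle column to be $\mathcal{E}$-exact. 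Hence $E\in\Gen_\mathcal{E}(P)\subseteq\GS_\mathcal{E}(\mathscr{C})$, respectively $E\in\Sub_\mathcal{E}(E')\subseteq\GS_\mathcal{E}(\mathscr{C})$, by \cref{lem:proponGS} $(b)$.

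I expect the main obstacle to be purely organisational: checking that the two pullback projections really are epimorphisms with the asserted kernels and quotients, that each auxiliary extension has $F$ — at level $n$, not at level $n+1$ — as its quotient so the induction hypothesis genuinely applies, and that the commutativity and exactness hypotheses of the ($3\times3$)-lemma are met in both arrays. No new homological input beyond \cref{prop:EAdapt} and the standard pullback constructions is needed; the one conceptual point is that the induction should run on the level of the quotient $U$ — rather than on the level of the subobject, as in \cref{lemmaExtclosedinduction} — because it is precisely this choice that lets the auxiliary object be recognised as an extension by $X$ of something strictly lower in the filtration.
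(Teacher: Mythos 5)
Your proposal is correct and follows essentially the same route as the paper's proof: the same induction on the filtration level of the quotient, the same two cases ($\Gen_\mathcal{E}$ and $\Sub_\mathcal{E}$), the same pullback constructions producing an auxiliary extension of $F$ by $X$ to which the induction hypothesis applies, and the same use of the $3\times 3$-lemma to transfer $\mathcal{E}$-exactness to the column containing $E$. The only cosmetic difference is that you re-derive the surjectivity of $\iota^{*}$ from heredity directly rather than citing \cref{lem:SublocalEadapt}, which is exactly the content of that lemma.
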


\begin{proof}
 Let us prove by induction that, for any $m \in \mathbb{N}$, if we have a short exact sequence
\[\begin{tikzcd}
	0 & X & E  & Y &  0,
	\arrow[from=1-1, to=1-2]
	\arrow[tail, from=1-2, to=1-3]
	\arrow[two heads,from=1-3, to=1-4]
	\arrow[from=1-4, to=1-5]
\end{tikzcd} \] with  $X\in \GS_{\mathcal{E}}(\mathscr{C})$ and $Y\in \GS^m_{\mathcal{E}} (\mathscr{C})$, then $E \in  \GS_{\mathcal{E}}(\mathscr{C})$. The $m=0$ case follows \cref{lemmaExtclosedinduction}.

Fix $m \in \mathbb{N}$. Assume that if we have the short exact sequence 
\[\begin{tikzcd}
	0 & U & E' & V &  0,
	\arrow[from=1-1, to=1-2]
	\arrow[tail, from=1-2, to=1-3]
	\arrow[two heads,from=1-3, to=1-4]
	\arrow[from=1-4, to=1-5]
\end{tikzcd} \]
with $U \in \GS_{\mathcal{E}}(\mathscr{C})$ and $V \in \GS^{m}_{\mathcal{E}}(\mathscr{C})$, then $E' \in \GS_{\mathcal{E}}(\mathscr{C})$. Consider a short exact sequence \[\begin{tikzcd}
	0 & X & E  & Y &  0,
	\arrow[from=1-1, to=1-2]
	\arrow[tail, from=1-2, to=1-3]
	\arrow[two heads,from=1-3, to=1-4]
	\arrow[from=1-4, to=1-5]
\end{tikzcd} \] with $X \in \GS_\mathcal{E}(\mathscr{C})$ and $Y \in \GS_\mathcal{E}^{m+1}(\mathscr{C})$. As before, we must consider two cases.

Suppose $Y \in \Sub_\mathcal{E}(F)$ for some $F \in \GS^{m}_{\mathcal{E}}(\mathscr{C})$. By \cref{prop:EAdapt}, we have that $\Ext^1(F,X) \subseteq \mathcal{E}$. By \cref{lem:SublocalEadapt}, we have:
\begin{enumerate}[label=$\bullet$, itemsep=1mm]
    \item any $\xi \in \Ext^1(Y,X)$ is obtained by a pullback of some $\eta \in \Ext^1(F,X)$ along an $\mathcal{E}$-monomorphism $\begin{tikzcd}
	 f: Y & F;
	\arrow[tail,from=1-1, to=1-2]
   \end{tikzcd}$ and,
    \item $\Ext^1(Y,X) \subseteq \mathcal{E}$.
\end{enumerate}
We obtain the following commutative diagram.
\[\begin{tikzcd}
         & 0 & 0 & 0 &  \\
	0 & 0 & C''  & C' &  0 \\
     0 & X & E'' & F & 0 \\
      0 & X & PB & Y & 0 \\
       & 0 & 0 & 0 & 
	\arrow[from=2-1, to=2-2]
	\arrow[tail, from=2-2, to=2-3]
	\arrow[two heads,from=2-3, to=2-4]
	\arrow[from=2-4, to=2-5]
        \arrow[from=3-1, to=3-2]
	\arrow[tail, from=3-2, to=3-3]
	\arrow[two heads,from=3-3, to=3-4]
	\arrow[from=3-4, to=3-5]
        \arrow[from=4-1, to=4-2]
	\arrow[tail, from=4-2, to=4-3]
	\arrow[two heads,from=4-3, to=4-4]
	\arrow[from=4-4, to=4-5]
        \arrow[from=5-2, to=4-2]
	\arrow[equals, from=4-2, to=3-2]
	\arrow[two heads,from=3-2, to=2-2]
	\arrow[from=2-2, to=1-2]
        \arrow[from=5-3, to=4-3]
	\arrow[tail, from=4-3, to=3-3]
	\arrow[two heads,from=3-3, to=2-3]
	\arrow[from=2-3, to=1-3]
        \arrow[from=5-4, to=4-4]
	\arrow[tail, from=4-4, to=3-4]
	\arrow[two heads,from=3-4, to=2-4]
	\arrow[from=2-4, to=1-4]
\end{tikzcd} \]

The short sequences given by the three columns and those given by the last two rows are exact. By \cref{lem:3x3}, the short sequence given by the first row is exact, too. Then $C'' \cong C$. Moreover, as $F \in \GS^{m}_{\mathcal{E}}(\mathscr{C})$, we have that $E'' \in \GS_{\mathcal{E}}(\mathscr{C})$ by induction hypothesis.

The short exact sequences given by the three rows and those given by the first and third columns are in ${\mathcal{E}}$. By \cref{lem:3x3}, the short sequence given by the middle column is also $\mathcal{E}$-exact. So $PB \in \Sub_\mathcal{E}(E'') \subseteq \GS_{\mathcal{E}}(\mathscr{C})$. By the fact that every short exact sequence in $\Ext^1(Y,X)$ is obtained by a pullback of a short exact sequence in $\Ext^1(E',X)$, we get that $PB \cong E \in \GS_{\mathcal{E}}(\mathscr{C})$.

Now, assume that $Y \in \Gen_\mathcal{E}(F)$ for some $F \in \GS_\mathcal{E}^m(\mathscr{C})$. Therefore, there exists a short exact sequence \[\begin{tikzcd}
	0 & K' & F  & Y &  0 & \in \mathcal{E}.
	\arrow[from=1-1, to=1-2]
	\arrow[tail, from=1-2, to=1-3]
	\arrow[two heads,from=1-3, to=1-4]
	\arrow[from=1-4, to=1-5]
\end{tikzcd} \] We obtain the following pullback diagram.
\[\begin{tikzcd}
         & 0 & 0 & 0 &  \\
	0 & X & E  & Y &  0 \\
     0 & X & PB & F & 0 \\
      0 & 0 & K & K' & 0 \\
       & 0 & 0 & 0 & 
	\arrow[from=2-1, to=2-2]
	\arrow[tail, from=2-2, to=2-3]
	\arrow[two heads,from=2-3, to=2-4]
	\arrow[from=2-4, to=2-5]
        \arrow[from=3-1, to=3-2]
	\arrow[tail, from=3-2, to=3-3]
	\arrow[two heads,from=3-3, to=3-4]
	\arrow[from=3-4, to=3-5]
        \arrow[from=4-1, to=4-2]
	\arrow[tail, from=4-2, to=4-3]
	\arrow[two heads,from=4-3, to=4-4]
	\arrow[from=4-4, to=4-5]
        \arrow[from=5-2, to=4-2]
	\arrow[tail, from=4-2, to=3-2]
	\arrow[equals,from=3-2, to=2-2]
	\arrow[from=2-2, to=1-2]
        \arrow[from=5-3, to=4-3]
	\arrow[tail, from=4-3, to=3-3]
	\arrow[two heads,from=3-3, to=2-3]
	\arrow[from=2-3, to=1-3]
        \arrow[from=5-4, to=4-4]
	\arrow[tail, from=4-4, to=3-4]
	\arrow[two heads,from=3-4, to=2-4]
	\arrow[from=2-4, to=1-4]
\end{tikzcd} \]
Once again, by \cref{lem:3x3}, the short sequence given by the last row is exact, and so $K \cong K'$. Furthermore, we have that $F,X,Y \in \GS_{\mathcal{E}}(\mathscr{C})$. So the first two rows are in $\mathcal{E}$ by \cref{prop:EAdapt}. Moreover $F \in \GS^{m}_{\mathcal{E}}(\mathscr{C})$. So we have that $PB \in \GS_{\mathcal{E}}(\mathscr{C})$ by induction hypothesis. Using \cref{lem:3x3}, the short sequence given by the middle column is ${\mathcal{E}}$-exact. Therefore $E \in \Gen_\mathcal{E}(PB) \subseteq \GS_{\mathcal{E}}(\mathscr{C})$. 

This completes the induction proof, and allows us to get that $\GS_\mathcal{E}(\mathscr{C})$ is closed under extensions.
\end{proof}

We can state the following corollary, which is a direct consequence of \cref{lem:proponGS} \ref{GSb} and the last proposition.

\begin{cor} \label{cor:ESerre} Let $\mathscr{C} \subseteq \mathscr{A}$ be an $\mathcal{E}$-adapted subcategory closed under extensions. Then the following statements hold:
\begin{enumerate}[label=$(\alph*)$, itemsep=1mm]
    \item $\GS_\mathcal{E}(\mathscr{C})$ is an ${\mathcal{E}}$-torsion class;
    \item $\GS_\mathcal{E}(\mathscr{C})$ is an ${\mathcal{E}}$-cotorsion class;
    \item $\GS_\mathcal{E}(\mathscr{C})$ is $\mathcal{E}$-Serre; and,
    \item $\GS_\mathcal{E}(\mathscr{C})$ is the smallest $\mathcal{E}$-Serre category containing $\mathscr{C}$.
\end{enumerate}
\end{cor}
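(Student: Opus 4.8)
The plan is to deduce \cref{cor:ESerre} directly from the preceding structural results, spending essentially no new work beyond assembling the right implications. For parts $(a)$ and $(b)$, recall that an $\mathcal{E}$-torsion class is a subcategory closed under $\mathcal{E}$-quotients and $\mathcal{E}$-extensions, while an $\mathcal{E}$-cotorsion class is closed under $\mathcal{E}$-subobjects and $\mathcal{E}$-extensions. By \cref{lem:proponGS}~\ref{GSb}, $\GS_\mathcal{E}(\mathscr{C})$ is closed under taking $\mathcal{E}$-subobjects and $\mathcal{E}$-quotients; by \cref{prop:Extclosed} it is closed under extensions (using that $\mathscr{C}$ is $\mathcal{E}$-adapted and extension-closed), hence in particular under $\mathcal{E}$-extensions. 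Combining these gives both $(a)$ and $(b)$ at once.

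For part $(c)$, I would invoke \cref{rem:weakerSerre}: a subcategory satisfying the closure property \ref{GSb} is $\mathcal{E}$-Serre precisely when it is additionally closed under extensions. Since \cref{lem:proponGS}~\ref{GSb} supplies the first condition and \cref{prop:Extclosed} supplies the second, $\GS_\mathcal{E}(\mathscr{C})$ is $\mathcal{E}$-Serre. (One should double-check here that \ref{GSb}'s ``closed under $\mathcal{E}$-subobjects and $\mathcal{E}$-quotients'' indeed matches the two non-extension conditions in the definition of $\mathcal{E}$-Serre in \cref{rem:weakerSerre}, namely that $E,G\in\mathscr{D}$ forces $F\in\mathscr{D}$ for the extension direction, and $F\in\mathscr{D}$ forces $E,G\in\mathscr{D}$ for the sub/quotient direction — this is routine.)

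For part $(d)$, the minimality is where the only genuine (if still short) argument lives. Let $\mathscr{D}$ be any $\mathcal{E}$-Serre category with $\mathscr{C}\subseteq\mathscr{D}$. Then $\mathscr{D}$ is in particular closed under $\mathcal{E}$-subobjects and $\mathcal{E}$-quotients, so by the minimality assertion of \cref{lem:proponGS}~\ref{GSb} — which says $\GS_\mathcal{E}(\mathscr{C})$ is the \emph{smallest} subcategory containing $\mathscr{C}$ and closed under $\mathcal{E}$-subobjects and $\mathcal{E}$-quotients — we get $\GS_\mathcal{E}(\mathscr{C})\subseteq\mathscr{D}$. Together with part $(c)$, which exhibits $\GS_\mathcal{E}(\mathscr{C})$ itself as an $\mathcal{E}$-Serre category containing $\mathscr{C}$, this shows it is the smallest such category.

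The proof carries no real obstacle; the entire content is bookkeeping to confirm that the definitions of $\mathcal{E}$-torsion/cotorsion/Serre class used in the statement line up with ``closed under $\mathcal{E}$-subobjects/$\mathcal{E}$-quotients'' from \cref{lem:proponGS} plus ``closed under extensions'' from \cref{prop:Extclosed}. If anything needs a sentence of care, it is part $(d)$: one must use that $\mathcal{E}$-Serre is \emph{a fortiori} closed under $\mathcal{E}$-subobjects and $\mathcal{E}$-quotients so that the universal property in \cref{lem:proponGS}~\ref{GSb} applies to it.
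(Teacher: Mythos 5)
Your proposal is correct and follows exactly the route the paper takes: the paper dispatches this corollary in one line as "a direct consequence of \cref{lem:proponGS}~\ref{GSb} and the last proposition" (i.e.\ \cref{prop:Extclosed}), which is precisely the combination you assemble, including the observation for part $(d)$ that any $\mathcal{E}$-Serre category is in particular closed under $\mathcal{E}$-subobjects and $\mathcal{E}$-quotients so that the universal property of \cref{lem:proponGS}~\ref{GSb} applies. Your write-up simply makes the bookkeeping explicit.
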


Note the extension-closed assumption on $\mathscr{C}$ is essential.

\begin{example} \label{ex:Eadaptednonextclosed}
Consider the exact category $(\rep Q, \mathcal{E}_\diamond)$, where $Q = 1 \longrightarrow 2 \longrightarrow 3$. 
Let $\mathscr{C} = \add(X_{\llrr{2,3}}, X_{\llrr{1,2}})$ (See \cref{fig:NonExt-closed}). We can check that $\GS_{\mathcal{E}_\diamond}(\mathscr{C}) = \mathscr{C}$, and $\mathscr{C}$ is $\mathcal{E}_\diamond$-adapted but not closed under extensions. 
\begin{figure}[H]
    \centering
    \begin{tikzpicture}[line width=.3mm, ->, >= angle 60]
		\node[circle,line width=0.2mm,double,fill=lava!20,draw] (23) at (1,-1){\scalebox{.9}{$\llrr{2,3}$}};
		\node (3) at (0,-2){\scalebox{.9}{$\llrr{3}$}};
		\node (13) at (2,0){\scalebox{.9}{$\llrr{1,3}$}};
		\node (2) at (2,-2){\scalebox{.9}{$\llrr{2}$}};
		\node[circle,line width=0.2mm,double,fill=lava!20,draw] (12) at (3,-1){\scalebox{.9}{$\llrr{1,2}$}};
		\node (1) at (4,-2){\scalebox{.9}{$\llrr{1}$}};
								
		\draw (3) -- (23);
		\draw (23) -- (13);
		\draw (23) -- (2);
		\draw (13) -- (12);
		\draw (2) -- (12);
		\draw (12) -- (1);
        \end{tikzpicture}
    \caption[fragile]{The Auslander--Reiten quiver of $Q$ in \cref{ex:Eadaptednonextclosed}. We have $\mathscr{C} = \add \left( \protect\begin{tikzpicture}[baseline={(0,-.1)}]
        \node[circle, line width=0.2mm, double, fill=lava!20,minimum size=1em,draw] at (0,0){$ $};
    \end{tikzpicture}\right) = \GS_{\mathcal{E}_{\diamond}}(\mathscr{C})$, and we can check that $\mathscr{C}$ is not closed under extensions even though it is $\mathcal{E}_{\diamond}$-adapted.}
    \label{fig:NonExt-closed}
\end{figure}
\end{example}

\subsection{Tilting objects in hereditary subcategories}
\label{ss:Tilting} In this section, we highlight some interesting properties of the $\GS_\mathcal{E}$-operator with tilting objects in $\mathscr{A}$. Recall that we assume $\mathscr{A}$ hereditary (\cref{conv:hereditary}).

\begin{definition}
\label{def:rigidtilting}
An object $T \in \mathscr{A}$ is said to be \new{rigid} whenever $\Ext_\mathscr{A}^1(T,T) = 0$. An object $T \in \mathscr{A}$ is said to be \new{tilting} if the following statements hold:
\begin{enumerate}[label=$\bullet$, itemsep=1mm]
\item $T$ is \emph{rigid}.
\item For any projective object $P$, there is a short exact sequence 
\[\begin{tikzcd}
	\varsigma: & 0 & P & T_0 & T_1&  0.
	\arrow[from=1-2, to=1-3]
	\arrow[tail, from=1-3, to=1-4]
	\arrow[two heads,from=1-4, to=1-5]
	\arrow[from=1-5, to=1-6]
\end{tikzcd}\] with $T_0,T_1 \in \add(T)$.
\end{enumerate} 
Denote by $\Tilt(\mathscr{A})$ the collection of tilting objects in $\mathscr{A}$, considered up to isomorphisms.
\end{definition}

We recall the following crucial result, which gives us an easier criterion for tilting objects of $\mathscr{A}$.

\begin{prop} \label{prop:numberindectilt}
Assume that $\mathscr{A}$ admits $n$ nonisomorphic indecomposable projective objects. Then, for any $T \in \mathscr{A}$, we have that $T \in \Tilt(\mathscr{A})$ if and only if $T$ is both rigid and made of $n$ nonisomorphic indecomposable summands.
\end{prop}

\begin{example} \label{ex:TiltA4} In \cref{fig:TiltA4}, we enumerate the fourteen tilting modules in $\rep(Q)$ for some $A_4$ type quiver $Q$.
 \begin{figure}[!ht]
        \centering
        \begin{tikzpicture}
				
				\begin{scope}[xshift=-3.4cm, yshift = -2cm,line width=.5mm,scale=.85]
					\node (a) at (0,0){\scalebox{.5}{\begin{tikzpicture}[line width=.3mm, ->, >= angle 60]
								\node[rectangle,line width=0.2mm, double,rounded corners,fill=orange!20,draw] (2) at (0,0){\scalebox{.9}{$\llrr{2}$}};
								\node[rectangle,line width=0.2mm, double,rounded corners,fill=orange!20,draw] (12) at (1,1){\scalebox{.9}{$\llrr{1,2}$}};
								\node[rectangle,line width=0.2mm, double,rounded corners,fill=orange!20,draw] (24) at (1,-1){\scalebox{.9}{$\llrr{2,4}$}};
								\node[rectangle,line width=0.2mm, double,rounded corners,fill=orange!20,draw] (4) at (0,-2){\scalebox{.9}{$\llrr{4}$}};
								\node (14) at (2,0){\scalebox{.9}{$\llrr{1,4}$}};
								\node (23) at (2,-2){\scalebox{.9}{$\llrr{2,3}$}};
								\node (34) at (3,1){\scalebox{.9}{$\llrr{3,4}$}};
								\node (13) at (3,-1){\scalebox{.9}{$\llrr{1,3}$}};
								\node (3) at (4,0){\scalebox{.9}{$\llrr{3}$}};
								\node (1) at (4,-2){\scalebox{.9}{$\llrr{1}$}};
								\draw (2) -- (12);
								\draw (2) -- (24);
								\draw (4) -- (24);
								\draw (12) -- (14);
								\draw (24) -- (14);
								\draw (24) -- (23);
								\draw (14) -- (34);
								\draw (14) -- (13);
								\draw (23) -- (13);
								\draw (34) -- (3);
								\draw (13) -- (3);
								\draw (13) -- (1);
					\end{tikzpicture}}};
					\node (b) at (3.5,0){\scalebox{.5}{\begin{tikzpicture}[line width=.3mm, ->, >= angle 60]
								\node (2) at (0,0){\scalebox{.9}{$\llrr{2}$}};
								\node[rectangle,line width=0.2mm, double,rounded corners,fill=orange!20,draw] (12) at (1,1){\scalebox{.9}{$\llrr{1,2}$}};
								\node[rectangle,line width=0.2mm, double,rounded corners,fill=orange!20,draw] (24) at (1,-1){\scalebox{.9}{$\llrr{2,4}$}};
								\node[rectangle,line width=0.2mm, double,rounded corners,fill=orange!20,draw] (4) at (0,-2){\scalebox{.9}{$\llrr{4}$}};
								\node[rectangle,line width=0.2mm, double,rounded corners,fill=orange!20,draw] (14) at (2,0){\scalebox{.9}{$\llrr{1,4}$}};
								\node (23) at (2,-2){\scalebox{.9}{$\llrr{2,3}$}};
								\node (34) at (3,1){\scalebox{.9}{$\llrr{3,4}$}};
								\node (13) at (3,-1){\scalebox{.9}{$\llrr{1,3}$}};
								\node (3) at (4,0){\scalebox{.9}{$\llrr{3}$}};
								\node (1) at (4,-2){\scalebox{.9}{$\llrr{1}$}};
								\draw (2) -- (12);
								\draw (2) -- (24);
								\draw (4) -- (24);
								\draw (12) -- (14);
								\draw (24) -- (14);
								\draw (24) -- (23);
								\draw (14) -- (34);
								\draw (14) -- (13);
								\draw (23) -- (13);
								\draw (34) -- (3);
								\draw (13) -- (3);
								\draw (13) -- (1);
					\end{tikzpicture}}};
					\node (c) at (7,0){\scalebox{.5}{\begin{tikzpicture}[line width=.3mm, ->, >= angle 60]
								\node[rectangle,line width=0.2mm, double,rounded corners,fill=orange!20,draw] (2) at (0,0){\scalebox{.9}{$\llrr{2}$}};
								\node[rectangle,line width=0.2mm, double,rounded corners,fill=orange!20,draw] (12) at (1,1){\scalebox{.9}{$\llrr{1,2}$}};
								\node[rectangle,line width=0.2mm, double,rounded corners,fill=orange!20,draw] (24) at (1,-1){\scalebox{.9}{$\llrr{2,4}$}};
								\node (4) at (0,-2){\scalebox{.9}{$\llrr{4}$}};
								\node (14) at (2,0){\scalebox{.9}{$\llrr{1,4}$}};
								\node[rectangle,line width=0.2mm, double,rounded corners,fill=orange!20,draw] (23) at (2,-2){\scalebox{.9}{$\llrr{2,3}$}};
								\node (34) at (3,1){\scalebox{.9}{$\llrr{3,4}$}};
								\node (13) at (3,-1){\scalebox{.9}{$\llrr{1,3}$}};
								\node (3) at (4,0){\scalebox{.9}{$\llrr{3}$}};
								\node (1) at (4,-2){\scalebox{.9}{$\llrr{1}$}};
								\draw (2) -- (12);
								\draw (2) -- (24);
								\draw (4) -- (24);
								\draw (12) -- (14);
								\draw (24) -- (14);
								\draw (24) -- (23);
								\draw (14) -- (34);
								\draw (14) -- (13);
								\draw (23) -- (13);
								\draw (34) -- (3);
								\draw (13) -- (3);
								\draw (13) -- (1);
					\end{tikzpicture}}};
					\node (d) at (10.5,0){\scalebox{.5}{\begin{tikzpicture}[line width=.3mm, ->, >= angle 60]
								\node (2) at (0,0){\scalebox{.9}{$\llrr{2}$}};
								\node[rectangle,line width=0.2mm, double,rounded corners,fill=orange!20,draw] (12) at (1,1){\scalebox{.9}{$\llrr{1,2}$}};
								\node[rectangle,line width=0.2mm, double,rounded corners,fill=orange!20,draw] (24) at (1,-1){\scalebox{.9}{$\llrr{2,4}$}};
								\node (4) at (0,-2){\scalebox{.9}{$\llrr{4}$}};
								\node[rectangle,line width=0.2mm, double,rounded corners,fill=orange!20,draw] (14) at (2,0){\scalebox{.9}{$\llrr{1,4}$}};
								\node[rectangle,line width=0.2mm, double,rounded corners,fill=orange!20,draw] (23) at (2,-2){\scalebox{.9}{$\llrr{2,3}$}};
								\node (34) at (3,1){\scalebox{.9}{$\llrr{3,4}$}};
								\node (13) at (3,-1){\scalebox{.9}{$\llrr{1,3}$}};
								\node (3) at (4,0){\scalebox{.9}{$\llrr{3}$}};
								\node (1) at (4,-2){\scalebox{.9}{$\llrr{1}$}};
								\draw (2) -- (12);
								\draw (2) -- (24);
								\draw (4) -- (24);
								\draw (12) -- (14);
								\draw (24) -- (14);
								\draw (24) -- (23);
								\draw (14) -- (34);
								\draw (14) -- (13);
								\draw (23) -- (13);
								\draw (34) -- (3);
								\draw (13) -- (3);
								\draw (13) -- (1);
					\end{tikzpicture}}};
					\node (e) at (0,-3){\scalebox{.5}{\begin{tikzpicture}[line width=.3mm, ->, >= angle 60]
								\node (2) at (0,0){\scalebox{.9}{$\llrr{2}$}};
								\node (12) at (1,1){\scalebox{.9}{$\llrr{1,2}$}};
								\node[rectangle,line width=0.2mm, double,rounded corners,fill=orange!20,draw] (24) at (1,-1){\scalebox{.9}{$\llrr{2,4}$}};
								\node[rectangle,line width=0.2mm, double,rounded corners,fill=orange!20,draw] (4) at (0,-2){\scalebox{.9}{$\llrr{4}$}};
								\node[rectangle,line width=0.2mm, double,rounded corners,fill=orange!20,draw] (14) at (2,0){\scalebox{.9}{$\llrr{1,4}$}};
								\node (23) at (2,-2){\scalebox{.9}{$\llrr{2,3}$}};
								\node[rectangle,line width=0.2mm, double,rounded corners,fill=orange!20,draw] (34) at (3,1){\scalebox{.9}{$\llrr{3,4}$}};
								\node (13) at (3,-1){\scalebox{.9}{$\llrr{1,3}$}};
								\node (3) at (4,0){\scalebox{.9}{$\llrr{3}$}};
								\node (1) at (4,-2){\scalebox{.9}{$\llrr{1}$}};
								\draw (2) -- (12);
								\draw (2) -- (24);
								\draw (4) -- (24);
								\draw (12) -- (14);
								\draw (24) -- (14);
								\draw (24) -- (23);
								\draw (14) -- (34);
								\draw (14) -- (13);
								\draw (23) -- (13);
								\draw (34) -- (3);
								\draw (13) -- (3);
								\draw (13) -- (1);
					\end{tikzpicture}}};
					\node (f) at (3.5,-3){\scalebox{.5}{\begin{tikzpicture}[line width=.3mm, ->, >= angle 60]
								\node (2) at (0,0){\scalebox{.9}{$\llrr{2}$}};
								\node[rectangle,line width=0.2mm, double,rounded corners,fill=orange!20,draw] (12) at (1,1){\scalebox{.9}{$\llrr{1,2}$}};
								\node (24) at (1,-1){\scalebox{.9}{$\llrr{2,4}$}};
								\node[rectangle,line width=0.2mm, double,rounded corners,fill=orange!20,draw] (4) at (0,-2){\scalebox{.9}{$\llrr{4}$}};
								\node[rectangle,line width=0.2mm, double,rounded corners,fill=orange!20,draw] (14) at (2,0){\scalebox{.9}{$\llrr{1,4}$}};
								\node (23) at (2,-2){\scalebox{.9}{$\llrr{2,3}$}};
								\node (34) at (3,1){\scalebox{.9}{$\llrr{3,4}$}};
								\node (13) at (3,-1){\scalebox{.9}{$\llrr{1,3}$}};
								\node (3) at (4,0){\scalebox{.9}{$\llrr{3}$}};
								\node[rectangle,line width=0.2mm, double,rounded corners,fill=orange!20,draw] (1) at (4,-2){\scalebox{.9}{$\llrr{1}$}};
								\draw (2) -- (12);
								\draw (2) -- (24);
								\draw (4) -- (24);
								\draw (12) -- (14);
								\draw (24) -- (14);
								\draw (24) -- (23);
								\draw (14) -- (34);
								\draw (14) -- (13);
								\draw (23) -- (13);
								\draw (34) -- (3);
								\draw (13) -- (3);
								\draw (13) -- (1);
					\end{tikzpicture}}};
					\node (g) at (7,-3){\scalebox{.5}{\begin{tikzpicture}[line width=.3mm, ->, >= angle 60]
								\node (2) at (0,0){\scalebox{.9}{$\llrr{2}$}};
								\node[rectangle,line width=0.2mm, double,rounded corners,fill=orange!20,draw] (12) at (1,1){\scalebox{.9}{$\llrr{1,2}$}};
								\node (24) at (1,-1){\scalebox{.9}{$\llrr{2,4}$}};
								\node (4) at (0,-2){\scalebox{.9}{$\llrr{4}$}};
								\node[rectangle,line width=0.2mm, double,rounded corners,fill=orange!20,draw] (14) at (2,0){\scalebox{.9}{$\llrr{1,4}$}};
								\node[rectangle,line width=0.2mm, double,rounded corners,fill=orange!20,draw] (23) at (2,-2){\scalebox{.9}{$\llrr{2,3}$}};
								\node (34) at (3,1){\scalebox{.9}{$\llrr{3,4}$}};
								\node[rectangle,line width=0.2mm, double,rounded corners,fill=orange!20,draw] (13) at (3,-1){\scalebox{.9}{$\llrr{1,3}$}};
								\node (3) at (4,0){\scalebox{.9}{$\llrr{3}$}};
								\node (1) at (4,-2){\scalebox{.9}{$\llrr{1}$}};
								\draw (2) -- (12);
								\draw (2) -- (24);
								\draw (4) -- (24);
								\draw (12) -- (14);
								\draw (24) -- (14);
								\draw (24) -- (23);
								\draw (14) -- (34);
								\draw (14) -- (13);
								\draw (23) -- (13);
								\draw (34) -- (3);
								\draw (13) -- (3);
								\draw (13) -- (1);
					\end{tikzpicture}}};
					\node (h) at (10.5,-3){\scalebox{.5}{\begin{tikzpicture}[line width=.3mm, ->, >= angle 60]
								\node (2) at (0,0){\scalebox{.9}{$\llrr{2}$}};
								\node (12) at (1,1){\scalebox{.9}{$\llrr{1,2}$}};
								\node[rectangle,line width=0.2mm, double,rounded corners,fill=orange!20,draw] (24) at (1,-1){\scalebox{.9}{$\llrr{2,4}$}};
								\node (4) at (0,-2){\scalebox{.9}{$\llrr{4}$}};
								\node[rectangle,line width=0.2mm, double,rounded corners,fill=orange!20,draw] (14) at (2,0){\scalebox{.9}{$\llrr{1,4}$}};
								\node[rectangle,line width=0.2mm, double,rounded corners,fill=orange!20,draw] (23) at (2,-2){\scalebox{.9}{$\llrr{2,3}$}};
								\node[rectangle,line width=0.2mm, double,rounded corners,fill=orange!20,draw] (34) at (3,1){\scalebox{.9}{$\llrr{3,4}$}};
								\node (13) at (3,-1){\scalebox{.9}{$\llrr{1,3}$}};
								\node (3) at (4,0){\scalebox{.9}{$\llrr{3}$}};
								\node (1) at (4,-2){\scalebox{.9}{$\llrr{1}$}};
								\draw (2) -- (12);
								\draw (2) -- (24);
								\draw (4) -- (24);
								\draw (12) -- (14);
								\draw (24) -- (14);
								\draw (24) -- (23);
								\draw (14) -- (34);
								\draw (14) -- (13);
								\draw (23) -- (13);
								\draw (34) -- (3);
								\draw (13) -- (3);
								\draw (13) -- (1);
					\end{tikzpicture}}};
					\node (i) at (0,-6){\scalebox{.5}{\begin{tikzpicture}[line width=.3mm, ->, >= angle 60]
								\node (2) at (0,0){\scalebox{.9}{$\llrr{2}$}};
								\node (12) at (1,1){\scalebox{.9}{$\llrr{1,2}$}};
								\node (24) at (1,-1){\scalebox{.9}{$\llrr{2,4}$}};
								\node[rectangle,line width=0.2mm, double,rounded corners,fill=orange!20,draw] (4) at (0,-2){\scalebox{.9}{$\llrr{4}$}};
								\node[rectangle,line width=0.2mm, double,rounded corners,fill=orange!20,draw] (14) at (2,0){\scalebox{.9}{$\llrr{1,4}$}};
								\node (23) at (2,-2){\scalebox{.9}{$\llrr{2,3}$}};
								\node[rectangle,line width=0.2mm, double,rounded corners,fill=orange!20,draw] (34) at (3,1){\scalebox{.9}{$\llrr{3,4}$}};
								\node (13) at (3,-1){\scalebox{.9}{$\llrr{1,3}$}};
								\node (3) at (4,0){\scalebox{.9}{$\llrr{3}$}};
								\node[rectangle,line width=0.2mm, double,rounded corners,fill=orange!20,draw] (1) at (4,-2){\scalebox{.9}{$\llrr{1}$}};
								\draw (2) -- (12);
								\draw (2) -- (24);
								\draw (4) -- (24);
								\draw (12) -- (14);
								\draw (24) -- (14);
								\draw (24) -- (23);
								\draw (14) -- (34);
								\draw (14) -- (13);
								\draw (23) -- (13);
								\draw (34) -- (3);
								\draw (13) -- (3);
								\draw (13) -- (1);
					\end{tikzpicture}}};
					\node (j) at (3.5,-6){\scalebox{.5}{\begin{tikzpicture}[line width=.3mm, ->, >= angle 60]
								\node (2) at (0,0){\scalebox{.9}{$\llrr{2}$}};
								\node[rectangle,line width=0.2mm, double,rounded corners,fill=orange!20,draw] (12) at (1,1){\scalebox{.9}{$\llrr{1,2}$}};
								\node (24) at (1,-1){\scalebox{.9}{$\llrr{2,4}$}};
								\node (4) at (0,-2){\scalebox{.9}{$\llrr{4}$}};
								\node[rectangle,line width=0.2mm, double,rounded corners,fill=orange!20,draw] (14) at (2,0){\scalebox{.9}{$\llrr{1,4}$}};
								\node (23) at (2,-2){\scalebox{.9}{$\llrr{2,3}$}};
								\node (34) at (3,1){\scalebox{.9}{$\llrr{3,4}$}};
								\node[rectangle,line width=0.2mm, double,rounded corners,fill=orange!20,draw] (13) at (3,-1){\scalebox{.9}{$\llrr{1,3}$}};
								\node (3) at (4,0){\scalebox{.9}{$\llrr{3}$}};
								\node[rectangle,line width=0.2mm, double,rounded corners,fill=orange!20,draw] (1) at (4,-2){\scalebox{.9}{$\llrr{1}$}};
								\draw (2) -- (12);
								\draw (2) -- (24);
								\draw (4) -- (24);
								\draw (12) -- (14);
								\draw (24) -- (14);
								\draw (24) -- (23);
								\draw (14) -- (34);
								\draw (14) -- (13);
								\draw (23) -- (13);
								\draw (34) -- (3);
								\draw (13) -- (3);
								\draw (13) -- (1);
					\end{tikzpicture}}};
					\node (k) at (7,-6){\scalebox{.5}{\begin{tikzpicture}[line width=.3mm, ->, >= angle 60]
								\node (2) at (0,0){\scalebox{.9}{$\llrr{2}$}};
								\node (12) at (1,1){\scalebox{.9}{$\llrr{1,2}$}};
								\node (24) at (1,-1){\scalebox{.9}{$\llrr{2,4}$}};
								\node (4) at (0,-2){\scalebox{.9}{$\llrr{4}$}};
								\node[rectangle,line width=0.2mm, double,rounded corners,fill=orange!20,draw] (14) at (2,0){\scalebox{.9}{$\llrr{1,4}$}};
								\node[rectangle,line width=0.2mm, double,rounded corners,fill=orange!20,draw] (23) at (2,-2){\scalebox{.9}{$\llrr{2,3}$}};
								\node[rectangle,line width=0.2mm, double,rounded corners,fill=orange!20,draw] (34) at (3,1){\scalebox{.9}{$\llrr{3,4}$}};
								\node[rectangle,line width=0.2mm, double,rounded corners,fill=orange!20,draw] (13) at (3,-1){\scalebox{.9}{$\llrr{1,3}$}};
								\node (3) at (4,0){\scalebox{.9}{$\llrr{3}$}};
								\node (1) at (4,-2){\scalebox{.9}{$\llrr{1}$}};
								\draw (2) -- (12);
								\draw (2) -- (24);
								\draw (4) -- (24);
								\draw (12) -- (14);
								\draw (24) -- (14);
								\draw (24) -- (23);
								\draw (14) -- (34);
								\draw (14) -- (13);
								\draw (23) -- (13);
								\draw (34) -- (3);
								\draw (13) -- (3);
								\draw (13) -- (1);
					\end{tikzpicture}}};
					\node (l) at (10.5,-6){\scalebox{.5}{\begin{tikzpicture}[line width=.3mm, ->, >= angle 60]
								\node (2) at (0,0){\scalebox{.9}{$\llrr{2}$}};
								\node (12) at (1,1){\scalebox{.9}{$\llrr{1,2}$}};
								\node (24) at (1,-1){\scalebox{.9}{$\llrr{2,4}$}};
								\node (4) at (0,-2){\scalebox{.9}{$\llrr{4}$}};
								\node[rectangle,line width=0.2mm, double,rounded corners,fill=orange!20,draw] (14) at (2,0){\scalebox{.9}{$\llrr{1,4}$}};
								\node (23) at (2,-2){\scalebox{.9}{$\llrr{2,3}$}};
								\node[rectangle,line width=0.2mm, double,rounded corners,fill=orange!20,draw] (34) at (3,1){\scalebox{.9}{$\llrr{3,4}$}};
								\node[rectangle,line width=0.2mm, double,rounded corners,fill=orange!20,draw] (13) at (3,-1){\scalebox{.9}{$\llrr{1,3}$}};
								\node (3) at (4,0){\scalebox{.9}{$\llrr{3}$}};
								\node[rectangle,line width=0.2mm, double,rounded corners,fill=orange!20,draw] (1) at (4,-2){\scalebox{.9}{$\llrr{1}$}};
								\draw (2) -- (12);
								\draw (2) -- (24);
								\draw (4) -- (24);
								\draw (12) -- (14);
								\draw (24) -- (14);
								\draw (24) -- (23);
								\draw (14) -- (34);
								\draw (14) -- (13);
								\draw (23) -- (13);
								\draw (34) -- (3);
								\draw (13) -- (3);
								\draw (13) -- (1);
					\end{tikzpicture}}};
					\node (m) at (3.5,-9){\scalebox{.5}{\begin{tikzpicture}[line width=.3mm, ->, >= angle 60]
								\node (2) at (0,0){\scalebox{.9}{$\llrr{2}$}};
								\node (12) at (1,1){\scalebox{.9}{$\llrr{1,2}$}};
								\node (24) at (1,-1){\scalebox{.9}{$\llrr{2,4}$}};
								\node (4) at (0,-2){\scalebox{.9}{$\llrr{4}$}};
								\node (14) at (2,0){\scalebox{.9}{$\llrr{1,4}$}};
								\node[rectangle,line width=0.2mm, double,rounded corners,fill=orange!20,draw] (23) at (2,-2){\scalebox{.9}{$\llrr{2,3}$}};
								\node[rectangle,line width=0.2mm, double,rounded corners,fill=orange!20,draw] (34) at (3,1){\scalebox{.9}{$\llrr{3,4}$}};
								\node[rectangle,line width=0.2mm, double,rounded corners,fill=orange!20,draw] (13) at (3,-1){\scalebox{.9}{$\llrr{1,3}$}};
								\node[rectangle,line width=0.2mm, double,rounded corners,fill=orange!20,draw] (3) at (4,0){\scalebox{.9}{$\llrr{3}$}};
								\node (1) at (4,-2){\scalebox{.9}{$\llrr{1}$}};
								\draw (2) -- (12);
								\draw (2) -- (24);
								\draw (4) -- (24);
								\draw (12) -- (14);
								\draw (24) -- (14);
								\draw (24) -- (23);
								\draw (14) -- (34);
								\draw (14) -- (13);
								\draw (23) -- (13);
								\draw (34) -- (3);
								\draw (13) -- (3);
								\draw (13) -- (1);
					\end{tikzpicture}}};
					\node (n) at (7,-9){\scalebox{.5}{\begin{tikzpicture}[line width=.3mm, ->, >= angle 60]
								\node (2) at (0,0){\scalebox{.9}{$\llrr{2}$}};
								\node (12) at (1,1){\scalebox{.9}{$\llrr{1,2}$}};
								\node (24) at (1,-1){\scalebox{.9}{$\llrr{2,4}$}};
								\node (4) at (0,-2){\scalebox{.9}{$\llrr{4}$}};
								\node (14) at (2,0){\scalebox{.9}{$\llrr{1,4}$}};
								\node (23) at (2,-2){\scalebox{.9}{$\llrr{2,3}$}};
								\node[rectangle,line width=0.2mm, double,rounded corners,fill=orange!20,draw] (34) at (3,1){\scalebox{.9}{$\llrr{3,4}$}};
								\node[rectangle,line width=0.2mm, double,rounded corners,fill=orange!20,draw] (13) at (3,-1){\scalebox{.9}{$\llrr{1,3}$}};
								\node[rectangle,line width=0.2mm, double,rounded corners,fill=orange!20,draw] (3) at (4,0){\scalebox{.9}{$\llrr{3}$}};
								\node[rectangle,line width=0.2mm, double,rounded corners,fill=orange!20,draw] (1) at (4,-2){\scalebox{.9}{$\llrr{1}$}};
								\draw (2) -- (12);
								\draw (2) -- (24);
								\draw (4) -- (24);
								\draw (12) -- (14);
								\draw (24) -- (14);
								\draw (24) -- (23);
								\draw (14) -- (34);
								\draw (14) -- (13);
								\draw (23) -- (13);
								\draw (34) -- (3);
								\draw (13) -- (3);
								\draw (13) -- (1);
					\end{tikzpicture}}};
				\end{scope}
			\end{tikzpicture}
        \caption[fragile]{Tilting representations of $\rep(Q)$ for some $A_4$ type quiver $Q$. A tilting object in this figure is the direct sum of indecomposable objects in  $\protect\begin{tikzpicture}[baseline={(0,-.1)}]
        \node[rectangle, line width=0.2mm, double, rounded corners, fill=orange!20,minimum size=1em,draw] at (0,0){$\ \ $};
    \end{tikzpicture}$ for one copy of the Auslander--Reiten quiver of $Q$}
        \label{fig:TiltA4}
    \end{figure}
\end{example}

The following result allows us to handle objects in $\Gen_\mathcal{E}(T)$ and those in $\Sub_\mathcal{E}(T)$ for $T \in \Tilt(\mathscr{A})$. See \cite{ASS06} for more details.

\begin{prop} \label{prop:GenSubT} Let $T \in \Tilt(\mathscr{A})$. The following assertions hold:
\begin{enumerate}[label=$(\roman*)$, itemsep=1mm]
    \item \label{GenET1} for any $M \in \Gen_\mathcal{E}(T)$, there exists a short $\mathcal{E}$-exact sequence \[\begin{tikzcd}
	 0 & T_1 & T_0 & M&  0.
	\arrow[from=1-1, to=1-2]
	\arrow[tail, from=1-2, to=1-3]
	\arrow[two heads,from=1-3, to=1-4]
	\arrow[from=1-4, to=1-5]
\end{tikzcd}\] where $T_0,T_1 \in \add(T)$; and,
\item \label{SubET1} for any $M \in \Sub_\mathcal{E}(T)$, there exists a short $\mathcal{E}$-exact sequence \[\begin{tikzcd}
	 0 & M & \Theta_0 &  \Theta_1 &  0.
	\arrow[from=1-1, to=1-2]
	\arrow[tail, from=1-2, to=1-3]
	\arrow[two heads,from=1-3, to=1-4]
	\arrow[from=1-4, to=1-5]
\end{tikzcd}\] where $\Theta_0,\Theta_1 \in \add(T)$.
\end{enumerate}   
\end{prop}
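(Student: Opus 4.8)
The plan is to obtain each sequence in two stages: first produce an \emph{ordinary} short exact sequence of the prescribed shape from classical tilting theory over a hereditary category, and then promote it to an $\mathcal{E}$-exact one using that $\add(T)$ is trivially $\mathcal{E}$-adapted.

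\emph{Stage 1 (classical input).} Since $\mathscr{A}$ is hereditary, $T$ has projective dimension at most $1$, its torsion class is $\Gen(T) = \{X \in \mathscr{A} \mid \Ext^1(T,X) = 0\}$, and the standard description of the torsion class of a tilting object (see \cite{ASS06}) provides, for every $M \in \Gen(T)$, a short exact sequence $0 \to T_1 \to T_0 \to M \to 0$ with $T_0, T_1 \in \add(T)$: one takes a right $\add(T)$-approximation $T_0 \to M$, which is an epimorphism because $M \in \Gen(T)$, and a standard argument (using $\pdim T \leqslant 1$ and the rigidity of $T$) shows that its kernel again lies in $\add(T)$. Dually, over a hereditary category a tilting object is simultaneously cotilting, whence $\Sub(T) = \{X \in \mathscr{A} \mid \Ext^1(X,T) = 0\}$ and every $M \in \Sub(T)$ fits into a short exact sequence $0 \to M \to \Theta_0 \to \Theta_1 \to 0$ with $\Theta_0,\Theta_1 \in \add(T)$; alternatively, whenever $\mathscr{A}$ carries a duality --- as it does in all our applications, where $\mathscr{A} = \rep(Q)$ --- this is exactly \ref{GenET1} applied to the tilting object of $\mathscr{A}^{\op}$ dual to $T$. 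Since $\Gen_\mathcal{E}(T) \subseteq \Gen(T)$ and $\Sub_\mathcal{E}(T) \subseteq \Sub(T)$, such sequences are available for any $M$ as in \ref{GenET1} and \ref{SubET1}.

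\emph{Stage 2 ($\mathcal{E}$-refinement).} As $T$ is rigid, $\Ext^1(X,Y) = 0 \subseteq \mathcal{E}$ for all $X,Y \in \add(T)$, so $\add(T)$ is $\mathcal{E}$-adapted. For \ref{GenET1}, apply \cref{lemma:gendiagram} with $\mathscr{C} = \add(T)$, taking $T$ for the object ``$E$'' and $T_1$ for ``$F$'': from $\Ext^1(T,T_1) = 0 \subseteq \mathcal{E}$ and $M \in \Gen_\mathcal{E}(T)$ one gets $\Ext^1(M,T_1) \subseteq \mathcal{E}$, hence the class of $0 \to T_1 \to T_0 \to M \to 0$ in $\Ext^1(M,T_1)$ is $\mathcal{E}$-exact. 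Symmetrically, for \ref{SubET1} apply \cref{lemma:subdiagram} with $\mathscr{C} = \add(T)$, taking $\Theta_1$ for ``$D$'' and $T$ for ``$E$'': from $\Ext^1(\Theta_1,T) = 0 \subseteq \mathcal{E}$ and $M \in \Sub_\mathcal{E}(T)$ one gets $\Ext^1(\Theta_1,M) \subseteq \mathcal{E}$, so $0 \to M \to \Theta_0 \to \Theta_1 \to 0$ is $\mathcal{E}$-exact. Equivalently, one may observe that $\add(T)$ is $\mathcal{E}$-adapted, hence so is $\GS_\mathcal{E}(T)$ by \cref{prop:EAdapt}, and that $\add(T)$, $\Gen_\mathcal{E}(T)$ and $\Sub_\mathcal{E}(T)$ all lie inside $\GS_\mathcal{E}(T)$ by \cref{lem:proponGS}\ref{GSb}, which yields the same conclusion.

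The only genuine obstacle is Stage 1, namely the classical $\add(T)$-presentation description of $\Gen(T)$ and its $\Sub(T)$-counterpart; within this, the $\Sub(T)$ half is the one to watch, as it rests on the coincidence of tilting and cotilting objects over a hereditary category (equivalently, on a duality on $\mathscr{A}$). Once those ordinary short exact sequences are in hand, Stage 2 costs nothing: the rigidity of $T$ makes $\add(T)$ $\mathcal{E}$-adapted, and \cref{lemma:gendiagram} and \cref{lemma:subdiagram} then upgrade the sequences to $\mathcal{E}$-exactness directly.
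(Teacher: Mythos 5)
Your argument is correct. Note, however, that the paper does not prove this proposition at all: it simply cites \cite{ASS06}, and that citation really only covers your Stage~1 (the classical $\add(T)$-presentations of $\Gen(T)$ and $\Sub(T)$); the $\mathcal{E}$-refinement is left implicit. Your Stage~1 is exactly the content of the paper's own \cref{lem:Ibra} (stated later and attributed to \cite{APT15}) together with its dual, so the fact you flag as ``the one to watch'' --- that the kernel of a minimal right $\add(T)$-approximation of $M\in\Gen(T)$, resp.\ the cokernel of a minimal left one for $M\in\Sub(T)$, again lies in $\add(T)$ --- is something the authors themselves take as known; the $\Sub$ half does rest on tilting objects being cotilting in the hereditary setting, which holds in the paper's framework ($\rep(Q)$, or more generally a hereditary Krull--Schmidt category with enough projectives). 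Your Stage~2 is a genuine and correct supplement: since $T$ is rigid, $\add(T)$ is $\mathcal{E}$-adapted, and \cref{lemma:gendiagram} (with $E=T$, $F=T_1$, $X=M$) gives $\Ext^1(M,T_1)\subseteq\mathcal{E}$, while \cref{lemma:subdiagram} (with $D=\Theta_1$, $E=T$, $X=M$) gives $\Ext^1(\Theta_1,M)\subseteq\mathcal{E}$; either that or the $\mathcal{E}$-adaptedness of $\GS_\mathcal{E}(T)$ via \cref{prop:EAdapt} upgrades the classical sequences to $\mathcal{E}$-exact ones. An equivalent shortcut for the surjection itself is \cref{prop:Approxismonoepi2}, which says the minimal approximations are already $\mathcal{E}$-epi/mono when $M$ lies in $\Gen_\mathcal{E}(T)$, resp.\ $\Sub_\mathcal{E}(T)$, but your a posteriori argument is equally valid. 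In short: no gap, and your write-up is more complete than what the paper records.
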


Before being further in the study of $\GS_\mathcal{E}(T)$ for any rigid object $T \in \mathscr{A}$, we can state a few noticeable results related to $\mathcal{E}$-adaptability.

\begin{lemma} \label{lem:TiltEadapt}
    For any rigid object $T \in \mathscr{A}$, the category $\add(T)$ is $\mathcal{E}$-adapted and closed under extension.
\end{lemma}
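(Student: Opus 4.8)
The plan is to observe that the rigidity of $T$ forces all first extension groups between objects of $\add(T)$ to vanish, and that both assertions follow at once from this.

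First I would address $\mathcal{E}$-adaptedness. Take $X, Y \in \add(T)$; then $X$ and $Y$ are direct summands of $T^{\oplus k}$ for some $k \in \mathbb{N}$, so $\Ext^1_{\mathscr{A}}(X,Y)$ is a direct summand of $\Ext^1_{\mathscr{A}}(T^{\oplus k}, T^{\oplus k}) \cong \Ext^1_{\mathscr{A}}(T,T)^{\oplus k^2}$ by additivity of $\Ext^1$ in each variable. Since $T$ is rigid, this group is zero, hence $\Ext^1_{\mathscr{A}}(X,Y) = 0$. The only short exact sequence with left term $Y$ and right term $X$ is then the split one, which belongs to $\mathcal{E}$ by axiom \ref{ES1}. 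Therefore $\Ext^1(X,Y) \subseteq \mathcal{E}$ for all $X, Y \in \add(T)$, i.e.\ $\add(T)$ is $\mathcal{E}$-adapted.

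Next I would check extension-closure. Let \[\begin{tikzcd}
	0 & X & E & Y & 0
	\arrow[from=1-1, to=1-2]
	\arrow[tail, from=1-2, to=1-3]
	\arrow[two heads, from=1-3, to=1-4]
	\arrow[from=1-4, to=1-5]
\end{tikzcd}\] be a short exact sequence in $\mathscr{A}$ with $X, Y \in \add(T)$. By the computation above, $\Ext^1_{\mathscr{A}}(Y,X) = 0$, so this sequence splits, giving $E \cong X \oplus Y$. Since $\add(T)$ is closed under finite direct sums (\cref{conv:addsubcat}), we conclude $E \in \add(T)$, so $\add(T)$ is closed under extensions.

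There is essentially no obstacle in this lemma. The only points worth flagging are that $\mathcal{E}$-adaptedness is not literally the statement ``$\Ext^1$ vanishes'' but requires axiom \ref{ES1} to place the unique split sequence inside $\mathcal{E}$, and that the hereditary hypothesis on $\mathscr{A}$ (\cref{conv:hereditary}) plays no role here — this is a purely formal consequence of rigidity together with the additivity of $\Ext^1$.
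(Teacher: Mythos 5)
Your proof is correct and is exactly the routine argument the paper leaves implicit (the lemma is stated without proof there, being regarded as immediate from rigidity, additivity of $\Ext^1$, and axiom \ref{ES1}). Your observations that the split sequence must still be placed in $\mathcal{E}$ via \ref{ES1} and that the hereditary hypothesis is not needed are both accurate.
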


Thanks to what we proved previously, we have the following result.

\begin{cor} \label{cor:rigidEadapt}
    If $T \in \mathscr{A}$ is rigid, then $\GS_{\mathcal{E}}(T)$ is $\mathcal{E}$-adapted and closed under extension.
\end{cor}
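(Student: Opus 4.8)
The statement is a direct packaging of the two preservation results just established, applied to the subcategory $\add(T)$. The plan is as follows. First, invoke \cref{lem:TiltEadapt}: since $T$ is rigid, the subcategory $\mathscr{C} := \add(T)$ is $\mathcal{E}$-adapted and closed under extensions. (Unfolding \cref{lem:TiltEadapt}: $\mathcal{E}$-adaptation holds because $\Ext^1(T',T'') \subseteq \Ext^1(T,T) = 0 \subseteq \mathcal{E}$ for all summands $T',T''$ of $T$, using rigidity and additivity of $\Ext^1$; and extension-closure holds because any middle term of a short exact sequence with outer terms in $\add(T)$ splits, again by rigidity, hence lies in $\add(T)$. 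If \cref{lem:TiltEadapt} is already proved in the text, one simply cites it.)

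Second, apply \cref{prop:EAdapt} with this $\mathscr{C} = \add(T)$ to conclude that $\GS_{\mathcal{E}}(\add(T))$ is $\mathcal{E}$-adapted. Third, apply \cref{prop:Extclosed} with the same $\mathscr{C}$ — whose hypotheses ($\mathcal{E}$-adapted and extension-closed) are exactly what \cref{lem:TiltEadapt} supplies — to conclude that $\GS_{\mathcal{E}}(\add(T))$ is closed under extensions. Finally, recall the convention $\GS_{\mathcal{E}}(T) = \GS_{\mathcal{E}}(\add(T))$ from \cref{def:GSop}, so both properties transfer verbatim to $\GS_{\mathcal{E}}(T)$, completing the proof.

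\textbf{Main obstacle.} There is essentially no obstacle here; the corollary is a formal consequence of \cref{lem:TiltEadapt}, \cref{prop:EAdapt}, and \cref{prop:Extclosed}. The only thing that requires a moment's care is checking that the hypothesis of \cref{prop:Extclosed} — namely that the seed subcategory is \emph{already} extension-closed, not merely $\mathcal{E}$-adapted — is met; this is precisely the content of the extension-closure half of \cref{lem:TiltEadapt}, and \cref{ex:Eadaptednonextclosed} shows it cannot be dropped in general, so it is worth pointing out explicitly that rigidity of $T$ is what makes $\add(T)$ extension-closed.
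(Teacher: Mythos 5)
Your proof is correct and matches the paper's argument, which likewise derives the corollary by applying \cref{prop:EAdapt} and \cref{prop:Extclosed} to the seed category $\add(T)$, with \cref{lem:TiltEadapt} supplying the hypotheses. Your unfolding of \cref{lem:TiltEadapt} (rigidity forces $\Ext^1$ to vanish on $\add(T)$, so adaptation is trivial and extensions split) and your remark on the necessity of extension-closure are accurate but not needed beyond the citations.
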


\begin{proof}
     It follows from \cref{prop:Extclosed} and \cref{lem:TiltEadapt}.
\end{proof}

\begin{theorem}
\label{thm:maximal}
 Let $T \in \Tilt(\mathscr{A})$ be a tilting object. Then $\GS_\mathcal{E}(T)$ is a maximal $\mathcal{E}$-adapted subcategory of $\mathscr{A}$ closed under extensions which contains $T$.
\end{theorem}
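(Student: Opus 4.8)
The plan is to separate the statement into two halves. That $\GS_\mathcal{E}(T)$ is $\mathcal{E}$-adapted, closed under extensions, and contains $T$ is immediate: a tilting object is rigid, so \cref{cor:rigidEadapt} gives the first two properties, while $T\in\add(T)\subseteq\GS_\mathcal{E}(T)$ gives the third. Everything therefore reduces to maximality, and for that it suffices to prove the following: if $\mathscr{D}\subseteq\mathscr{A}$ is $\mathcal{E}$-adapted, closed under extensions, and contains $T$, then $\mathscr{D}\subseteq\GS_\mathcal{E}(T)$. (This in fact proves slightly more than \cref{thm:maximal} asks: it is the uniqueness of the maximal such subcategory appearing in \cref{thm:maximalmain}.)

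So fix such a $\mathscr{D}$ together with an object $M\in\mathscr{D}$; the goal is $M\in\GS_\mathcal{E}(T)$. The key external input, from standard tilting theory over the hereditary category $\mathscr{A}$ (see \cite{ASS06}), is a Bongartz universal extension
\[
0\longrightarrow M\longrightarrow E\longrightarrow \widetilde T\longrightarrow 0
\]
with $\widetilde T\in\add(T)$ and $\Ext^1(T,E)=0$, so that $E\in\Gen(T)$. First, $E\in\mathscr{D}$, being an extension of $\widetilde T\in\add(T)\subseteq\mathscr{D}$ by $M\in\mathscr{D}$ and $\mathscr{D}$ being closed under extensions. Next, since $E\in\Gen(T)$, \cref{prop:GenSubT}~\ref{GenET1} (applied with $\mathcal{E}=\mathcal{E}_{\max}$) yields a short exact sequence $0\to T_1\to T_0\to E\to 0$ with $T_0,T_1\in\add(T)$; its outer terms $E$ and $T_1$ lie in the $\mathcal{E}$-adapted category $\mathscr{D}$, so this sequence represents a class of $\Ext^1(E,T_1)\subseteq\mathcal{E}$ and is therefore $\mathcal{E}$-exact. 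Thus $E$ is an $\mathcal{E}$-quotient of $T_0\in\add(T)$, whence $E\in\Gen_\mathcal{E}(\add(T))\subseteq\GS_\mathcal{E}(T)$. Finally, the Bongartz sequence itself has both outer terms $M,\widetilde T$ in $\mathscr{D}$, so by $\mathcal{E}$-adaptedness it too is $\mathcal{E}$-exact; hence $M$ is an $\mathcal{E}$-subobject of $E$, and since $E\in\GS_\mathcal{E}(T)$ and $\GS_\mathcal{E}(T)$ is closed under $\mathcal{E}$-subobjects (\cref{lem:proponGS}~\ref{GSb}), we conclude $M\in\GS_\mathcal{E}(T)$, as wanted.

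The heart of the argument is the repeated move of promoting an ordinary short exact sequence to an $\mathcal{E}$-exact one: this works precisely because $\mathscr{D}$ is closed under extensions (which places the middle term $E$ inside $\mathscr{D}$) and $\mathcal{E}$-adapted (which forces the relevant $\Ext^1$-groups into $\mathcal{E}$), and the only delicate bookkeeping is keeping straight which objects lie in $\mathscr{D}$ versus in $\GS_\mathcal{E}(T)$. The one place I rely on something outside the present text is the two classical tilting facts: the existence of the Bongartz sequence with $\widetilde T\in\add(T)$ and $E\in\Gen(T)$, and the existence of the short exact sequence $0\to T_1\to T_0\to E\to 0$ with $T_i\in\add(T)$ for $E\in\Gen(T)$ — both are standard over a hereditary abelian category, the latter being exactly \cref{prop:GenSubT}~\ref{GenET1} specialised to $\mathcal{E}_{\max}$. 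Accordingly, I expect the only genuine obstacle to be making sure these classical inputs are quoted in the right form; the rest is a short diagram-free deduction.
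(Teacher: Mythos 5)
Your proof is correct, but it takes a genuinely different route from the paper's. The paper never invokes the Bongartz universal extension: instead, for $V\in\mathscr{D}$ it starts from a projective resolution $0\to P_1\to P_0\to V\to 0$, pushes out along the $\add(T)$-coresolutions $0\to P_i\to\Theta\to\Theta'\to 0$ guaranteed by the definition of tilting, and extracts from the resulting $3\times 3$ diagrams an intermediate object $W$ with $0\to\Theta_0\to W\to V\to 0$ and $0\to W\to\Theta_1\oplus\Theta_2\to\Theta_3\to 0$; the same ``promote to $\mathcal{E}$-exact via $\mathcal{E}$-adaptedness of $\mathscr{D}$'' move then puts $W$ in $\Sub_\mathcal{E}(\add(T))$ and $V$ in $\Gen_\mathcal{E}(W)$. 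Your argument is the mirror image: you approximate $M$ from above by an object $E$ with $\Ext^1(T,E)=0$, place $E$ in $\Gen_\mathcal{E}(\add(T))$ and then $M$ in $\Sub_\mathcal{E}(E)$. What the paper's version buys is self-containedness — it uses only the coresolution of projectives from \cref{def:rigidtilting} and \cref{lem:3x3}, both already available in the text — whereas yours imports two classical facts (existence of the universal extension, and $\Ext^1(T,-)=0\Leftrightarrow{}\in\Gen(T)$ for tilting $T$ over a hereditary category), in exchange for a shorter, diagram-free deduction. Both arguments establish the stronger uniqueness statement of \cref{thm:maximalmain}, and both show in passing that $\mathscr{D}\subseteq\GS_\mathcal{E}^2(T)$, which is \cref{cor:GS2=GSanduniqueESerre}(a). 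The only caveat worth flagging is that the universal extension requires $\Ext^1(T,M)$ to be finite-dimensional (equivalently, Hom-finiteness of $\mathscr{A}$); this is implicit throughout the paper's setting, so it is a hypothesis to record rather than a gap.
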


\begin{proof}
Let $\mathscr{C} \subseteq \mathscr{A}$ be an $\mathcal{E}$-adapted subcategory closed under extensions that contains $T$. By \cref{conv:addsubcat}, to prove that $\mathscr{C} \subseteq \GS_\mathcal{E}(T)$, it is enough to show that all the objects in $\mathscr{C}$ are in $\GS_\mathcal{E}(T)$. 

Consider $V \in \mathscr{C}$. Suppose $V \in \proj(\mathscr{A})$. Since $T$ is tilting, by \cref{def:rigidtilting}, there is a short exact sequence 
\[\begin{tikzcd}
	\xi: & 0 & V & \Theta_0 & \Theta_1 &  0, & 
	\arrow[from=1-2, to=1-3]
	\arrow[tail, from=1-3, to=1-4]
	\arrow[two heads,from=1-4, to=1-5]
	\arrow[from=1-5, to=1-6]
\end{tikzcd}\] with $\Theta_0,\Theta_1 \in \add(T)$.
Since $V,\Theta_1 \in \mathscr{C}$ which is $\mathcal{E}$-adapted, then $\xi \in \mathcal{E}$. Thus $V \in \Sub_\mathcal{E}(T_0) \subseteq \GS_\mathcal{E}(T)$.

Assume that $V \notin \proj(\mathscr{A})$. Consider the projective resolution of $V$
\[\begin{tikzcd}
     0 & P_1 & P_0 & V &  0 & 
	\arrow[from=1-1, to=1-2]
	\arrow[tail, from=1-2, to=1-3]
	\arrow[two heads,from=1-3, to=1-4]
	\arrow[from=1-4, to=1-5]
\end{tikzcd}\] with $P_0,P_1 \in \proj(\mathscr{A})$. Since $T$ is tilting, by \cref{def:rigidtilting}, there is a short exact sequence
\[\begin{tikzcd}
   0 & P_1 & \Theta_0 & \Theta_1 &  0, & 
	\arrow[from=1-1, to=1-2]
	\arrow[tail,"\psi", from=1-2, to=1-3]
	\arrow[two heads,from=1-3, to=1-4]
	\arrow[from=1-4, to=1-5]
\end{tikzcd}\] with $\Theta_0,\Theta_1 \in \add(T)$. 
We obtain the following diagram by taking the pushout along $\psi$ and by using the cokernel lifting property.
\[\begin{tikzcd}
         & 0 & 0 & 0 &  \\
	0 & P_1 & P_0  & V &  0 \\
     0 & \Theta_0 & PO & V & 0 \\
      0 & \Theta_1 & C & 0 & 0 \\
       & 0 & 0 & 0 & 
	\arrow[from=2-1, to=2-2]
	\arrow[tail, from=2-2, to=2-3]
	\arrow[two heads,from=2-3, to=2-4]
	\arrow[from=2-4, to=2-5]
        \arrow[from=3-1, to=3-2]
	\arrow[tail, from=3-2, to=3-3]
	\arrow[two heads,from=3-3, to=3-4]
	\arrow[from=3-4, to=3-5]
        \arrow[from=4-1, to=4-2]
	\arrow[tail, from=4-2, to=4-3]
	\arrow[two heads,from=4-3, to=4-4]
	\arrow[from=4-4, to=4-5]
        \arrow[from=4-2, to=5-2]
	\arrow[two heads, from=3-2, to=4-2]
	\arrow[tail,"\psi",from=2-2, to=3-2]
	\arrow[from=1-2, to=2-2]
        \arrow[from=4-3, to=5-3]
	\arrow[two heads, from=3-3, to=4-3]
	\arrow[tail,from=2-3, to=3-3]
	\arrow[from=1-3, to=2-3]
        \arrow[from=4-4, to=5-4]
	\arrow[two heads, from=3-4, to=4-4]
	\arrow[equals,from=2-4, to=3-4]
	\arrow[from=1-4, to=2-4]
\end{tikzcd} \]
As the short sequences given by the three columns and those given by the first two rows of the above diagram are exact, by \cref{lem:3x3}, the short sequence given by the last row is exact too. So $C \cong \Theta_1$. Since $V,\Theta_0 \in \mathscr{C}$, by assumption on $\mathscr{C}$, then the middle row lies in $\mathcal{E}$, and $W = PO \in \mathscr{C}$.

Since $T$ is tilting, by \cref{def:rigidtilting}, there is also a short exact sequence 
\[\begin{tikzcd}
     0 & P_0 & \Theta_2 & \Theta_3 &  0, & 
	\arrow[from=1-1, to=1-2]
	\arrow[tail,"\phi", from=1-2, to=1-3]
	\arrow[two heads,from=1-3, to=1-4]
	\arrow[from=1-4, to=1-5]
\end{tikzcd}\] with $\Theta_2,\Theta_3 \in \add(T)$. Given the short exact sequence and the middle column of the diagram above, we obtain the following diagram by taking the pushout along $\phi$ and applying the cokernel lifting property again.
\[\begin{tikzcd}
         & 0 & 0 & 0 &  \\
	0 & P_0 & \Theta_2  & \Theta_3 &  0 \\
     0 & W & PO' & C' & 0 \\
      0 & \Theta_1 & \Theta_1 & 0 & 0 \\
       & 0 & 0 & 0 & 
	\arrow[from=2-1, to=2-2]
	\arrow[tail,"\phi", from=2-2, to=2-3]
	\arrow[two heads,from=2-3, to=2-4]
	\arrow[from=2-4, to=2-5]
        \arrow[from=3-1, to=3-2]
	\arrow[tail, from=3-2, to=3-3]
	\arrow[two heads,from=3-3, to=3-4]
	\arrow[from=3-4, to=3-5]
        \arrow[from=4-1, to=4-2]
	\arrow[equals, from=4-2, to=4-3]
	\arrow[two heads,from=4-3, to=4-4]
	\arrow[from=4-4, to=4-5]
        \arrow[from=4-2, to=5-2]
	\arrow[two heads, from=3-2, to=4-2]
	\arrow[tail,from=2-2, to=3-2]
	\arrow[from=1-2, to=2-2]
        \arrow[from=4-3, to=5-3]
	\arrow[two heads, from=3-3, to=4-3]
	\arrow[tail,from=2-3, to=3-3]
	\arrow[from=1-3, to=2-3]
        \arrow[from=4-4, to=5-4]
	\arrow[two heads, from=3-4, to=4-4]
	\arrow[from=2-4, to=3-4]
	\arrow[from=1-4, to=2-4]
\end{tikzcd} \]
As the short sequences given by the three rows and those given by the first two columns of the above diagram are exact, by \cref{lem:3x3}, the short sequence given by the last column is exact too. So $C' \cong \Theta_3$. Since $T$ is tilting, the middle column splits, hence $PO' \cong \Theta_1 \oplus \Theta_2$.

We get the following short exact sequence.
\[\begin{tikzcd}
    \varsigma: & 0 & W & \Theta_1 \oplus \Theta_2 & \Theta_3 &  0 & 
	\arrow[from=1-2, to=1-3]
	\arrow[tail, from=1-3, to=1-4]
	\arrow[two heads,from=1-4, to=1-5]
	\arrow[from=1-5, to=1-6]
\end{tikzcd}\]
As $W,\Theta_3 \in \mathscr{C}$, as $\mathscr{C}$ is $\mathcal{E}$-adapted, then $\varsigma \in \mathcal{E}$ and $W \in \Sub_\mathcal{E}(\Theta_1 \oplus \Theta_2) \subseteq \GS_\mathcal{E}(T)$.

We finally obtain the following short exact sequence from the middle row of the first diagram
\[\begin{tikzcd}
    \mu: & 0 & T_0 & W & V &  0 & 
	\arrow[from=1-2, to=1-3]
	\arrow[tail, from=1-3, to=1-4]
	\arrow[two heads,from=1-4, to=1-5]
	\arrow[from=1-5, to=1-6]
\end{tikzcd}\]
As $\mu \in \mathcal{E}$ and $W \in \GS_\mathcal{E}(T)$, we get that $V \in \Gen_\mathcal{E}(W) \subseteq \GS_\mathcal{E}(T)$. 

So all the objects in $\mathscr{C}$ are in $\GS_\mathcal{E}(T)$, and therefore, we get the desired result.
\end{proof}

Following the proof, the corollary below occurs.

\begin{cor} \label{cor:GS2=GSanduniqueESerre}
   Let $T \in \Tilt(\mathscr{A})$ be a tilting object. The following assertions hold:
   \begin{enumerate}[label=$(\alph*)$, itemsep=1mm]
       \item we have $\GS_\mathcal{E}(T) = \GS_\mathcal{E}^2(T)$, and moreover, if $T$ is an $\mathcal{E}$-projective object, then $\GS_\mathcal{E}(T) = \GS_\mathcal{E}^1(T)$; and,
       \item $\GS_\mathcal{E}(T)$ is the unique $\mathcal{E}$-adapted $\mathcal{E}$-Serre subcategory of $\mathscr{A}$ containing $T$.
   \end{enumerate}
\end{cor}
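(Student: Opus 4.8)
The plan is to extract both statements from the argument already run in the proof of \cref{thm:maximal}, specialised to the subcategory $\mathscr{C} = \GS_\mathcal{E}(T)$ itself.

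For (a), the inclusion $\GS_\mathcal{E}^2(T) \subseteq \GS_\mathcal{E}(T)$ is automatic. For the converse, I would first observe that, by \cref{cor:rigidEadapt}, $\GS_\mathcal{E}(T)$ is $\mathcal{E}$-adapted and closed under extensions, and of course it contains $T$; hence it is an admissible choice of $\mathscr{C}$ in \cref{thm:maximal}. Re-reading that proof with $\mathscr{C} = \GS_\mathcal{E}(T)$, every object $V \in \mathscr{C}$ is in fact placed inside $\GS_\mathcal{E}^2(T)$: a projective $V$ lands in $\Sub_\mathcal{E}(\add T) \subseteq \GS_\mathcal{E}^1(T)$, while a non-projective $V$ is produced as an $\mathcal{E}$-quotient $0 \to \Theta_0 \to W \to V \to 0$ of an object $W \in \Sub_\mathcal{E}(\add T) \subseteq \GS_\mathcal{E}^1(T)$, so that $V \in \Gen_\mathcal{E}(W) \subseteq \GS_\mathcal{E}^2(T)$. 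This yields $\GS_\mathcal{E}(T) = \GS_\mathcal{E}^2(T)$.

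Suppose moreover that $T$ is $\mathcal{E}$-projective; then I would sharpen the non-projective case to land already in $\GS_\mathcal{E}^1(T)$. Take a projective presentation $0 \to P_1 \to P_0 \xrightarrow{b} V \to 0$, which is $\mathcal{E}$-exact since $V$ and the projective $P_1$ both lie in $\GS_\mathcal{E}(T)$ and $\GS_\mathcal{E}(T)$ is $\mathcal{E}$-adapted; thus $b$ is an $\mathcal{E}$-epimorphism. Using that $T$ is tilting, choose $0 \to P_0 \xrightarrow{c} \Theta_2 \to \Theta_3 \to 0$ with $\Theta_2,\Theta_3 \in \add T$. As $\Theta_3$ is a summand of a power of $T$ it is $\mathcal{E}$-projective, and since $\Theta_3, V \in \GS_\mathcal{E}(T)$ we get $\Ext^1(\Theta_3, V) = \mathcal{E}(\Theta_3, V) = 0$; applying $\Hom(-,V)$ to this sequence produces $g \colon \Theta_2 \to V$ with $g c = b$, necessarily an epimorphism since $b$ is one. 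Because $b = g c$ is an $\mathcal{E}$-epimorphism, the obscure axiom (\cref{lem:obscure}) forces $g$ to be an $\mathcal{E}$-epimorphism, so $V \in \Gen_\mathcal{E}(\Theta_2) \subseteq \GS_\mathcal{E}^1(T)$, and hence $\GS_\mathcal{E}(T) = \GS_\mathcal{E}^1(T)$.

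For (b), existence follows from \cref{lem:TiltEadapt} and \cref{cor:ESerre}: $\GS_\mathcal{E}(T)$ is $\mathcal{E}$-adapted, $\mathcal{E}$-Serre, and contains $T$. For uniqueness, let $\mathscr{D}$ be an $\mathcal{E}$-adapted $\mathcal{E}$-Serre subcategory with $T \in \mathscr{D}$. Being $\mathcal{E}$-Serre, $\mathscr{D}$ is closed under $\mathcal{E}$-subobjects and $\mathcal{E}$-quotients, so $\GS_\mathcal{E}(T) \subseteq \mathscr{D}$ by the minimality in \cref{lem:proponGS} \ref{GSb}; being $\mathcal{E}$-adapted and $\mathcal{E}$-Serre, $\mathscr{D}$ is also closed under extensions (every extension between its objects is $\mathcal{E}$-exact by $\mathcal{E}$-adaptedness, hence remains in $\mathscr{D}$), so $\mathscr{D} \subseteq \GS_\mathcal{E}(T)$ by the containment established in the proof of \cref{thm:maximal}. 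Hence $\mathscr{D} = \GS_\mathcal{E}(T)$. The only step requiring genuine care is the $\mathcal{E}$-projective refinement in (a), and within it the vanishing $\Ext^1(\Theta_3, V) = 0$ together with the obscure-axiom argument promoting $g$ to an $\mathcal{E}$-epimorphism; everything else is a bookkeeping reorganisation of the proof of \cref{thm:maximal}.
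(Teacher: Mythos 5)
Your proof is correct and takes essentially the same route as the paper, which derives both assertions by re-reading the proof of \cref{thm:maximal} with $\mathscr{C} = \GS_\mathcal{E}(T)$ (projectives land in $\Sub_\mathcal{E}(\add T) \subseteq \GS^1_\mathcal{E}(T)$, general objects in $\Gen_\mathcal{E}(W) \subseteq \GS^2_\mathcal{E}(T)$) combined with \cref{cor:ESerre} for the uniqueness in (b). The only minor deviation is in the $\mathcal{E}$-projective refinement, where the most direct reading of the paper's argument would note that the $\mathcal{E}$-exact sequence $0 \to W \to \Theta_1 \oplus \Theta_2 \to \Theta_3 \to 0$ splits because $\Theta_3$ is $\mathcal{E}$-projective, forcing $W \in \add(T)$ and hence $V \in \Gen_\mathcal{E}(\add T) \subseteq \GS^1_\mathcal{E}(T)$; your lift along $\Hom(-,V)$ followed by the obscure axiom reaches the same conclusion and is equally valid.
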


\begin{proof}
    The assertion $(a)$ follows directly from the proof of \cref{thm:maximal}. The assertion $(b)$ is a direct consequence of both \cref{thm:maximal} and \cref{cor:ESerre} $(d)$.
\end{proof}

Note that, in general, a maximal $\mathcal{E}$-adapted subcategory is not extension-closed.

\begin{example} \label{ex:dontworkforanyE}
Consider $\mathscr{A} = \rep Q$ with
$Q = 1 \longrightarrow 2 \longrightarrow 3$. Fix the exact structure $\mathcal{E}'$ generated by the two following Auslander--Reiten sequences.
\[\begin{tikzcd}
    \xi_1: & 0 &  X_{\llrr{3}} & X_{\llrr{2,3}} & X_{\llrr{2}} & 0 
	\arrow[from=1-2, to=1-3]
	\arrow[tail, from=1-3, to=1-4]
	\arrow[two heads,from=1-4, to=1-5]
	\arrow[from=1-5, to=1-6]
\end{tikzcd}\]
\[\begin{tikzcd}
    \xi_2: & 0 &  X_{\llrr{2}} & X_{\llrr{1,2}} & X_{\llrr{1}} & 0 
	\arrow[from=1-2, to=1-3]
	\arrow[tail, from=1-3, to=1-4]
	\arrow[two heads,from=1-4, to=1-5]
	\arrow[from=1-5, to=1-6]
\end{tikzcd}\]

Let $\mathscr{C} = \add(X_{\llrr{1}}, X_{\llrr{2}}, X_{\llrr{3}}, X_{\llrr{1;3}})$ (see \cref{fig:NonExt-closed2}). We can check that $\mathscr{C}$ is a maximal $\mathcal{E}'$-adapted subcategory of $\mathscr{A}$, but $\mathscr{C}$ is not closed under extension.

\begin{figure}[!ht]
    \centering
    \begin{tikzpicture}[line width=.3mm, ->, >= angle 60]
		\node (23) at (1,-1){\scalebox{.9}{$\llrr{2,3}$}};
		\node[circle,line width=0.2mm,double,fill=lava!20,draw] (3) at (0,-2){\scalebox{.9}{$\llrr{3}$}};
		\node[circle,line width=0.2mm,double,fill=lava!20,draw] (13) at (2,0){\scalebox{.9}{$\llrr{1,3}$}};
		\node[circle,line width=0.2mm,double,fill=lava!20,draw] (2) at (2,-2){\scalebox{.9}{$\llrr{2}$}};
		\node (12) at (3,-1){\scalebox{.9}{$\llrr{1,2}$}};
		\node[circle,line width=0.2mm,double,fill=lava!20,draw] (1) at (4,-2){\scalebox{.9}{$\llrr{1}$}};
								
		\draw (3) -- (23);
		\draw (23) -- (13);
		\draw (23) -- (2);
		\draw (13) -- (12);
		\draw (2) -- (12);
		\draw (12) -- (1);
        \end{tikzpicture}
    \caption[fragile]{The Auslander--Reiten quiver of $Q$ in \cref{ex:Eadaptednonextclosed}. We have $\mathscr{C} = \add \left( \protect\begin{tikzpicture}[baseline={(0,-.1)}]
        \node[circle, line width=0.2mm, double, fill=lava!20,minimum size=1em,draw] at (0,0){$ $};
    \end{tikzpicture}\right) = \GS_{\mathcal{E}'}(\mathscr{C})$, and we can check that $\mathscr{C}$ is not closed under extensions even though it is a maximal $\mathcal{E}'$-adapted subcategory of $\mathscr{A}$.}
    \label{fig:NonExt-closed2}
\end{figure}           
\end{example}

\section{Jordan recoverability}
\label{sec:JR}
\pagestyle{plain}

Let $(Q,R)$ be a bounded quiver. In this section, we recall the notion of Jordan recoverability on subcategories of $\rep(Q,R)$, and a few results relative to this subject. We refer the reader to \cite{GPT19,D22,D23}. 

\subsection{Jordan recoverability}
\label{ss:JR}

Let $E \in \rep(Q,R)$. A \new{nilpotent endomorphism} of $E$ is a morphism $N : E \longrightarrow E$  such that there exists a $k \in \mathbb{N}^*$ such that $N^k = 0$, or equivalently, for all $q \in Q_0$, the linear map $N_q$ is nilpotent. We denote the set of nilpotent endomorphisms of $E$ by $\NEnd(E)$. \cite{GPT19} shows that $\NEnd(E)$ is an irreducible algebraic variety.

An \new{integer partition} is a finite weakly decreasing sequence $\lambda = (\lambda_1,\ldots,\lambda_k)$ of positive integers. We also consider, by convention, that $(0)$ is an integer partition called \emph{zero partition}. The \new{parts} of $\lambda$ are the values, counted with multiplicities, taken by $\lambda$. The \new{size} of $\lambda$, denoted by $|\lambda|$, is the sum of all its parts; meaning $|\lambda| = \lambda_1 + \ldots + \lambda_k$. More precisely, if $|\lambda| = m$ for some $m \in \mathbb{N}$, we say that $\lambda$ is an integer partition of $m$, and we write $\lambda \vdash m$. The \new{length} of $\lambda$, denoted by $\ell(\lambda)$, is the number of parts of $\lambda$; meaning $\ell(\lambda) = k$. Let $m \in \mathbb{N}$. We define the \new{dominance order} $\leqslant$ on integer partitions of $m$ as follows: for any $\lambda, \mu \vdash m$, we say that $\lambda \leqslant \mu$ whenever for all $1 \leqslant i \leqslant \min(\ell(\lambda), \ell(\mu))$, we have $\lambda_1 + \ldots + \lambda_i \leqslant \mu_1 + \ldots + \mu_i$. 

Let $p \in \mathbb{N}^*$. We extend some notions to $p$-tuples of integer partitions, which are called \new{p-integer partitions}. Let $\pmb{\lambda} = (\lambda^1, \ldots, \lambda^p)$ be a $p$-integer partition. The \new{size vector} of $\pmb{\lambda}$ is $|\pmb{\lambda}| = (|\lambda^1|, \ldots, |\lambda^p|)$. Given $\pmb{d} = (d_1,\ldots,d_p) \in \mathbb{N}^p$, we say that $\pmb{\lambda}$ is a $p$-integer partition of $\pmb{d}$ whenever $|\pmb{\lambda}| = \pmb{d}$. In such case, we write $\pmb{\lambda} \pmb{\vdash} \pmb{d}$. We define the \emph{dominance order} on $p$-integer partitions of $\pmb{d}$ as follows: for any $\pmb{\lambda},\pmb{\mu} \pmb{\vdash} \pmb{d}$, we say that $\pmb{\lambda} \domleq \pmb{\mu}$ if, for all $j \in \{1,\ldots,p\}$, we have $\lambda^j \leqslant \mu^j$.

Let $N \in \NEnd(E)$. For any $q \in Q_0$, we denote $\JF(N_q)$ the \emph{Jordan form} of the nilpotent linear map $N_q$, understood as an integer partition whose parts are the sizes of the Jordan blocks. So $\JF(N_q) \vdash d_q$ for any $q \in Q_0$. Write $\vJF(N)$ for the \new{Jordan form} of $N$, defined as the $\#Q_0$-integer partitions $(\JF(N_q))_{q \in Q_0}$. Note that $\vJF(N) \pmb{\vdash} \vdim(E)$.

\begin{theorem}[\cite{GPT19}] \label{thm:GenJFexists} Let $(Q,R)$ be a bounded quiver, $\mathbb{K}$ be an algebraically closed field, and $E \in \rep(Q,R)$. Then  $\vJF$ admits a maximum value on $\NEnd(E)$, and it is attained in a dense open (for Zariski topology) subset of $\NEnd(E)$.
\end{theorem}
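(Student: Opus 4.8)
The plan is to deduce the existence of a maximum from the irreducibility of $\NEnd(E)$ (which is part of the cited statement from \cite{GPT19}) together with lower semicontinuity of the ranks of powers of a nilpotent endomorphism. The essential combinatorial input is the classical rank/dominance dictionary: if $M$ is a nilpotent endomorphism of a $d$-dimensional $\mathbb{K}$-vector space with Jordan form $\mu=(\mu_1,\mu_2,\ldots)$, then $\operatorname{rank}(M^k)=\sum_i\max(\mu_i-k,0)$ for all $k\geqslant 0$, so that $\mu$ is recovered from the integer sequence $(\operatorname{rank}(M^k))_{k\geqslant 0}$ (indeed the number of parts of $\mu$ of size $\geqslant k$ equals $\operatorname{rank}(M^{k-1})-\operatorname{rank}(M^k)$). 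Moreover, for $\mu,\nu\vdash d$ one has $\nu\leqslant\mu$ in the dominance order if and only if $\operatorname{rank}(N_\nu^k)\leqslant\operatorname{rank}(N_\mu^k)$ for every $k\geqslant 1$, where $N_\mu$ is any nilpotent matrix of Jordan type $\mu$. I would state this as a preliminary lemma (or recall it with a one-line proof by conjugating partitions).

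Next, fix $q\in Q_0$ and $k\geqslant 1$. The assignment $N\mapsto N_q^k$ is a morphism of affine varieties $\NEnd(E)\to \End(E_q)$, and for each $m$ the locus $\{M\in\End(E_q):\operatorname{rank}(M)\geqslant m\}$ is Zariski-open (nonvanishing of some $m\times m$ minor); hence $\phi_{q,k}:N\mapsto\operatorname{rank}(N_q^k)$ is lower semicontinuous and takes finitely many values in $\{0,\ldots,\dim(E_q)\}$. Letting $M_{q,k}$ denote its maximum value, the set $U_{q,k}:=\{N\in\NEnd(E):\operatorname{rank}(N_q^k)=M_{q,k}\}=\{N:\operatorname{rank}(N_q^k)\geqslant M_{q,k}\}$ is nonempty and Zariski-open. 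Since $\NEnd(E)$ is irreducible, every nonempty open subset is dense, and a finite intersection of dense opens is dense open; as only the finitely many pairs $(q,k)$ with $1\leqslant k\leqslant\dim(E_q)$ are relevant (for $k\geqslant\dim(E_q)$ one has $N_q^k=0$ identically), the set $U:=\bigcap_{q\in Q_0}\bigcap_{k\geqslant 1}U_{q,k}$ is a dense open subset of $\NEnd(E)$.

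Finally I would argue that $\vJF$ is constant on $U$ and equals the maximum of $\vJF$ on $\NEnd(E)$ for the order $\domleq$. For $N\in U$ and each $q$, every rank $\operatorname{rank}(N_q^k)$ attains its maximal value $M_{q,k}$; by the recovery formula above, $\JF(N_q)$ depends only on the sequence $(M_{q,k})_k$, hence is the same for all $N\in U$, so $\vJF$ is constant on $U$, say equal to $\pmb{\lambda}$. For an arbitrary $N'\in\NEnd(E)$ and all $q,k$, one has $\operatorname{rank}((N'_q)^k)\leqslant M_{q,k}=\operatorname{rank}(N_q^k)$ for any $N\in U$, whence $\JF(N'_q)\leqslant\JF(N_q)$ for every $q$ by the dominance characterization, i.e. $\vJF(N')\domleq\pmb{\lambda}$. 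Thus $\pmb{\lambda}$ is the maximum of $\vJF$ on $\NEnd(E)$ and it is attained exactly on the dense open set $U$. The only genuinely delicate step is the passage from ``each $\operatorname{rank}(N_q^k)$ is simultaneously maximized on $U$'' to ``the tuple of Jordan forms is maximized''; this is precisely where one uses that all the $U_{q,k}$ can be intersected at once (so a single generic $N$ works for every $(q,k)$) together with the rank/dominance dictionary, rather than only obtaining a maximal — as opposed to maximum — element.
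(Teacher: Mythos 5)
Your proof is correct. The paper itself gives no proof of this statement---it is quoted from \cite{GPT19}---and your argument (lower semicontinuity of $N\mapsto\operatorname{rank}(N_q^k)$, irreducibility of $\NEnd(E)$ to intersect the finitely many dense open loci where these ranks are maximal, and the rank/dominance dictionary to upgrade ``all ranks simultaneously maximal'' to ``$\vJF$ is the maximum for $\domleq$'') is essentially the standard one used in that reference.
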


We define $\GenJF(E)$ as the maximal value of $\vJF$ on $\NEnd(E)$. We call it \new{generic Jordan form} of $E$. Note that $\GenJF$ defines an invariant on isomorphism classes of representations of $(Q,R)$. Still, it is not a complete invariant in general: that is, we can find $E,F \in \rep(Q,R)$ such that $E \ncong F$ and $\GenJF(E) = \GenJF(F)$. However, we can focus on the subcategories of $\rep(Q,R)$ such that $\GenJF$ is complete.

\begin{definition} \label{def:JR}
    Let $(Q,R)$ be a bounded quiver. A subcategory $\mathscr{C}$ or $\rep(Q,R)$ is said to be \new{Jordan recoverable} if, for any $\#Q_0$-integer partition $\pmb{\lambda}$, there exists at most one $E \in \rep(Q,R)$, up to isomorphism, such that $\GenJF(E) = \pmb{\lambda}$.
\end{definition}

To determine whether a subcategory of $\rep(Q,R)$ is Jordan recoverable, we are led to solve a system of tropical equations, which can be a challenging task. The following section introduces an algebraic way to build a representation from any $\#Q_0$-integer partition. Under some more restrictions, it allows one to return a representation of $(Q,R)$ from its generic Jordan form.

\subsection{Canonical Jordan recoverability}
\label{ss:CJR}

Let $\pmb{\lambda}$ be a $\#Q_0$-integer partition, and set $|\pmb{\lambda}| = \pmb{d}$. Consider $N = \left(N_q : \mathbb{K}^{d_q} \longrightarrow \mathbb{K}^{d_q} \right)_{q \in Q_0}$ to be a $\#Q_0$-tuple of nilpotent linear maps such that $\JF(N_q) = \lambda^q$. We say that a representation $E$ is \new{compatible with} $N$ whenever there exists $N' \in \NEnd(E)$ such that, for all $q \in Q_0$, the linear maps $N'_q$ and $N_q$ are similar. We denote by $\rep((Q,R),N)$ the collection of representations compatible with $N$.

In $\rep((Q,R),N)$, we ask whether there exists a dense open set (for Zariski topology) such that all its representations are isomorphic. In such a case, we define $\GenRep(N)$ as a representation in the corresponding dense open set. Note that $\GenRep(N)$ only depends on its Jordan form. Therefore, we define $\GenRep(\pmb{\lambda})$ the \new{generic representation} of $\pmb{\lambda}$ to be one of the $\GenRep(N)$ where $N$ is a collection of nilpotent linear maps compatible with a nilpotent endomorphism $N' \in \NEnd(E)$ such that $\JF(N') = \pmb{\lambda}$.

\begin{definition} \label{def:CJR} A subcategory $\mathscr{C} \subseteq \rep(Q,R)$ is said to be \new{canonically Jordan recoverable} if, for any $X \in \mathscr{C}$, $\GenRep(\GenJF(X))$ is well-defined and is isomorphic to $X$.
\end{definition}

Note that, in general, given a $\#Q_0$-integer partition, it could happen that $\GenRep(\pmb{\lambda})$ cannot be defined. We refer to \cite[Examples 2.21 and 2.23]{D22} for examples in the case where $(Q,R)$ is a gentle quiver.

The following result shows that, at least, under some restrictions, we can ensure that $\GenRep(\pmb{\lambda})$ is well-defined for any $\#Q_0$-integer partition.

\begin{prop}[\cite{GPT19,D23}] \label{prop:GenRepDynkin}
    Let $Q$ be an $ ADE$-type Dynkin quiver. Then, for any $\#Q_0$-integer partition $\pmb{\lambda}$, $\GenRep(\pmb{\lambda})$ is well-defined.
\end{prop}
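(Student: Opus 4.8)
The plan is to reduce the statement to a standard fact about representation-finite algebras: an irreducible variety that decomposes as a finite union of group orbits must contain a dense one. First I would record that for an $ADE$ Dynkin quiver $Q$ one may take $R=\varnothing$ (see the remark following \cref{thm:Gab}), so that compatibility with a tuple $N$ involves no relations. Fix a $\#Q_0$-integer partition $\pmb{\lambda}$, put $\pmb{d}=|\pmb{\lambda}|$, and fix a tuple $N=(N_q)_{q\in Q_0}$ of nilpotent linear maps $N_q$ on $\mathbb{K}^{d_q}$ with $\JF(N_q)=\lambda^q$. I would then consider the linear subspace
\[
Z_N=\{(E_\alpha)_{\alpha\in Q_1}\in\rep(Q,\pmb{d}):\ E_\alpha N_{s(\alpha)}=N_{t(\alpha)}E_\alpha\ \text{for all}\ \alpha\in Q_1\}\subseteq\rep(Q,\pmb{d}),
\]
whose points are exactly the representations of dimension vector $\pmb{d}$ for which $N$ is literally a nilpotent endomorphism. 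Being cut out by linear equations inside the affine space $\rep(Q,\pmb{d})$, the set $Z_N$ is irreducible.

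Next I would identify $\rep(Q,N)$ with the saturation $\vGL_{\pmb{d}}(\mathbb{K})\cdot Z_N$ of $Z_N$ inside $\rep(Q,\pmb{d})$: a representation of dimension vector $\pmb{d}$ is compatible with $N$ precisely when, after a simultaneous change of basis, $N$ becomes one of its nilpotent endomorphisms, i.e. when its orbit meets $Z_N$. As the image of the irreducible variety $\vGL_{\pmb{d}}(\mathbb{K})\times Z_N$ under the action morphism, $\rep(Q,N)$ is irreducible. By Gabriel's theorem (\cref{thm:Gab}), $Q$ is representation-finite, so $\rep(Q,\pmb{d})$ is a union of finitely many $\vGL_{\pmb{d}}(\mathbb{K})$-orbits; being stable under the group action, $\rep(Q,N)$ is a union of finitely many of them, say $\mathcal{O}_1,\dots,\mathcal{O}_r$. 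An irreducible set is not a union of proper closed subsets, so one orbit, say $\mathcal{O}$, is dense in $\overline{\rep(Q,N)}$; since an orbit is open in its closure, $\mathcal{O}$ is open in $\overline{\mathcal{O}}=\overline{\rep(Q,N)}$, hence a dense open subset of $\rep(Q,N)$ on which all representations are isomorphic to a single object $X$. Setting $\GenRep(N)=X$ shows that $\GenRep(N)$ is well-defined.

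Finally I would verify independence of the auxiliary choices. If $N'$ is another tuple with $\JF(N'_q)=\lambda^q$ for all $q$, then Jordan normal form yields $(g_q)_{q}\in\vGL_{\pmb{d}}(\mathbb{K})$ with $N'_q=g_qN_qg_q^{-1}$; the map $(E_\alpha)_\alpha\mapsto(g_{t(\alpha)}E_\alpha g_{s(\alpha)}^{-1})_\alpha$ is a $\vGL_{\pmb{d}}(\mathbb{K})$-equivariant linear isomorphism carrying $Z_N$ onto $Z_{N'}$, hence $\rep(Q,N)$ onto $\rep(Q,N')$ and the dense orbit onto the dense orbit, so $\GenRep(N)\cong\GenRep(N')$. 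Moreover, for a representation $E$ with $N'\in\NEnd(E)$ and $\vJF(N')=\pmb{\lambda}$, a choice of bases realizes $E$ as a point of $Z_{N'}$, so the object attached to this data is again $\GenRep(N')$. Hence $\GenRep(\pmb{\lambda})$ is well-defined, independently of all choices.

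I expect the only genuinely delicate point to be the geometric bookkeeping in the second paragraph: the identification $\rep(Q,N)=\vGL_{\pmb{d}}(\mathbb{K})\cdot Z_N$ and the translation between ``a dense open set of pairwise isomorphic representations'' and ``a dense orbit'', together with the standard facts that orbits are locally closed and open in their Zariski closures. Everything else follows at once from irreducibility of $Z_N$ combined with finiteness of the representation type.
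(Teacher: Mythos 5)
Your argument is correct, and it is essentially the proof given in the cited sources (the paper itself states this proposition with a citation to \cite{GPT19,D23} and supplies no proof): one observes that the locus $Z_N$ of representations admitting $N$ literally as a nilpotent endomorphism is a linear subspace of $\rep(Q,\pmb d)$, that $\rep(Q,N)$ is its $\vGL_{\pmb d}(\mathbb K)$-saturation and hence irreducible, and that representation-finiteness forces a dense orbit, which is open in its closure. Your verification that the construction is independent of the choice of $N$ within a fixed Jordan type is also the standard conjugation argument, so nothing is missing.
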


Now, we restrict ourselves to type $A_n$ quivers for some $n \in \mathbb{N}^*$, and we recall useful results. Fix $n \in \mathbb{N}^*$. Let us first recall the combinatorial statement that characterizes all the canonically Jordan recoverable subcategories of type $A_n$.

Two intervals $K,L \in \mathcal{I}_n$ are said to be \new{adjacent} if either $b(K) = e(L)+1$ or $b(L) = e(K)+1$. Note that $K$ and $L$ are not adjacent if $K \cap L \neq \varnothing$, $b(K) > e(L)+1$ or $b(L) > e(K)+1$. We say that an interval subset $\mathscr{J} \subseteq \mathcal{I}_n$ is \new{adjacency-avoiding} if, for all $(K,L) \in \mathscr{J}^2$, the intervals $K$ and $L$ are not adjacent.

\begin{theorem} \label{thm:typeACJRcombi}Let $Q$ be an $A_n$ type quiver. A subcategory $\mathscr{C} \subset \rep(Q)$ is canonically Jordan recoverable if and only if $\Int(\mathscr{C})$ is adjacency-avoiding.
\end{theorem}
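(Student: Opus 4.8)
The plan is to combine the algebraic characterization from \cref{thm:CJRAlg} with the explicit description of nonsplit short exact sequences between indecomposables in type $A$ given by \cref{thm:exttypeA}. By \cref{thm:CJRAlg}, a subcategory $\mathscr{C} \subseteq \rep(Q)$ is canonically Jordan recoverable if and only if for every nonsplit short exact sequence $0 \to E \to F \to G \to 0$ with $E, G \in \mathscr{C}$, the middle term $F$ is decomposable. Since our subcategories are additive (\cref{conv:addsubcat}), it suffices to test this on indecomposable $E$ and $G$; but a subtlety is that $E$ or $G$ decomposable forces us to argue that a decomposable end term always yields a decomposable middle term, or else reduce to the indecomposable case componentwise — this reduction should be handled first, using that $\Ext^1$ is additive in both arguments and that an extension of a decomposable object splits as a direct sum of extensions only up to the relevant Baer sum subtlety; the cleanest route is to observe that if $E = E_1 \oplus E_2$ then any nonsplit extension of $G$ by $E$ has middle term containing a summand that is a (possibly split) extension, and in type $A$ a careful bookkeeping shows decomposability of $F$ is equivalent to the indecomposable-pair condition. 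So the core is: $\mathscr{C}$ is CJR $\iff$ for all $K, L \in \Int(\mathscr{C})$, every nonsplit extension between $X_K$ and $X_L$ has decomposable middle term.

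The next step is to translate that condition on pairs $(K,L)$ into the combinatorial statement. By \cref{thm:exttypeA}, a nonsplit short exact sequence with indecomposable end terms $X_K$ and $X_L$ falls into exactly one of two cases: case (a), where the middle term is the indecomposable $X_{K \cup L}$ (requiring $K \cup L \in \mathcal{I}_n$ and $K \cap L = \varnothing$), or case (b), a diamond exact sequence, where the middle term is decomposable. Therefore the middle term $F$ is forced to be decomposable for all nonsplit extensions between $X_K$ and $X_L$ if and only if case (a) never occurs for the pair $\{K,L\}$ — equivalently, it is never the case that $K \cup L \in \mathcal{I}_n$ and $K \cap L = \varnothing$. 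One then checks that "$K \cup L$ is an interval and $K \cap L = \varnothing$" is precisely the negation of the adjacency condition applied sloppily: if $K \cap L = \varnothing$, then $K \cup L \in \mathcal{I}_n$ exactly when $K$ and $L$ are adjacent (i.e. $b(K) = e(L)+1$ or $b(L) = e(K)+1$). Indeed, two disjoint intervals of $\{1,\dots,n\}$ have union an interval iff they abut with no gap, which is the definition of adjacency; conversely adjacent intervals are automatically disjoint. Hence case (a) is available for $\{K,L\}$ iff $K$ and $L$ are adjacent.

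Putting these together: $\mathscr{C}$ is canonically Jordan recoverable $\iff$ for all $K,L \in \Int(\mathscr{C})$ there is no nonsplit extension with indecomposable middle term between $X_K$ and $X_L$ $\iff$ for all $K,L \in \Int(\mathscr{C})$, $K$ and $L$ are not adjacent $\iff$ $\Int(\mathscr{C})$ is adjacency-avoiding. The formal proof is then just assembling these equivalences, citing \cref{thm:CJRAlg}, \cref{thm:exttypeA}, and the bijection $\mathscr{C} \mapsto \Int(\mathscr{C})$ from \cref{thm:indecandmorphtypeA}.

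**Main obstacle.** I expect the genuine work to be the reduction from arbitrary $E, G \in \mathscr{C}$ to indecomposable summands — i.e., showing that \cref{thm:CJRAlg}'s condition, stated for all short exact sequences, is equivalent to the condition restricted to sequences between indecomposables. One must rule out the possibility that some extension of $G_1 \oplus G_2$ by $E$ has indecomposable middle term even though each extension of $G_i$ by $E$ individually has decomposable middle term. In type $A$ this cannot happen because an indecomposable representation $X_M$ has $\dim (X_M)_q \le 1$ everywhere, so a surjection $X_M \twoheadrightarrow G_1 \oplus G_2$ with $G_1, G_2 \ne 0$ indecomposable would force $M \supseteq \supp(G_1) \sqcup \supp(G_2)$ with these supports disjoint (since dimensions are $0$ or $1$) yet $M$ connected — contradicting that $G_1 \oplus G_2$ is disconnected in support unless the supports abut, in which case one reduces again to the adjacency analysis. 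Making this airtight, possibly by induction on the number of indecomposable summands of $E$ and $G$ and invoking \cref{thm:exttypeA} repeatedly, is the only step requiring real care; everything else is a direct unwinding of the cited results.
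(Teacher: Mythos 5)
The first thing to note is that the paper does not actually prove this statement: \cref{thm:typeACJRcombi} is recalled from \cite{D23}, and the surrounding text only indicates that the proof there rests on two ingredients your proposal never touches, namely the classification of maximal adjacency-avoiding subsets of $\mathcal{I}_n$ (\cref{prop:allmaxadjavo}) and the fact that canonical Jordan recoverability is independent of the orientation of the type $A$ quiver. Your argument instead reduces the theorem to \cref{thm:CJRAlg}. The reduction itself is sound: the equivalence ``$K$ and $L$ adjacent $\iff$ some nonsplit extension between $X_K$ and $X_L$ has indecomposable middle term'' is exactly \cref{lem:adj=arrow}, and your passage from indecomposable end terms to arbitrary $E,G\in\mathscr{C}$ works as you suspect, most cleanly by observing that a subrepresentation $E$ of an indecomposable $X_M$ is determined by its support $S\subseteq M$, so that the components of $S$ and of $M\setminus S$ partition the interval $M$ and must alternate, forcing an adjacent pair of intervals of $\Int(\mathscr{C})$ whenever $E,G\neq 0$.

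The genuine problem is circularity. Within this paper the algebraic characterization (\cref{thm:CJRAlg}, restated as \cref{thm:CJRreptheory}) is presented as a \emph{consequence} of \cref{thm:typeACJRcombi} via \cref{lem:adj=arrow} (``This allows us to state the following result''), and this reflects the logic of \cite{D23}, where the combinatorial characterization is the primary theorem. What you have written is therefore a correct proof that the two characterizations are equivalent, not a proof of either. All of the actual content of the theorem --- that adjacency-avoidance of $\Int(\mathscr{C})$ is what makes $\GenRep\circ\GenJF$ the identity on $\mathscr{C}$, which requires working with $\NEnd(E)$, generic Jordan forms, generic representations, and ultimately the maximal adjacency-avoiding sets $\opJ(\B,\E)$ --- is outsourced to the black box \cref{thm:CJRAlg}, which cannot be taken as given if the goal is to prove \cref{thm:typeACJRcombi}. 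To turn your proposal into a genuine proof you would need an independent argument for \cref{thm:CJRAlg}, or you would need to attack the definition of canonical Jordan recoverability directly along the lines sketched after \cref{prop:allmaxadjavo}.
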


Determining all the maximal adjacency-avoiding subsets for the inclusion of $\mathcal{I}_n$ was the primary key to proving this result. Let us recall the exact statement. Given $\B, \E \subseteq \{1,\ldots n\}$, we define the interval subset \[\opJ(\B,\E) = \{K \in \mathcal{I}_n \mid b(K) \in \B,\ e(K) \in \E\}.\] Given $\A \subset\{1,\ldots,n\}$, we set $\A[1] = \{a+1 \mid a \in \A\}$.

\begin{prop} \label{prop:allmaxadjavo} The following assertions hold:
    \begin{enumerate}[label = $(\roman*)$,itemsep=1mm]
        \item For all $\B,\E \subseteq \{1,\ldots,n\}$, if $(\B,\E[1])$ is a set partition of $\{1,\ldots,n+1\}$, then $\opJ(\B,\E)$ is a maximal adjacency-avoiding subset of $\mathcal{I}_n$; and,
        \item Any maximal adjacency-avoiding subset of $\mathcal{I}$ is equal to $\opJ(\B,\E)$ for some $\B,\E \subseteq \{1,\ldots,n\}$ such that $(\B,\E[1])$ is a set partition of $\{1,\ldots,n+1\}$.
    \end{enumerate}
\end{prop}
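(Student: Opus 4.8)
The plan is to prove both directions by a direct combinatorial analysis of when two intervals are adjacent. First recall the definition: $K,L \in \mathcal{I}_n$ are adjacent iff $b(K) = e(L)+1$ or $b(L) = e(K)+1$; equivalently, writing $E[1] = \{e+1 : e \in E\}$, the intervals $K$ and $L$ are adjacent iff $b(K) \in \{e(L)\}[1]$ or $b(L) \in \{e(K)\}[1]$. So an interval set $\mathscr{J}$ is adjacency-avoiding iff for all $K,L \in \mathscr{J}$ we have $b(K) \notin \{e(K'): K' \in \mathscr{J}\}[1]$ for any relevant pairing — more precisely, iff the set $B := \{b(K) : K \in \mathscr{J}\}$ of left endpoints and the set $E[1]$, where $E := \{e(K) : K \in \mathscr{J}\}$, satisfy $B \cap E[1] = \varnothing$. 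This reformulation is the conceptual heart: adjacency-avoidance of $\mathscr{J}$ is equivalent to disjointness of $B$ and $E[1]$ as subsets of $\{1,\ldots,n+1\}$ (note $B \subseteq \{1,\ldots,n\}$ and $E[1] \subseteq \{2,\ldots,n+1\}$, so both live in $\{1,\ldots,n+1\}$).

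For part $(i)$: suppose $(\B, \E[1])$ is a set partition of $\{1,\ldots,n+1\}$. Then in particular $\B \cap \E[1] = \varnothing$, so by the reformulation $\opJ(\B,\E) = \{K : b(K) \in \B, e(K) \in \E\}$ is adjacency-avoiding (any $K,L$ in it have $b(K),b(L) \in \B$ and $e(K)+1, e(L)+1 \in \E[1]$, which are disjoint from $\B$). For maximality, suppose $K_0 \notin \opJ(\B,\E)$; I must show $\opJ(\B,\E) \cup \{K_0\}$ fails to be adjacency-avoiding. Since $K_0 \notin \opJ(\B,\E)$, either $b(K_0) \notin \B$ or $e(K_0) \notin \E$. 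If $b(K_0) \notin \B$ then $b(K_0) \in \E[1]$ (as $(\B,\E[1])$ covers $\{1,\ldots,n+1\}$), so $b(K_0) = e+1$ for some $e \in \E$; now $e \leqslant n$ so the singleton interval $\llrr{e} \in \opJ(\B,\E)$ provided $e \in \B$ — but one must check $\llrr{e}$ or some interval with right endpoint $e$ lies in $\opJ(\B,\E)$. Here the partition hypothesis again helps: $e+1 = b(K_0) \in \{1,\ldots,n+1\}$, and the interval $\llrr{\min \B, e}$, say, or more carefully any interval $L$ with $e(L) = e$ and $b(L) \in \B$ with $b(L) \leqslant e$; such $L$ exists and then $b(K_0) = e(L)+1$ witnesses adjacency. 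The symmetric case $e(K_0) \notin \E$, hence $e(K_0)+1 \in \B$, is handled dually: pick $L' \in \opJ(\B,\E)$ with $b(L') = e(K_0)+1$. The routine point to verify carefully is the existence of these witnessing intervals, which reduces to checking that for every $b \in \B$ there is some $e \in \E$ with $e \geqslant b$ (and dually) — this follows because $n \in \B \cup (\E[1] \cap \{n+1\})$... I'd spell out that $\max \E \geqslant \max(\B \setminus \{n+1\})$ type statements from the partition structure, or more simply note $n \in \{1,\ldots,n+1\}$ forces $n \in \B$ or $n \in \E[1]$, i.e. $n-1 \in \E$, and iterate.

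For part $(ii)$: let $\mathscr{J}$ be a maximal adjacency-avoiding subset. Set $\B = \{b(K) : K \in \mathscr{J}\}$ and $\E = \{e(K) : K \in \mathscr{J}\}$. By the reformulation $\B \cap \E[1] = \varnothing$ in $\{1,\ldots,n+1\}$. I claim $\B \cup \E[1] = \{1,\ldots,n+1\}$, i.e. they form a partition. Suppose some $m \in \{1,\ldots,n+1\}$ lies in neither. If $m \leqslant n$, consider adding any interval $K_0$ with $b(K_0) = m$; if $m-1 \notin \E$ (which holds when $m \notin \E[1]$) and $m \notin \{e(K')+1 : K' \in \mathscr{J}\}$... one checks $\mathscr{J} \cup \{K_0\}$ is still adjacency-avoiding for a suitable choice of $e(K_0)$, contradicting maximality — the subtlety is choosing $e(K_0)$ so that $e(K_0)+1 \notin \B$, which is possible iff $\{e : e+1 \notin \B, e \geqslant m\} \neq \varnothing$, and this holds because $n+1 \notin \B$ always, giving $e = n$ as a fallback (so $K_0 = \llrr{m,n}$ works unless already blocked). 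Then also $\mathscr{J}$ maximal forces $\mathscr{J} = \opJ(\B,\E)$: clearly $\mathscr{J} \subseteq \opJ(\B,\E)$, and $\opJ(\B,\E)$ is adjacency-avoiding by the $\B \cap \E[1] = \varnothing$ condition together with part $(i)$'s argument, so maximality of $\mathscr{J}$ gives equality. The main obstacle I anticipate is the bookkeeping in both directions around the "boundary" elements $1$ and $n+1$ and the precise verification that the witnessing intervals exist — none of it is deep, but it requires care with the index shift $E \mapsto E[1]$ and with degenerate (singleton) intervals; I would handle it cleanly by first proving the reformulation "$\mathscr{J}$ adjacency-avoiding $\iff$ $B \cap E[1] = \varnothing$" as a standalone lemma and then feeding it into both parts.
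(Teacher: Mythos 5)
The paper does not actually prove this proposition: it is recalled from the earlier work \cite{D23} immediately after the sentence ``Let us recall the exact statement,'' so there is no in-paper argument to compare yours against. Judged on its own, your proposal is correct and the approach is the natural one. The standalone lemma you propose --- $\mathscr{J}$ is adjacency-avoiding if and only if $\B \cap \E[1] = \varnothing$, where $\B$ and $\E$ are the sets of left and right endpoints occurring in $\mathscr{J}$ --- is exactly right and does all the work. The ``routine points'' you flag do go through, and they can be stated more cleanly than in your sketch: since $\E[1] \subseteq \{2,\ldots,n+1\}$ and $\B \subseteq \{1,\ldots,n\}$, a set partition $(\B,\E[1])$ of $\{1,\ldots,n+1\}$ forces $1 \in \B$ and $n \in \E$, so in part $(i)$ the witnessing intervals are simply $\llrr{1,e}$ (when $b(K_0) = e+1 \in \E[1]$) and $\llrr{b,n}$ (when $e(K_0)+1 = b \in \B$); no iteration or $\max$/$\min$ bookkeeping is needed. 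Likewise in part $(ii)$ the fallback $K_0 = \llrr{m,n}$ is never ``blocked'': $b(K_0)=m \notin \E[1]$ by hypothesis and $e(K_0)+1 = n+1 \notin \B$ unconditionally, so maximality forces $\llrr{m,n} \in \mathscr{J}$ and hence $m \in \B$, a contradiction; the case $m = n+1$ is handled by adding $\llrr{1,n}$ to conclude $n \in \E$. With those two boundary observations written out, your outline is a complete proof.
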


Another relevant key to proving \cref{thm:typeACJRcombi} is to show that the notion of canonical Jordan recoverability on type $A$ quivers does not depend on the orientation of the quiver. This, indeed, is a consequence of the techniques we will not detail here.

Define the subcategory $\opC(\B,\E) = \Cat(\opJ(\B,\E))$ for any $\B,\E \subseteq \{1,\ldots,n\}$. We can state the following result. 

\begin{theorem} \label{thm:CJRmaxcomb}
    Let $Q$ be an $A_n$ type quiver. A subcategory $\mathscr{C} \subseteq \rep(Q)$ is a maximal canonically Jordan recoverable subcategory if and only if $\mathscr{C} = \opC(\B,\E)$ for some $\B,\E \subseteq \{1,\ldots,n\}$ such that $(\B,\E[1])$ is a set partition of $\{1,\ldots,n+1\}$.
\end{theorem}

\begin{ex} \label{ex:A7typeMCJR} By taking back the $A_7$ type quiver $Q$ in \cref{ex:A7typeGS}, we explicit, in \cref{fig:MaxCJR1}, a maximal canonically Jordan recoverable subcategory of $\rep(Q)$ given by the pair $(\B,\E)$ with $\B = \{1,2,3,5\}$ and $\E = \{3,5,6,7\}$.
    \begin{figure}[!ht]
    \leavevmode\\[-0.5ex]  
    \centering
    \begin{tikzpicture}
                \begin{scope}[line width=.3mm, ->, >= angle 60]
					\node (2) at (0,0){\scalebox{.7}{$\llrr{2}$}};
					\node (12) at (1,1){\scalebox{.7}{$\llrr{1,2}$}};
					\node[rectangle,line width=0.2mm,fill=Plum!20,draw] (2345) at (1,-1){\scalebox{.7}{$\llrr{2,5}$}};
					\node (45) at (0,-2){\scalebox{.7}{$\llrr{4,5}$}};
                    \node[rectangle,line width=0.2mm,fill=Plum!20,draw] (5) at (-1,-3){\scalebox{.7}{$\llrr{5}$}};
                    \node[rectangle,line width=0.2mm,fill=Plum!20,draw] (567) at (0,-4){\scalebox{.7}{$\llrr{5,7}$}};
                    \node (7) at (-1,-5){\scalebox{.7}{
							$\llrr{7}$}};
                    \node (56)[rectangle,line width=0.2mm,fill=Plum!20,draw] at (1,-5){\scalebox{.7}{$\llrr{5,6}$}
                                };
                     \node (4) at (3,-5){\scalebox{.7}{$\llrr{4}$}};
                     \node[rectangle,line width=0.2mm,fill=Plum!20,draw] (23) at (5,-5){\scalebox{.7}{$\llrr{2,3}$}};
                    \node (1) at (7,-5){\scalebox{.7}{$\llrr{1}$}};
                    \node (456) at (2,-4){\scalebox{.7}{$\llrr{4,6}$}};
                    \node (234) at (4,-4){\scalebox{.7}{$\llrr{2,4}$}};
                     \node[rectangle,line width=0.2mm,fill=Plum!20,draw] (123) at (6,-4){\scalebox{.7}{$\llrr{1,3}$}};
                     \node (4567) at (1,-3){\scalebox{.7}{$\llrr{4;7}$}};
                     \node[rectangle,line width=0.2mm,fill=Plum!20,draw] (23456) at (3,-3){\scalebox{.7}{$\llrr{2,6}$}};
                     \node (1234) at (5,-3){\scalebox{.7}{$\llrr{1,4}$}};
                     \node[rectangle,line width=0.2mm,fill=Plum!20,draw] (3) at (7,-3){\scalebox{.7}{$\llrr{3}$}};
					\node[rectangle,line width=0.2mm,fill=Plum!20,draw] (12345) at (2,0){\scalebox{.7}{$\llrr{1,5}$}};
					\node[rectangle,line width=0.2mm,fill=Plum!20,draw] (234567) at (2,-2){\scalebox{.7}{$\llrr{2,7}$}};
                    \node[rectangle,line width=0.2mm,fill=Plum!20,draw] (123456) at (4,-2){\scalebox{.7}{$\llrr{1,6}$}};
					\node (34) at (6,-2){\scalebox{.7}{$\llrr{3,4}$}};
					\node[rectangle,line width=0.2mm,fill=Plum!20,draw] (1234567) at (3,-1){\scalebox{.7}{$\llrr{1,7}$}};
                    \node[rectangle,line width=0.2mm,fill=Plum!20,draw] (3456) at (5,-1){\scalebox{.7}{$\llrr{3,6}$}};
                    \node[rectangle,line width=0.2mm,fill=Plum!20,draw] (34567) at (4,0){\scalebox{.7}{$\llrr{3,7}$}};
                    \node (6) at (6,0){\scalebox{.7}{$\llrr{6}$}};
                    \node[rectangle,line width=0.2mm,fill=Plum!20,draw] (345) at (3,1){\scalebox{.7}{$\llrr{3,5}$}};
                    \node (67) at (5,1){\scalebox{.7}{$\llrr{6,7}$}};
					\draw (2) -- (12);
					\draw (2) -- (2345);
					\draw (45) -- (2345);
					\draw (12) -- (12345);
					\draw (2345) -- (12345);
					\draw (2345) -- (234567);
                    \draw (12345) -- (345);

                    \draw (5) -- (45);
                    \draw (7) -- (567);
                    \draw (567) -- (4567);
                    \draw (4567) -- (234567);
                    \draw (234567) -- (1234567);
                    \draw (1234567) -- (34567);
                    \draw (34567) -- (67);
                    \draw (56) -- (456);
                    \draw (456) -- (23456);
                    \draw (23456) -- (123456);
                    \draw (123456) -- (3456);
                    \draw (3456) -- (6);
                    \draw (4) -- (234);
                    \draw (234) -- (1234);
                    \draw (1234) -- (34);
                    \draw (23) -- (123);
                    \draw (123) -- (3);
					
					\draw (5) -- (567);
                    \draw (567) -- (56);
                    \draw (45) -- (4567);
                    \draw (4567) -- (456);
                    \draw (456) -- (4);
                    \draw (234567) -- (23456);
                    \draw (23456) -- (234);
                    \draw (234) -- (23);
                    \draw (12345) -- (1234567);
                    \draw (1234567) -- (123456);
                    \draw (123456) -- (1234);
                    \draw (1234) -- (123);
                    \draw (123) -- (1);
                    \draw (345) -- (34567);
                    \draw (34567) -- (3456);
                    \draw (3456) -- (34);
                    \draw (34) -- (3);
                    \draw (67) -- (6);	
				\end{scope}
    \end{tikzpicture}
    \caption[fragile]{Example of a maximal canonically Jordan recoverable subcategory $\mathscr{C} = \add \left( \protect\begin{tikzpicture}[baseline={(0,-.1)}]
        \node[rectangle, line width=0.2mm, fill=Plum!20,minimum size=1em,draw] at (0,0){$ $};
    \end{tikzpicture}\right)$ in $\rep(Q)$, for $Q$ given in \cref{fig:GSCalc1}.}
    \label{fig:MaxCJR1}
\end{figure}
\end{ex}

There is also a representation-theoretic interpretation of \cref{thm:typeACJRcombi}, and it starts by noticing the following result, an obvious consequence from \cref{thm:exttypeA}.

\begin{lemma} \label{lem:adj=arrow} Let $Q$ be an $A_n$ type quiver. Then two intervals $K,L \in \mathcal{I}_n$ are adjacent if and only if one of the following assertions holds:
\begin{enumerate}[label=$\bullet$, itemsep=1mm]
    \item there exists a short exact sequence \[\begin{tikzcd}
	0 & X_K & E  & X_L&  0,
	\arrow[from=1-1, to=1-2]
	\arrow[tail, from=1-2, to=1-3]
	\arrow[two heads,from=1-3, to=1-4]
	\arrow[from=1-4, to=1-5]
\end{tikzcd} \] such that $E \in \Ind(Q)$; or,
    \item there exists a short exact sequence \[\begin{tikzcd}
	0 & X_L & E  & X_K&  0,
	\arrow[from=1-1, to=1-2]
	\arrow[tail, from=1-2, to=1-3]
	\arrow[two heads,from=1-3, to=1-4]
	\arrow[from=1-4, to=1-5]
\end{tikzcd} \] such that $E \in \Ind(Q)$.
\end{enumerate}
\end{lemma}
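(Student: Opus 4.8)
The plan is to deduce the statement directly from \cref{thm:exttypeA}, treating the two implications separately; no new combinatorial input is needed beyond the elementary observation about how two disjoint intervals can have an interval as union.

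For the ``if'' direction, suppose we are given a short exact sequence $0 \to X_K \to E \to X_L \to 0$ with $E \in \Ind(Q)$ (the case with $K$ and $L$ swapped is symmetric, applying \cref{thm:exttypeA} with $D \cong X_L$ and $F \cong X_K$). Since $K$ and $L$ are nonempty, both $X_K$ and $X_L$ are nonzero, so the sequence cannot split, for otherwise $E \cong X_K \oplus X_L$ would be decomposable. Hence \cref{thm:exttypeA}\,(a) applies and forces $K \cap L = \varnothing$ together with $K \cup L \in \mathcal{I}_n$. Writing $K = \llrr{a,b}$ and $L = \llrr{c,d}$, disjointness of these nonempty intervals means $b < c$ or $d < a$; and for $K \cup L$ to have no gap we must have $c = b+1$ in the first case and $a = d+1$ in the second. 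In either case $b(L) = e(K)+1$ or $b(K) = e(L)+1$, i.e.\ $K$ and $L$ are adjacent.

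For the ``only if'' direction, assume $K$ and $L$ are adjacent, say $b(L) = e(K)+1$ (the other case is symmetric). Then $K \cap L = \varnothing$ and $J := K \cup L = \llrr{b(K),e(L)}$ is an interval, so $X_J$ is indecomposable. I would then exhibit $X_J$ as the middle term of one of the two required sequences by inspecting the arrow of $Q$ joining $e(K)$ and $e(K)+1 = b(L)$: restricting $X_J$, exactly one of $X_K$, $X_L$ is a subrepresentation and the other is the corresponding quotient, the choice being dictated by the orientation of that arrow. Concretely, if the arrow is $e(K) \longrightarrow e(K)+1$, then the subspaces $(X_J)_q$ with $q \in L$ form a subrepresentation isomorphic to $X_L$ with cokernel $X_K$, giving $0 \to X_L \to X_J \to X_K \to 0$; if instead the arrow is $e(K) \longleftarrow e(K)+1$, then the subspaces $(X_J)_q$ with $q \in K$ form a subrepresentation isomorphic to $X_K$ with cokernel $X_L$, giving $0 \to X_K \to X_J \to X_L \to 0$. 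Either way the middle term $X_J$ is indecomposable, which is what is required.

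There is essentially no serious obstacle here; the only points requiring a little care are the bookkeeping of the connecting arrow's direction in the ``only if'' direction (equivalently, deciding which of $K$, $L$ sits as a subrepresentation and which as a quotient of $X_J$), and recording that a short exact sequence with indecomposable middle term and two nonzero outer terms is automatically non-split, so that \cref{thm:exttypeA}\,(a)---rather than the diamond case (b)---is the relevant alternative.
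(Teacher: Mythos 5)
Your argument is correct, and it follows the route the paper intends: the paper offers no written proof, simply calling the lemma an obvious consequence of \cref{thm:exttypeA}, and your write-up is exactly the natural filling-in of that claim (non-splitness plus case (a) of \cref{thm:exttypeA} for one direction, and the explicit sub/quotient construction of $X_{K\cup L}$ governed by the orientation of the connecting arrow for the other). The only point worth flagging is that you correctly recognized that \cref{thm:exttypeA} alone does not assert the \emph{existence} of a non-split extension, so the explicit construction in the ``only if'' direction is genuinely needed and not redundant.
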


This allows us to state the following result.

 \begin{theorem} \label{thm:CJRreptheory}
     Let $Q$ be an $A_n$ type quiver. A subcategory $\mathscr{C} \subset \rep(Q)$ is canonically Jordan recoverable if and only if for any nonsplit short exact sequence \[\begin{tikzcd}
	0 & E & F  & G &  0,
	\arrow[from=1-1, to=1-2]
	\arrow[tail, from=1-2, to=1-3]
	\arrow[two heads,from=1-3, to=1-4]
	\arrow[from=1-4, to=1-5]
\end{tikzcd}\] we have $F \notin \Ind(Q)$ whenever $E,G \in \mathscr{C}$.
 \end{theorem}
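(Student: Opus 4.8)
The plan is to reduce the statement to the two results that have just been recorded: the combinatorial characterization \cref{thm:typeACJRcombi} (canonical Jordan recoverability $\iff$ $\Int(\mathscr{C})$ adjacency-avoiding) and the extension-theoretic reformulation of adjacency in \cref{lem:adj=arrow}. Since, by \cref{conv:addsubcat}, $\mathscr{C}$ is full, additive and closed under summands, it is entirely determined by $\Int(\mathscr{C})$, so the whole task is to match up ``no nonsplit short exact sequence with indecomposable middle term has both ends in $\mathscr{C}$'' with ``$\Int(\mathscr{C})$ is adjacency-avoiding''.

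For the direction ``extension condition $\Rightarrow$ canonically Jordan recoverable'', I would argue by contraposition on \cref{thm:typeACJRcombi}: assuming $\Int(\mathscr{C})$ is \emph{not} adjacency-avoiding, pick adjacent $K,L\in\Int(\mathscr{C})$ (note $K\neq L$ automatically, since $b(J)=e(J)+1$ is impossible). By \cref{lem:adj=arrow} there is a short exact sequence with indecomposable middle term whose ends are $X_K$ and $X_L$ in one of the two orders; it is nonsplit because an indecomposable cannot be isomorphic to $X_K\oplus X_L$, and both $X_K,X_L\in\mathscr{C}$. This contradicts the extension hypothesis, so $\Int(\mathscr{C})$ must be adjacency-avoiding, and \cref{thm:typeACJRcombi} finishes this direction.

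For the converse, assume $\mathscr{C}$ is canonically Jordan recoverable, so $\Int(\mathscr{C})$ is adjacency-avoiding, and suppose toward a contradiction that there is a nonsplit short exact sequence $0\to E\to F\to G\to 0$ with $E,G\in\mathscr{C}$ and $F\cong X_M$ indecomposable. The key is a purely combinatorial observation: $\vdim F=\vdim E+\vdim G$ and $\vdim X_M$ is $0/1$-valued, so writing $E=\bigoplus_i X_{K_i}$ and $G=\bigoplus_j X_{L_j}$ (all $K_i,L_j\in\Int(\mathscr{C})$ by closure under summands), the intervals $K_i,L_j$ are pairwise disjoint and their union is $M$. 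An interval partitioned into intervals is a disjoint union of \emph{consecutive} subintervals $I_1,\dots,I_m$ with $b(I_{t+1})=e(I_t)+1$; nonsplitness forces $m\geq 2$ (if $m=1$, then one of $E,G$ is zero and the other is $X_M$, and the sequence splits). Then $I_1$ and $I_2$ are adjacent intervals both lying in $\Int(\mathscr{C})$, contradicting adjacency-avoidance.

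Once \cref{thm:typeACJRcombi} and \cref{lem:adj=arrow} are available, the proof is pure bookkeeping; the only step needing a little care is the converse, namely checking that an indecomposable middle term forces the interval summands of $E$ and $G$ to tile $M$ consecutively and that the single-tile case is exactly the split sequence. I expect this mild combinatorial point — not any homological subtlety — to be the main obstacle.
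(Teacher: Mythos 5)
Your proposal is correct and follows essentially the route the paper intends: the theorem is presented as a direct consequence of \cref{thm:typeACJRcombi} together with \cref{lem:adj=arrow}, and you derive exactly that, with the only added content being the (correct) dimension-vector tiling argument showing that an indecomposable middle term forces two adjacent intervals in $\Int(\mathscr{C})$ when $E$ or $G$ is decomposable. Nothing further is needed.
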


\begin{remark}\label{rem:TwoThings} Two things:
\begin{enumerate}[label=$\bullet$,itemsep=1mm]
    \item One can restate the result by saying that a subcategory $\mathscr{C} \subset \rep(Q)$ is canonically Jordan recoverable if and only if all the short exact sequences whose extremities are in $\mathscr{C}$ must be in $\mathcal{E}_\diamond$.
    \item One can do a parallel with the definition of \emph{maximal almost rigid} representations \cite{BGMS23}. A representation $M \in \rep(Q)$ is said to be \new{maximal almost rigid} if for any nonsplit short exact sequence \[\begin{tikzcd}
	0 & E & F  & G &  0,
	\arrow[from=1-1, to=1-2]
	\arrow[tail, from=1-2, to=1-3]
	\arrow[two heads,from=1-3, to=1-4]
	\arrow[from=1-4, to=1-5]
\end{tikzcd}\] where $E$ and $G$ are indecomposable summands of $M$, we have $F \in \ind(Q)$.
\end{enumerate}
\end{remark}

In the following section, we exhibit a strong link between $\GS$ operators on $\rep(Q)$ for  $\mathcal{E}_\diamond$ and maximal canonically Jordan recoverable subcategories.

\section{Type ADE algebras case}
\label{sec:TypeA}
\pagestyle{plain}

\begin{conv}\label{conv:typeA}
    We fix $Q$ an $ADE$ type quiver. We endow $\rep(Q)$ with an exact structure $\mathcal{E}$.
\end{conv}

Denote by $\Tilt(Q)$ the collection $\Tilt(\rep(Q))$.
Given any finite collection of additive full subcategories $\mathscr{C}_i \subseteq \rep Q$, we write $\bigoplus_i \mathscr{C}_i$ for the additive full subcategory of $\rep(Q)$ generated by all the subcategories $\mathscr{C}_i$.

In this section, our aim is to describe, for any $T \in \Tilt(Q)$, that the subcategory $\GS_\mathcal{E}(T)$ in terms of a full and additive subcategory generated by tilting objects that are related to $T$ via so-called \emph{$\mathcal{E}$-mutations}. We apply this description to canonically Jordan recoverable subcategories, by restricting ourselves to type $A$ quivers. 

\subsection{Tilting \texorpdfstring{$\mathcal{E}$}{TEXT}-mutations}
\label{ss:Tiltmut} In this subsection, we recall the notion of \emph{$\mathcal{E}$-mutations} on $\Tilt(Q)$, and we enumerate crucial and well-known results on left and right approximations, stated in our type $ADE$ quivers framework.

\begin{definition}[\cite{AS94}] \label{def:leftrightminapprox}
Let $\mathscr{C} \subseteq \rep(Q)$ be a subcategory, and $E \in \rep(Q)$. 
\begin{enumerate}[label=$\bullet$, itemsep=1mm]
    \item A \new{left $\mathscr{C}$-approximation} of $E$ is a morphism $\begin{tikzcd}
	f: E & C
	\arrow[from=1-1, to=1-2]
	\end{tikzcd}$ with $C \in \mathscr{C}$ such that every morphism $\begin{tikzcd}
	\varphi: E & C'
	\arrow[from=1-1, to=1-2]
	\end{tikzcd}$ where $C' \in \mathscr{C}$ factors through $f$.
    \[\begin{tikzcd}
	 E & & C' \\
     & C & 
	\arrow["\forall\varphi",from=1-1, to=1-3]
        \arrow["f"', from=1-1, to=2-2]
        \arrow["\exists \psi"', dashed, from=2-2,to=1-3]
	\end{tikzcd}\]
    \item Such a left $\mathscr{C}$-approximation of $E$ is said to be \new{minimal} if any map $\begin{tikzcd}
	\alpha: C & C
	\arrow[from=1-1, to=1-2]
	\end{tikzcd}$ such that $\alpha f = f$ is bijective.
    Moreover, if $f$ is an $\mathcal{E}$-monomorphism, it is a \new{left $(\mathscr{C},\mathcal{E})$-approximation} of $E$.
    \item A \new{right $\mathscr{C}$-approximation} of $E$ is a morphism $\begin{tikzcd}
	g: C & E
	\arrow[from=1-1, to=1-2]
	\end{tikzcd}$ with $C \in \mathscr{C}$ such that every morphism $\begin{tikzcd}
	\varphi: C' & E
	\arrow[from=1-1, to=1-2]
	\end{tikzcd}$ where $C' \in \mathscr{C}$ factors through $g$.
    \[\begin{tikzcd}
	 C' & & E \\
     & C & 
	\arrow["\forall\varphi",from=1-1, to=1-3]
        \arrow["\exists \psi"',dashed, from=1-1, to=2-2]
        \arrow["g"', from=2-2,to=1-3]
	\end{tikzcd}\]
    \item Such a right $\mathscr{C}$-approximation of $E$ is said to be \new{minimal} if any map $\begin{tikzcd}
	\beta: C & C
	\arrow[from=1-1, to=1-2]
	\end{tikzcd}$ such that $g \beta = g$ is bijective.
    Moreover, if $g$ is an $\mathcal{E}$-epimorphsim, it is a  \new{right $(\mathscr{C},\mathcal{E})$-approximation} of $E$. 
\end{enumerate}
\end{definition}

This first proposition states that, given a subcategory $\mathscr{C} \subset \rep(Q)$, from any $\mathscr{C}$-approximation, we can build a minimal one.

\begin{prop} \label{prop:Approxismonoepi}
    Let $\mathscr{C} \subseteq \rep(Q)$ be a subcategory. Let $E \in \rep(Q)$. The following assertions hold:
    \begin{enumerate}[label=$(\roman*)$, itemsep=1mm]
        \item if $E$ admits a left $\mathscr{C}$-approximation, then $E$ admits a minimal left $\mathscr{C}$-approximation.
        \item if $E$ admits a right $\mathscr{C}$-approximation, then $E$ admits a minimal right $\mathscr{C}$-approximation.
    \end{enumerate}
\end{prop}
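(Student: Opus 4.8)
The plan is to prove both assertions by the same mechanism, treating the right-approximation case and deducing the left one by duality, since $\rep(Q)$ is a Krull--Schmidt category with finite-dimensional Hom-spaces (being equivalent to the module category over a finite-dimensional algebra). First I would fix a right $\mathscr{C}$-approximation $g : C \longrightarrow E$ and observe that among all right $\mathscr{C}$-approximations of $E$ one may extract one whose source has minimal number of indecomposable summands; the claim is that such a $g$ is automatically minimal in the sense of \cref{def:leftrightminapprox}. For this, the key step is to show that if $\beta : C \longrightarrow C$ satisfies $g\beta = g$, then $\beta$ is an isomorphism. Using the fact that $\End(C)$ is a semiperfect ring (again because $\rep(Q)$ is Krull--Schmidt), if $\beta$ were not invertible it would lie in the radical of $\End(C)$, or more precisely one can decompose $C$ and use Fitting's lemma: write $\beta$ via its matrix over the indecomposable decomposition of $C$ and argue that some summand of $C$ can be discarded while still retaining the approximation property, contradicting minimality of the number of summands.

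More concretely, the main body of the argument runs as follows. Suppose $g : C \longrightarrow E$ is a right $\mathscr{C}$-approximation with $C$ having the least possible number of indecomposable summands among all right $\mathscr{C}$-approximations of $E$. Let $\beta : C \longrightarrow C$ with $g\beta = g$. If $\beta$ is not an automorphism, then by the Krull--Schmidt property there is an indecomposable summand $C_i$ of $C$ on which $\beta$ acts (in the appropriate block sense) non-bijectively; applying Fitting's lemma to the composite $C_i \hookrightarrow C \xrightarrow{\beta} C \twoheadrightarrow C_i$ and iterating, one produces an idempotent $e \in \End(C)$ with $e \neq \mathrm{id}$, $\mathrm{im}(e)$ a proper summand $C'$ of $C$, and such that $g$ still factors through the inclusion $C' \hookrightarrow C$, hence $g|_{C'} : C' \longrightarrow E$ is still a right $\mathscr{C}$-approximation. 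This contradicts the minimality of the number of summands of $C$. Therefore $\beta$ is an automorphism and $g$ is a minimal right $\mathscr{C}$-approximation. The dual argument — working with $\End(C)^{\mathrm{op}}$ and left approximations — gives assertion $(i)$, so the proposition follows.

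The main obstacle I anticipate is making the ``discard a summand'' step fully rigorous: one needs that $g\beta = g$ together with $\beta$ non-invertible forces a genuine proper direct summand $C'$ of $C$ through which $g$ still factors as an approximation, rather than merely a non-split filtration. The clean way to do this is to invoke that over a Krull--Schmidt category every endomorphism $\beta$ of $C$ decomposes, after choosing a suitable direct-sum decomposition, as $\beta' \oplus \nu$ with $\beta'$ an automorphism and $\nu$ in the radical (this is essentially Fitting's lemma for the semiperfect ring $\End(C)$); then $g = g\beta = g\beta' \oplus g\nu$ forces $g$ restricted to the radical-part summand to be expressible via the automorphism part, whence the approximation property descends to the smaller summand. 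Once this structural lemma is in place, the contradiction with minimality is immediate, and the only remaining routine check is that the restriction of a right $\mathscr{C}$-approximation to a summand through which it still factors is again a right $\mathscr{C}$-approximation, which is direct from the factorization definition.
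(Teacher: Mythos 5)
Your proof is correct. Note that the paper itself gives no proof of this proposition: it is recalled as a classical fact from the approximation theory of Auslander and Smal\o{} (the reference attached to \cref{def:leftrightminapprox}), so there is no in-paper argument to compare against, and what you supply is exactly the standard Krull--Schmidt argument a reader would be expected to reconstruct. Two remarks on the execution. First, the ``discard a summand'' step is cleanest if you apply Fitting's lemma directly to $\beta$ rather than to the diagonal components $C_i \hookrightarrow C \xrightarrow{\beta} C \twoheadrightarrow C_i$: since $C$ has finite length, there is $n$ with $C = \operatorname{Im}(\beta^n) \oplus \operatorname{Ker}(\beta^n)$, $\beta$ restricts to an automorphism of the first summand and a nilpotent endomorphism of the second (so your claimed decomposition $\beta = \beta' \oplus \nu$ does hold, because $\beta$ commutes with $\beta^n$), and $g\beta = g$ forces $g\beta^n = g$, hence $g$ vanishes on $\operatorname{Ker}(\beta^n)$, which is nonzero if $\beta$ is not invertible. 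The restriction of $g$ to $\operatorname{Im}(\beta^n)$ is then still a right $\mathscr{C}$-approximation (anything factoring through $g$ factors through this restriction precomposed with the projection) with strictly fewer indecomposable summands, giving the contradiction; this closes the gap you flag in your final paragraph. Second, the reduction of $(i)$ to $(ii)$ by duality is legitimate because the $\mathbb{K}$-linear duality $\Hom_{\mathbb{K}}(-,\mathbb{K})$ identifies $\rep(Q)$ with $\rep(Q^{\op})^{\op}$ and exchanges left and right approximations; alternatively the left-handed argument can be run verbatim with the minimality taken over targets instead of sources.
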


The following statement asserts that, for objects chosen rightfully relative to $\mathscr{C}$, any $\mathscr{C}$-approximation is an $\mathcal{E}$-epimorphism or an $\mathcal{E}$-monomorphism.

\begin{prop} \label{prop:Approxismonoepi2}
Let $\mathscr{C} \subseteq \rep(Q)$ be a subcategory. Let $E \in \rep(Q)$. The following assertions hold:
    \begin{enumerate}[label=$(\roman*)$, itemsep=1mm]
        \item if $E \in \Gen_\mathcal{E}(\mathscr{C})$, then any minimal right $\mathscr{C}$-approximation $\begin{tikzcd}
	g: C & E
	\arrow[from=1-1, to=1-2]
        \end{tikzcd}$ is an $\mathcal{E}$-epimorphism.
        \item if $E \in \Sub_\mathcal{E}(\mathscr{C})$, then any minimal left $\mathscr{C}$-approximation $\begin{tikzcd}
	f: E & C
	\arrow[from=1-1, to=1-2]
        \end{tikzcd}$ is a $\mathcal{E}$-monomorphism.
    \end{enumerate}
\end{prop}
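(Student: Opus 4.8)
The plan is to prove the two statements of \cref{prop:Approxismonoepi2} separately, using the hereditary hypothesis (\cref{conv:hereditary}), the defining property of the $\Gen_\mathcal{E}$- and $\Sub_\mathcal{E}$-operators, and the obscure axiom (\cref{lem:obscure}). I will treat case $(i)$ in detail; case $(ii)$ follows by a dual argument.

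\medskip

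\textbf{Proof of $(i)$.} Let $E \in \Gen_\mathcal{E}(\mathscr{C})$, so there is some $C_0 \in \mathscr{C}$ and an $\mathcal{E}$-epimorphism $p : C_0 \twoheadrightarrow E$. Let $g : C \to E$ be a minimal right $\mathscr{C}$-approximation (which exists by \cref{prop:Approxismonoepi}, since $p$ exhibits a right $\mathscr{C}$-approximation and minimality can be arranged). By the approximation property applied to the morphism $p : C_0 \to E$ with $C_0 \in \mathscr{C}$, there exists $h : C_0 \to C$ such that $g \circ h = p$. Since $p$ is an $\mathcal{E}$-epimorphism and $p = g \circ h$ factors through $g$, the obscure axiom (\cref{lem:obscure}, dual version for $\mathcal{E}$-epimorphisms) applies directly: if $g \circ h$ is an $\mathcal{E}$-epimorphism, then $g$ is an $\mathcal{E}$-epimorphism. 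This immediately gives the conclusion. (I should double-check the exact form of \cref{lem:obscure}: it says that if $j \circ i$ is an $\mathcal{E}$-monomorphism then $i$ is, and dually if $j \circ i$ is an $\mathcal{E}$-epimorphism then $j$ is; here $j = g$, $i = h$, and $j \circ i = p$, so the hypothesis that the composite is an $\mathcal{E}$-epimorphism yields that $g$ is one. No minimality is even needed for this direction — only the existence of \emph{some} right $\mathscr{C}$-approximation through which the $\mathcal{E}$-epimorphism $p$ factors, which is automatic from $C_0 \in \mathscr{C}$.)

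\medskip

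\textbf{Proof of $(ii)$.} Dually, let $E \in \Sub_\mathcal{E}(\mathscr{C})$, so there is $C_0 \in \mathscr{C}$ and an $\mathcal{E}$-monomorphism $\iota : E \hookrightarrow C_0$. Let $f : E \to C$ be a minimal left $\mathscr{C}$-approximation (existing by \cref{prop:Approxismonoepi}). By the left-approximation property applied to $\iota : E \to C_0$ with $C_0 \in \mathscr{C}$, there is $k : C \to C_0$ with $k \circ f = \iota$. Since $\iota = k \circ f$ is an $\mathcal{E}$-monomorphism, \cref{lem:obscure} applied with $i = f$, $j = k$ gives that $f$ is an $\mathcal{E}$-monomorphism, as required.

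\medskip

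\textbf{Main obstacle.} There is no serious obstacle: the statement is essentially an immediate corollary of the obscure axiom combined with the definitions of $\Gen_\mathcal{E}$ and $\Sub_\mathcal{E}$. The only point demanding a little care is matching the direction of the factorization to the correct half of \cref{lem:obscure} — namely, that a right approximation $g$ receives a factorization $p = g \circ h$ of the witnessing $\mathcal{E}$-epimorphism $p$, so that $g$ sits in the \emph{outer} (post-composition) slot, which is exactly the slot for which the obscure axiom concludes ``$\mathcal{E}$-epimorphism"; and symmetrically a left approximation $f$ sits in the \emph{inner} (pre-composition) slot for the $\mathcal{E}$-monomorphism version. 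One might worry whether minimality of the approximation is needed, but in fact it is not for this statement — it is presumably included in the hypotheses because these minimal approximations are the ones used subsequently to build the $\mathcal{E}$-exact mutation sequences.
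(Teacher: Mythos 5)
Your proof is correct. The paper states \cref{prop:Approxismonoepi2} without proof, as a recalled standard fact, so there is no in-paper argument to compare against; your route — factor the witnessing $\mathcal{E}$-epimorphism $p:C_0\twoheadrightarrow E$ (resp.\ $\mathcal{E}$-monomorphism $\iota:E\rightarrowtail C_0$) through the approximation and invoke the obscure axiom, whose epi version concludes on the outer factor and whose mono version concludes on the inner factor — is exactly the standard one, and you match the factorization slots to \cref{lem:obscure} correctly. The only step you gloss over is that $E\in\Gen_\mathcal{E}(\mathscr{C})$ really does yield an $\mathcal{E}$-epimorphism from a single object of $\mathscr{C}$: since $\Gen_\mathcal{E}(\mathscr{C})$ is the additive closure of cokernels of $\mathcal{E}$-monomorphisms into $\mathscr{C}$, one composes with split projections and takes direct sums, both of which preserve $\mathcal{E}$-epimorphisms by \ref{ES1} and \ref{ES2}; this is routine but worth a line. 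Your observation that minimality is not needed is also correct.
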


\begin{lemma} \label{lem:TiltandApprox} Let $M \in \rep (Q)$. The following assertions hold:
\begin{enumerate}[label=$(\roman*)$, itemsep=1mm]
    \item \label{GenET2} Any $E \in \Gen_\mathcal{E}(M)$ admits a right $(\add(M), \mathcal{E})$-approximation; and,
    \item \label{SubET2} Any $E \in \Sub_\mathcal{E}(M)$ admits a left $(\add(M),\mathcal{E})$-approximation. 
\end{enumerate}
\end{lemma}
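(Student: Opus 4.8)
The plan is to prove the two dual statements of \cref{lem:TiltandApprox} by explicit construction, using the description of objects in $\Gen_\mathcal{E}(M)$ and $\Sub_\mathcal{E}(M)$ together with the standard fact that a Krull--Schmidt category with finitely many indecomposables always admits $\add(M)$-approximations. Since everything is dual under the opposite category, I will do case $(i)$ carefully and indicate that $(ii)$ follows symmetrically (or just by noting that $\rep(Q)^{\mathrm{op}} \cong \rep(Q^{\mathrm{op}})$ and that the exact structure dualizes).

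\medskip

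First I would recall that, because $\rep(Q)$ is Krull--Schmidt with finitely many indecomposable objects (\cref{thm:Gab}), every $E \in \rep(Q)$ admits \emph{some} right $\add(M)$-approximation $g_0 : C_0 \to E$: take $C_0$ to be a sufficiently large sum of copies of the indecomposable summands of $M$ together with a basis of each $\Hom(M_j, E)$. By \cref{prop:Approxismonoepi}$(ii)$, $E$ then admits a \emph{minimal} right $\add(M)$-approximation $g : C \to E$ with $C \in \add(M)$. The content of the lemma is that this $g$ can moreover be taken to be an $\mathcal{E}$-epimorphism when $E \in \Gen_\mathcal{E}(M)$ — but that is exactly \cref{prop:Approxismonoepi2}$(i)$. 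So the proof of $(i)$ assembles as: existence of a right $\add(M)$-approximation (Krull--Schmidt), upgrade to a minimal one (\cref{prop:Approxismonoepi}), then observe it is an $\mathcal{E}$-epimorphism (\cref{prop:Approxismonoepi2}), hence a right $(\add(M),\mathcal{E})$-approximation by definition. The proof of $(ii)$ is word-for-word dual, invoking \cref{prop:Approxismonoepi}$(i)$ and \cref{prop:Approxismonoepi2}$(ii)$.

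\medskip

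The main obstacle, and the only place where something genuinely needs checking, is the very first step: that a right $\add(M)$-approximation exists at all. This is where finiteness of the $\Hom$-spaces (automatic over a field, for finite-dimensional representations) and the Krull--Schmidt property are used — one forms $C = \bigoplus_j M_j^{\,\dim_\mathbb{K}\Hom(M_j,E)}$ over the indecomposable summands $M_j$ of $M$ and lets $g$ be the canonical evaluation map, which is a right $\add(M)$-approximation essentially by construction, since any $\varphi : C' \to E$ with $C' \in \add(M)$ decomposes along the summands of $C'$ and each component lands in the span of the chosen basis maps. Everything after that is a direct citation of the two preceding propositions, so I expect the write-up to be short.

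\medskip

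Concretely:

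\begin{proof}
We prove $(i)$; assertion $(ii)$ follows by the dual argument, replacing right approximations, $\Gen_\mathcal{E}$ and $\mathcal{E}$-epimorphisms by left approximations, $\Sub_\mathcal{E}$ and $\mathcal{E}$-monomorphisms, and invoking \cref{prop:Approxismonoepi}$(i)$ and \cref{prop:Approxismonoepi2}$(ii)$ in place of their counterparts.

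Let $E \in \Gen_\mathcal{E}(M)$, and write $M \cong \bigoplus_{j} M_j$ with each $M_j$ indecomposable. Since $\rep(Q)$ is a $\mathbb{K}$-linear Krull--Schmidt category and all $\Hom$-spaces are finite-dimensional, the object $C = \bigoplus_{j} M_j^{\,\dim_\mathbb{K} \Hom(M_j,E)}$ lies in $\add(M)$, and the canonical evaluation morphism $g_0 : C \longrightarrow E$ is a right $\add(M)$-approximation of $E$: any morphism $\varphi : C' \to E$ with $C' \in \add(M)$ decomposes as a sum of morphisms from indecomposable summands of $C'$, each of which factors through the corresponding block of $C$ by construction.

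By \cref{prop:Approxismonoepi}$(ii)$, $E$ therefore admits a minimal right $\add(M)$-approximation $g : C' \longrightarrow E$ with $C' \in \add(M)$. Since $E \in \Gen_\mathcal{E}(M)$, \cref{prop:Approxismonoepi2}$(i)$ applies and $g$ is an $\mathcal{E}$-epimorphism. Hence $g$ is a right $(\add(M), \mathcal{E})$-approximation of $E$, which proves $(i)$.
\end{proof}
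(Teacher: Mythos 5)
Your proof is correct: the paper states this lemma without proof, listing it among ``crucial and well-known results'' on approximations, and your argument is exactly the standard one — existence of a right $\add(M)$-approximation via the evaluation map out of $\bigoplus_j M_j^{\,\dim_\mathbb{K}\Hom(M_j,E)}$, reduction to a minimal one by \cref{prop:Approxismonoepi}, and the $\mathcal{E}$-epimorphism property from \cref{prop:Approxismonoepi2}, with the dual reasoning for $(ii)$. Nothing is missing; this is a faithful filling-in of the omitted proof using precisely the propositions the paper supplies for this purpose.
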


\begin{remark}
\label{rem:addMcase}
    In particular, for any given $M \in \rep(Q)$, any object $N \in \rep(Q)$ admits a right and a left $\add(M)$-approximation.
\end{remark}

We can now define the notion of $\mathcal{E}$-mutations on tilting objects of $\rep(Q)$.

\begin{definition} \label{def:leftrightEmutation}
Let $T \in \Tilt(Q)$ and $U \in \ind(Q)$.
\begin{enumerate}[label=$\bullet$, itemsep=1mm]
    \item We say that $T$ admits a \new{left $\mathcal{E}$-mutation} at $U$ if $U$ is an indecomposable summand of $T$, and, by writing $T = \widetilde{T} \oplus U$, we have that $U$ admits a left minimal $(\add(\widetilde{T}),\mathcal{E})$-approximation $f$. In such a case, we define the \new{left $\mathcal{E}$-mutation} of $T$ at $U$ as $\pmb{\mu}_{U,\mathcal{E}}^{-}(T) = \widetilde{T} \oplus \Coker(f)$. 
    \item We say that $T$ admits a \new{right $\mathcal{E}$-mutation} at $U$ if $U$ is an indecomposable summand of $T$, and, by writing $T = \widetilde{T} \oplus U$, we have that $U$ admit a right minimal $(\add(\widetilde{T}),\mathcal{E})$-approximation $g$. In such a case, we define the \new{right $\mathcal{E}$-mutation} of $T$ at $U$ as $\pmb{\mu}_{U,\mathcal{E}}^{+}(T) = \widetilde{T} \oplus \Ker(g)$. 
\end{enumerate}
\end{definition}

The following proposition allows us to talk about $\mathcal{E}$-mutations.

\begin{prop} \label{prop:OneEmutation}
    Let $T \in \Tilt(Q)$ and $U \in \ind(Q)$. Then $T$ admits at most one left $\mathcal{E}$-mutation or at most one right $\mathcal{E}$-mutation at $U$. Moreover if $T$ admits a left $\mathcal{E}$-mutation at $U$, then $T$ does not admits a right $\mathcal{E}$-mutation at $U$.
\end{prop}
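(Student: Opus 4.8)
The plan is to prove the three assertions in the following order: uniqueness of a left $\mathcal{E}$-mutation at $U$ (when one exists), uniqueness of a right $\mathcal{E}$-mutation at $U$ (when one exists), and finally the incompatibility: $T$ cannot simultaneously admit a left and a right $\mathcal{E}$-mutation at $U$. First, I would fix $T = \widetilde{T} \oplus U$ with $U$ indecomposable and recall from Definition \ref{def:leftrightEmutation} that a left $\mathcal{E}$-mutation at $U$ requires a minimal left $(\add(\widetilde{T}),\mathcal{E})$-approximation $f : U \to T'$, with output $\widetilde{T} \oplus \Coker(f)$. For uniqueness, the key point is that a minimal left $\add(\widetilde{T})$-approximation of $U$ is unique up to isomorphism — this is a standard fact about minimal approximations (any two are isomorphic via the defining factorization property together with the minimality condition in Definition \ref{def:leftrightminapprox}), and it is implicit in Proposition \ref{prop:Approxismonoepi}. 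Hence $\Coker(f)$ is determined up to isomorphism, so $\pmb{\mu}_{U,\mathcal{E}}^{-}(T)$ is well-defined. The argument for the right $\mathcal{E}$-mutation is entirely dual, using the uniqueness of a minimal right $(\add(\widetilde{T}),\mathcal{E})$-approximation $g : T'' \to U$ and taking $\Ker(g)$.

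The main obstacle is the third assertion: showing that a left $\mathcal{E}$-mutation at $U$ precludes a right $\mathcal{E}$-mutation at $U$. Here I would argue as follows. Suppose $T$ admits a left $\mathcal{E}$-mutation at $U$, with minimal left $(\add(\widetilde{T}),\mathcal{E})$-approximation $f : U \hookrightarrow T'$, $T' \in \add(\widetilde{T})$, which is an $\mathcal{E}$-monomorphism with $\mathcal{E}$-exact sequence
\[\begin{tikzcd}
0 & U & T' & \Coker(f) & 0. \arrow[from=1-1,to=1-2] \arrow["f",tail,from=1-2,to=1-3] \arrow[two heads,from=1-3,to=1-4] \arrow[from=1-4,to=1-5]
\end{tikzcd}\]
The minimality of $f$ forces $f$ to be a \emph{radical} (non-split) monomorphism — were it split, $U$ would be a summand of $T' \in \add(\widetilde{T})$, contradicting that $T = \widetilde{T} \oplus U$ is basic with $U$ indecomposable. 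Now suppose toward a contradiction that $T$ also admits a right $\mathcal{E}$-mutation at $U$, i.e. a minimal right $(\add(\widetilde{T}),\mathcal{E})$-approximation $g : T'' \twoheadrightarrow U$, $T'' \in \add(\widetilde{T})$, likewise a non-split $\mathcal{E}$-epimorphism. The idea is to show these two facts together violate the tilting/rigidity structure. Concretely: from $f$ being a minimal left approximation, every map $U \to \widetilde{T}$ factors through $f$; dually, every map $\widetilde{T} \to U$ factors through $g$. Composing, one gets that every endomorphism of $U$ that factors as $U \to \widetilde{T} \to U$ both factors through $f$ and co-factors through $g$; but more usefully, consider the composite $g' := $ (the map $T'' \to U \xrightarrow{f} T'$): this is a map in $\add(\widetilde{T})$. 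Using that $\widetilde{T} \oplus U$ is rigid (Definition \ref{def:rigidtilting}), we have $\Ext^1(\widetilde{T},\widetilde{T}) = 0$ and $\Ext^1(U,\widetilde{T}) = \Ext^1(\widetilde{T},U) = 0$. Apply $\Hom(T'',-)$ to the $\mathcal{E}$-exact sequence above: since $\Ext^1(T'',U) = 0$, the map $\Hom(T'',T') \to \Hom(T'',\Coker f)$ is surjective, and combined with the right-approximation property of $g$, one deduces that the composite $U \xrightarrow{f} T' \to \Coker(f)$ must be zero — impossible since $f$ is a monomorphism and $\Coker(f) \neq 0$ (as $f$ is non-split, $\Coker(f) \neq 0$). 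This contradiction completes the proof.

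I expect the delicate point to be organizing this last homological argument cleanly — in particular making precise the interplay between the approximation factorization properties and the vanishing of $\Ext^1$ coming from rigidity of $T$, and handling the edge case where $\Coker(f)$ or $\Ker(g)$ could a priori be $0$ (which is ruled out by non-splitness of a minimal approximation of an indecomposable object not in $\add(\widetilde{T})$). Throughout I would lean on Lemma \ref{lem:TiltandApprox} and Proposition \ref{prop:Approxismonoepi2} to guarantee the relevant approximations are $\mathcal{E}$-monomorphisms/epimorphisms, and on Proposition \ref{prop:numberindectilt} to know that $\widetilde{T}$ has exactly $n-1$ non-isomorphic indecomposable summands, so that the mutated object is again a candidate tilting object (though establishing that the mutation is itself tilting is presumably deferred to a later result and not needed here).
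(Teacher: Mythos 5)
Your first two assertions (uniqueness of the left, resp.\ right, $\mathcal{E}$-mutation at $U$, when it exists) are fine: they reduce to the standard fact that minimal left/right approximations are unique up to isomorphism. The problem is the third claim, which is the actual content of the proposition. Your argument culminates in the assertion that ``the composite $U \xrightarrow{f} T' \to \Coker(f)$ must be zero --- impossible since $f$ is a monomorphism and $\Coker(f)\neq 0$.'' But that composite is zero \emph{by definition of the cokernel}; its vanishing carries no information and contradicts nothing. Consequently no contradiction is ever reached, and the deductions leading up to it (surjectivity of $\Hom(T'',T')\to\Hom(T'',\Coker f)$ ``combined with the right-approximation property of $g$'') are not shown to produce anything. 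The incompatibility of a left and a right $\mathcal{E}$-mutation at $U$ therefore remains unproved.

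For comparison, the paper does not attempt a direct homological argument: \cref{rem:RS} observes that the statement for $\mathcal{E}=\mathcal{E}_{\max}$ is the classical theorem of Riedtmann and Schofield on complements of almost complete tilting modules, and that the general case follows, since a minimal left (resp.\ right) $(\add(\widetilde T),\mathcal{E})$-approximation is in particular a minimal left (resp.\ right) $\add(\widetilde T)$-approximation which is a monomorphism (resp.\ epimorphism); so admitting both a left and a right $\mathcal{E}$-mutation at $U$ would force $U$ to admit both classical mutations, which Riedtmann--Schofield excludes. If you insist on a self-contained proof, you would essentially have to reprove that theorem; a workable route in this Dynkin setting is to note that non-splitness of the two exchange sequences gives $\Ext^1(\Coker(f),U)\neq 0$ and $\Ext^1(U,\Ker(g))\neq 0$, identify $\Coker(f)\cong\Ker(g)$ as the unique second complement of $\widetilde T$, and then contradict the directedness of $\rep(Q)$. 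None of that is present in your proposal, so as written there is a genuine gap.
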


\begin{remark} \label{rem:RS}
C. Riedtmann and A. Schofield \cite{RS91} proved this result for $\mathcal{E} = \mathcal{E}_{\max}$, and thus this proposition follows. 
\end{remark}

\begin{definition} \label{def:Emut}
    Let $U \in \rep(Q)$. We define the $\mathcal{E}$-mutation operator on $\Tilt(Q)$ as it follows:
    \[\forall T \in \Tilt(Q),\ \pmb{\mu}_{U,\mathcal{E}}(T) = \begin{cases}
        \pmb{\mu}_{U,\mathcal{E}}^{-}(T) & \text{if } T \text{ admits a left } \mathcal{E}\text{-mutation at } U; \\
        \pmb{\mu}_{U,\mathcal{E}}^{+}(T) & \text{if } T \text{ admits a right } \mathcal{E}\text{-mutation at } U; \\
        T & \text{otherwise.}
    \end{cases}\]
\end{definition}

\begin{definition} \label{def:Ereachable}
Let $T,T' \in \Tilt(Q)$. We say that $T'$ is \new{$\mathcal{E}$-reachable from} $T$ if there exists a finite sequence $(U_1, \ldots, U_m)$ of indecomposable representations of $Q$, such that \[T' \cong \pmb{\mu}_{U_m,\mathcal{E}} \circ \cdots \circ \pmb{\mu}_{U_1,\mathcal{E}} (T).\] 
In the case where $T' \cong T$ or $\pmb{\mu}_{U_i,\mathcal{E}}= \pmb{\mu}_{U_i,\mathcal{E}}^{-}$ for all $i \in \{1,\ldots,m\}$, we say that $T'$ is \new{left $\mathcal E$-reachable} from $T$. Dually, if $T' \cong T$ or
$\pmb{\mu}_{U_i,\mathcal{E}}= \pmb{\mu}_{U_i,\mathcal{E}}^{+}$ for all $i \in \{1,\ldots,m\}$, we say that $T'$ is \new{right $\mathcal E$-reachable} from $T$.
\end{definition}

The relation of $\mathcal{E}$-reachability on $\Tilt(Q)$ defines an equivalence relation, as $\pmb{\mu}_{U,\mathcal{E}}$ is an involution by \cref{prop:OneEmutation}. We denote this relation by $\approx_\mathcal{E}$. In the following, given $T \in \Tilt(Q)$, we denote by $[T]_{\approx_\mathcal{E}}$ the equivalence class of tilting representation under $\approx_\mathcal{E}$.

In our setting, \cite{HU05} shows that the right (and the left) $\mathcal{E}$-reachability relation defines an order relation on $\Tilt(Q)$. In the following, for $T,T' \in \Tilt(Q)$, we write $T' \TleqE T$ whenever $T'$ is right $\mathcal{E}$-reachable from $T$. In the case where $\mathcal{E} = \mathcal{E}_{\max}$, we simply write $\Tleq$ this relation. We also write $T' \TldotE T$ whenever $T$ \emph{covers} $T'$, meaning that there exists $U \in \ind(Q)$ such that $\pmb{\mu}_{U,\mathcal{E}}^{+}(T) \cong T'$. 

\begin{example} \label{ex:TiltA4Ediamond} In \cref{fig:posettiltEdiamond}, we represent the poset $(\Tilt(Q), \Tleq)$ for $Q$ the $A_4$ type quiver seen \cref{ex:TiltA4}. We also gather together tilting representations with respect to the equivalence relation $\approx_{\mathcal{E}_\diamond}$.
     \begin{figure}[!ht]
    \centering
    \begin{tikzpicture}[line width=.5mm,scale=1.7]
                        \filldraw[lava, fill=lava,opacity=0.2, loosely dotted, rounded corners] (.5,-.5) to[bend right=60] (-1.8,1.8) to[bend right=60] cycle;
					\filldraw[lava, fill=lava,opacity=0.2, loosely dotted, rounded corners] (1.75,0.9) to (1.8,1.9) to[bend right=-10] (2.8,2.6) to[bend right=60] (2.8,3.5) to[bend right=20] (.7,2.75) to[bend right=20] (.75,0.9) to[bend right=20]  cycle;
					\filldraw[lava, fill=lava,opacity=0.2, loosely dotted, rounded corners] (0,3.6) to[bend right=20] (3,5.7) to[bend right=60] (2.5,6.75) to[bend right=0] (1.5,5.6) to[bend right=-10] (-.25,4.6) to[bend right=30]  cycle;
					\filldraw[lava, fill=lava,opacity=0.2, loosely dotted, rounded corners] (-1.8,5.6) to[bend right=50] (1.4,7.5) to[bend right=40] cycle;
					\draw[lava, fill=lava, opacity=0.2, loosely dotted] (-2.25,2.5) circle (.7cm);
					\draw[lava, fill=lava, opacity=0.2, loosely dotted] (-.75,2.5) circle (.65cm);
					\draw[lava, fill=lava, opacity=0.2, loosely dotted] (-3,4.75) circle (.7cm);
					\draw[lava, fill=lava, opacity=0.2, loosely dotted] (-1.25,4.75) circle (.65cm);
					\node (a) at (0,0){\scalebox{.325}{\begin{tikzpicture}[line width=.3mm, ->, >= angle 60]
								\node[rectangle,line width=0.2mm, double,rounded corners,fill=orange!20,draw] (2) at (0,0){\scalebox{.9}{$\llrr{2}$}};
								\node[rectangle,line width=0.2mm, double,rounded corners,fill=orange!20,draw] (12) at (1,1){\scalebox{.9}{$\llrr{1,2}$}};
								\node[rectangle,line width=0.2mm, double,rounded corners,fill=orange!20,draw] (24) at (1,-1){\scalebox{.9}{$\llrr{2,4}$}};
								\node[rectangle,line width=0.2mm, double,rounded corners,fill=orange!20,draw] (4) at (0,-2){\scalebox{.9}{$\llrr{4}$}};
								\node (14) at (2,0){\scalebox{.9}{$\llrr{1,4}$}};
								\node (23) at (2,-2){\scalebox{.9}{$\llrr{2,3}$}};
								\node (34) at (3,1){\scalebox{.9}{$\llrr{3,4}$}};
								\node (13) at (3,-1){\scalebox{.9}{$\llrr{1,3}$}};
								\node (3) at (4,0){\scalebox{.9}{$\llrr{3}$}};
								\node (1) at (4,-2){\scalebox{.9}{$\llrr{1}$}};
								\draw (2) -- (12);
								\draw (2) -- (24);
								\draw (4) -- (24);
								\draw (12) -- (14);
								\draw (24) -- (14);
								\draw (24) -- (23);
								\draw (14) -- (34);
								\draw (14) -- (13);
								\draw (23) -- (13);
								\draw (34) -- (3);
								\draw (13) -- (3);
								\draw (13) -- (1);
					\end{tikzpicture}}};
					\node (b) at (-1.25,1.25){\scalebox{.325}{\begin{tikzpicture}[line width=.3mm, ->, >= angle 60]
								\node (2) at (0,0){\scalebox{.9}{$\llrr{2}$}};
								\node[rectangle,line width=0.2mm, double,rounded corners,fill=orange!20,draw] (12) at (1,1){\scalebox{.9}{$\llrr{1,2}$}};
								\node[rectangle,line width=0.2mm, double,rounded corners,fill=orange!20,draw] (24) at (1,-1){\scalebox{.9}{$\llrr{2,4}$}};
								\node[rectangle,line width=0.2mm, double,rounded corners,fill=orange!20,draw] (4) at (0,-2){\scalebox{.9}{$\llrr{4}$}};
								\node[rectangle,line width=0.2mm, double,rounded corners,fill=orange!20,draw] (14) at (2,0){\scalebox{.9}{$\llrr{1,4}$}};
								\node (23) at (2,-2){\scalebox{.9}{$\llrr{2,3}$}};
								\node (34) at (3,1){\scalebox{.9}{$\llrr{3,4}$}};
								\node (13) at (3,-1){\scalebox{.9}{$\llrr{1,3}$}};
								\node (3) at (4,0){\scalebox{.9}{$\llrr{3}$}};
								\node (1) at (4,-2){\scalebox{.9}{$\llrr{1}$}};
								\draw (2) -- (12);
								\draw (2) -- (24);
								\draw (4) -- (24);
								\draw (12) -- (14);
								\draw (24) -- (14);
								\draw (24) -- (23);
								\draw (14) -- (34);
								\draw (14) -- (13);
								\draw (23) -- (13);
								\draw (34) -- (3);
								\draw (13) -- (3);
								\draw (13) -- (1);
					\end{tikzpicture}}};
					\node (c) at (1.25,1.25){\scalebox{.325}{\begin{tikzpicture}[line width=.3mm, ->, >= angle 60]
								\node[rectangle,line width=0.2mm, double,rounded corners,fill=orange!20,draw] (2) at (0,0){\scalebox{.9}{$\llrr{2}$}};
								\node[rectangle,line width=0.2mm, double,rounded corners,fill=orange!20,draw] (12) at (1,1){\scalebox{.9}{$\llrr{1,2}$}};
								\node[rectangle,line width=0.2mm, double,rounded corners,fill=orange!20,draw] (24) at (1,-1){\scalebox{.9}{$\llrr{2,4}$}};
								\node (4) at (0,-2){\scalebox{.9}{$\llrr{4}$}};
								\node (14) at (2,0){\scalebox{.9}{$\llrr{1,4}$}};
								\node[rectangle,line width=0.2mm, double,rounded corners,fill=orange!20,draw] (23) at (2,-2){\scalebox{.9}{$\llrr{2,3}$}};
								\node (34) at (3,1){\scalebox{.9}{$\llrr{3,4}$}};
								\node (13) at (3,-1){\scalebox{.9}{$\llrr{1,3}$}};
								\node (3) at (4,0){\scalebox{.9}{$\llrr{3}$}};
								\node (1) at (4,-2){\scalebox{.9}{$\llrr{1}$}};
								\draw (2) -- (12);
								\draw (2) -- (24);
								\draw (4) -- (24);
								\draw (12) -- (14);
								\draw (24) -- (14);
								\draw (24) -- (23);
								\draw (14) -- (34);
								\draw (14) -- (13);
								\draw (23) -- (13);
								\draw (34) -- (3);
								\draw (13) -- (3);
								\draw (13) -- (1);
					\end{tikzpicture}}};
					\node (d) at (1.25,2.5){\scalebox{.325}{\begin{tikzpicture}[line width=.3mm, ->, >= angle 60]
								\node (2) at (0,0){\scalebox{.9}{$\llrr{2}$}};
								\node[rectangle,line width=0.2mm, double,rounded corners,fill=orange!20,draw] (12) at (1,1){\scalebox{.9}{$\llrr{1,2}$}};
								\node[rectangle,line width=0.2mm, double,rounded corners,fill=orange!20,draw] (24) at (1,-1){\scalebox{.9}{$\llrr{2,4}$}};
								\node (4) at (0,-2){\scalebox{.9}{$\llrr{4}$}};
								\node[rectangle,line width=0.2mm, double,rounded corners,fill=orange!20,draw] (14) at (2,0){\scalebox{.9}{$\llrr{1,4}$}};
								\node[rectangle,line width=0.2mm, double,rounded corners,fill=orange!20,draw] (23) at (2,-2){\scalebox{.9}{$\llrr{2,3}$}};
								\node (34) at (3,1){\scalebox{.9}{$\llrr{3,4}$}};
								\node (13) at (3,-1){\scalebox{.9}{$\llrr{1,3}$}};
								\node (3) at (4,0){\scalebox{.9}{$\llrr{3}$}};
								\node (1) at (4,-2){\scalebox{.9}{$\llrr{1}$}};
								\draw (2) -- (12);
								\draw (2) -- (24);
								\draw (4) -- (24);
								\draw (12) -- (14);
								\draw (24) -- (14);
								\draw (24) -- (23);
								\draw (14) -- (34);
								\draw (14) -- (13);
								\draw (23) -- (13);
								\draw (34) -- (3);
								\draw (13) -- (3);
								\draw (13) -- (1);
					\end{tikzpicture}}};
					\node (e) at (-.75,2.5){\scalebox{.325}{\begin{tikzpicture}[line width=.3mm, ->, >= angle 60]
								\node (2) at (0,0){\scalebox{.9}{$\llrr{2}$}};
								\node (12) at (1,1){\scalebox{.9}{$\llrr{1,2}$}};
								\node[rectangle,line width=0.2mm, double,rounded corners,fill=orange!20,draw] (24) at (1,-1){\scalebox{.9}{$\llrr{2,4}$}};
								\node[rectangle,line width=0.2mm, double,rounded corners,fill=orange!20,draw] (4) at (0,-2){\scalebox{.9}{$\llrr{4}$}};
								\node[rectangle,line width=0.2mm, double,rounded corners,fill=orange!20,draw] (14) at (2,0){\scalebox{.9}{$\llrr{1,4}$}};
								\node (23) at (2,-2){\scalebox{.9}{$\llrr{2,3}$}};
								\node[rectangle,line width=0.2mm, double,rounded corners,fill=orange!20,draw] (34) at (3,1){\scalebox{.9}{$\llrr{3,4}$}};
								\node (13) at (3,-1){\scalebox{.9}{$\llrr{1,3}$}};
								\node (3) at (4,0){\scalebox{.9}{$\llrr{3}$}};
								\node (1) at (4,-2){\scalebox{.9}{$\llrr{1}$}};
								\draw (2) -- (12);
								\draw (2) -- (24);
								\draw (4) -- (24);
								\draw (12) -- (14);
								\draw (24) -- (14);
								\draw (24) -- (23);
								\draw (14) -- (34);
								\draw (14) -- (13);
								\draw (23) -- (13);
								\draw (34) -- (3);
								\draw (13) -- (3);
								\draw (13) -- (1);
					\end{tikzpicture}}};
					\node (f) at (-2.25,2.5){\scalebox{.325}{\begin{tikzpicture}[line width=.3mm, ->, >= angle 60]
								\node (2) at (0,0){\scalebox{.9}{$\llrr{2}$}};
								\node[rectangle,line width=0.2mm, double,rounded corners,fill=orange!20,draw] (12) at (1,1){\scalebox{.9}{$\llrr{1,2}$}};
								\node (24) at (1,-1){\scalebox{.9}{$\llrr{2,4}$}};
								\node[rectangle,line width=0.2mm, double,rounded corners,fill=orange!20,draw] (4) at (0,-2){\scalebox{.9}{$\llrr{4}$}};
								\node[rectangle,line width=0.2mm, double,rounded corners,fill=orange!20,draw] (14) at (2,0){\scalebox{.9}{$\llrr{1,4}$}};
								\node (23) at (2,-2){\scalebox{.9}{$\llrr{2,3}$}};
								\node (34) at (3,1){\scalebox{.9}{$\llrr{3,4}$}};
								\node (13) at (3,-1){\scalebox{.9}{$\llrr{1,3}$}};
								\node (3) at (4,0){\scalebox{.9}{$\llrr{3}$}};
								\node[rectangle,line width=0.2mm, double,rounded corners,fill=orange!20,draw] (1) at (4,-2){\scalebox{.9}{$\llrr{1}$}};
								\draw (2) -- (12);
								\draw (2) -- (24);
								\draw (4) -- (24);
								\draw (12) -- (14);
								\draw (24) -- (14);
								\draw (24) -- (23);
								\draw (14) -- (34);
								\draw (14) -- (13);
								\draw (23) -- (13);
								\draw (34) -- (3);
								\draw (13) -- (3);
								\draw (13) -- (1);
					\end{tikzpicture}}};
					\node (g) at (2.5,3.){\scalebox{.325}{\begin{tikzpicture}[line width=.3mm, ->, >= angle 60]
								\node (2) at (0,0){\scalebox{.9}{$\llrr{2}$}};
								\node[rectangle,line width=0.2mm, double,rounded corners,fill=orange!20,draw] (12) at (1,1){\scalebox{.9}{$\llrr{1,2}$}};
								\node (24) at (1,-1){\scalebox{.9}{$\llrr{2,4}$}};
								\node (4) at (0,-2){\scalebox{.9}{$\llrr{4}$}};
								\node[rectangle,line width=0.2mm, double,rounded corners,fill=orange!20,draw] (14) at (2,0){\scalebox{.9}{$\llrr{1,4}$}};
								\node[rectangle,line width=0.2mm, double,rounded corners,fill=orange!20,draw] (23) at (2,-2){\scalebox{.9}{$\llrr{2,3}$}};
								\node (34) at (3,1){\scalebox{.9}{$\llrr{3,4}$}};
								\node[rectangle,line width=0.2mm, double,rounded corners,fill=orange!20,draw] (13) at (3,-1){\scalebox{.9}{$\llrr{1,3}$}};
								\node (3) at (4,0){\scalebox{.9}{$\llrr{3}$}};
								\node (1) at (4,-2){\scalebox{.9}{$\llrr{1}$}};
								\draw (2) -- (12);
								\draw (2) -- (24);
								\draw (4) -- (24);
								\draw (12) -- (14);
								\draw (24) -- (14);
								\draw (24) -- (23);
								\draw (14) -- (34);
								\draw (14) -- (13);
								\draw (23) -- (13);
								\draw (34) -- (3);
								\draw (13) -- (3);
								\draw (13) -- (1);
					\end{tikzpicture}}};
					\node (h) at (.25,4.25){\scalebox{.325}{\begin{tikzpicture}[line width=.3mm, ->, >= angle 60]
								\node (2) at (0,0){\scalebox{.9}{$\llrr{2}$}};
								\node (12) at (1,1){\scalebox{.9}{$\llrr{1,2}$}};
								\node[rectangle,line width=0.2mm, double,rounded corners,fill=orange!20,draw] (24) at (1,-1){\scalebox{.9}{$\llrr{2,4}$}};
								\node (4) at (0,-2){\scalebox{.9}{$\llrr{4}$}};
								\node[rectangle,line width=0.2mm, double,rounded corners,fill=orange!20,draw] (14) at (2,0){\scalebox{.9}{$\llrr{1,4}$}};
								\node[rectangle,line width=0.2mm, double,rounded corners,fill=orange!20,draw] (23) at (2,-2){\scalebox{.9}{$\llrr{2,3}$}};
								\node[rectangle,line width=0.2mm, double,rounded corners,fill=orange!20,draw] (34) at (3,1){\scalebox{.9}{$\llrr{3,4}$}};
								\node (13) at (3,-1){\scalebox{.9}{$\llrr{1,3}$}};
								\node (3) at (4,0){\scalebox{.9}{$\llrr{3}$}};
								\node (1) at (4,-2){\scalebox{.9}{$\llrr{1}$}};
								\draw (2) -- (12);
								\draw (2) -- (24);
								\draw (4) -- (24);
								\draw (12) -- (14);
								\draw (24) -- (14);
								\draw (24) -- (23);
								\draw (14) -- (34);
								\draw (14) -- (13);
								\draw (23) -- (13);
								\draw (34) -- (3);
								\draw (13) -- (3);
								\draw (13) -- (1);
					\end{tikzpicture}}};
					\node (i) at (-1.25,4.75){\scalebox{.325}{\begin{tikzpicture}[line width=.3mm, ->, >= angle 60]
								\node (2) at (0,0){\scalebox{.9}{$\llrr{2}$}};
								\node (12) at (1,1){\scalebox{.9}{$\llrr{1,2}$}};
								\node (24) at (1,-1){\scalebox{.9}{$\llrr{2,4}$}};
								\node[rectangle,line width=0.2mm, double,rounded corners,fill=orange!20,draw] (4) at (0,-2){\scalebox{.9}{$\llrr{4}$}};
								\node[rectangle,line width=0.2mm, double,rounded corners,fill=orange!20,draw] (14) at (2,0){\scalebox{.9}{$\llrr{1,4}$}};
								\node (23) at (2,-2){\scalebox{.9}{$\llrr{2,3}$}};
								\node[rectangle,line width=0.2mm, double,rounded corners,fill=orange!20,draw] (34) at (3,1){\scalebox{.9}{$\llrr{3,4}$}};
								\node (13) at (3,-1){\scalebox{.9}{$\llrr{1,3}$}};
								\node (3) at (4,0){\scalebox{.9}{$\llrr{3}$}};
								\node[rectangle,line width=0.2mm, double,rounded corners,fill=orange!20,draw] (1) at (4,-2){\scalebox{.9}{$\llrr{1}$}};
								\draw (2) -- (12);
								\draw (2) -- (24);
								\draw (4) -- (24);
								\draw (12) -- (14);
								\draw (24) -- (14);
								\draw (24) -- (23);
								\draw (14) -- (34);
								\draw (14) -- (13);
								\draw (23) -- (13);
								\draw (34) -- (3);
								\draw (13) -- (3);
								\draw (13) -- (1);
					\end{tikzpicture}}};
					\node (j) at (-3,4.75){\scalebox{.325}{\begin{tikzpicture}[line width=.3mm, ->, >= angle 60]
								\node (2) at (0,0){\scalebox{.9}{$\llrr{2}$}};
								\node[rectangle,line width=0.2mm, double,rounded corners,fill=orange!20,draw] (12) at (1,1){\scalebox{.9}{$\llrr{1,2}$}};
								\node (24) at (1,-1){\scalebox{.9}{$\llrr{2,4}$}};
								\node (4) at (0,-2){\scalebox{.9}{$\llrr{4}$}};
								\node[rectangle,line width=0.2mm, double,rounded corners,fill=orange!20,draw] (14) at (2,0){\scalebox{.9}{$\llrr{1,4}$}};
								\node (23) at (2,-2){\scalebox{.9}{$\llrr{2,3}$}};
								\node (34) at (3,1){\scalebox{.9}{$\llrr{3,4}$}};
								\node[rectangle,line width=0.2mm, double,rounded corners,fill=orange!20,draw] (13) at (3,-1){\scalebox{.9}{$\llrr{1,3}$}};
								\node (3) at (4,0){\scalebox{.9}{$\llrr{3}$}};
								\node[rectangle,line width=0.2mm, double,rounded corners,fill=orange!20,draw] (1) at (4,-2){\scalebox{.9}{$\llrr{1}$}};
								\draw (2) -- (12);
								\draw (2) -- (24);
								\draw (4) -- (24);
								\draw (12) -- (14);
								\draw (24) -- (14);
								\draw (24) -- (23);
								\draw (14) -- (34);
								\draw (14) -- (13);
								\draw (23) -- (13);
								\draw (34) -- (3);
								\draw (13) -- (3);
								\draw (13) -- (1);
					\end{tikzpicture}}};
					\node (k) at (1.5,5){\scalebox{.325}{\begin{tikzpicture}[line width=.3mm, ->, >= angle 60]
								\node (2) at (0,0){\scalebox{.9}{$\llrr{2}$}};
								\node (12) at (1,1){\scalebox{.9}{$\llrr{1,2}$}};
								\node (24) at (1,-1){\scalebox{.9}{$\llrr{2,4}$}};
								\node (4) at (0,-2){\scalebox{.9}{$\llrr{4}$}};
								\node[rectangle,line width=0.2mm, double,rounded corners,fill=orange!20,draw] (14) at (2,0){\scalebox{.9}{$\llrr{1,4}$}};
								\node[rectangle,line width=0.2mm, double,rounded corners,fill=orange!20,draw] (23) at (2,-2){\scalebox{.9}{$\llrr{2,3}$}};
								\node[rectangle,line width=0.2mm, double,rounded corners,fill=orange!20,draw] (34) at (3,1){\scalebox{.9}{$\llrr{3,4}$}};
								\node[rectangle,line width=0.2mm, double,rounded corners,fill=orange!20,draw] (13) at (3,-1){\scalebox{.9}{$\llrr{1,3}$}};
								\node (3) at (4,0){\scalebox{.9}{$\llrr{3}$}};
								\node (1) at (4,-2){\scalebox{.9}{$\llrr{1}$}};
								\draw (2) -- (12);
								\draw (2) -- (24);
								\draw (4) -- (24);
								\draw (12) -- (14);
								\draw (24) -- (14);
								\draw (24) -- (23);
								\draw (14) -- (34);
								\draw (14) -- (13);
								\draw (23) -- (13);
								\draw (34) -- (3);
								\draw (13) -- (3);
								\draw (13) -- (1);
					\end{tikzpicture}}};
					\node (l) at (-1,6){\scalebox{.325}{\begin{tikzpicture}[line width=.3mm, ->, >= angle 60]
								\node (2) at (0,0){\scalebox{.9}{$\llrr{2}$}};
								\node (12) at (1,1){\scalebox{.9}{$\llrr{1,2}$}};
								\node (24) at (1,-1){\scalebox{.9}{$\llrr{2,4}$}};
								\node (4) at (0,-2){\scalebox{.9}{$\llrr{4}$}};
								\node[rectangle,line width=0.2mm, double,rounded corners,fill=orange!20,draw] (14) at (2,0){\scalebox{.9}{$\llrr{1,4}$}};
								\node (23) at (2,-2){\scalebox{.9}{$\llrr{2,3}$}};
								\node[rectangle,line width=0.2mm, double,rounded corners,fill=orange!20,draw] (34) at (3,1){\scalebox{.9}{$\llrr{3,4}$}};
								\node[rectangle,line width=0.2mm, double,rounded corners,fill=orange!20,draw] (13) at (3,-1){\scalebox{.9}{$\llrr{1,3}$}};
								\node (3) at (4,0){\scalebox{.9}{$\llrr{3}$}};
								\node[rectangle,line width=0.2mm, double,rounded corners,fill=orange!20,draw] (1) at (4,-2){\scalebox{.9}{$\llrr{1}$}};
								\draw (2) -- (12);
								\draw (2) -- (24);
								\draw (4) -- (24);
								\draw (12) -- (14);
								\draw (24) -- (14);
								\draw (24) -- (23);
								\draw (14) -- (34);
								\draw (14) -- (13);
								\draw (23) -- (13);
								\draw (34) -- (3);
								\draw (13) -- (3);
								\draw (13) -- (1);
					\end{tikzpicture}}};
					\node (m) at (2.5,6){\scalebox{.325}{\begin{tikzpicture}[line width=.3mm, ->, >= angle 60]
								\node (2) at (0,0){\scalebox{.9}{$\llrr{2}$}};
								\node (12) at (1,1){\scalebox{.9}{$\llrr{1,2}$}};
								\node (24) at (1,-1){\scalebox{.9}{$\llrr{2,4}$}};
								\node (4) at (0,-2){\scalebox{.9}{$\llrr{4}$}};
								\node (14) at (2,0){\scalebox{.9}{$\llrr{1,4}$}};
								\node[rectangle,line width=0.2mm, double,rounded corners,fill=orange!20,draw] (23) at (2,-2){\scalebox{.9}{$\llrr{2,3}$}};
								\node[rectangle,line width=0.2mm, double,rounded corners,fill=orange!20,draw] (34) at (3,1){\scalebox{.9}{$\llrr{3,4}$}};
								\node[rectangle,line width=0.2mm, double,rounded corners,fill=orange!20,draw] (13) at (3,-1){\scalebox{.9}{$\llrr{1,3}$}};
								\node[rectangle,line width=0.2mm, double,rounded corners,fill=orange!20,draw] (3) at (4,0){\scalebox{.9}{$\llrr{3}$}};
								\node (1) at (4,-2){\scalebox{.9}{$\llrr{1}$}};
								\draw (2) -- (12);
								\draw (2) -- (24);
								\draw (4) -- (24);
								\draw (12) -- (14);
								\draw (24) -- (14);
								\draw (24) -- (23);
								\draw (14) -- (34);
								\draw (14) -- (13);
								\draw (23) -- (13);
								\draw (34) -- (3);
								\draw (13) -- (3);
								\draw (13) -- (1);
					\end{tikzpicture}}};
					\node (n) at (0.5,7){\scalebox{.325}{\begin{tikzpicture}[line width=.3mm, ->, >= angle 60]
								\node (2) at (0,0){\scalebox{.9}{$\llrr{2}$}};
								\node (12) at (1,1){\scalebox{.9}{$\llrr{1,2}$}};
								\node (24) at (1,-1){\scalebox{.9}{$\llrr{2,4}$}};
								\node (4) at (0,-2){\scalebox{.9}{$\llrr{4}$}};
								\node (14) at (2,0){\scalebox{.9}{$\llrr{1,4}$}};
								\node (23) at (2,-2){\scalebox{.9}{$\llrr{2,3}$}};
								\node[rectangle,line width=0.2mm, double,rounded corners,fill=orange!20,draw] (34) at (3,1){\scalebox{.9}{$\llrr{3,4}$}};
								\node[rectangle,line width=0.2mm, double,rounded corners,fill=orange!20,draw] (13) at (3,-1){\scalebox{.9}{$\llrr{1,3}$}};
								\node[rectangle,line width=0.2mm, double,rounded corners,fill=orange!20,draw] (3) at (4,0){\scalebox{.9}{$\llrr{3}$}};
								\node[rectangle,line width=0.2mm, double,rounded corners,fill=orange!20,draw] (1) at (4,-2){\scalebox{.9}{$\llrr{1}$}};
								\draw (2) -- (12);
								\draw (2) -- (24);
								\draw (4) -- (24);
								\draw (12) -- (14);
								\draw (24) -- (14);
								\draw (24) -- (23);
								\draw (14) -- (34);
								\draw (14) -- (13);
								\draw (23) -- (13);
								\draw (34) -- (3);
								\draw (13) -- (3);
								\draw (13) -- (1);
					\end{tikzpicture}}};
					\draw (a) -- (b);
					\draw (a) -- (c);
					\draw (b) -- (d);
					\draw (b) -- (e);
					\draw (b) -- (f);
					\draw (c) -- (d);
					\draw (d) -- (g);
					\draw (d) -- (h);
					\draw (e) -- (h);
					\draw (e) -- (i);
					\draw (f) -- (i);
					\draw (f) -- (j);
					\draw (g) -- (k);
					\draw[bend left=5] (g) edge (j);
					\draw (h) -- (k);
					\draw (i) -- (l);
					\draw (j) -- (l);
					\draw (k) -- (l);
					\draw (k) -- (m);
					\draw (l) -- (n);
					\draw (m) -- (n);
			\end{tikzpicture}
    \caption{The poset $(\Tilt(Q), \protect\Tleq)$ for the $A_4$ type quiver $Q$ seen in \cref{ex:TiltA4}. We gather together tiltings of the same equivalence class for $\approx_{\mathcal{E}_\diamond}$.}
    \label{fig:posettiltEdiamond}
\end{figure}
\end{example}

\begin{remark} \label{rem:latticetilt}
    By the work of \cite{IRTT15}, we know that this poset $(\Tilt(Q), \Tleq)$ is isomorphic to the poset of the torsion classes in $\rep(Q)$, and, therefore, it can be endowed with a lattice structure. The obtained lattice is isomorphic to the positive part of the well-known \emph{Cambrian lattice} in the sense of N. Reading \cite{R06}.
\end{remark}

\subsection{Gen-Sub operators on tilting objects}
\label{ss:GSandTiltmut}
In this section, we establish that, for any $T \in \Tilt(Q)$, the subcategory $\GS_\mathcal{E}(T)$ is the category additively generated by tilting representations $T'$ obtained from $T$ by applying a finite sequence of $\mathcal{E}$-mutations. Let us consider first the following equivalence relation on $\Tilt(Q)$.

\begin{definition} \label{def:GSErelation}
    We define the \new{$\GS_{\mathcal{E}}$-relation}, denoted by $\sim_{\mathcal{E}}$, on $\Tilt(Q)$ as follows. Given $T,T' \in \Tilt(Q)$, write $T \sim_{\mathcal{E}} T'$ whenever we have $\GS_{\mathcal{E}}(T) = \GS_{\mathcal{E}}(T')$.
\end{definition}

\begin{cor}\label{cor:propUniquetilting}
Let $T,T' \in \Tilt(Q)$. Then $T' \sim_\mathcal{E} T$ if and only if $T' \in \GS_{\mathcal{E}}(T)$. 
\end{cor}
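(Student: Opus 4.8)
The plan is to prove \cref{cor:propUniquetilting} by combining the earlier structural results on $\GS_\mathcal{E}(T)$ with the characterization of $\GS_\mathcal{E}(T)$ as the unique $\mathcal{E}$-adapted $\mathcal{E}$-Serre category containing $T$. First I would record the easy implication: if $T' \sim_\mathcal{E} T$, then by definition $\GS_\mathcal{E}(T') = \GS_\mathcal{E}(T)$, and since $T' \in \GS_\mathcal{E}(T')$ (the operator is a closure, so $\mathscr{C} \subseteq \GS_\mathcal{E}(\mathscr{C})$ always holds, in particular $\add(T') \subseteq \GS_\mathcal{E}(T')$), we get $T' \in \GS_\mathcal{E}(T)$ immediately.

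For the converse, suppose $T' \in \GS_\mathcal{E}(T)$. Since $T'$ is tilting, it is in particular rigid, so $\add(T')$ is $\mathcal{E}$-adapted and closed under extensions by \cref{lem:TiltEadapt}; hence \cref{cor:rigidEadapt} applies and $\GS_\mathcal{E}(T')$ is $\mathcal{E}$-adapted and closed under extensions. Moreover $T$ and $T'$ are both tilting objects of $\mathscr{A} = \rep(Q)$, which admits finitely many (namely $n = \#Q_0$) nonisomorphic indecomposable projectives. The key observation is the following inclusion argument. From $T' \in \GS_\mathcal{E}(T)$ and \cref{conv:addsubcat} we get $\add(T') \subseteq \GS_\mathcal{E}(T)$; since $\GS_\mathcal{E}(T)$ is $\mathcal{E}$-Serre by \cref{cor:ESerre}, it contains the smallest $\mathcal{E}$-Serre subcategory containing $\add(T')$, which by \cref{cor:ESerre}$(d)$ (applied to $\mathscr{C} = \add(T')$, valid since $\add(T')$ is $\mathcal{E}$-adapted and extension-closed by \cref{lem:TiltEadapt}) is precisely $\GS_\mathcal{E}(T')$. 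Hence $\GS_\mathcal{E}(T') \subseteq \GS_\mathcal{E}(T)$.

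The reverse inclusion $\GS_\mathcal{E}(T) \subseteq \GS_\mathcal{E}(T')$ is where the main obstacle lies, and I expect to close it by a dimension/uniqueness argument rather than a direct chase. Here the cleanest route is to invoke \cref{cor:GS2=GSanduniqueESerre}$(b)$: $\GS_\mathcal{E}(T)$ is the \emph{unique} $\mathcal{E}$-adapted $\mathcal{E}$-Serre subcategory of $\mathscr{A}$ containing $T$. It therefore suffices to show that $\GS_\mathcal{E}(T')$ also contains $T$ and is $\mathcal{E}$-adapted and $\mathcal{E}$-Serre; the latter two properties are already established above (via \cref{cor:rigidEadapt} and \cref{cor:ESerre}), so the crux is $T \in \GS_\mathcal{E}(T')$. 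To see this, note that $\GS_\mathcal{E}(T)$ is $\mathcal{E}$-adapted and closed under extensions and contains $T'$, so by \cref{thm:maximal} (maximality of $\GS_\mathcal{E}(T')$ among such subcategories) we get $\GS_\mathcal{E}(T) \subseteq \GS_\mathcal{E}(T')$; in particular $T \in \GS_\mathcal{E}(T')$. Combining the two inclusions yields $\GS_\mathcal{E}(T) = \GS_\mathcal{E}(T')$, i.e. $T' \sim_\mathcal{E} T$, completing the proof. The only delicate point to verify carefully is that \cref{thm:maximal} is genuinely applicable to the pair $(\GS_\mathcal{E}(T'), \GS_\mathcal{E}(T))$, which requires $\GS_\mathcal{E}(T)$ to be $\mathcal{E}$-adapted and extension-closed and to contain the tilting object $T'$; all three hold by \cref{cor:rigidEadapt} and the hypothesis $T' \in \GS_\mathcal{E}(T)$, so the argument goes through.
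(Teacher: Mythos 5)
Your proof is correct and uses exactly the ingredients the paper evidently intends for this (unproved) corollary: \cref{thm:maximal} gives $\GS_\mathcal{E}(T) \subseteq \GS_\mathcal{E}(T')$ since $\GS_\mathcal{E}(T)$ is $\mathcal{E}$-adapted, extension-closed and contains the tilting object $T'$, while the closure/minimality property of $\GS_\mathcal{E}$ (via \cref{cor:ESerre}$(d)$, or just \cref{lem:proponGS}\ref{GSb}) gives the reverse inclusion. One small streamlining: \cref{cor:GS2=GSanduniqueESerre}$(b)$ alone already yields the converse in one step, since $\GS_\mathcal{E}(T)$ is an $\mathcal{E}$-adapted $\mathcal{E}$-Serre subcategory containing $T'$ and $\GS_\mathcal{E}(T')$ is the unique such subcategory.
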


Given $T,T' \in \Tilt(Q)$, our aim is to prove that $T \sim_{\mathcal{E}} T'$ if and only if $T \approx_{\mathcal{E}} T'$.

\begin{prop}
\label{prop:Tstable}
Let $T,T' \in \Tilt(Q)$. If $T' \approx_{\mathcal{E}} T$, then $T' \sim_{\mathcal{E}} T $.
\end{prop}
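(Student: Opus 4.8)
\textbf{Proof plan for Proposition~\ref{prop:Tstable}.} The goal is to show that performing a single $\mathcal{E}$-mutation on a tilting object $T$ does not change the subcategory $\GS_\mathcal{E}(T)$; iterating then gives the statement for arbitrary $\approx_\mathcal{E}$-equivalence. By symmetry (and since $\pmb{\mu}_{U,\mathcal{E}}$ is an involution by \cref{prop:OneEmutation}), it suffices to treat the case $T' = \pmb{\mu}_{U,\mathcal{E}}^{+}(T)$, i.e. $T$ admits a right $\mathcal{E}$-mutation at an indecomposable summand $U$. Write $T = \widetilde{T} \oplus U$ and $T' = \widetilde{T} \oplus U'$ where $U' = \Ker(g)$ for $g\colon \Theta \longrightarrow U$ a minimal right $(\add(\widetilde{T}),\mathcal{E})$-approximation. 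This gives a short $\mathcal{E}$-exact sequence
\[\begin{tikzcd}
	0 & U' & \Theta & U & 0
	\arrow[from=1-1, to=1-2]
	\arrow[tail, from=1-2, to=1-3]
	\arrow["g", two heads, from=1-3, to=1-4]
	\arrow[from=1-4, to=1-5]
\end{tikzcd}\]
with $\Theta \in \add(\widetilde{T}) \subseteq \add(T)$.

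\textbf{Main argument.} First I would show $T' \in \GS_\mathcal{E}(T)$, which by \cref{cor:propUniquetilting} already yields $\GS_\mathcal{E}(T') \subseteq \GS_\mathcal{E}(T)$ (since $T'$ being a tilting object inside the $\mathcal{E}$-Serre category $\GS_\mathcal{E}(T)$ forces $\GS_\mathcal{E}(T') \subseteq \GS_\mathcal{E}(T)$; note $\GS_\mathcal{E}(T)$ is $\mathcal{E}$-adapted, $\mathcal{E}$-Serre and contains $T'$, and $\GS_\mathcal{E}(T')$ is the smallest such by \cref{cor:GS2=GSanduniqueESerre}). Indeed $\widetilde{T} \in \add(T) \subseteq \GS_\mathcal{E}(T)$, and $U' = \Ker(g)$ is the kernel of the $\mathcal{E}$-epimorphism $g$ with source $\Theta \in \add(T) \subseteq \GS_\mathcal{E}(T)$, so $U' \in \Sub_\mathcal{E}(\GS_\mathcal{E}(T)) \subseteq \GS_\mathcal{E}(T)$; hence $T' = \widetilde{T}\oplus U' \in \GS_\mathcal{E}(T)$. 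Symmetrically — and this is where the involutivity of $\mathcal{E}$-mutation pays off — $T$ is obtained from $T'$ by the left $\mathcal{E}$-mutation at $U'$, so the cokernel $U = \Coker(U' \hookrightarrow \Theta)$ lies in $\Gen_\mathcal{E}(\add(T')) \subseteq \GS_\mathcal{E}(T')$, giving $T \in \GS_\mathcal{E}(T')$ and thus $\GS_\mathcal{E}(T) \subseteq \GS_\mathcal{E}(T')$ by the same minimality argument. Combining the two inclusions gives $\GS_\mathcal{E}(T) = \GS_\mathcal{E}(T')$, i.e. $T \sim_\mathcal{E} T'$ for a single mutation.

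\textbf{Conclusion.} For the general case, suppose $T' \approx_\mathcal{E} T$, so $T' \cong \pmb{\mu}_{U_m,\mathcal{E}} \circ \cdots \circ \pmb{\mu}_{U_1,\mathcal{E}}(T)$. Setting $T^{(0)} = T$ and $T^{(i)} = \pmb{\mu}_{U_i,\mathcal{E}}(T^{(i-1)})$, the single-mutation case gives $\GS_\mathcal{E}(T^{(i-1)}) = \GS_\mathcal{E}(T^{(i)})$ for each $i$ (when $\pmb{\mu}_{U_i,\mathcal{E}}$ acts trivially this is immediate, otherwise it is the case above, in whichever of its two directions applies), and by transitivity $\GS_\mathcal{E}(T) = \GS_\mathcal{E}(T')$.

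\textbf{Anticipated obstacle.} The delicate point is the bookkeeping needed to justify, in the single-mutation step, that $\GS_\mathcal{E}(T') \subseteq \GS_\mathcal{E}(T)$ from $T' \in \GS_\mathcal{E}(T)$: one must invoke that $\GS_\mathcal{E}(T)$ is itself an $\mathcal{E}$-adapted, extension-closed, $\mathcal{E}$-Serre subcategory containing the tilting object $T'$ (\cref{cor:rigidEadapt}, \cref{cor:ESerre}, \cref{cor:GS2=GSanduniqueESerre}), and that $\GS_\mathcal{E}(T')$ is the \emph{smallest} such. One also needs to be slightly careful that $\add(\widetilde T)\subseteq \GS_\mathcal{E}(T)$ and that the approximation sequence defining the mutation is genuinely $\mathcal{E}$-exact — but this is exactly built into \cref{def:leftrightEmutation} via the requirement that $f$ (resp. $g$) be an $\mathcal{E}$-monomorphism (resp. $\mathcal{E}$-epimorphism). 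Everything else is a direct application of the closure properties of $\GS_\mathcal{E}$ established in \cref{sec:ExStr}.
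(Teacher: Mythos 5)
Your proposal is correct and follows essentially the same route as the paper: reduce to a single $\mathcal{E}$-mutation, use the $\mathcal{E}$-exact approximation sequence together with closure of $\GS_\mathcal{E}(T)$ under kernels of $\mathcal{E}$-epimorphisms (resp.\ cokernels of $\mathcal{E}$-monomorphisms) to get $T' \in \GS_\mathcal{E}(T)$, and then conclude via \cref{cor:propUniquetilting}. The only difference is cosmetic: the paper stops after establishing $T' \in \GS_\mathcal{E}(T)$, since \cref{cor:propUniquetilting} is an equivalence, so your separate verification of the reverse inclusion $T \in \GS_\mathcal{E}(T')$ is sound but redundant.
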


\begin{proof}
To prove this proposition, it is enough to show that $\GS_{\mathcal{E}}(T') = \GS_{\mathcal{E}}(T)$ whenever $T' \cong \pmb{\mu}_{U,\mathcal{E}}(T)$ for some $U \in \ind(Q)$. We obtain the desired result by induction on the length of the sequence of $\mathcal{E}$-mutations, which allows us to have $T'\approx_{\mathcal{E}} T$.

We have three cases to treat.
\begin{enumerate}[label=$\bullet$, itemsep=1mm]
    \item If $\pmb{\mu}_{U,\mathcal{E}}(T) \cong T$, then the result is obvious.
    \item If $\pmb{\mu}_{U,\mathcal{E}}(T) = \pmb{\mu}_{U,\mathcal{E}}^-(T)$, by writing $T = \widetilde{T} \oplus U$, we have the following short $\mathcal{E}$-exact sequence: 
\[\begin{tikzcd}
	0 & U & \Theta  & \Coker(f)&  0,
	\arrow[from=1-1, to=1-2]
	\arrow["f",tail, from=1-2, to=1-3]
	\arrow[two heads,from=1-3, to=1-4]
	\arrow[from=1-4, to=1-5]
\end{tikzcd} \] where $f$ is a left minimal $(\add(\widetilde{T}), \mathcal{E})$-approximation of $U$. As $\Theta \in \add(T) \subset \GS_\mathcal{E}(T)$, and $f$ is an $\mathcal{E}$-monomorphism, by \cref{lem:proponGS}, we have that $\Coker(f) \in  \GS_{\mathcal{E}}(T)$. Therefore $T' \cong \widetilde{T} \oplus \Coker(f) \in \GS_{\mathcal{E}}(T)$. 
    \item If $\pmb{\mu}_{U,\mathcal{E}}(T) = \pmb{\mu}_{U, \mathcal{E}}^{+}(X)$, by writing $T = \widetilde{T} \oplus U$, we have the following short $\mathcal{E}$-exact sequence: 
\[\begin{tikzcd}
	0 & \Ker(f) & \Theta' & U &  0,
	\arrow[from=1-1, to=1-2]
	\arrow[tail, from=1-2, to=1-3]
	\arrow["g",two heads,from=1-3, to=1-4]
	\arrow[from=1-4, to=1-5]
\end{tikzcd} \] where $g$ is a right minimal $(\add(\widetilde{T}), \mathcal{E})$-approximation of $U$. As $\Theta' \in \add(\widetilde{T})$ and $g$ is an $\mathcal{E}$-epimorphism, by \cref{lem:proponGS}, we have that $\Ker(f) \in \GS_{\mathcal{E}}(T)$. So $T' \cong \widetilde{T} \oplus \Ker(f) \in \GS_{\mathcal{E}}(T)$.
\end{enumerate}
In either case, we got that $T' \sim_{\mathcal{E}} T$.
\end{proof}

Besides establishing that \(T \sim_{\mathcal{E}} T'\) implies \(T \approx_{\mathcal{E}} T'\), we also show that the class \([T]_{\approx_{\mathcal{E}}}\) additively generates \(\GS_{\mathcal{E}}(T)\).
To do so, we will have to show a few more results. First, we recall the notion of the Jacobson radical of a category.

\begin{definition}\label{def:jacobson-radical} Let $\mathscr{A}$ be an abelian category.
The \emph{(Jacobson) radical} of $\mathscr{A}$ is the two-sided ideal
$\Rad_{\mathscr{A}}$ defined by
\[
\Rad_{\mathscr{A}}(X,Y)
\;=\;
\bigl\{\, h \in \Hom(X,Y)
\;\big|\;
\forall g \in \Hom(Y,X),\ \text{id}_Y - hg
\text{ is invertible}
\,\bigr\}
\]
for all objects $X,Y \in \mathscr{A}$.
\end{definition}

\begin{remark}
Following the definition, if $h \in \Rad_{\mathscr{A}}(X,Y)$, then $h$ is not an isomorhism.
\end{remark}

\begin{lemma}[\cite{ASS06}]\label{lem:rad-on-sums}
Let $\mathscr{A}$ be an abelian category and let
\[
X \;=\; \bigoplus_{i=1}^n X_i,
\qquad
Y \;=\; \bigoplus_{j=1}^m Y_j.
\]
For $f \in \Hom(X,Y)$, we write $f_{ji} = \pi_{Y_j}\, f\, \iota_{X_i} \in \Hom(X_i,Y_j)$
using the canonical inclusions $\iota_{X_i}:X_i\to X$ and projections $\pi_{Y_j}:Y\to Y_j$.
Then the following are equivalent:
\begin{enumerate}[label=$(\roman*)$, itemsep=1mm]
  \item $f \in \Rad_{\mathscr{A}}(X,Y)$.
  \item $f_{ji}\in \Rad_{\mathscr{A}}(X_i,Y_j)$ for all $1\le i\le n$ and $1\le j\le m$.
\end{enumerate}
\end{lemma}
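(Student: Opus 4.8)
The final statement to prove is Lemma~\ref{lem:rad-on-sums}, the characterization of when a morphism between direct sums lies in the Jacobson radical.

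\textbf{Overall approach.} The plan is to exploit the fact that in a Krull--Schmidt abelian category the radical $\Rad_{\mathscr{A}}(X,Y)$ can be described intrinsically as the set of morphisms $h$ such that, for every decomposition of $X$ and $Y$ into indecomposables, no ``component'' of $h$ is an isomorphism; equivalently, $\Rad_{\mathscr{A}}(X,Y)$ consists of those $h$ none of whose matrix entries (relative to indecomposable decompositions) is invertible. Since the statement is a standard fact recorded in \cite{ASS06}, I would present a clean self-contained argument rather than invoke a black box. First I would reduce to proving the two implications separately, using only the defining property: $h\in\Rad_{\mathscr{A}}(X,Y)$ iff $\mathrm{id}_Y-hg$ is invertible for all $g\in\Hom(Y,X)$.

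\textbf{Step 1: $(i)\Rightarrow(ii)$.} Fix $i,j$ and let $g_{ij}\in\Hom(Y_j,X_i)$ be arbitrary. Set $g=\iota_{X_i}\,g_{ij}\,\pi_{Y_j}\in\Hom(Y,X)$. Then $hg=(\iota_{Y_j}f_{ji}\pi_{X_i})(\iota_{X_i}g_{ij}\pi_{Y_j})$ computes (using $\pi_{X_i}\iota_{X_i}=\mathrm{id}_{X_i}$) to a morphism $Y\to Y$ that is supported only on the summand $Y_j$, namely $\iota_{Y_j}(f_{ji}g_{ij})\pi_{Y_j}$. Hence $\mathrm{id}_Y-hg$ is block-diagonal with respect to $Y=\bigoplus Y_k$, acting as $\mathrm{id}_{Y_j}-f_{ji}g_{ij}$ on $Y_j$ and as the identity on the other $Y_k$. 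Since $h\in\Rad_{\mathscr{A}}(X,Y)$, $\mathrm{id}_Y-hg$ is invertible, and an invertible block-diagonal endomorphism is invertible on each block, so $\mathrm{id}_{Y_j}-f_{ji}g_{ij}$ is invertible. As $g_{ij}$ was arbitrary, $f_{ji}\in\Rad_{\mathscr{A}}(X_i,Y_j)$.

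\textbf{Step 2: $(ii)\Rightarrow(i)$.} Suppose $f_{ji}\in\Rad_{\mathscr{A}}(X_i,Y_j)$ for all $i,j$, and let $g\in\Hom(Y,X)$ be arbitrary with components $g_{ik}=\pi_{X_i}g\iota_{Y_k}$. Then $hg$ has components $(hg)_{lk}=\sum_i f_{li}g_{ik}$. The cleanest route is to argue that $hg$ is a ``radical endomorphism'' of $Y$ in the sense that every entry $(hg)_{lk}$ lies in $\Rad_{\mathscr{A}}(Y_k,Y_l)$ --- this holds because $\Rad_{\mathscr{A}}$ is an ideal, so each summand $f_{li}g_{ik}$ of $(hg)_{lk}$ lies in $\Rad_{\mathscr{A}}(Y_k,Y_l)$, and radical hom-sets are closed under addition. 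Now I would invoke (or quickly reprove) the standard fact that an endomorphism $\varphi$ of a Krull--Schmidt object $Y=\bigoplus Y_k$ all of whose entries $\varphi_{lk}$ lie in the radical is itself in $\Rad_{\mathscr{A}}(Y,Y)$; equivalently, $\mathrm{id}_Y-\varphi$ is invertible. Applying this to $\varphi=hg$ gives that $\mathrm{id}_Y-hg$ is invertible for all $g$, i.e. $h\in\Rad_{\mathscr{A}}(X,Y)$.

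\textbf{Main obstacle.} The genuinely load-bearing input is the Krull--Schmidt property (which $\rep(Q,R)$ enjoys, and which $\mathscr{A}$ is assumed to have): that the endomorphism ring of an indecomposable is local, and consequently that the radical of $\End_{\mathscr{A}}(Y)$ for $Y$ a finite direct sum of indecomposables is precisely the set of matrices with all entries in the radical. The subtle point in Step~2 is justifying that ``all entries in $\Rad$'' forces invertibility of $\mathrm{id}_Y-hg$: this requires either citing the description of $\Rad_{\End(Y)}$ via the nilpotency/quasi-regularity of its elements, or an induction on the number of indecomposable summands of $Y$ peeling off one summand at a time and using that $\mathrm{id}_{Y_k}-(hg)_{kk}$ is invertible (since $(hg)_{kk}\in\Rad_{\End(Y_k)}$ and $\End(Y_k)$ is local) together with a Gaussian-elimination/Schur-complement argument to clear the off-diagonal entries. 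I would carry out this induction explicitly, as it is the only non-formal step; everything else is bookkeeping with the ideal property of the radical.
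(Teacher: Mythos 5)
The paper itself offers no proof of this lemma --- it is quoted directly from \cite{ASS06} --- so your argument can only be judged on its own terms. The overall strategy (testing against the morphisms $g=\iota_{X_i}g_{ij}\pi_{Y_j}$ for one direction, and reducing the other direction to the statement that an endomorphism of $Y$ all of whose entries lie in the radical is quasi-regular) is the standard one, and Step~2 is correct as sketched: each entry $\sum_i f_{li}g_{ik}$ of $fg$ lies in $\Rad_{\mathscr{A}}(Y_k,Y_l)$ because the radical is a two-sided ideal closed under addition, and the peeling-off induction via Schur complements closes the argument. One remark: you do not actually need the Krull--Schmidt property or locality of $\End(Y_k)$ anywhere. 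The base case of your induction, namely that $\mathrm{id}_{Y_k}-\varphi_{kk}$ is invertible when $\varphi_{kk}\in\Rad_{\mathscr{A}}(Y_k,Y_k)$, is immediate from \cref{def:jacobson-radical} with $g=\mathrm{id}_{Y_k}$, and the inductive step uses only the ideal property; so the lemma holds in any additive category, matching the generality of the statement.

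There is, however, a genuine computational error in Step~1. With $g=\iota_{X_i}g_{ij}\pi_{Y_j}$ you assert $fg=(\iota_{Y_j}f_{ji}\pi_{X_i})(\iota_{X_i}g_{ij}\pi_{Y_j})$, which silently replaces $f$ by its single component $\iota_{Y_j}f_{ji}\pi_{X_i}$. In fact $f\iota_{X_i}$ has components $f_{li}$ for every $l$, so $fg=\sum_l\iota_{Y_l}\,f_{li}g_{ij}\,\pi_{Y_j}$: the matrix of $fg$ has a full $j$-th column, not just a $(j,j)$ entry, and $\mathrm{id}_Y-fg$ is \emph{not} block-diagonal. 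The conclusion you want still holds --- a matrix agreeing with the identity outside a single column is invertible if and only if the diagonal entry of that column is, as one sees by clearing the off-diagonal entries of that column with elementary column operations --- but the justification as written fails. The cleaner fix is to drop the computation entirely: since $\Rad_{\mathscr{A}}$ is a two-sided ideal (as recorded in \cref{def:jacobson-radical}), $f\in\Rad_{\mathscr{A}}(X,Y)$ immediately gives $f_{ji}=\pi_{Y_j}\,f\,\iota_{X_i}\in\Rad_{\mathscr{A}}(X_i,Y_j)$, which is all of $(i)\Rightarrow(ii)$.
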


\begin{lemma}\label{lem:minimalrad}
Let $\mathscr{A}$ be an abelian category and $M \in \mathscr{A}$. Consider a right minimal $\add(M)$-approximation $g$ of $X \in \Gen(M)$. Then we have the following short exact sequence 
    \[\begin{tikzcd}
	\xi : 0 & K_M & E_M  & X &  0,
	\arrow[from=1-1, to=1-2]
	\arrow["f",tail, from=1-2, to=1-3]
	\arrow["g",two heads,from=1-3, to=1-4]
	\arrow[from=1-4, to=1-5]
    \end{tikzcd} \]
    with $f \in \Rad_{\mathscr{A}}(K_M,E_M)$. Dually, if $ X \in \Sub(T)$ and $f : X \rightarrow E_M$ is a left minimal $\add(M)$-approximation of $X$, then $g \in \Rad_{\mathscr{A}}(E_M,\Coker(f))$. 
\end{lemma}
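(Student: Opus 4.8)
The strategy is to reduce the claim to a purely radical-theoretic fact about minimal approximations, using \Cref{lem:rad-on-sums} and the very definition of minimality. Consider the first assertion. Since $g$ is a right minimal $\add(M)$-approximation of $X$, it is in particular an epimorphism (as $X \in \Gen(M)$: every object of $\Gen(M)$ is a quotient of an object of $\add(M)$, and a right $\add(M)$-approximation factors any such surjection, so it is surjective by a standard argument). Hence we do get a short exact sequence $\xi$ with $E_M \in \add(M)$ and $f = \ker(g)$. The content is that $f \in \Rad_{\mathscr{A}}(K_M, E_M)$.

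First I would decompose $E_M = \bigoplus_{i=1}^n E_i$ with each $E_i$ indecomposable, and by \Cref{lem:rad-on-sums} it suffices to show that each component $f_i = \pi_{E_i} \circ f : K_M \to E_i$ lies in $\Rad_\mathscr{A}(K_M, E_i)$; equivalently (working on the other side, since $K_M$ need not be in $\add(M)$ but $\Rad$ behaves well with respect to arbitrary domains too) it suffices to rule out that $f$ has a direct summand of $E_M$ as a ``split mono piece'', i.e.\ that there is no nonzero summand $E' \mid E_M$ and no retraction $E_M \to E'$ whose composite with $f$ is a split epimorphism $K_M \to E'$. Suppose for contradiction that $f$ is not in the radical. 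Then, decomposing $E_M$, there is a decomposition $E_M \cong E' \oplus E''$ with $E' \neq 0$, $E' \in \add(M)$, such that the composite $K_M \xrightarrow{f} E_M \twoheadrightarrow E'$ is a split epimorphism. Writing the kernel of that retraction and using that $E'$ maps isomorphically onto a summand of $K_M$, one produces a decomposition showing that $g$ restricted to a complementary summand of $E_M$ is still a right $\add(M)$-approximation of $X$ — contradicting the minimality of $g$. More precisely: if $K_M \to E'$ is split epi with section $s : E' \to K_M$, then $f \circ s : E' \to E_M$ is a split mono, so $E'$ is (up to the automorphism it induces) a direct summand of $E_M$ sitting inside $\IIm(f) = \Ker(g)$; killing this summand gives a proper summand $E'' \subseteq E_M$ with $g|_{E''}$ still surjective onto $X$ and still an $\add(M)$-approximation, so the idempotent endomorphism $e$ of $E_M$ projecting onto $E''$ satisfies $g \circ e = g$ but $e$ is not invertible, contradicting the definition of minimality of $g$. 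Hence $f \in \Rad_\mathscr{A}(K_M, E_M)$.

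The dual assertion is obtained by the evident dualization: if $X \in \Sub(M)$ and $f : X \to E_M$ is a left minimal $\add(M)$-approximation, then $f$ is a monomorphism (again a standard consequence of $X \in \Sub(M)$, since every object of $\Sub(M)$ embeds into an object of $\add(M)$ and the left approximation factors that embedding), so $g = \Coker(f)$ fits in the short exact sequence $0 \to X \xrightarrow{f} E_M \xrightarrow{g} \Coker(f) \to 0$, and the same argument applied in $\mathscr{A}^{\op}$ — where left minimal $\add(M)$-approximations become right minimal $\add(M)$-approximations and $\Rad_{\mathscr{A}^{\op}} = \Rad_{\mathscr{A}}^{\op}$ — yields $g \in \Rad_\mathscr{A}(E_M, \Coker(f))$.

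The main obstacle I anticipate is the bookkeeping in the contradiction step: turning ``$f$ has a non-radical component'' into ``$g$ admits a non-invertible endomorphism $e$ with $ge = g$'' requires carefully using the Krull--Schmidt property of $\mathscr{A}$ (so that split summands of $E_M$ are genuine direct summands lying in $\add(M)$) and checking that removing the offending summand of $E_M$ genuinely leaves an $\add(M)$-approximation — that is, that every map $M' \to X$ with $M' \in \add(M)$ still factors through the smaller $E''$. This last point follows because the factorization through $E_M$ can be post-composed with the idempotent $e$ (using $g e = g$) to land in $E''$, but it is the step that must be written with care. Everything else is formal.
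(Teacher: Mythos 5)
Your argument is correct (in the Krull--Schmidt setting the paper actually works in), but it takes a genuinely different and considerably longer route than the paper's. The paper verifies radical membership directly from \cref{def:jacobson-radical}: for any $h \in \Hom(E_M,K_M)$ one has $g(\operatorname{id}_{E_M} - fh) = g - gfh = g$ since $gf = 0$, and right minimality of $g$ then forces $\operatorname{id}_{E_M} - fh$ to be invertible --- two lines, no decomposition into indecomposables, and no Krull--Schmidt hypothesis. You instead argue by contraposition through the ``no split component'' characterization of the radical, which requires decomposing $E_M$, invoking \cref{lem:rad-on-sums}, and using that a non-radical map between indecomposables with local endomorphism rings is an isomorphism --- an input beyond what \cref{lem:rad-on-sums} itself states, and one that restricts the argument to Krull--Schmidt categories even though the lemma is stated for an arbitrary abelian category. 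Your construction of the non-invertible idempotent $e$ with $ge = g$ is sound and does yield the contradiction with minimality; note also that the final worry you flag (re-checking that $E''$ still gives an $\add(M)$-approximation) is unnecessary, since minimality as defined in \cref{def:leftrightminapprox} only quantifies over endomorphisms $\beta$ of $E_M$ with $g\beta = g$, and your idempotent already violates it. In short: your proof buys a concrete structural picture of where a non-radical $f$ would fail, at the cost of extra hypotheses and bookkeeping; the paper's direct computation is both shorter and more general.
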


\begin{proof}
By \cref{prop:Approxismonoepi2}, the map $g$ is an epimorphism and we get the short exact sequence
\[\begin{tikzcd}
	\xi : 0 & K_M & E_M  & X &  0,
	\arrow[from=1-1, to=1-2]
	\arrow["f",tail, from=1-2, to=1-3]
	\arrow["g",two heads,from=1-3, to=1-4]
	\arrow[from=1-4, to=1-5]
    \end{tikzcd} \] with $E_M \in \add(M)$.

Consider the morphism $\text{id}_{E_M} - fh \in \End(E_M)$ with $h \in \Hom(E_M,K_M)$. We have $g(\text{id}_{E_M} - fh) = g - gfh = g$. Since $g$ is minimal, $\text{id}_{E_M} - fh$ is invertible for any $h \in \Hom(E_M,K_M)$. Hence, $f \in \Rad_{\mathscr{A}}(K_M,E_M)$.
\end{proof}

\begin{lemma}[\cite{APT15}] \label{lem:Ibra} 
    Let $T \in \Tilt(Q)$ and $X \in \Gen(T)$. Consider a right minimal $\add(T)$-approximation $g$ of $X$, which is an epimorphism by \cref{prop:Approxismonoepi2}. Then we have the following short exact sequence 
    \[\begin{tikzcd}
	\xi : 0 & K_T & E_T  & X &  0,
	\arrow[from=1-1, to=1-2]
	\arrow[tail, from=1-2, to=1-3]
	\arrow["g",two heads,from=1-3, to=1-4]
	\arrow[from=1-4, to=1-5]
    \end{tikzcd} \]
    with $K_T \in \add(T)$. Dually, if $ X \in \Sub(T)$ and $f$ is a left minimal $\add(T)$-approximation of $X$, then $\Coker(f) \in \add(T)$.
\end{lemma}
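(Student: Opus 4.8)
The plan is to establish the first assertion; the second is entirely dual. We are handed the short exact sequence $\xi\colon 0 \to K_T \xrightarrow{f} E_T \xrightarrow{g} X \to 0$, where $g$ is a right minimal $\add(T)$-approximation (an epimorphism by \cref{prop:Approxismonoepi2}), and we must show $K_T \in \add(T)$. The strategy is to compare $\xi$ against a second short exact sequence whose two left-hand terms already lie in $\add(T)$: since $T$ is tilting and $X \in \Gen(T) = \Gen_{\mathcal{E}_{\max}}(T)$, the first assertion of \cref{prop:GenSubT}, applied with $\mathcal{E} = \mathcal{E}_{\max}$, yields a short exact sequence $\eta\colon 0 \to T_1 \to T_0 \xrightarrow{p} X \to 0$ with $T_0, T_1 \in \add(T)$.

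First I would verify that $p$ is itself a right $\add(T)$-approximation: for $T' \in \add(T)$, applying $\Hom(T', -)$ to $\eta$ gives an exact sequence $\Hom(T', T_0) \to \Hom(T', X) \to \Ext^1(T', T_1)$ whose rightmost term vanishes because $T$ is rigid and $T', T_1 \in \add(T)$; hence every map $T' \to X$ factors through $p$. Since $g$ and $p$ are both right $\add(T)$-approximations and $E_T, T_0 \in \add(T)$, there are morphisms $s\colon T_0 \to E_T$ and $t\colon E_T \to T_0$ with $p = g s$ and $g = p t$, and these extend to morphisms of short exact sequences $\eta \to \xi$ (given by $s$ on the middle terms, identity on $X$) and $\xi \to \eta$ (given by $t$).

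The composite endomorphism of the middle term of $\xi$ is $st$, and $g \circ (st) = p t = g$, so right minimality of $g$ forces $st$ to be an automorphism of $E_T$. The resulting self-morphism of $\xi$ is then the identity on $X$ and an isomorphism on the middle term, hence an isomorphism on $K_T$ by the five lemma; this exhibits the comparison map $\xi \to \eta$ as a split monomorphism in the category of short exact sequences, and in particular $K_T$ as a direct summand of $T_1$. As $\add(T)$ is closed under direct summands ($\rep(Q)$ being Krull--Schmidt), we conclude $K_T \in \add(T)$. The dual statement follows by the same argument with $\Gen$ replaced by $\Sub$, the first assertion of \cref{prop:GenSubT} by the second, $\Hom(T',-)$ by $\Hom(-,T')$, and all arrows reversed, using \cref{prop:Approxismonoepi2} to see that the left minimal approximation is a monomorphism.

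I do not expect a serious obstacle: the statement is, at bottom, the familiar fact that a minimal right approximation is a direct summand of an arbitrary one, combined with the tilting-theoretic input of \cref{prop:GenSubT}. The only point demanding care is the diagram bookkeeping — checking that $s$ and $t$ genuinely induce maps on the kernels $K_T$ and $T_1$, and that the five lemma applies inside $\rep(Q)$ to make the induced endomorphism of $K_T$ invertible — together with the reminder that \cref{prop:GenSubT} and \cref{prop:Approxismonoepi2} are being invoked for the maximal exact structure, where $\Gen_{\mathcal{E}_{\max}} = \Gen$ and $\mathcal{E}_{\max}$-epimorphisms are simply epimorphisms.
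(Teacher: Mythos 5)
The paper does not actually prove this lemma; it only cites \cite{APT15}, so there is no internal argument to compare yours against. Your proof is correct and is the standard one: the coresolution $0\to T_1\to T_0\xrightarrow{p} X\to 0$ supplied by \cref{prop:GenSubT} for $\mathcal{E}=\mathcal{E}_{\max}$ is itself a right $\add(T)$-approximation because $\Ext^1(T',T_1)=0$ for $T'\in\add(T)$ by rigidity, the two approximations factor through each other, right minimality of $g$ forces $st$ to be an automorphism of $E_T$, and its restriction to the kernel (an isomorphism by the five lemma applied to the endomorphism of $\xi$, or simply because an injective endomorphism of a finite-dimensional representation is bijective) splits the induced map $K_T\to T_1$, so $K_T$ is a summand of $T_1\in\add(T)$. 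The only bookkeeping worth spelling out in a written version is the one you flag yourself: that $t$ and $s$ really do restrict to maps between the kernels, which follows from $p\circ t|_{K_T}=g|_{K_T}=0$ and the universal property of $\ker(p)$, and dually.
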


\begin{lemma} \label{lem:UinSubXinGen}
Let $T\in\Tilt(Q)$ and $X \notin \add(T)$ indecomposable. The following assertions hold:
\begin{enumerate}[label=$(\roman*)$, itemsep=1mm]
    \item if $X \in \Gen_\mathcal{E}(T)$, then there is an indecomposable $U \in \add(T)$ such that $U \in \Sub_\mathcal{E}(M)$ and $X \in \Gen_\mathcal{E}(M)$ with $T = U \oplus M$; and,
    \item if $X \in \Sub_\mathcal{E}(T)$, then there is an indecomposable $U \in \add(T)$ such that $U \in \Gen_\mathcal{E}(M)$ and $X \in \Sub_\mathcal{E}(M)$ with $T = U \oplus M$.
\end{enumerate}
\end{lemma}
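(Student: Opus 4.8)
The two statements $(i)$ and $(ii)$ are dual to each other (swapping $\Gen$ with $\Sub$, left approximations with right approximations, and using the opposite category), so it suffices to prove $(i)$; I would state this reduction at the outset and then concentrate on the first assertion. Throughout, recall from \cref{prop:numberindectilt} that a tilting object $T$ has exactly $n$ nonisomorphic indecomposable summands, where $n$ is the number of nonisomorphic indecomposable projectives, and that it is rigid.

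Assume $X \notin \add(T)$ is indecomposable and $X \in \Gen_{\mathcal E}(T) \subseteq \Gen(T)$. The key tool is \cref{lem:Ibra}: take a right minimal $\add(T)$-approximation $g : E_T \longrightarrow X$, which by \cref{prop:Approxismonoepi2}$(i)$ is an $\mathcal{E}$-epimorphism (indeed, since $X \in \Gen_\mathcal{E}(T)$), so that we obtain a short $\mathcal{E}$-exact sequence
\[\begin{tikzcd}
	0 & K_T & E_T & X & 0,
	\arrow[from=1-1, to=1-2]
	\arrow[tail, from=1-2, to=1-3]
	\arrow["g",two heads,from=1-3, to=1-4]
	\arrow[from=1-4, to=1-5]
\end{tikzcd}\]
with both $K_T, E_T \in \add(T)$. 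Moreover, by \cref{lem:minimalrad}, the inclusion $K_T \hookrightarrow E_T$ lies in $\Rad_{\rep(Q)}$, hence no indecomposable summand of $E_T$ can appear as a summand of $K_T$ via this map being split — more precisely, the map $K_T \to E_T$ has no nonzero component that is an isomorphism between indecomposable summands. I would then argue that $E_T$ and $K_T$ have no common indecomposable summand: if $U$ were such a common summand, one could use rigidity of $T$ together with the minimality of $g$ (no summand of $E_T$ maps isomorphically into a summand of $K_T$ by the radical property) to reach a contradiction, essentially splitting off $U$ from both ends of the sequence and contradicting either minimality or that $X \notin \add(T)$. The upshot is that there exists an indecomposable summand $U$ of $E_T$ which is \emph{not} a summand of $K_T$; write $T = U \oplus M$.

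It then remains to check the two membership claims. For $X \in \Gen_\mathcal{E}(M)$: since $K_T \in \add(M)$ (as $K_T$ has no $U$-summand and $T = U \oplus M$), the sequence $0 \to K_T \to E_T \to X \to 0$ exhibits $X$ as an $\mathcal{E}$-quotient; but $E_T$ may still contain copies of $U$, so this needs a small extra argument — I would instead build the approximation directly relative to $\add(M)$, or modify $g$, to produce a short $\mathcal{E}$-exact sequence $0 \to K' \to E' \to X \to 0$ with $E', K' \in \add(M)$, using that $\Ext^1$ between summands of $T$ vanishes to push the $U$-summands of $E_T$ across. For $U \in \Sub_\mathcal{E}(M)$: the inclusion $K_T \hookrightarrow E_T$ restricted appropriately gives, after isolating the $U$-summand of $E_T$, an $\mathcal{E}$-monomorphism from $U$ into an object of $\add(M)$; concretely, composing the summand inclusion $U \hookrightarrow E_T$ with... no — rather, one extracts from the structure of the sequence a short $\mathcal{E}$-exact sequence $0 \to U \to M_0 \to M_1 \to 0$ with $M_0, M_1 \in \add(M)$, which is exactly $U \in \Sub_\mathcal{E}(M)$ by definition (this uses \cref{prop:GenSubT}$(ii)$-type reasoning, or \cref{lem:SublocalEadapt}).

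\textbf{Main obstacle.} The delicate point is the bookkeeping in passing from the approximation sequence $0 \to K_T \to E_T \to X \to 0$ (where $E_T$ may contain copies of the chosen summand $U$) to clean sequences witnessing $X \in \Gen_\mathcal{E}(M)$ and $U \in \Sub_\mathcal{E}(M)$ \emph{simultaneously with the same decomposition} $T = U \oplus M$. Ensuring that the \emph{same} indecomposable $U$ works for both conditions — rather than needing different summands for the $\Gen$ and $\Sub$ statements — is where the radical/minimality input from \cref{lem:minimalrad} and the rigidity of $T$ have to be combined carefully. I expect the cleanest route is: first use minimality to guarantee $E_T$ and $K_T$ share no indecomposable summand, pick $U$ a summand of $E_T$ (which therefore is not a summand of $K_T$), then verify $U$ itself cannot lie in $\add(M)$-generated data trivially, and finally invoke the tilting axioms and $\mathcal{E}$-adaptedness of $\add(T)$ (\cref{lem:TiltEadapt}) to rewrite both required short exact sequences with terms in $\add(M)$.
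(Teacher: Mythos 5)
There is a genuine gap, and it lies in your choice of the summand $U$. The paper's proof takes the right minimal $(\add(T),\mathcal{E})$-approximation sequence $0 \to K_T \xrightarrow{f} E_T \xrightarrow{g} X \to 0$ (with $K_T, E_T \in \add(T)$ by \cref{lem:Ibra}), proves $\add(K_T)\cap\add(E_T)=0$, and then chooses $U$ to be an indecomposable summand of the \emph{kernel} $K_T$. With that choice both conclusions fall out of the same sequence at once: since $U$ is not a summand of $E_T$ we get $E_T\in\add(M)$, hence $X\in\Gen_\mathcal{E}(M)$ directly from $g$, and $K_T\in\Sub_\mathcal{E}(E_T)\subseteq\Sub_\mathcal{E}(M)$ directly from $f$, so $U\in\Sub_\mathcal{E}(M)$ as a summand of $K_T$. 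You instead take $U$ to be a summand of the \emph{middle term} $E_T$, and this choice does not merely require ``a small extra argument'' --- it can make the conclusion false. Take $Q = 1\to 2$, $T = X_{\llrr{1,2}}\oplus X_{\llrr{2}}$ (the projectives), $X = X_{\llrr{1}}$, and $\mathcal{E}=\mathcal{E}_{\max}$: the approximation sequence is $0\to X_{\llrr{2}}\to X_{\llrr{1,2}}\to X_{\llrr{1}}\to 0$, and your recipe gives $U = X_{\llrr{1,2}}$, $M = X_{\llrr{2}}$. But $X_{\llrr{1,2}}\notin\Sub(X_{\llrr{2}})$ and $X_{\llrr{1}}\notin\Gen(X_{\llrr{2}})$, since $\Hom(X_{\llrr{2}},X_{\llrr{1}})=0$. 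So the plan to ``push the $U$-summands of $E_T$ across'' cannot be repaired; the summand must come from $K_T$.

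A secondary issue: your justification that $K_T$ and $E_T$ share no indecomposable summand is also under-powered. \cref{lem:minimalrad} only places the specific map $f$ in the radical, i.e.\ no component of $f$ is an isomorphism; that does not preclude $K_T$ and $E_T$ having a common summand with the corresponding component of $f$ being zero or a non-isomorphism. The paper's argument shows the much stronger fact that \emph{every} morphism $\alpha\in\Hom(K_T,E_T)$ lies in $\Rad(K_T,E_T)$: one uses rigidity of the indecomposable $X$ to get surjectivity of $\Hom(f,X)$, lifts $g\alpha$ through $f$, then uses the approximation property of $g$ and the kernel universal property to write $\alpha = f\alpha' + \beta' f$, and concludes via the two-sided-ideal property of the radical and \cref{lem:rad-on-sums}. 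That chain of reasoning is absent from your sketch and is the actual content of the ``no common summand'' step.
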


\begin{proof}
Consider $X \in \Gen_\mathcal{E}(T)$. By \cref{prop:Approxismonoepi} and \cref{lem:TiltandApprox},
there exists a right minimal $(\add(T),\mathcal{E})$-approximation $g$ of X :
\[
\begin{tikzcd}
\xi : 0 \arrow[r] & K_T \arrow[r,tail,"f"] & E_T \arrow[r,two heads,"g"] & X \arrow[r] & 0
\end{tikzcd}
\]
with $E_T\in\add(T)$. Given that $X \notin \add(T)$, $\xi$ does not split and $K_T \neq 0$. Moreover, $K_T \in \add(T)$ by \cref{lem:Ibra} and $f \in \Rad_{\mathscr{A}}(K_T,E_T)$ by \cref{lem:minimalrad}.

\smallskip

We will show that $\add(K_T) \cap \add(E_T) = 0$. 

\smallskip
Consider any morphism $\alpha \in \Hom(K_T,E_T)$. Given that $X$ is indecomposable and $Q$ is of $ADE$ Dynkin type, then $X$ is rigid. By applying the functor $\Hom(-,X)$, we get the short exact sequence

 \[\begin{tikzcd}
	\xi : 0 & \Hom(X,X) & \Hom(E_T,X)  & \Hom(K_T,X) &  0,
	\arrow[from=1-1, to=1-2]
	\arrow[tail, from=1-2, to=1-3]
	\arrow["{\scriptstyle \Hom(f,X)}",two heads,from=1-3, to=1-4]
	\arrow[from=1-4, to=1-5]
    \end{tikzcd} \]
Therefore, there exist $\beta \in \Hom(E_T,X)$ such that $\beta f = g\alpha$.

\smallskip
    
We can construct the following commutative diagram:

\[
\begin{tikzcd}[column sep=large]
& 0 \arrow[r] & K_T \arrow[r, tail, "f"] \arrow["\alpha",d]
            & E_T \arrow[r, two heads, "g"] \arrow["\beta",d, dashed]
            & X   \arrow[r]
            & 0   \\
0 \arrow[r] & K_T \arrow[r, tail, "f"']
    & E_T \arrow[r, two heads, "g"']
    & X   \arrow[r]
    & 0 &
\end{tikzcd}
\]

Since $g$ is a right $\add(T)$-approximation of $X$, there is a morphism $\beta' \in \End(E_T)$ such that $\beta = g\beta'$. We get that $g(\alpha - \beta'f) = g\alpha - g\beta' f = \beta f - \beta f = 0$. By the kernel universal property, there is a morphism $\alpha' \in \End(K_T)$ such that $(\alpha - \beta'f) = f\alpha'$. Therefore, $\alpha = f\alpha' + \beta'f$. By \cref{lem:minimalrad}, $f \in \Rad_{\rep Q}(K_T,E_T)$, so is $\alpha$ because $\Rad_{\rep Q}$ is a two-sided ideal of $\rep Q$.

\smallskip

Given that any morphism $\alpha \in \Hom(K_T,E_T)$ is in $\Rad_{\rep Q}(K_T,E_T)$, by \cref{lem:rad-on-sums}, we get that $\add(K_T) \cap \add(E_T) = 0$.

\smallskip
Now, consider an indecomposable $U \in \add(K_T) \subseteq \add(T)$. Since $U \notin \add(E_T)$, we have $E_T \in \add(M)$ with $T = U \oplus M$. Therefore, since $\xi$ is an $\mathcal{E}$-admissible short exact sequence, then $K_T \in \Sub_\mathcal{E}(M)$ and $X \in \Gen_\mathcal{E}(M)$. Hence, $U \in \Sub_\mathcal{E}(M)$ and $X \in \Gen_\mathcal{E}(M)$. 
\end{proof}

\begin{remark} \label{rem:citeIA}
    The above proof was mainly inspired by the proof of \cite[Lemma 2.25]{AI12}. 
\end{remark}

\begin{prop}\label{prop:noMutation}
Let $T\in\Tilt(Q)$. If $T$ admits no left $\mathcal{E}$-mutation at any indecomposable summands, then $\Gen_\mathcal{E}(T) =\add(T)$. Dually, If $T$ admits no right $\mathcal{E}$-mutation at any indecomposable summands, then $\Sub_\mathcal{E}(T) =\add(T)$.
\end{prop}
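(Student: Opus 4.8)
The plan is to prove the nontrivial inclusion $\Gen_\mathcal{E}(T) \subseteq \add(T)$ by contradiction: I would show that if $\Gen_\mathcal{E}(T)$ strictly contains $\add(T)$, then $T$ admits a left $\mathcal{E}$-mutation at one of its indecomposable summands, contrary to hypothesis. The reverse containment $\add(T) \subseteq \Gen_\mathcal{E}(T)$ needs no argument, since $\mathcal{E}$ is an exact structure, so $\mathscr{C} \subseteq \Gen_\mathcal{E}(\mathscr{C})$ always. I will only spell out the left/$\Gen_\mathcal{E}$ case, as the right/$\Sub_\mathcal{E}$ case is entirely dual.

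First I would reduce to an indecomposable witness. If $\Gen_\mathcal{E}(T) \neq \add(T)$, pick $Y \in \Gen_\mathcal{E}(T) \setminus \add(T)$ and decompose $Y = \bigoplus_i Y_i$ into indecomposables; since $\Gen_\mathcal{E}(T)$ and $\add(T)$ are both closed under summands (\cref{conv:addsubcat}) and $Y \notin \add(T)$, some summand $X := Y_i \in \Gen_\mathcal{E}(T)$ lies outside $\add(T)$. Then I would invoke \cref{lem:UinSubXinGen}~$(i)$, which is precisely tailored to this situation: it produces a decomposition $T = U \oplus M$ with $U$ indecomposable, $U \in \Sub_\mathcal{E}(M)$ (and $X \in \Gen_\mathcal{E}(M)$, which I will not need).

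Next, the goal is to build a left $\mathcal{E}$-mutation of $T$ at $U$, i.e.\ in the notation of \cref{def:leftrightEmutation} with $\widetilde{T} = M$, a left minimal $(\add(M),\mathcal{E})$-approximation of $U$. Existence of \emph{some} left $\add(M)$-approximation of $U$ is \cref{rem:addMcase}; upgrading it to a minimal one $f\colon U \to C$ with $C \in \add(M)$ is \cref{prop:Approxismonoepi}~$(i)$; and the key point that this minimal left $\add(M)$-approximation is automatically an $\mathcal{E}$-monomorphism is \cref{prop:Approxismonoepi2}~$(ii)$, whose hypothesis $U \in \Sub_\mathcal{E}(M) = \Sub_\mathcal{E}(\add(M))$ is exactly what \cref{lem:UinSubXinGen}~$(i)$ delivered. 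Hence $T = M \oplus U$ admits a left $\mathcal{E}$-mutation at $U$, the desired contradiction; therefore $\Gen_\mathcal{E}(T) = \add(T)$. For the dual assertion I would run the same argument, replacing parts $(i)$ of \cref{lem:UinSubXinGen}, \cref{prop:Approxismonoepi}, \cref{prop:Approxismonoepi2} by the relevant parts ($(ii)$, $(ii)$, $(i)$ respectively) and exchanging $\Gen_\mathcal{E} \leftrightarrow \Sub_\mathcal{E}$, left $\leftrightarrow$ right, monomorphism $\leftrightarrow$ epimorphism.

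Since all the real substance is already packaged in \cref{lem:UinSubXinGen} together with the approximation results, there is no computational obstacle here; the only point requiring care is the bookkeeping with definitions — in particular verifying that a ``left minimal $(\add(M),\mathcal{E})$-approximation'' in the sense of \cref{def:leftrightEmutation} is exactly a minimal left $\add(M)$-approximation that is an $\mathcal{E}$-monomorphism, which is precisely the object produced by chaining \cref{rem:addMcase}, \cref{prop:Approxismonoepi}~$(i)$ and \cref{prop:Approxismonoepi2}~$(ii)$.
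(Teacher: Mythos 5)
Your proposal is correct and follows essentially the same route as the paper's own proof: argue by contradiction, extract an indecomposable $X \in \Gen_\mathcal{E}(T)\setminus\add(T)$, invoke \cref{lem:UinSubXinGen} to find the summand $U \in \Sub_\mathcal{E}(\widetilde{T})$, and then use the approximation results to produce the forbidden left $\mathcal{E}$-mutation. The only (harmless) difference is bookkeeping: the paper cites \cref{prop:Approxismonoepi} together with \cref{lem:TiltandApprox} where you chain \cref{rem:addMcase}, \cref{prop:Approxismonoepi}~$(i)$ and \cref{prop:Approxismonoepi2}~$(ii)$, and you make explicit the reduction to an indecomposable witness that the paper leaves implicit.
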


\begin{proof}
Suppose that $\add(T) \subsetneq \Gen_\mathcal{E}(T)$. Given that, there exist $X \in \Gen_\mathcal{E}(T)$ such that $X \notin \add(T)$. By \cref{lem:UinSubXinGen}, there exist an indecomposable $U \in \add(T)$ such that $U \in \Sub_\mathcal{E}(\widetilde{T})$ with $T = \widetilde{T} \oplus U $. By \cref{prop:Approxismonoepi} and \cref{lem:TiltandApprox}, $U$ admits a left minimal $(\add(\widetilde{T}),\mathcal{E})$-approximation $f$. In the sense of \cref{def:leftrightEmutation}, $T$ admits a left $\mathcal{E}$-mutation at $U$ which is a contradiction. Hence, $\Gen_\mathcal{E}(T) =\add(T)$.
\end{proof}

The following results highlight the description of $\Gen_\mathcal{E}(T)$ via the tilting representations that cover $T$, and the description of $\Sub_\mathcal{E}(T)$  via the tilting representations that $ T$ covers for $\TleqE$.

\begin{prop}\label{prop:InductiveStepGenSub}
Let $T \in \Tilt(Q)$. Then the following equalities hold:
\begin{enumerate}[label=$(\roman*)$, itemsep=1mm]
    \item
    $\displaystyle
        \Gen_{\mathcal{E}}(T) 
        = \add(T) \oplus 
        \Bigl(\,\bigoplus_{T \TldotEunder T'} \Gen_{\mathcal{E}}(T') \Bigr);$ and,

    \item $\displaystyle 
        \Sub_{\mathcal{E}}(T) 
        = \add(T) \oplus 
        \Bigl(\,\bigoplus_{T' \TldotEunder T} \Sub_{\mathcal{E}}(T') \Bigr).$
\end{enumerate}
\end{prop}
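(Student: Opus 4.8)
The two statements are dual, so the plan is to prove (i) and obtain (ii) by applying the dual argument (exchanging $\Gen$ and $\Sub$, left and right mutations, and reversing the order $\TleqE$). For (i), I would prove the two inclusions separately.

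\textbf{The inclusion $\supseteq$.} This is the easy direction. Obviously $\add(T) \subseteq \Gen_{\mathcal{E}}(T)$. Now suppose $T \TldotEunder T'$, so that $T$ covers $T'$ for $\TleqE$; by definition of the covering relation this means there is an indecomposable summand $U$ of $T'$ with $\pmb{\mu}_{U,\mathcal{E}}^{+}(T') \cong T$, equivalently $T$ admits a left $\mathcal{E}$-mutation at an indecomposable summand producing $T'$. Writing $T = \widetilde{T} \oplus U$ with a short $\mathcal{E}$-exact sequence $0 \to U \to \Theta \to \Coker(f) \to 0$ where $f$ is the left minimal $(\add(\widetilde{T}),\mathcal{E})$-approximation, we have $T' \cong \widetilde{T} \oplus \Coker(f)$. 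By \cref{prop:Tstable} (or directly \cref{prop:Tstable} combined with \cref{cor:propUniquetilting}), $T' \sim_{\mathcal{E}} T$, hence $\GS_{\mathcal{E}}(T') = \GS_{\mathcal{E}}(T)$; since $\Gen_{\mathcal{E}}(T') \subseteq \GS_{\mathcal{E}}^1(T') \subseteq \GS_{\mathcal{E}}(T') = \GS_{\mathcal{E}}(T)$... but that only lands inside $\GS_{\mathcal{E}}(T)$, not inside $\Gen_{\mathcal{E}}(T)$. So the honest argument is: an object $X \in \Gen_{\mathcal{E}}(T')$ with $T' = \widetilde{T} \oplus \Coker(f)$; using that $\Coker(f)$ is itself an $\mathcal{E}$-quotient of $\Theta \in \add(T)$, one builds (via a pushout/$3\times 3$ diagram as in \cref{lemma:gendiagram}) a short $\mathcal{E}$-exact sequence exhibiting $X$ as an $\mathcal{E}$-quotient of an object of $\add(T)$, i.e. $X \in \Gen_{\mathcal{E}}(T)$. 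The point is that $\Gen_{\mathcal{E}}$ is transitive along $\mathcal{E}$-epimorphisms when the relevant Ext groups lie in $\mathcal{E}$, which holds here by \cref{cor:rigidEadapt}.

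\textbf{The inclusion $\subseteq$.} Let $X \in \Gen_{\mathcal{E}}(T)$. If $X \in \add(T)$ we are done, so assume $X \notin \add(T)$, and we may assume $X$ indecomposable. By \cref{lem:UinSubXinGen}(i), there is an indecomposable summand $U$ of $T$, writing $T = U \oplus M$, with $U \in \Sub_{\mathcal{E}}(M)$ and $X \in \Gen_{\mathcal{E}}(M)$. Since $U \in \Sub_{\mathcal{E}}(M)$, by \cref{prop:Approxismonoepi} and \cref{lem:TiltandApprox}, $U$ admits a left minimal $(\add(M),\mathcal{E})$-approximation $f$, so $T$ admits a left $\mathcal{E}$-mutation at $U$, producing $T' := \pmb{\mu}_{U,\mathcal{E}}^{-}(T) = M \oplus \Coker(f)$, and $T \TldotEunder T'$. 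It remains to show $X \in \Gen_{\mathcal{E}}(T')$; since $X \in \Gen_{\mathcal{E}}(M)$ and $M$ is a summand of $T'$, this is immediate. Hence $X \in \bigoplus_{T \TldotEunder T'} \Gen_{\mathcal{E}}(T')$. This proves $\subseteq$, and the reverse inclusion above completes (i). Part (ii) follows by the dual of \cref{lem:UinSubXinGen}(ii), \cref{prop:noMutation} (dual statement), and the dual approximation results, everything being symmetric under the contravariant duality on $\rep(Q)$.

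\textbf{Main obstacle.} The delicate point is the direction $\supseteq$: showing that $\Gen_{\mathcal{E}}(T') \subseteq \Gen_{\mathcal{E}}(T)$ for a cover $T'$ of $T$, i.e. establishing the transitivity $\Gen_{\mathcal{E}}(\Gen_{\mathcal{E}}(\text{stuff in } \add(T))) \subseteq \Gen_{\mathcal{E}}(T)$ when only one $\mathcal{E}$-mutation separates them. The naive use of $T \sim_{\mathcal{E}} T'$ only gives containment in $\GS_{\mathcal{E}}(T)$, which is too weak; one genuinely needs to track that the relevant short exact sequences stay in $\mathcal{E}$, which is exactly what \cref{lem:GenlocalEadapt}, \cref{lemma:gendiagram}, and \cref{cor:rigidEadapt} are designed to handle, assembled via a $3\times 3$-diagram argument. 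I would isolate this as a short lemma (``if $N \in \add(T)$ and $X \in \Gen_{\mathcal{E}}(N)$ and $Y \in \Gen_{\mathcal{E}}(X)$ then $Y \in \Gen_{\mathcal{E}}(N)$, using $\mathcal{E}$-adaptedness of $\GS_{\mathcal{E}}(T)$'') before running the proof of $\supseteq$.
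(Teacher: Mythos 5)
Your proposal is correct and follows essentially the same route as the paper: the inclusion $\subseteq$ is exactly the paper's argument via \cref{lem:UinSubXinGen} and the existence of the left minimal $(\add(\widetilde{T}),\mathcal{E})$-approximation, and the inclusion $\supseteq$ is the paper's observation that $\Coker(f)\in\Gen_{\mathcal{E}}(T)$ forces $\Gen_{\mathcal{E}}(T')\subseteq\Gen_{\mathcal{E}}(T)$. The only remark worth making is that your ``main obstacle'' is lighter than you suggest: the transitivity $\Gen_{\mathcal{E}}(T')\subseteq\Gen_{\mathcal{E}}(T)$ needs no $3\times 3$ diagram or $\mathcal{E}$-adaptedness, only closure of $\mathcal{E}$-epimorphisms under composition (axiom \ref{ES2}) and under direct sums, which is how the paper treats it.
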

\begin{proof} The assertions $(i)$ and $(ii)$ are dual statements. It suffices for us to show $(i)$.

First, we will show that $  
         \add(T) \oplus 
        \Bigl(\,\bigoplus_{T \TldotEunder T'} \Gen_{\mathcal{E}}(T') \Bigr) \subseteq \Gen_{\mathcal{E}}(T)$.
        
Let $X \in \add(T) \oplus 
        \Bigl(\,\bigoplus_{T \TldotEunder T'} \Gen_{\mathcal{E}}(T') \Bigr)$ be an indecomposable representation. If $X \in \add(T)$, then obviously $X \in \Gen_\mathcal{E}(T)$.  Assume that $X \in \Gen_\mathcal{E}(T')$ for some $T \TldotE T'$. Set $U \in \ind(Q)$ such that $T \ncong \pmb{\mu}_{U,\mathcal{E}}^{-}(T) = T'$. We have the following $\mathcal{E}$-admissible short exact sequence
\[
\begin{tikzcd}
\xi : 0 \arrow[r] & U \arrow[r,tail,"f"] & \Theta \arrow[r,two heads,"g"] & \Coker(f) \arrow[r] & 0,
\end{tikzcd}
\]
with $f$ a left minimal $(\add(\widetilde{T}),\mathcal{E})$-approximation of $U$ with $T=\widetilde{T}\oplus U$. Since $\Theta \in \add(\widetilde{T}) \subset \add(T)$, then $\Coker(f) \in \Gen_\mathcal{E}(T)$. Given that $T' = \widetilde{T} \oplus \Coker(f)$, then $\Gen_\mathcal{E}(T') \subseteq \Gen_\mathcal{E}(T)$, so $X \in \Gen_\mathcal{E}(T)$.

Now, we will show that $  
        \Gen_{\mathcal{E}}(T) \subseteq \add(T) \oplus 
        \Bigl(\,\bigoplus_{T \TldotEunder T'} \Gen_{\mathcal{E}}(T') \Bigr)$.

Let $X \in \Gen_\mathcal{E}(T)$ be an indecomposable representation. If $X \in \add(T)$, we are done. Otherwise, by \cref{lem:UinSubXinGen}, there is an indecomposable representation $U \in \add(T)$ such that $X \in \Gen_\mathcal{E}(\widetilde{T})$ and $U \in \Sub_\mathcal{E}(\widetilde{T})$ with $T =  \widetilde{T} \oplus U$. By \cref{prop:Approxismonoepi} and \cref{lem:TiltandApprox}, $U$ admits a left minimal $(\add(\widetilde{T}),\mathcal{E})$-approximation $f$. In the sense of \cref{def:leftrightEmutation}, $T$ admits a left $\mathcal{E}$-mutation at $U$, and we get $\pmb{\mu}_{U,\mathcal{E}}^{-}(T) = T'$ for some tilting object $T \TldotEunder T'$ with $\add(\widetilde{T}) \subseteq \add(T')$. Therefore,  $X \in \Gen_\mathcal{E}(\widetilde{T}) \subseteq \Gen_\mathcal{E}(T')$ implies $X \in \Gen_\mathcal{E}(T')$ for some tilting object $T \TldotE T'$. Hence $ \Gen_{\mathcal{E}}(T) \subseteq \add(T) \oplus 
        \Bigl(\,\bigoplus_{T \TldotEunder T'} \Gen_{\mathcal{E}}(T') \Bigr)$. 
\end{proof}

\begin{prop}\label{prop:GenAdd}
Let $T \in \Tilt(Q)$. Then the following equalities hold:
\begin{enumerate}
    \item
    $\displaystyle 
\Gen_{\mathcal E}(T)
\;=\; \bigoplus_{T \TleqEunder T'}
\add(T')$; and,

    \item
$\displaystyle
\Sub_{\mathcal E}(T)
\;=\;
 \bigoplus_{T' \TleqEunder T }
\add(T').
$
\end{enumerate}   
\end{prop}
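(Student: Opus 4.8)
The two assertions are dual, so it suffices to prove $(1)$, namely $\Gen_{\mathcal{E}}(T) = \bigoplus_{T\, \TleqEunder\, T'} \add(T')$. I would argue by induction on the number of $\mathcal{E}$-mutations, or more precisely by Noetherian induction using the poset structure on $(\Tilt(Q),\TleqE)$: since $Q$ is an $ADE$ quiver, $\Tilt(Q)$ is finite, so the relation $\TleqE$ is a finite partial order and I can induct downward from the minimal elements. The engine of the induction is \cref{prop:InductiveStepGenSub}$(i)$, which expresses $\Gen_{\mathcal{E}}(T)$ in terms of $\add(T)$ together with the categories $\Gen_{\mathcal{E}}(T')$ ranging over the tilting objects $T'$ that $T$ \emph{covers} for $\TleqE$, i.e. the $T'$ with $T \TldotE T'$.

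\textbf{Base case.} The base case is a tilting object $T$ that admits no left $\mathcal{E}$-mutation at any indecomposable summand; equivalently (once I know the covers correspond exactly to left $\mathcal{E}$-mutations, by \cref{def:leftrightEmutation} and \cref{prop:InductiveStepGenSub}) a $T$ with no $T'$ satisfying $T \TldotE T'$. For such $T$, \cref{prop:noMutation} gives $\Gen_{\mathcal{E}}(T) = \add(T)$, and the right-hand side $\bigoplus_{T\, \TleqEunder\, T'}\add(T')$ collapses to $\add(T)$ as well, since $T$ is the only tilting object right $\mathcal{E}$-reachable from itself in this case. So the base case is immediate.

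\textbf{Inductive step.} Fix $T \in \Tilt(Q)$ and assume $(1)$ holds for every $T'$ with $T \TldotE T'$ (these are strictly below $T$ in the order $\TleqE$, and every chain descending from $T$ terminates because $\Tilt(Q)$ is finite). By \cref{prop:InductiveStepGenSub}$(i)$,
\[
\Gen_{\mathcal{E}}(T) \;=\; \add(T) \oplus \Bigl(\bigoplus_{T\, \TldotEunder\, T'} \Gen_{\mathcal{E}}(T')\Bigr).
\]
Applying the induction hypothesis to each $\Gen_{\mathcal{E}}(T')$ rewrites this as
\[
\Gen_{\mathcal{E}}(T) \;=\; \add(T) \oplus \Bigl(\bigoplus_{T\, \TldotEunder\, T'}\ \bigoplus_{T'\, \TleqEunder\, T''} \add(T'')\Bigr).
\]
Now I need to identify the indexing set with $\{\,T'' \mid T \TleqE T''\,\}$. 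One inclusion is easy: if $T \TldotE T'$ and $T' \TleqE T''$ then $T \TleqE T''$ by transitivity, and also $T$ itself is counted by the $\add(T)$ summand, so the right-hand side above is contained in $\bigoplus_{T\, \TleqEunder\, T''}\add(T'')$. For the reverse inclusion I must show that every $T''$ with $T \TleqE T''$ and $T'' \ncong T$ factors through some cover: i.e. there exists $T'$ with $T \TldotE T'$ and $T' \TleqE T''$. This is where I invoke that $\TleqE$ really is the reachability order generated by the covering relation $\TldotE$ — as asserted in the text following \cref{def:Ereachable} (citing \cite{HU05}): $T'' \TleqE T$ means $T''$ is obtained from $T$ by a finite sequence of right $\mathcal{E}$-mutations, hence by a finite chain of covers $T = T_0 \TldotE T_1 \TldotE \cdots \TldotE T_k = T''$; take $T' = T_1$. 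This gives the reverse inclusion and closes the induction.

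\textbf{Main obstacle.} The routine part is the bookkeeping of direct sums of subcategories; the delicate point is the precise matching between the three relations in play — the cover relation $\TldotE$ (defined via a single $\mathcal{E}$-mutation), the reachability order $\TleqE$ (defined via a finite sequence of right $\mathcal{E}$-mutations), and the set of tilting summands appearing in $\Gen_{\mathcal{E}}(T)$. I must be careful that \cref{prop:InductiveStepGenSub} indeed indexes the covers of $T$ by \emph{left} $\mathcal{E}$-mutations $\pmb{\mu}^-_{U,\mathcal{E}}$ while $\TleqE$ is built from \emph{right} $\mathcal{E}$-mutations, and reconcile this using \cref{prop:OneEmutation} (a left $\mathcal{E}$-mutation of $T$ at $U$ is the inverse of a right $\mathcal{E}$-mutation of the result at the new summand), so that "$T$ covers $T'$" in the sense of \cref{prop:InductiveStepGenSub} coincides with "$T' \TldotE T$" only up to the orientation conventions — I should double-check signs here, as a swapped inequality would produce $\Sub$ instead of $\Gen$. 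Once the orientation is pinned down, finiteness of $\Tilt(Q)$ (from \cref{thm:Gab}) guarantees the induction is well-founded and the proof concludes. Finally, $(2)$ follows from $(1)$ by the standard duality $D:\rep(Q)\to\rep(Q^{\op})$ exchanging $\Gen_{\mathcal{E}}$ with $\Sub_{\mathcal{E}^{\op}}$, $\TleqE$ with its opposite, and left with right $\mathcal{E}$-mutations.
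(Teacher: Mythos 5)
Your proof is correct and follows essentially the same route as the paper, whose own proof simply states that the result follows by induction from \cref{prop:noMutation} and \cref{prop:InductiveStepGenSub} using representation-finiteness; you have filled in exactly the bookkeeping (base case, transitivity, decomposition of $\TleqE$ into a chain of covers) that the paper leaves implicit. One cosmetic slip: the covers $T'$ with $T \TldotE T'$ lie strictly \emph{above} $T$ for $\TleqE$ (they are the left $\mathcal{E}$-mutations of $T$), not below, so your induction actually runs upward from the elements admitting no left $\mathcal{E}$-mutation --- but since $\Tilt(Q)$ is finite this does not affect well-foundedness, and the rest of your argument is unaffected.
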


\begin{proof}
Since $Q$ is representation-finite, the result follows directly from Propositions \ref{prop:noMutation} and \ref{prop:InductiveStepGenSub} by induction.
\end{proof}

Before we show the proof of the last theorem, we will show how to choose some special tilting object $T' \in [T]_{\approx_{\mathcal{E}}}$ to generate $\GS_{\mathcal{E}}(T)$.

\begin{lemma}\label{UnicityGen}
Let $T_1,T_2 \in \Tilt(Q)$. If $\Gen_\mathcal{E}(T_1) = \Gen_\mathcal{E}(T_2)$, then $T_1 \cong T_2$. Dually, it holds if $\Sub_\mathcal{E}(T_1) = \Sub_\mathcal{E}(T_2)$.   
\end{lemma}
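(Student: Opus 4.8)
The plan is to use the structural description of $\Gen_{\mathcal{E}}$ obtained in \cref{prop:GenAdd}, namely $\Gen_{\mathcal{E}}(T) = \bigoplus_{T \TleqEunder T'} \add(T')$, together with the fact (from \cref{rem:RS} and the order-theoretic results of \cite{HU05}) that $\TleqE$ is a genuine partial order on the finite set $\Tilt(Q)$. The key observation is that $T$ itself is the \emph{unique minimal element} of the set $\{T' \in \Tilt(Q) \mid T \TleqE T'\}$ with respect to $\TleqE$, and hence can be recovered intrinsically from the subcategory $\Gen_{\mathcal{E}}(T)$. So the argument is really: $\Gen_{\mathcal{E}}(T_1) = \Gen_{\mathcal{E}}(T_2)$ forces the two ``generating sets of tilting objects'' to coincide, and then $T_1$ and $T_2$ are both identified as the unique $\TleqE$-minimal tilting object among them.

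First I would make precise the claim that a tilting object $T'$ lies in $\Gen_{\mathcal{E}}(T)$ if and only if $T \TleqE T'$. The ``if'' direction is immediate from \cref{prop:GenAdd}(1). For the ``only if'' direction, suppose $T'$ is a tilting object with $\add(T') \subseteq \Gen_{\mathcal{E}}(T)$. Since $\Gen_{\mathcal{E}}(T) = \bigoplus_{T \TleqEunder T''} \add(T'')$ and the subcategories involved are determined by their indecomposable objects, every indecomposable summand of $T'$ appears as a summand of some tilting object $T''$ with $T \TleqE T''$. Here I would want a combinatorial input: a tilting object is determined by its set of indecomposable summands, and if all $n$ of its indecomposable summands lie in a union $\bigcup_{T \TleqEunder T''}\add(T'')$, then in fact $T'$ must itself be one of the $T''$. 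The cleanest way to get this is to invoke \cref{prop:GenAdd}(1) once more in the reverse direction: $T' \in \Gen_{\mathcal{E}}(T)$ implies (by applying \cref{prop:GenSubT}\ref{GenET1} to each summand of $T'$, or directly) that $\Gen_{\mathcal{E}}(T') \subseteq \Gen_{\mathcal{E}}(T)$, and then compare minimal elements.

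Concretely, I would argue: if $\Gen_{\mathcal{E}}(T_1) = \Gen_{\mathcal{E}}(T_2) =: \mathscr{G}$, then since $T_1 \in \Gen_{\mathcal{E}}(T_1) = \mathscr{G} = \Gen_{\mathcal{E}}(T_2)$, the just-established equivalence gives $T_2 \TleqE T_1$; symmetrically $T_1 \TleqE T_2$. As $\TleqE$ is antisymmetric (a partial order, by \cite{HU05}), we conclude $T_1 \cong T_2$. The dual statement for $\Sub_{\mathcal{E}}$ follows by the same argument applied to $\TleqE$ with the roles of $T$ and $T'$ interchanged, using \cref{prop:GenAdd}(2) in place of (1).

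The main obstacle is pinning down precisely why $T' \in \Gen_{\mathcal{E}}(T)$ (as an object, equivalently $\add(T') \subseteq \Gen_{\mathcal{E}}(T)$) forces $T \TleqE T'$ rather than merely ``$T'$ is built from various $T''$ above $T$.'' The resolution I expect to use: $T'$ tilting and $T' \in \Gen_{\mathcal{E}}(T)$ imply $\Gen_{\mathcal{E}}(T') \subseteq \Gen_{\mathcal{E}}(T)$ — indeed any $\mathcal{E}$-epimorphic image of an object of $\add(T')$ is an $\mathcal{E}$-epimorphic image of an object of $\Gen_{\mathcal{E}}(T)$, and $\Gen_{\mathcal{E}}(T)$ is closed under $\mathcal{E}$-quotients by construction since $\GS_{\mathcal{E}}$'s first step already is (here one uses that $T$, being tilting, is rigid, so $\GS_{\mathcal{E}}(T) = \GS^2_{\mathcal{E}}(T)$ by \cref{cor:GS2=GSanduniqueESerre}, though for the present lemma we only need $\Gen_{\mathcal{E}}(T)$ itself, which is closed under $\mathcal{E}$-quotients by \cref{lem:proponGS} applied one layer up). Then, comparing the $\TleqE$-minimal tilting objects of $\Gen_{\mathcal{E}}(T')$ and $\Gen_{\mathcal{E}}(T)$ — which are $T'$ and $T$ respectively, by \cref{prop:noMutation} characterizing when $\Gen_{\mathcal{E}}$ stabilizes — yields $T \TleqE T'$. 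Once this containment–to–inequality bridge is in place, antisymmetry of $\TleqE$ closes the proof in two lines.
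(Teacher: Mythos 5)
Your overall strategy --- recover $T$ from $\Gen_{\mathcal{E}}(T)$ as the $\TleqE$-minimum of the tilting objects it contains, then conclude by antisymmetry --- has a genuine gap at exactly the step you flag as the main obstacle, and the fix you propose is circular. \cref{prop:GenAdd} says that $\Gen_{\mathcal{E}}(T)$ is the additive hull of the $\add(T'')$ with $T \TleqE T''$; it does \emph{not} say that every tilting object whose indecomposable summands lie in that hull is itself one of the $T''$ (a tilting object could a priori mix summands drawn from different members of the filter), nor that the hull determines the filter. Your proposed bridge is: from $\add(T')\subseteq\Gen_{\mathcal{E}}(T)$ deduce $\Gen_{\mathcal{E}}(T')\subseteq\Gen_{\mathcal{E}}(T)$ (this part is fine, using closure of $\Gen_{\mathcal{E}}(T)$ under $\mathcal{E}$-quotients via \ref{ES2}), and then ``compare minimal elements.'' But identifying $T$ as \emph{the} $\TleqE$-minimal tilting object sitting inside $\Gen_{\mathcal{E}}(T)$ presupposes that every tilting object inside $\Gen_{\mathcal{E}}(T)$ lies above $T$ --- which is precisely the unproved bridge; and \cref{prop:noMutation} only says that $\Gen_{\mathcal{E}}(T)=\add(T)$ when $T$ admits no left $\mathcal{E}$-mutation, which is not the statement you need. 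In effect you are assuming that $T\mapsto\Gen_{\mathcal{E}}(T)$ is injective on $\Tilt(Q)$ in order to prove it.

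The paper's proof avoids the order entirely and is much shorter: from $\Gen_{\mathcal{E}}(T_1)=\Gen_{\mathcal{E}}(T_2)$ one gets $T_2\in\Gen_{\mathcal{E}}(T_1)$ and $T_1\in\Gen_{\mathcal{E}}(T_2)$, hence two $\mathcal{E}$-admissible short exact sequences $0\to K\to\Theta_1\to T_2\to 0$ and $0\to K'\to\Theta_2\to T_1\to 0$ with $\Theta_i\in\add(T_i)$. Applying $\Hom(T_1,-)$ and $\Hom(T_2,-)$, and using that $\rep(Q)$ is hereditary together with $\Ext^1(T_1,\Theta_1)=\Ext^1(T_2,\Theta_2)=0$, yields $\Ext^1(T_1,T_2)=\Ext^1(T_2,T_1)=0$; then $T_1\oplus T_2$ is rigid, and maximality of tilting objects among rigid ones (\cref{prop:numberindectilt}) forces $T_1\cong T_2$. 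If you want to keep your poset-theoretic framing, you would first have to prove the relative Happel--Unger-type statement that $\add(T')\subseteq\Gen_{\mathcal{E}}(T)$ implies $T\TleqE T'$ for tilting $T'$; the natural way to do that is the same Ext-vanishing computation, at which point the detour through \cref{prop:GenAdd} becomes unnecessary.
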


\begin{proof}
Since $\Gen_\mathcal{E}(T_1) = \Gen_\mathcal{E}(T_2)$, we have the two $\mathcal{E}$-admissible short exact sequences:
\[\begin{tikzcd}
	\xi_1 : 0 & K & \Theta_1  & T_2 &  0,
	\arrow[from=1-1, to=1-2]
	\arrow[tail, from=1-2, to=1-3]
	\arrow[two heads,from=1-3, to=1-4]
	\arrow[from=1-4, to=1-5]
    \end{tikzcd} \]
with $\Theta_1 \in \add(T_1)$, and
\[\begin{tikzcd}
	\xi_2 : 0 & K' & \Theta_2  & T_1 &  0,
	\arrow[from=1-1, to=1-2]
	\arrow[tail, from=1-2, to=1-3]
	\arrow[two heads,from=1-3, to=1-4]
	\arrow[from=1-4, to=1-5]
    \end{tikzcd} \]
with $\Theta_2 \in \add(T_2)$. Applying the functors $\Hom(T_1,-)$ to $\xi_1$ and $\Hom(T_2,-)$ to $\xi_2$, because $\Ext^1(T_1,\Theta_2)$ = $\Ext^1(T_2,\Theta_2) = 0$ and because $\rep Q $ is hereditary, we obtain $\Ext^1(T_1,T_2)$ = $\Ext^1(T_2,T_1)=0$. Since $T_1,T_2$ are maximally rigid objects, then $T_1 \cong T_2$.
\end{proof}

\begin{prop}\label{prop:MinimalT}
Let $T \in \Tilt(Q)$. Then $[T]_{\approx_{\mathcal{E}}}$ has a unique minimal element $T'$ for $\TleqE$ and $\GS_{\mathcal{E}}(T) = \Gen_\mathcal{E}(T')$. Moreover, $\GS_{\mathcal{E}}(T) = \GS^1_{\mathcal{E}}(T')$.
\end{prop}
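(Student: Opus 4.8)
The plan is to fix an arbitrary minimal element $T_0$ of $[T]_{\approx_{\mathcal{E}}}$ for $\TleqE$ (one exists since $Q$ is representation-finite, hence $\Tilt(Q)$, and a fortiori $[T]_{\approx_{\mathcal{E}}}$, is finite) and to prove the stronger statement that $\GS_{\mathcal{E}}(T)=\GS^1_{\mathcal{E}}(T_0)=\Gen_{\mathcal{E}}(T_0)$ for \emph{every} such $T_0$; uniqueness of $T_0$ will then follow at the end. The first observation I would record is a reformulation of minimality. By \cref{prop:GenAdd}, $\Sub_{\mathcal{E}}(T_0)=\bigoplus_{T' \TleqEunder T_0}\add(T')$, and any $T'$ with $T'\TleqE T_0$ is right $\mathcal{E}$-reachable from $T_0$, hence lies in $[T_0]_{\approx_{\mathcal{E}}}=[T]_{\approx_{\mathcal{E}}}$; so if $T_0$ is minimal in $[T]_{\approx_{\mathcal{E}}}$, the only such $T'$ is $T_0$ itself. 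Since two basic tilting objects with the same additive closure coincide (each has exactly $n$ non-isomorphic indecomposable summands, by \cref{prop:numberindectilt}), this gives $\Sub_{\mathcal{E}}(T_0)=\add(T_0)$; equivalently, by \cref{prop:noMutation}, $T_0$ admits no right $\mathcal{E}$-mutation.

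From $\Sub_{\mathcal{E}}(T_0)=\add(T_0)\subseteq\Gen_{\mathcal{E}}(T_0)$ and the definition of $\GS^1_{\mathcal{E}}$ we get at once $\GS^1_{\mathcal{E}}(T_0)=\Gen_{\mathcal{E}}(T_0)$. The core of the proof is to show that $\Gen_{\mathcal{E}}(T_0)$ is already closed under taking $\mathcal{E}$-subobjects and $\mathcal{E}$-quotients, so that $\GS^2_{\mathcal{E}}(T_0)=\GS^1_{\mathcal{E}}(T_0)$; combined with \cref{cor:GS2=GSanduniqueESerre}, which gives $\GS_{\mathcal{E}}(T_0)=\GS^2_{\mathcal{E}}(T_0)$, this yields $\GS_{\mathcal{E}}(T_0)=\GS^1_{\mathcal{E}}(T_0)=\Gen_{\mathcal{E}}(T_0)$. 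Stability under $\mathcal{E}$-quotients is straightforward: an $\mathcal{E}$-quotient of an $\mathcal{E}$-quotient of an object of $\add(T_0)$ is again one such, by closure of $\mathcal{E}$-epimorphisms under composition (axiom \ref{ES2}), so $\Gen_{\mathcal{E}}(\Gen_{\mathcal{E}}(T_0))=\Gen_{\mathcal{E}}(T_0)$ after passing to summands.

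I expect the main obstacle to be stability under $\mathcal{E}$-subobjects, i.e. $\Sub_{\mathcal{E}}(\Gen_{\mathcal{E}}(T_0))\subseteq\Gen_{\mathcal{E}}(T_0)$. Here is the diagram chase I would run. Let $0\to A\to M\to X\to 0$ be $\mathcal{E}$-exact with $M\in\Gen_{\mathcal{E}}(T_0)$. By \cref{prop:GenSubT}~\ref{GenET1} there is an $\mathcal{E}$-exact sequence $0\to P_1\to P_0\to M\to 0$ with $P_0,P_1\in\add(T_0)$. Composing $P_0\twoheadrightarrow M$ with $M\twoheadrightarrow X$ gives, by \ref{ES2}, an $\mathcal{E}$-epimorphism $P_0\twoheadrightarrow X$; its kernel $B$ fits in an $\mathcal{E}$-exact sequence $0\to B\to P_0\to X\to 0$, so $B\in\Sub_{\mathcal{E}}(T_0)=\add(T_0)$ by minimality. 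Since $P_1\subseteq B\subseteq P_0$ with $P_1\hookrightarrow P_0$ an $\mathcal{E}$-monomorphism, \cref{lem:obscure} makes $P_1\hookrightarrow B$ an $\mathcal{E}$-monomorphism whose cokernel is $A$; thus $A$ is the cokernel of an $\mathcal{E}$-monomorphism into an object of $\add(T_0)$, i.e. $A\in\Gen_{\mathcal{E}}(T_0)$, and summands of such $A$ lie there too. Two points requiring care: checking that each sequence produced in the chase really is $\mathcal{E}$-exact (using that, in an exact category, an exact sequence whose inflation is an $\mathcal{E}$-monomorphism is automatically $\mathcal{E}$-exact), and the bookkeeping with sums and summands when unpacking the definitions of $\Gen_{\mathcal{E}}$ and $\Sub_{\mathcal{E}}$.

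To conclude: for any minimal $T_0\in[T]_{\approx_{\mathcal{E}}}$ we have $T_0\approx_{\mathcal{E}}T$, so $\GS_{\mathcal{E}}(T_0)=\GS_{\mathcal{E}}(T)$ by \cref{prop:Tstable}; hence $\GS_{\mathcal{E}}(T)=\Gen_{\mathcal{E}}(T_0)=\GS^1_{\mathcal{E}}(T_0)$, which gives the last two assertions. Finally, if $T_0$ and $T_0'$ are both minimal in $[T]_{\approx_{\mathcal{E}}}$, then $\Gen_{\mathcal{E}}(T_0)=\GS_{\mathcal{E}}(T)=\Gen_{\mathcal{E}}(T_0')$, whence $T_0\cong T_0'$ by \cref{UnicityGen}; so the minimal element is unique, as claimed.
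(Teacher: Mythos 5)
Your proof is correct, and its skeleton is the same as the paper's: both arguments turn on the observation that minimality of $T_0$ for $\TleqE$ forces $\Sub_\mathcal{E}(T_0)=\add(T_0)$, and both then show that every object of $\GS_\mathcal{E}(T_0)$ already lies in $\Gen_\mathcal{E}(T_0)$ by pulling an $\mathcal{E}$-exact sequence $0\to A\to M\to X\to 0$ back along a presentation $0\to P_1\to P_0\to M\to 0$ with $P_0,P_1\in\add(T_0)$ (your $B=\ker(P_0\twoheadrightarrow X)$ is exactly the pullback object $PB$ in the paper's diagram, and your sequence $0\to P_1\to B\to A\to 0$ is its bottom row). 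Uniqueness via \cref{UnicityGen} and the identification $\GS_\mathcal{E}(T)=\GS^1_\mathcal{E}(T_0)$ are handled identically.

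The packaging differs in three harmless ways. First, the paper runs an induction over the filtration $\GS^n_\mathcal{E}(T_0)$, treating the cases $X\in\Gen_\mathcal{E}(F)$ and $X\in\Sub_\mathcal{E}(F)$ for $F\in\GS^n_\mathcal{E}(T_0)$; you instead prove once that $\Gen_\mathcal{E}(T_0)$ is closed under $\mathcal{E}$-quotients and $\mathcal{E}$-subobjects, so the tower stabilizes at $\GS^1$ — this collapses the induction and is arguably cleaner. Second, you derive $\Sub_\mathcal{E}(T_0)=\add(T_0)$ from \cref{prop:GenAdd} plus the remark that right $\mathcal{E}$-reachability implies $\mathcal{E}$-reachability, whereas the paper goes through ``no right $\mathcal{E}$-mutation'' and \cref{prop:noMutation}; these are equivalent reformulations. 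Third, you certify $\mathcal{E}$-exactness of $0\to P_1\to B\to A\to 0$ via axiom \ref{ES2} and the obscure axiom (\cref{lem:obscure}), where the paper invokes the $3\times 3$-lemma (\cref{lem:3x3}) on the full commutative diagram; both are legitimate, and your route uses slightly lighter machinery. The only point I would make explicit is the one you flag yourself: that an $\mathcal{E}$-monomorphism together with its cokernel automatically yields a sequence in $\mathcal{E}$, which holds because $\mathcal{E}$ is closed under isomorphism of short exact sequences.
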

\begin{proof}
It is clear that $[T]_{\approx_{\mathcal{E}}}$ has minimal elements for $\TleqE$. We must only show that there is indeed a unique one. Let $T' \in [T]_{\approx_{\mathcal{E}}}$ be a minimal element. By \cref{prop:Tstable}, we already have $\Gen_\mathcal{E}(T') \subseteq \GS_{\mathcal{E}}(T')=\GS_{\mathcal{E}}(T)$.

Let us show that $\GS_{\mathcal{E}}(T') \subseteq \Gen_\mathcal{E}(T')$ by induction. First, we have that $\GS^0_{\mathcal{E}}(T') = \add(T') \subseteq \Gen_\mathcal{E}(T')$. Assume that, for a fixed $n \in \mathbb{N}$, we have $\GS^n_{\mathcal{E}}(T') \subseteq \Gen_\mathcal{E}(T')$. Consider $X \in \GS^{n+1}_{\mathcal{E}}(T')$. We have two cases to treat.

If $X \in \Gen_{\mathcal{E}}(F)$ for some $F \in \GS^{n}_{\mathcal{E}}(T')$, by induction hypothesis, $F \in \Gen_\mathcal{E}(T')$. As $\mathcal{E}$-epimorphisms are closed under composition, so is $X$. 

Now assume that $X \in \Sub_{\mathcal{E}}(F)$ for some $F \in \GS^{n}_{\mathcal{E}}(T')$. Then we have the following short $\mathcal{E}$-exact sequence
\[\begin{tikzcd}
	0 & X & F  & C &  0.
	\arrow[from=1-1, to=1-2]
	\arrow[tail, from=1-2, to=1-3]
	\arrow[two heads,from=1-3, to=1-4]
	\arrow[from=1-4, to=1-5]
\end{tikzcd} \] By induction hypothesis, $F \in \Gen_\mathcal{E}(T')$, and therefore, there exists a short $\mathcal{E}$-exact sequence
\[\begin{tikzcd}
	0 & K & \Theta  & F &  0,
	\arrow[from=1-1, to=1-2]
	\arrow[tail, from=1-2, to=1-3]
	\arrow[two heads,from=1-3, to=1-4]
	\arrow[from=1-4, to=1-5]
\end{tikzcd} \] with $\Theta \in \add(T')$. We obtain the following commutative pull-back diagram.
\[\begin{tikzcd}
         & 0 & 0 & 0 &  \\
	0 & 0 & C'  & C &  0 \\
     0 & K & \Theta & F & 0 \\
      0 & K & PB & X & 0 \\
       & 0 & 0 & 0 & 
	\arrow[from=2-1, to=2-2]
	\arrow[tail, from=2-2, to=2-3]
	\arrow[two heads,from=2-3, to=2-4]
	\arrow[from=2-4, to=2-5]
        \arrow[from=3-1, to=3-2]
	\arrow[tail, from=3-2, to=3-3]
	\arrow[two heads,from=3-3, to=3-4]
	\arrow[from=3-4, to=3-5]
        \arrow[from=4-1, to=4-2]
	\arrow[tail, from=4-2, to=4-3]
	\arrow[two heads,from=4-3, to=4-4]
	\arrow[from=4-4, to=4-5]
        \arrow[from=5-2, to=4-2]
	\arrow[equals, from=4-2, to=3-2]
	\arrow[two heads,from=3-2, to=2-2]
	\arrow[from=2-2, to=1-2]
        \arrow[from=5-3, to=4-3]
	\arrow[tail, from=4-3, to=3-3]
	\arrow[two heads,from=3-3, to=2-3]
	\arrow[from=2-3, to=1-3]
        \arrow[from=5-4, to=4-4]
	\arrow[tail, from=4-4, to=3-4]
	\arrow[two heads,from=3-4, to=2-4]
	\arrow[from=2-4, to=1-4]
\end{tikzcd} \]
The short sequences given by the three columns and those given by the last two rows are exact. By \cref{lem:3x3}, the short sequence given by the first row is also exact. Thus $C' \cong C$. Moreover, the short exact sequences given by the three rows and those given by the first and third columns are in ${\mathcal{E}}$. By \cref{lem:3x3}, the short sequence given by the middle column is also $\mathcal{E}$-exact. So $PB \in \Sub_\mathcal{E}(\Theta)$. 

Since $\Theta \in \add(T')$ and $T'$ admits no right $\mathcal{E}$-mutation (as $T'$ is minimal for $\TleqE$), then $\Sub_\mathcal{E}(\Theta) \subseteq \Sub_\mathcal{E}(T') = \add(T')$ by \cref{prop:noMutation}. Hence, $PB \in \add(T')$. Given that the last row is $\mathcal{E}$-exact, $X \in \Gen_\mathcal{E}(PB) \subseteq \Gen_\mathcal{E}(T')$.

Thus, this completes the induction proof and we get $\GS_{\mathcal{E}}(T') \subseteq \Gen_\mathcal{E}(T')$. Thus $\GS_{\mathcal{E}}(T) = \GS_{\mathcal{E}}(T') = \Gen_\mathcal{E}(T')$. The uniqueness directly follows from \cref{UnicityGen}. Recall that $\Gen_\mathcal{E}(T') \subseteq \GS_{\mathcal{E}}(T)$ and $\Sub_\mathcal{E}(T') = \add(T') \subseteq \GS_{\mathcal{E}}(T)$ from Propositions \ref{prop:noMutation} and \ref{prop:Tstable}. So $\GS^1_{\mathcal{E}}(T') \subseteq \GS_{\mathcal{E}}(T) = \Gen_\mathcal{E}(T') \subseteq \GS^1_{\mathcal{E}}(T')$.
\end{proof}

By dual statements, we get the following corollary.

\begin{cor} \label{cor:maxEreach}
Let $T \in \Tilt(Q)$. Then $[T]_{\approx_{\mathcal{E}}}$ has a unique maximal element $T''$ of $[T]_{\approx_{\mathcal{E}}}$ for $\TleqE$ such that $\GS_{\mathcal{E}}(T) = \Sub_\mathcal{E}(T')$. Moreover, $\GS_\mathcal{E}(T) = \GS_\mathcal{E}^1 (T'')$.
\end{cor}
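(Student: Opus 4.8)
The plan is to deduce Corollary~\ref{cor:maxEreach} from Proposition~\ref{prop:MinimalT} by a straightforward duality argument. Recall that $\rep(Q)$ is a hereditary abelian category, and there is a duality $D = \Hom_{\mathbb{K}}(-,\mathbb{K}) : \rep(Q) \to \rep(Q^{\op})$ that sends short exact sequences to short exact sequences (reversing arrows), sends $\mathcal{E}$ to an exact structure $\mathcal{E}^{\op}$ on $\rep(Q^{\op})$, interchanges $\Gen_{\mathcal{E}}$ with $\Sub_{\mathcal{E}^{\op}}$ and $\Sub_{\mathcal{E}}$ with $\Gen_{\mathcal{E}^{\op}}$, interchanges kernels of $\mathcal{E}$-epimorphisms with cokernels of $\mathcal{E}^{\op}$-monomorphisms, and hence commutes with $\GS$: one has $\GS_{\mathcal{E}^{\op}}(D\mathscr{C}) = D(\GS_{\mathcal{E}}(\mathscr{C}))$ for every subcategory $\mathscr{C}$. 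Moreover $D$ interchanges tilting objects with cotilting objects, but since $\rep(Q)$ is hereditary these coincide (every tilting object is cotilting), so $D$ restricts to a bijection $\Tilt(Q) \to \Tilt(Q^{\op})$. Finally, $D$ interchanges left $\mathcal{E}$-mutations with right $\mathcal{E}^{\op}$-mutations and vice versa (since it swaps left/right minimal $(\add,\mathcal{E})$-approximations and swaps $\Coker$ with $\Ker$); consequently it sends the order $\TleqE$ on $\Tilt(Q)$ to the opposite of the order $\TleqEunder[\mathcal{E}^{\op}]$ on $\Tilt(Q^{\op})$, and sends the equivalence class $[T]_{\approx_{\mathcal{E}}}$ to $[DT]_{\approx_{\mathcal{E}^{\op}}}$.

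Granting this dictionary, the proof runs as follows. Given $T \in \Tilt(Q)$, apply Proposition~\ref{prop:MinimalT} to $DT \in \Tilt(Q^{\op})$ with the exact structure $\mathcal{E}^{\op}$: there is a unique element $S$ of $[DT]_{\approx_{\mathcal{E}^{\op}}}$ that is minimal for $\TleqEunder[\mathcal{E}^{\op}]$, it satisfies $\GS_{\mathcal{E}^{\op}}(DT) = \Gen_{\mathcal{E}^{\op}}(S)$, and moreover $\GS_{\mathcal{E}^{\op}}(DT) = \GS^1_{\mathcal{E}^{\op}}(S)$. Now set $T'' = DS$. Since $D$ reverses the mutation order and is an anti-equivalence of categories, $T''$ is the unique maximal element of $[T]_{\approx_{\mathcal{E}}}$ for $\TleqE$. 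Applying $D$ to the equality $\GS_{\mathcal{E}^{\op}}(DT) = \Gen_{\mathcal{E}^{\op}}(S)$ and using $D\circ\GS_{\mathcal{E}^{\op}} = \GS_{\mathcal{E}}\circ D$ together with $D\circ\Gen_{\mathcal{E}^{\op}} = \Sub_{\mathcal{E}}\circ D$ gives $\GS_{\mathcal{E}}(T) = \Sub_{\mathcal{E}}(T'')$; applying $D$ to $\GS_{\mathcal{E}^{\op}}(DT) = \GS^1_{\mathcal{E}^{\op}}(S)$ gives $\GS_{\mathcal{E}}(T) = \GS^1_{\mathcal{E}}(T'')$. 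This is exactly the statement of the corollary.

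Alternatively, and perhaps more in keeping with the self-contained style of Proposition~\ref{prop:MinimalT}, one could give a direct proof mirroring that one: existence of maximal elements of $[T]_{\approx_{\mathcal{E}}}$ for $\TleqE$ is clear by finiteness of $\Tilt(Q)$; for a maximal element $T''$, Proposition~\ref{prop:Tstable} gives $\Sub_{\mathcal{E}}(T'') \subseteq \GS_{\mathcal{E}}(T'') = \GS_{\mathcal{E}}(T)$, and one shows $\GS_{\mathcal{E}}(T'') \subseteq \Sub_{\mathcal{E}}(T'')$ by induction on the $\GS^n$-filtration, using that $T''$ admits no left $\mathcal{E}$-mutation so that $\Gen_{\mathcal{E}}(T'') = \add(T'')$ by Proposition~\ref{prop:noMutation}, and running the pushout version of the diagram chase in the proof of Proposition~\ref{prop:MinimalT} (the $\mathcal{E}$-exact $3\times 3$ diagram with $\add(T'')$-envelopes instead of $\add(T')$-covers) to push a member of $\Gen_{\mathcal{E}}(F)$ back into $\Sub_{\mathcal{E}}(T'')$; uniqueness then follows from the dual half of Lemma~\ref{UnicityGen}, and the identity $\GS_{\mathcal{E}}(T) = \GS^1_{\mathcal{E}}(T'')$ follows since $\add(T'') = \Gen_{\mathcal{E}}(T'') \subseteq \GS^1_{\mathcal{E}}(T'')$ and $\Sub_{\mathcal{E}}(T'') \subseteq \GS^1_{\mathcal{E}}(T'')$ by definition of $\GS^1_{\mathcal{E}}$. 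The main obstacle, in either approach, is bookkeeping rather than substance: in the duality approach one must be careful to verify that $D$ really does commute with all the relevant operators (in particular that it sends $\mathcal{E}$ to a genuine exact structure and respects minimality of approximations), and in the direct approach one must make sure the pushout diagram chase is truly dual to the pullback one and not merely analogous. I would go with the duality argument, as it makes the logical dependence on Proposition~\ref{prop:MinimalT} transparent and avoids repeating a page-long diagram chase.
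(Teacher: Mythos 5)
Your proposal is correct and matches the paper, whose entire proof of this corollary is the remark that it follows from \cref{prop:MinimalT} ``by dual statements''; your write-up simply makes that duality explicit (and your alternative direct argument is the same dualization carried out by hand). No gap to report.
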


We can now provide proof of the results we aimed for.

\begin{prop} \label{prop:samesamebutdifferent}
Let \(T,T' \in \Tilt(Q)\). Then $T \sim_{\mathcal{E}} T'$ if and only if $T \approx_{\mathcal{E}} T'$.
\end{prop}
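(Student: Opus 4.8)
The proof of \cref{prop:samesamebutdifferent} splits into two implications. The forward direction, that $T \approx_{\mathcal{E}} T'$ implies $T \sim_{\mathcal{E}} T'$, is already \cref{prop:Tstable}, so nothing remains to do there. The substance of the proposition is the converse: if $\GS_{\mathcal{E}}(T) = \GS_{\mathcal{E}}(T')$, then $T'$ is reachable from $T$ by a finite sequence of $\mathcal{E}$-mutations.

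The plan is to exploit the uniqueness built into \cref{prop:MinimalT}. Assume $T \sim_{\mathcal{E}} T'$. By \cref{prop:MinimalT} applied to the equivalence class $[T]_{\approx_{\mathcal{E}}}$, there is a unique minimal element $T_0 \in [T]_{\approx_{\mathcal{E}}}$ for $\TleqE$ with $\GS_{\mathcal{E}}(T) = \Gen_{\mathcal{E}}(T_0)$; similarly there is a unique minimal element $T_0' \in [T']_{\approx_{\mathcal{E}}}$ with $\GS_{\mathcal{E}}(T') = \Gen_{\mathcal{E}}(T_0')$. Since $\GS_{\mathcal{E}}(T) = \GS_{\mathcal{E}}(T')$ by hypothesis, we get $\Gen_{\mathcal{E}}(T_0) = \Gen_{\mathcal{E}}(T_0')$, and then \cref{UnicityGen} forces $T_0 \cong T_0'$. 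Now $T_0 \approx_{\mathcal{E}} T$ (as $T_0 \in [T]_{\approx_{\mathcal{E}}}$) and $T_0' \approx_{\mathcal{E}} T'$ (as $T_0' \in [T']_{\approx_{\mathcal{E}}}$), so from $T_0 \cong T_0'$ and the transitivity and symmetry of $\approx_{\mathcal{E}}$ we conclude $T \approx_{\mathcal{E}} T'$, as desired. One should note that $\approx_{\mathcal{E}}$ is genuinely an equivalence relation here — this was recorded right after \cref{def:Ereachable}, using that each $\pmb{\mu}_{U,\mathcal{E}}$ is an involution by \cref{prop:OneEmutation} — so chaining the two $\approx_{\mathcal{E}}$-relations through the common representative $T_0 \cong T_0'$ is legitimate.

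I expect essentially no obstacle in this argument: all the real work has already been done, in \cref{prop:Tstable} (giving one implication outright), in \cref{prop:MinimalT} (producing the canonical minimal representative of each $\approx_{\mathcal{E}}$-class together with the identification $\GS_{\mathcal{E}} = \Gen_{\mathcal{E}}$ of it), and in \cref{UnicityGen} (showing $\Gen_{\mathcal{E}}$ is injective on tilting objects). The only thing to be careful about is bookkeeping: one must apply \cref{prop:MinimalT} to $[T]_{\approx_{\mathcal{E}}}$ and to $[T']_{\approx_{\mathcal{E}}}$ separately and then compare, rather than trying to relate $T$ and $T'$ directly, and one must invoke that $\approx_{\mathcal{E}}$ is an equivalence relation to pass from $T \approx_{\mathcal{E}} T_0 \cong T_0' \approx_{\mathcal{E}} T'$ to $T \approx_{\mathcal{E}} T'$. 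With that, the proof is a short two-line deduction.

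\begin{proof}
By \cref{prop:Tstable}, if $T \approx_{\mathcal{E}} T'$, then $T \sim_{\mathcal{E}} T'$.

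Conversely, assume $T \sim_{\mathcal{E}} T'$, that is, $\GS_{\mathcal{E}}(T) = \GS_{\mathcal{E}}(T')$. By \cref{prop:MinimalT}, the class $[T]_{\approx_{\mathcal{E}}}$ has a unique minimal element $T_0$ for $\TleqE$, and $\GS_{\mathcal{E}}(T) = \Gen_{\mathcal{E}}(T_0)$. Likewise, $[T']_{\approx_{\mathcal{E}}}$ has a unique minimal element $T_0'$ for $\TleqE$ with $\GS_{\mathcal{E}}(T') = \Gen_{\mathcal{E}}(T_0')$. Since $\GS_{\mathcal{E}}(T) = \GS_{\mathcal{E}}(T')$, we obtain $\Gen_{\mathcal{E}}(T_0) = \Gen_{\mathcal{E}}(T_0')$, and hence $T_0 \cong T_0'$ by \cref{UnicityGen}.

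Now $T_0 \in [T]_{\approx_{\mathcal{E}}}$ gives $T \approx_{\mathcal{E}} T_0$, and $T_0' \in [T']_{\approx_{\mathcal{E}}}$ gives $T' \approx_{\mathcal{E}} T_0'$. Since $\approx_{\mathcal{E}}$ is an equivalence relation (each $\pmb{\mu}_{U,\mathcal{E}}$ being an involution by \cref{prop:OneEmutation}) and $T_0 \cong T_0'$, we conclude $T \approx_{\mathcal{E}} T_0 \cong T_0' \approx_{\mathcal{E}} T'$, so $T \approx_{\mathcal{E}} T'$.
\end{proof}
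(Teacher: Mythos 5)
Your proof is correct and follows exactly the same route as the paper's: \cref{prop:Tstable} for the forward implication, then \cref{prop:MinimalT} to produce the unique minimal representatives of each class with $\GS_{\mathcal{E}} = \Gen_{\mathcal{E}}$, and \cref{UnicityGen} to identify them. The only difference is notational ($T_0, T_0'$ versus the paper's $S, S'$) and that you spell out the final chaining through the equivalence relation slightly more explicitly.
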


\begin{proof}
By \cref{prop:Tstable}, we have that if $T' \approx_{\mathcal{E}} T$, then $T' \sim_{\mathcal{E}} T$. Consider $T,T' \in \Tilt(Q)$ such that $T' \sim_{\mathcal{E}} T$. By definition, $\GS_{\mathcal{E}}(T) = \GS_{\mathcal{E}}(T')$. Due to \cref{prop:MinimalT}, there exist $S \in [T]_{\approx_{\mathcal{E}}}$ and $S' \in [T']_{\approx_{\mathcal{E}}}$ such that $\GS_{\mathcal{E}}(T) = \Gen_\mathcal{E}(S)$ and $\GS_{\mathcal{E}}(T') = \Gen_\mathcal{E}(S')$. We obtain $\Gen_\mathcal{E}(S) = \Gen_\mathcal{E}(S')$ and, by \cref{UnicityGen}, $S \cong S'$. We conclude that $[T]_{\approx_{\mathcal{E}}} = [T']_{\approx_{\mathcal{E}}}$.
\end{proof}

We can finally prove one of our main results.

\begin{theorem}
\label{th:Treaching}
Let $T \in \Tilt(Q)$. Then \[\GS_{\mathcal{E}}(T) = \add\left(\bigoplus_{T' \in [T]_{\approx_{\mathcal{E}}}} T' \right)\]
\end{theorem}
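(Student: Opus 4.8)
The plan is to prove the two inclusions separately, using the structural results already established. For the inclusion $\add\left(\bigoplus_{T' \in [T]_{\approx_{\mathcal{E}}}} T'\right) \subseteq \GS_{\mathcal{E}}(T)$, I would observe that this is exactly \cref{prop:Tstable}: every $T' \approx_{\mathcal{E}} T$ satisfies $\GS_{\mathcal{E}}(T') = \GS_{\mathcal{E}}(T)$, and in particular $T' \in \add(T') \subseteq \GS_{\mathcal{E}}(T')= \GS_{\mathcal{E}}(T)$ (this is also recorded in \cref{cor:propUniquetilting}). Since $\GS_{\mathcal{E}}(T)$ is a full additive subcategory (closed under sums and summands), it contains the additive subcategory generated by all such $T'$, giving one inclusion immediately.

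For the reverse inclusion $\GS_{\mathcal{E}}(T) \subseteq \add\left(\bigoplus_{T' \in [T]_{\approx_{\mathcal{E}}}} T'\right)$, I would invoke \cref{prop:MinimalT}: the class $[T]_{\approx_{\mathcal{E}}}$ has a unique minimal element $S$ for $\TleqE$, and $\GS_{\mathcal{E}}(T) = \Gen_{\mathcal{E}}(S)$. Now apply \cref{prop:GenAdd}(1) to $S$: we get $\Gen_{\mathcal{E}}(S) = \bigoplus_{S \TleqEunder T'} \add(T')$. The final step is to note that whenever $S \TleqE T'$, the tilting object $T'$ is obtained from $S$ by a finite sequence of right $\mathcal{E}$-mutations (by definition of right $\mathcal{E}$-reachability, \cref{def:Ereachable}), hence $T' \in [T]_{\approx_{\mathcal{E}}}$ since $S \in [T]_{\approx_{\mathcal{E}}}$ and $\approx_{\mathcal{E}}$ is the equivalence relation generated by $\mathcal{E}$-mutations. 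Therefore every summand of every object in $\Gen_{\mathcal{E}}(S)$ lies in some $T'$ with $T' \approx_{\mathcal{E}} T$, and we conclude $\GS_{\mathcal{E}}(T) \subseteq \add\left(\bigoplus_{T' \in [T]_{\approx_{\mathcal{E}}}} T'\right)$.

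Combining the two inclusions gives the desired equality. The argument is essentially an assembly of \cref{prop:Tstable}, \cref{prop:MinimalT}, and \cref{prop:GenAdd}, together with the bookkeeping observation that right $\TleqE$-comparability from a representative of $[T]_{\approx_{\mathcal{E}}}$ stays inside $[T]_{\approx_{\mathcal{E}}}$. I do not anticipate a genuine obstacle here: the substantive work (that minimal elements of the equivalence class exist and that $\GS_{\mathcal{E}}$ collapses to a single $\Gen_{\mathcal{E}}$ on them, and that $\Gen_{\mathcal{E}}$ is an additive union of $\add(T')$'s over an up-set) has already been done in the preceding propositions. The only point requiring a little care is making sure that ``$S \TleqE T'$'' is correctly translated into ``$T' \in [T]_{\approx_{\mathcal{E}}}$'' — one should remember that $\TleqE$ being a partial order on $\Tilt(Q)$ (via \cite{HU05}) guarantees the minimal element is well-defined, while $\approx_{\mathcal{E}}$ merely requires the (unordered) mutation equivalence, which $\TleqE$-comparability clearly implies.
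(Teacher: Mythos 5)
Your proposal is correct and follows exactly the paper's own argument: the first inclusion is \cref{prop:Tstable}, and the reverse inclusion is obtained by passing to the unique $\TleqE$-minimal representative $S$ of $[T]_{\approx_{\mathcal{E}}}$ via \cref{prop:MinimalT} and then decomposing $\Gen_{\mathcal{E}}(S)$ with \cref{prop:GenAdd}. The only cosmetic difference is that you spell out the bookkeeping step that $S \TleqEunder T'$ forces $T' \in [T]_{\approx_{\mathcal{E}}}$, which the paper leaves implicit.
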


\begin{proof} 
First of all, \[\add\left(\bigoplus_{T' \in [T]_{\approx_{\mathcal{E}}}} T' \right) \subseteq  \GS_{\mathcal{E}}(T)\] by \cref{prop:Tstable}. By \cref{prop:MinimalT}, there exists (a unique) $ S \in [T]_{\approx_{\mathcal{E}}}$ such that $\GS_{\mathcal{E}}(T) = \Gen_\mathcal{E}(S)$. By \cref{prop:GenAdd}, we can write \[
\GS_\mathcal{E}(T) = \Gen_{\mathcal E}(S)
\;=\; \bigoplus_{\substack{S \TleqEunder T'}}
\add(T') \subseteq \add\left(\bigoplus_{T' \in [T]_{\approx_{\mathcal{E}}}} T' \right).
\]
\end{proof}

\begin{example}
The following figure illustrates an example of Theorem \ref{th:Treaching}, as well as Corollary \ref{cor:propUniquetilting} and Proposition \ref{prop:Tstable}, for a specific equivalence class of tilting representations selected from those shown in \cref{fig:posettiltEdiamond}.
\begin{figure}[ht!]
\centering
\begin{tikzpicture}[line width=.3mm, ->, >= angle 60]

                                \node at (-0.4,2.25) { $[T_1]_{\approx_{\mathcal{E}_\diamond}}=[T_2]_{\approx_{\mathcal{E}_\diamond}}=[T_3]_{\approx_{\mathcal{E}_\diamond}}$};
                                \node at (-1.5,1.15) {$T_1$};
                                \node at (1.25,-0.1) {$T_2$};
                                \node at (-1.5,-1.35) {$T_3$};
                                \node at (5.5,1.5) {\shortstack{
    $\GS_{\mathcal{E}_\diamond}(T_1)
    = \add \left(\displaystyle T_1 \oplus T_2 \oplus T_3 \right)$
  }};
                                \draw[lava, fill=lava,opacity=0.2, loosely dotted] (-0.4,-1.4) ellipse (3.25cm and 3.25cm);

                                \node (a) at (-1.5,0){\scalebox{.5}{\begin{tikzpicture}[line width=.3mm, ->, >= angle 60]
								\node[rectangle,line width=0.2mm, double,rounded corners,fill=orange!20,draw] (2) at (0,0){\scalebox{.9}{$\llrr{2}$}};
								\node[rectangle,line width=0.2mm, double,rounded corners,fill=orange!20,draw] (12) at (1,1){\scalebox{.9}{$\llrr{1,2}$}};
								\node[rectangle,line width=0.2mm, double,rounded corners,fill=orange!20,draw] (24) at (1,-1){\scalebox{.9}{$\llrr{2,4}$}};
								\node (4) at (0,-2){\scalebox{.9}{$\llrr{4}$}};
								\node (14) at (2,0){\scalebox{.9}{$\llrr{1,4}$}};
								\node[rectangle,line width=0.2mm, double,rounded corners,fill=orange!20,draw] (23) at (2,-2){\scalebox{.9}{$\llrr{2,3}$}};
								\node (34) at (3,1){\scalebox{.9}{$\llrr{3,4}$}};
								\node (13) at (3,-1){\scalebox{.9}{$\llrr{1,3}$}};
								\node (3) at (4,0){\scalebox{.9}{$\llrr{3}$}};
								\node (1) at (4,-2){\scalebox{.9}{$\llrr{1}$}};
								\draw (2) -- (12);
								\draw (2) -- (24);
								\draw (4) -- (24);
								\draw (12) -- (14);
								\draw (24) -- (14);
								\draw (24) -- (23);
								\draw (14) -- (34);
								\draw (14) -- (13);
								\draw (23) -- (13);
								\draw (34) -- (3);
								\draw (13) -- (3);
								\draw (13) -- (1);
					\end{tikzpicture}}};
					\node (b) at (1.25,-1.25){\scalebox{.5}{\begin{tikzpicture}[line width=.3mm, ->, >= angle 60]
								\node (2) at (0,0){\scalebox{.9}{$\llrr{2}$}};
								\node[rectangle,line width=0.2mm, double,rounded corners,fill=orange!20,draw] (12) at (1,1){\scalebox{.9}{$\llrr{1,2}$}};
								\node[rectangle,line width=0.2mm, double,rounded corners,fill=orange!20,draw] (24) at (1,-1){\scalebox{.9}{$\llrr{2,4}$}};
								\node (4) at (0,-2){\scalebox{.9}{$\llrr{4}$}};
								\node[rectangle,line width=0.2mm, double,rounded corners,fill=orange!20,draw] (14) at (2,0){\scalebox{.9}{$\llrr{1,4}$}};
								\node[rectangle,line width=0.2mm, double,rounded corners,fill=orange!20,draw] (23) at (2,-2){\scalebox{.9}{$\llrr{2,3}$}};
								\node (34) at (3,1){\scalebox{.9}{$\llrr{3,4}$}};
								\node (13) at (3,-1){\scalebox{.9}{$\llrr{1,3}$}};
								\node (3) at (4,0){\scalebox{.9}{$\llrr{3}$}};
								\node (1) at (4,-2){\scalebox{.9}{$\llrr{1}$}};
								\draw (2) -- (12);
								\draw (2) -- (24);
								\draw (4) -- (24);
								\draw (12) -- (14);
								\draw (24) -- (14);
								\draw (24) -- (23);
								\draw (14) -- (34);
								\draw (14) -- (13);
								\draw (23) -- (13);
								\draw (34) -- (3);
								\draw (13) -- (3);
								\draw (13) -- (1);
					\end{tikzpicture}}};
                                \node (c) at (-1.5,-2.5){\scalebox{.5}{\begin{tikzpicture}[line width=.3mm, ->, >= angle 60]
								\node (2) at (0,0){\scalebox{.9}{$\llrr{2}$}};
								\node[rectangle,line width=0.2mm, double,rounded corners,fill=orange!20,draw] (12) at (1,1){\scalebox{.9}{$\llrr{1,2}$}};
								\node (24) at (1,-1){\scalebox{.9}{$\llrr{2,4}$}};
								\node (4) at (0,-2){\scalebox{.9}{$\llrr{4}$}};
								\node[rectangle,line width=0.2mm, double,rounded corners,fill=orange!20,draw] (14) at (2,0){\scalebox{.9}{$\llrr{1,4}$}};
								\node[rectangle,line width=0.2mm, double,rounded corners,fill=orange!20,draw] (23) at (2,-2){\scalebox{.9}{$\llrr{2,3}$}};
								\node (34) at (3,1){\scalebox{.9}{$\llrr{3,4}$}};
								\node[rectangle,line width=0.2mm, double,rounded corners,fill=orange!20,draw] (13) at (3,-1){\scalebox{.9}{$\llrr{1,3}$}};
								\node (3) at (4,0){\scalebox{.9}{$\llrr{3}$}};
								\node (1) at (4,-2){\scalebox{.9}{$\llrr{1}$}};
								\draw (2) -- (12);
								\draw (2) -- (24);
								\draw (4) -- (24);
								\draw (12) -- (14);
								\draw (24) -- (14);
								\draw (24) -- (23);
								\draw (14) -- (34);
								\draw (14) -- (13);
								\draw (23) -- (13);
								\draw (34) -- (3);
								\draw (13) -- (3);
								\draw (13) -- (1);
					\end{tikzpicture}}};

                                \node (d) at (5.5,-1.5) {\scalebox{0.9}
                                {\begin{tikzpicture}[line width=.3mm, ->, >= angle 60]
								\node[rectangle,line width=0.2mm,fill=Plum!20,draw] (2) at (0,0){\scalebox{.9}{$\llrr{2}$}};
								\node[rectangle,line width=0.2mm,fill=Plum!20,draw] (12) at (1,1){\scalebox{.9}{$\llrr{1,2}$}};
								\node[rectangle,line width=0.2mm,fill=Plum!20,draw] (24) at (1,-1){\scalebox{.9}{$\llrr{2,4}$}};
								\node (4) at (0,-2){\scalebox{.9}{$\llrr{4}$}};
								\node[rectangle,line width=0.2mm,fill=Plum!20,draw] (14) at (2,0){\scalebox{.9}{$\llrr{1,4}$}};
								\node[rectangle,line width=0.2mm,fill=Plum!20,draw] (23) at (2,-2){\scalebox{.9}{$\llrr{2,3}$}};
								\node (34) at (3,1){\scalebox{.9}{$\llrr{3,4}$}};
								\node[rectangle,line width=0.2mm,fill=Plum!20,draw] (13) at (3,-1){\scalebox{.9}{$\llrr{1,3}$}};
								\node (3) at (4,0){\scalebox{.9}{$\llrr{3}$}};
								\node (1) at (4,-2){\scalebox{.9}{$\llrr{1}$}};
								\draw (2) -- (12);
								\draw (2) -- (24);
								\draw (4) -- (24);
								\draw (12) -- (14);
								\draw (24) -- (14);
								\draw (24) -- (23);
								\draw (14) -- (34);
								\draw (14) -- (13);
								\draw (23) -- (13);
								\draw (34) -- (3);
								\draw (13) -- (3);
								\draw (13) -- (1);
                                \end{tikzpicture}}};
\end{tikzpicture}
\label{fig:EclassandGSE}
\caption{Illustration of \cref{th:Treaching} via one equivalence class in $\Tilt(Q)$ for $\approx_{\mathcal{E}_{\diamond}}$ and for $Q$ the $A_4$ type quiver in \cref{ex:TiltA4}.}
\end{figure}
\end{example}

\subsection{Canonically Jordan recoverable subcategories for type A algebras}
    \label{ss:AllCJRtypeA}

In the following, we prepare the ground for the proof of our main result, which will be an application of the results in \cref{ss:Tilting,ss:AllCJRtypeA,ss:GSandTiltmut}. To do so, we show that any maximal canonically Jordan recoverable subcategory of $\rep(Q)$ is extension-closed and contains a tilting representation.

\begin{prop} \label{prop:maxCJRextclos}
    Let $Q$ be an $A_n$ type quiver for some $n \in \mathbb{N}^*$. Then all the maximal canonically Jordan recoverable subcategories of $\rep(Q)$ are extension-closed.
\end{prop}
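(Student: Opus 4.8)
The plan is to combine the combinatorial classification of maximal canonically Jordan recoverable subcategories with the explicit description of short exact sequences between indecomposables in type $A$. By \cref{thm:CJRmaxcomb}, every maximal canonically Jordan recoverable subcategory of $\rep(Q)$ equals $\opC(\B,\E) = \Cat(\opJ(\B,\E))$ for some $\B,\E \subseteq \{1,\ldots,n\}$ such that $(\B,\E[1])$ is a set partition of $\{1,\ldots,n+1\}$, so it suffices to show that each such $\opC(\B,\E)$ is closed under extensions. First I would record the convenient reformulation: since $\E[1] = \{1,\ldots,n+1\}\setminus\B$, an interval $\llrr{i,j}$ lies in $\opJ(\B,\E)$ if and only if $i \in \B$ and $j+1 \notin \B$ (reading $\B \subseteq \{1,\ldots,n+1\}$). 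Next, because $\opC(\B,\E)$ is closed under direct sums and summands, a routine reduction by pullbacks and pushouts along the inclusions/projections of indecomposable summands of the outer terms reduces extension-closedness to the following: for every nonsplit
\[\begin{tikzcd}
	0 & X_K & F  & X_L &  0
	\arrow[from=1-1, to=1-2]
	\arrow[tail, from=1-2, to=1-3]
	\arrow[two heads,from=1-3, to=1-4]
	\arrow[from=1-4, to=1-5]
\end{tikzcd}\]
with $K,L \in \opJ(\B,\E)$, every indecomposable summand of $F$ belongs to $\opJ(\B,\E)$.

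I would then invoke \cref{thm:exttypeA}. In case $(a)$ the middle term is indecomposable, $K \cap L = \varnothing$ and $K \cup L$ is an interval, so $K$ and $L$ meet at a boundary point $c$: one of them has right endpoint $c$ and the other has left endpoint $c+1$. Membership of the first in $\opJ(\B,\E)$ forces $c+1 \notin \B$, membership of the second forces $c+1 \in \B$ --- a contradiction, so case $(a)$ never occurs (which also matches \cref{thm:CJRreptheory}: the middle term must be decomposable). Hence we are in case $(b)$, a diamond sequence: there are overlapping intervals $P,S$ with $P,S,P\cap S,P\cup S$ pairwise distinct and $F \cong X_{M_1}\oplus X_{M_2}$ where either $\{K,L\}=\{P,S\}$ and $\{M_1,M_2\}=\{P\cap S,\,P\cup S\}$, or $\{K,L\}=\{P\cap S,\,P\cup S\}$ and $\{M_1,M_2\}=\{P,S\}$. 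Using $b(P\cap S)=\max(b(P),b(S))$, $e(P\cap S)=\min(e(P),e(S))$, $b(P\cup S)=\min(b(P),b(S))$, $e(P\cup S)=\max(e(P),e(S))$, the two conditions ``left endpoint in $\B$'' and ``right endpoint $+\,1$ not in $\B$'' pass freely between $\{P,S\}$ and $\{P\cap S,P\cup S\}$: in the first subcase, $b(P),b(S)\in\B$ gives both $\min$ and $\max$ in $\B$, and $e(P)+1,e(S)+1\notin\B$ gives both $\min$ and $\max$ out of $\B$; in the second subcase, $\min(b(P),b(S))\in\B$ and $\max(b(P),b(S))\in\B$ together force $b(P),b(S)\in\B$, and symmetrically on the right. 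Either way $M_1,M_2\in\opJ(\B,\E)$, so $F\in\opC(\B,\E)$, and $\opC(\B,\E)$ is extension-closed.

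The main obstacle is largely bookkeeping rather than conceptual: one must handle carefully the two possibilities inside \cref{thm:exttypeA}$(b)$ and check that the min/max of endpoints behaves as claimed, and one must state the pullback/pushout reduction to indecomposable outer terms precisely (this step only uses closure under summands, so it is harmless, but it should not be glossed over). The genuinely clean point --- and the reason the statement holds for $\mathcal{E}_\diamond$ but not, say, for the exact structure of \cref{ex:dontworkforanyE} --- is that the defining inequalities of $\opJ(\B,\E)$ make case $(a)$ of \cref{thm:exttypeA} vacuous, so only diamonds survive, and diamonds manifestly preserve the interval set. This also dovetails with the rest of \cref{sec:TypeA}: once extension-closedness (and the existence of a tilting object inside $\opC(\B,\E)$) is in hand, \cref{thm:maximal} and \cref{th:Treaching} identify $\opC(\B,\E)$ with $\GS_{\mathcal{E}_\diamond}(T)$, yielding \cref{thm:main1}.
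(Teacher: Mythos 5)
Your proposal is correct and follows essentially the same route as the paper: reduce via \cref{thm:CJRmaxcomb} to $\opC(\B,\E)$, pass to indecomposable outer terms, invoke \cref{thm:exttypeA}, and verify that the diamond's four intervals all satisfy the endpoint conditions defining $\opJ(\B,\E)$. The only cosmetic differences are that the paper rules out the indecomposable-middle-term case by citing \cref{thm:CJRreptheory} directly rather than re-deriving the adjacency contradiction, and it tracks endpoints through an explicit ordering $b_1<b_2\leqslant e_1<e_2$ instead of your min/max formulation.
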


\begin{proof}
    Let $\mathscr{C} \subseteq \rep(Q)$ be a maximal canonically Jordan recoverable subcategory. By \cref{thm:CJRmaxcomb}, consider $\B,\E \subseteq \{1, \ldots,n\}$ such that:
    \begin{enumerate}[label=$\bullet$,itemsep=1mm]
        \item the pair $(\B,\E[1])$ is a set partition of $\{1,\ldots,n+1\}$; and, 
        \item we have $\mathscr{C} = \opC(\B,\E)$.
    \end{enumerate} To check that $\mathscr{C}$ is extension closed, by the fact that $\Ext^1$ is a bilinear functor, it is enough to prove that for any pair $(D,F)$ of indecomposable representations in $\mathscr{C}$, and for any nonsplit short exact sequence \[\begin{tikzcd}
	0 & D & E & F &  0,
	\arrow[from=1-1, to=1-2]
	\arrow[tail, from=1-2, to=1-3]
	\arrow[two heads,from=1-3, to=1-4]
	\arrow[from=1-4, to=1-5]
\end{tikzcd}\] we have $E \in \mathscr{C}$. 
    
    Given such a short exact sequence, by \cref{thm:exttypeA,thm:CJRreptheory}, we have that $E \cong E_1 \oplus E_2$ and there exist $K,L \in \mathcal{I}_n$ with $K \cap L \neq \varnothing$ such that:
    \begin{enumerate}[label=$\bullet$,itemsep=1mm]
    \item $K,L,K \cap L$ and $K \cup L$ are four different intervals in $\mathcal{I}_n$; and,
    \item either $\{D,F\} = \{X_K, X_L\}$ and $\{E_1,E_2\} = \{X_{K\cap L}, X_{K \cup L} \}$, or \\
    $\{D,F\} = \{X_{K \cap L}, X_{K \cup L}\}$ and $\{E_1,E_2\} = \{X_{K}, X_{L} \}$ .
    \end{enumerate}

    Set $K = \llrr{b_1, e_1}$ and $L = \llrr{b_2,e_2}$. Up to exchanging the role of $K$ and $L$, assume that $e_1 \leqslant e_2$. To make sure that $K,L,K \cap L$ and $K \cup L$ are four distinct intervals nonempty in $\mathcal{I}_n$, we must have either
    \begin{enumerate}[label=$(\arabic*)$, itemsep=1mm]
        \item $b_1 < b_2 \leqslant e_1 < e_2$; or,
        \item  $b_2 < b_1 \leqslant e_1 < e_2$.
    \end{enumerate}
    Assume that we are in the case $(1)$. If $\{D,F\} =\{X_K, X_L\}$, then $b_1,b_2 \in \B$ and $e_1,e_2 \in \E$. Therefore, $K \cap L = \llrr{b_2;e_1} \in \Int(\mathscr{C})$ and $K \cup L = \llrr{b_1, e_2} \in \Int(\mathscr{C})$. So $E \in \mathscr{C}$. The case where $\{D,F\} = \{X_{K\cap L},X_{K \cup L}\}$ can be treated similarly. These arguments yield the same result if we start with case $(2)$. 

    In either case, we get $E \in \mathscr{C}$, and so the desired result.
\end{proof}

\begin{remark}
    In contrast to \cref{ex:dontworkforanyE}, every maximal $\mathcal{E}_\diamond$-adapted subcategory is extension-closed. This is a direct consequence of \cref{prop:maxCJRextclos} and \cref{thm:CJRreptheory}.
\end{remark}

 \begin{prop} \label{prop:addTCJR}
    Let $Q$ be an $A_n$ type quiver. Let $T \in \rep(Q)$ be a rigid representation. Then $\add(T)$ is canonically Jordan recoverable.
\end{prop}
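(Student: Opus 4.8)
The plan is to reduce the statement to the representation-theoretic characterization of canonically Jordan recoverable subcategories recorded in \cref{thm:CJRreptheory}: the subcategory $\add(T)$ is canonically Jordan recoverable if and only if, for every nonsplit short exact sequence
$0 \to E \to F \to G \to 0$ with $E,G \in \add(T)$, the middle term $F$ is decomposable. I would then observe that this condition is satisfied \emph{vacuously} — when $T$ is rigid, there are simply no nonsplit short exact sequences with both end terms in $\add(T)$ — so nothing needs to be checked about the middle term at all.

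The key step is the vanishing of the relevant extension group. If $E,G \in \add(T)$, then $E$ is a direct summand of $T^{\oplus m}$ and $G$ of $T^{\oplus n}$ for suitable $m,n \in \mathbb{N}$. Since $\Ext^1_{\rep Q}(-,-)$ is additive in each variable, $\Ext^1(G,E)$ is a direct summand of $\Ext^1(T^{\oplus n}, T^{\oplus m}) \cong \Ext^1(T,T)^{\oplus mn}$, which is zero because $T$ is rigid (\cref{def:rigidtilting}). Hence $\Ext^1(G,E) = 0$, so every short exact sequence $0 \to E \to F \to G \to 0$ splits. Consequently the hypothesis of \cref{thm:CJRreptheory} is never met, and $\add(T)$ is canonically Jordan recoverable.

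I do not expect any real obstacle here; the only minor points to keep in mind are that $\add(T)$ is a legitimate subcategory in the sense of \cref{conv:addsubcat} (full, and closed under direct sums and summands), which is immediate, and that the characterization of \cref{thm:CJRreptheory} applies to $Q$ of type $A_n$, which is our standing assumption. As an alternative route, one could instead invoke the combinatorial characterization \cref{thm:typeACJRcombi}: if $X_K$ and $X_L$ are indecomposable summands of the rigid object $T$ with $K,L$ adjacent, then \cref{lem:adj=arrow} produces a nonsplit extension between $X_K$ and $X_L$, contradicting rigidity; hence $\Int(\add(T))$ is adjacency-avoiding and $\add(T)$ is canonically Jordan recoverable. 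Either argument delivers the conclusion immediately.
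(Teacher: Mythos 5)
Your proposal is correct and matches the paper's argument, which simply cites \cref{thm:CJRreptheory}; you merely spell out the (vacuous) verification that rigidity kills all nonsplit extensions between objects of $\add(T)$, which is exactly the implicit content of the paper's one-line proof.
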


\begin{proof}
   It follows from \cref{thm:CJRreptheory}.
\end{proof}

\begin{prop} \label{prop:existtiltmaxCJR}
    Let $Q$ be an $A_n$ type quiver. Consider $\mathscr{C} \subset \rep(Q)$ a maximal canonically Jordan recoverable subcategory. Then $\mathscr{C}$ contains a tilting representation.
\end{prop}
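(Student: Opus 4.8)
The plan is to show that $\mathscr{C} = \opC(\B,\E)$ for a set partition $(\B,\E[1])$ of $\{1,\ldots,n+1\}$ contains a rigid representation with $n$ nonisomorphic indecomposable summands; by \cref{prop:numberindectilt} this is automatically tilting. The natural candidate is built from the blocks of $\B$. First I would index $\B = \{b_1 < b_2 < \cdots < b_k\}$ and $\E = \{e_1 < e_2 < \cdots < e_{n+1-k}\}$ (so $\E[1] = \{1,\ldots,n+1\}\setminus \B$), and to each $i \in \{1,\ldots,n\}$ I associate the interval $K_i = \llrr{b,e}$ where $b = \max\{b' \in \B \mid b' \leqslant i\}$ and $e = \min\{e' \in \E \mid e' \geqslant i\}$; one checks $b \leqslant i \leqslant e$ always holds because $\B$ and $\E[1]$ partition $\{1,\ldots,n+1\}$, so $i$ and $i+1$ land on opposite sides, forcing the required inequalities. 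By construction $b(K_i) \in \B$ and $e(K_i) \in \E$, so $X_{K_i} \in \mathscr{C}$ for all $i$. Set $T = \bigoplus_{i=1}^n X_{K_i}$ after discarding repetitions.

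The second step is to verify that the distinct intervals among the $K_i$ number exactly $n$, i.e.\ that $i \mapsto K_i$ is injective. This should follow from the definition: if $K_i = K_{i+1}$ then $i$ and $i+1$ lie in the same ``cell'' cut out by $\B$ on the left and $\E$ on the right, but since $\{i+1\}$ belongs to either $\B$ or $\E[1]$, either $b(K_{i+1}) = i+1 > i \geqslant b(K_i)$ or $e(K_i) = i < i+1 \leqslant e(K_{i+1})$ — a contradiction in both cases. Hence $T$ has precisely $n$ nonisomorphic indecomposable summands.

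The third and main step is to prove $T$ is rigid, i.e.\ $\Ext^1(X_{K_i}, X_{K_j}) = 0$ for all $i,j$. Here I would use \cref{thm:exttypeA}: a nonsplit extension between $X_K$ and $X_L$ forces either $K \cup L \in \mathcal{I}_n$ with $K \cap L = \varnothing$ (the indecomposable case, possible only when $K,L$ are adjacent), or $K,L$ overlap with $K \cap L, K \cup L$ all distinct intervals (the diamond case). I need to rule both out for $K = K_i$, $L = K_j$. For the adjacency case: if $K_i$ and $K_j$ were adjacent, say $b(K_j) = e(K_i)+1$, then $e(K_i) \in \E$ and $e(K_i)+1 = b(K_j) \in \B$, so $e(K_i)+1 \in \B$ while $e(K_i) \in \E$ means $e(K_i)+1 \in \E[1]$ — contradicting that $\B$ and $\E[1]$ are disjoint. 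For the diamond case: if $K_i \cap K_j \neq \varnothing$ with all four intervals distinct, write $K_i = \llrr{b,e}$, $K_j = \llrr{b',e'}$; distinctness of $K_i \cap K_j$ and $K_i \cup K_j$ forces the endpoints to interleave, say $b < b' \leqslant e < e'$ (or $b' < b \leqslant e < e'$). But then $b' \in \B$ lies strictly inside $K_i = \llrr{b,e}$, and by the defining property of $K_i$ — that $b(K_i)$ is the \emph{largest} element of $\B$ that is $\leqslant i$ and $b \leqslant i \leqslant e$ — the presence of $b'$ with $b < b' \leqslant e$ would be incompatible with the way $K_i$ is assigned to any index $i$ in the cell; I expect a short case analysis comparing $i$ against $b'$ and $e$ closes this. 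This case analysis — showing the diamond configuration cannot arise among the $K_i$ — is the technical heart of the argument and the main obstacle; it is essentially a combinatorial bookkeeping on how $\B$ and $\E$ cut $\{1,\ldots,n\}$ into "diagonal strips," so I would phrase it carefully, perhaps noting that the $K_i$ form a chain-like family where no two can be "crossed."

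Finally, with $T$ rigid and possessing $n$ nonisomorphic indecomposable summands, \cref{prop:numberindectilt} gives $T \in \Tilt(Q)$, and since every $X_{K_i} \in \mathscr{C}$ we conclude $T \in \mathscr{C}$, as required.
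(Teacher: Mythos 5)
Your overall strategy (produce $n$ intervals with endpoints in $\B\times\E$, prove the resulting module is rigid, and invoke \cref{prop:numberindectilt}) is the same as the paper's, and your first two steps (well-definedness of the $K_i$ and injectivity of $i\mapsto K_i$) are fine. The problem is the third step: the claim that no two of your intervals $K_i$ can form the ``diamond'' configuration of \cref{thm:exttypeA} is false, so your candidate $T$ need not be rigid. Concretely, take $Q = 1\to 2\to 3$, $\B=\{1,2\}$, $\E=\{2,3\}$ (so $(\B,\E[1])$ partitions $\{1,2,3,4\}$). Your recipe gives $K_1=\llrr{1,2}$, $K_2=\llrr{2}$, $K_3=\llrr{2,3}$. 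The pair $\llrr{1,2}$, $\llrr{2,3}$ crosses, with $\llrr{2}$ and $\llrr{1,3}$ as intersection and union, and for this orientation the Auslander--Reiten sequence $0\to X_{\llrr{2,3}}\to X_{\llrr{1,3}}\oplus X_{\llrr{2}}\to X_{\llrr{1,2}}\to 0$ shows $\Ext^1(X_{\llrr{1,2}},X_{\llrr{2,3}})\neq 0$. So $\bigoplus_i X_{K_i}$ is not rigid here.

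The gap in your argument for excluding the crossing case is identifiable: from $b(K_i)<b'\leqslant e(K_i)$ with $b'\in\B$ you want to contradict the maximality in $b(K_i)=\max\{b''\in\B\mid b''\leqslant i\}$, but that maximum is only taken over elements $\leqslant i$, while $b'$ may lie in the range $(i,e(K_i)]$ (as happens with $i=1$, $b'=2$, $e(K_1)=2$ above). The underlying issue is structural: your choice of the $K_i$ depends only on $(\B,\E)$ and not on the orientation of $Q$, whereas which crossing pairs carry a nontrivial extension does depend on the orientation (for $1\leftarrow 2\to 3$ the same pair $\llrr{1,2},\llrr{2,3}$ is extension-free). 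This is exactly why the paper's set $\pmb{\mathscr{T}}(\B,\E)$ is built using the orientation-sensitive ``above/below'' relations of \cref{def:intervalbotandtop}: it always takes $\llrr{1,e}$ for $e\in\E$, and for each $b\in\B\setminus\{1\}$ it selects a right endpoint in $\E$ chosen so that the resulting interval sits on the correct ``side'' of $\llrr{1,n}$, which is what kills the diamond extensions. To repair your proof you would need to make your assignment of right endpoints depend on $Q$ in a similar way; as written, the construction does not work.
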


\begin{proof}
    By \cref{thm:CJRmaxcomb}, consider $\B,\E \subseteq \{1,\ldots,n\}$ such that: \begin{enumerate}[label=$\bullet$,itemsep=1mm]
        \item the pair $(\B,\E[1])$ is a set partition of $\{1,\ldots,n+1\}$; and, 
        \item we have $\mathscr{C} = \opC(\B,\E)$.
    \end{enumerate}  
    Let $\pmb{\mathscr{T}}(\B, \E) \subseteq \opJ(\B,\E) = \Int(\mathscr{C})$ be the set such that: 
    \begin{enumerate}[label=$\bullet$, itemsep=1mm]
        \item for any $e \in \E$, $\llrr{1,e} \in \pmb{\mathscr{T}}(\B,\E)$; and,
        \item for any $b \in \B \setminus \{1\}$,
        \begin{enumerate}[label=$\bullet$,itemsep=1mm]
            \item if, for any $e \in E$, the interval $\llrr{b,e}$ is either on the top or at the bottom of $\llrr{1,n}$, then $\llrr{b,n} \in \pmb{\mathscr{T}}(\B,\E)$; 
            \item otherwise, by setting $e_{(b)} \in \E$ the minimal element in $\E$ such that $\llrr{b,e_{(b)}}$ is neither on the top and at the bottom of $\llrr{1,n}$, then $\llrr{b,e_{(b)}} \in \pmb{\mathscr{T}}(\B,\E)$.
        \end{enumerate}
    \end{enumerate} Set \[T = T_{(\B,\E)} = \bigoplus_{K \in \pmb{\mathscr{T}}(\B,\E)} X_{K}.\]
    Let us show that $T$ is tilting.
    
    First $\rep(Q)$ is a hereditary category. Therefore, to show that $T$ is a tilting representation, it suffices to show that:
    \begin{enumerate}[label=$\bullet$, itemsep=1mm]
        \item $\# \pmb{\mathscr{T}}(\B,\E) = n$; and,
        \item  $\Ext^1(X_K, X_L) = 0$ for all $K,L \in \pmb{\mathscr{T}}(\B,\E)$.
    \end{enumerate}
    The first point is obvious, as one can check that
    \[\#\pmb{\mathscr{T}}(\B,\E) = \#\E + \#\B - 1 = n.\]  
    
    By \cref{thm:exttypeA}, the short exact sequences
    \[\begin{tikzcd}
	0 & D & E  & F &  0,
	\arrow[from=1-1, to=1-2]
	\arrow[tail, from=1-2, to=1-3]
	\arrow[two heads,from=1-3, to=1-4]
	\arrow[from=1-4, to=1-5]
\end{tikzcd}\] where $D,F \in  \ind(Q)$, are parametrized by only two kind of pair of intervals $(K,L) \in \mathcal{I}_n$, namely:
    \begin{enumerate}[label=$(\arabic*)$,itemsep=1mm]
        \item the case where $K \cap L = \varnothing$ and $K \cup L \in \mathcal{I}_n$; or,

        \item the case where $K \cap L \neq \varnothing$, and $K,L,K\cap L$ and $K \cup L$ are four distinct intervals.
    \end{enumerate}
        The case $(1)$ is equivalent to say that $K$ and $L$ are adjacent, and as $\pmb{\mathscr{T}}(\B,\E) \subset \opJ(\B,\E)$ which is adjacency-avoinding, this cannot happen, with $D,F \in \add(T)$ two indecomposable representations.
        
        Assume the case $(2)$ holds with $D,F \in \add(T)$ two indecomposable representations. To exchange the role of $K$ and $L$, assume that $e(K) < e(L)$. Then $b(K) < b(L) \leqslant e(K) < e(L)$. By construction of $\pmb{\mathscr{T}}(\B,\E)$, both of the following assertions holds:
        \begin{enumerate}[label=$\bullet$, itemsep=1mm]
            \item $b(L) \neq 1$;
            \item if $e(L) \neq n$, then $L$ is neither on the top nor at the bottom of $\llrr{1,n}$; and,
            \item $K \cap L \notin \pmb{\mathscr{T}}(\B,\E)$; and,
            \item $K \cap L$ is either on the top or at the bottom of $\llrr{1,n}$, and, therefore, of $K \cup L$.
        \end{enumerate}
         So, in such case, we have $\{D,F\} = \{X_{K}, X_{L}\}$. However, by the last point, we cannot have a non-zero morphism between $X_K$ and $X_L$ which factors through $X_{K \cap L} \oplus X_{K \cup L}$. We sought a contradiction. 
         
        Therefore, any short exact sequence  \[\begin{tikzcd}
	0 & D & E  & F &  0,
	\arrow[from=1-1, to=1-2]
	\arrow[tail, from=1-2, to=1-3]
	\arrow[two heads,from=1-3, to=1-4]
	\arrow[from=1-4, to=1-5]
\end{tikzcd}\] where $D,F \in \add(T)$ are indecomposable, must split, and so $\Ext^1(F, D) = 0$. We proved that $T \in \mathscr{C}$ is tilting.
\end{proof}

\begin{example} \leavevmode \label{ex:A7typeTiltinginMCJR} We continue with \cref{ex:A7typeMCJR}, and we exhibit a tilting representation in this maximal canonically Jordan recoverable following the procedure given in the proof of \cref{prop:existtiltmaxCJR}. 
\end{example}
\begin{figure}[!ht]
    \centering
    \begin{tikzpicture}
                \begin{scope}[line width=.5mm,->, >= angle 60]
                    \node at (-.6,0){$Q=$};
					\node (a) at (0,0){$1$};
					\node (b) at (1,0){$2$};
					\node (c) at (2,0){$3$};
					\node (d) at (3,0){$4$};
                    \node (e) at (4,0){$5$};
                    \node (f) at (5,0){$6$};
                    \node (g) at (6,0){$7$};
					\draw (a) -- (b);
					\draw (c) -- (b);
					\draw (c) -- (d);
                    \draw (d) -- (e);
                    \draw (f) -- (e);
                    \draw (f) -- (g);
				\end{scope}
                \begin{scope}[yshift=-2cm, xshift=-.5cm, line width=.3mm, ->, >= angle 60]
					\node (2) at (0,0){\scalebox{.7}{$\llrr{2}$}};
					\node (12) at (1,1){\scalebox{.7}{$\llrr{1,2}$}};
					\node[rectangle,line width=0.2mm,fill=Plum!20,draw] (2345) at (1,-1){\scalebox{.7}{$\llrr{2,5}$}};
					\node (45) at (0,-2){\scalebox{.7}{$\llrr{4,5}$}};
                    \node[rectangle,line width=0.2mm,fill=Plum!20,draw] (5) at (-1,-3){\scalebox{.7}{$\llrr{5}$}};
                    \node[rectangle,line width=0.2mm,fill=Plum!20,draw]  (567) at (0,-4){\scalebox{.7}{$\llrr{5,7}$}};
                    \node (7) at (-1,-5){\scalebox{.7}{
							$\llrr{7}$}};
                    \node[rectangle, line width=0.2mm, double, rounded corners, fill=orange!20,draw] (56) at (1,-5){\scalebox{.7}{$\llrr{5,6}$}
                                };
                     \node (4) at (3,-5){\scalebox{.7}{$\llrr{4}$}};
                     \node[rectangle, line width=0.2mm, double, rounded corners, fill=orange!20,draw]  (23) at (5,-5){\scalebox{.7}{$\llrr{2,3}$}};
                    \node (1) at (7,-5){\scalebox{.7}{$\llrr{1}$}};
                    \node (456) at (2,-4){\scalebox{.7}{$\llrr{4,6}$}};
                    \node (234) at (4,-4){\scalebox{.7}{$\llrr{2,4}$}};
                     \node[rectangle, line width=0.2mm, double, rounded corners, fill=orange!20,draw] (123) at (6,-4){\scalebox{.7}{$\llrr{1,3}$}};
                     \node (4567) at (1,-3){\scalebox{.7}{$\llrr{4;7}$}};
                     \node[rectangle,line width=0.2mm,fill=Plum!20,draw] (23456) at (3,-3){\scalebox{.7}{$\llrr{2,6}$}};
                     \node (1234) at (5,-3){\scalebox{.7}{$\llrr{1,4}$}};
                     \node[rectangle,line width=0.2mm,fill=Plum!20,draw] (3) at (7,-3){\scalebox{.7}{$\llrr{3}$}};
					\node[rectangle, line width=0.2mm, double, rounded corners, fill=orange!20,draw] (12345) at (2,0){\scalebox{.7}{$\llrr{1,5}$}};
					\node[rectangle,line width=0.2mm,fill=Plum!20,draw] (234567) at (2,-2){\scalebox{.7}{$\llrr{2,7}$}};
                    \node[rectangle, line width=0.2mm, double, rounded corners, fill=orange!20,draw] (123456) at (4,-2){\scalebox{.7}{$\llrr{1,6}$}};
					\node (34) at (6,-2){\scalebox{.7}{$\llrr{3,4}$}};
					\node[rectangle, line width=0.2mm, double, rounded corners, fill=orange!20,draw] (1234567) at (3,-1){\scalebox{.7}{$\llrr{1,7}$}};
                    \node[rectangle,line width=0.2mm,fill=Plum!20,draw] (3456) at (5,-1){\scalebox{.7}{$\llrr{3,6}$}};
                    \node[rectangle,line width=0.2mm,fill=Plum!20,draw] (34567) at (4,0){\scalebox{.7}{$\llrr{3,7}$}};
                    \node (6) at (6,0){\scalebox{.7}{$\llrr{6}$}};
                    \node[rectangle, line width=0.2mm, double, rounded corners, fill=orange!20,draw] (345) at (3,1){\scalebox{.7}{$\llrr{3,5}$}};
                    \node (67) at (5,1){\scalebox{.7}{$\llrr{6,7}$}};
					\draw (2) -- (12);
					\draw (2) -- (2345);
					\draw (45) -- (2345);
					\draw (12) -- (12345);
					\draw (2345) -- (12345);
					\draw (2345) -- (234567);
                    \draw (12345) -- (345);

                    \draw (5) -- (45);
                    \draw (7) -- (567);
                    \draw (567) -- (4567);
                    \draw (4567) -- (234567);
                    \draw (234567) -- (1234567);
                    \draw (1234567) -- (34567);
                    \draw (34567) -- (67);
                    \draw (56) -- (456);
                    \draw (456) -- (23456);
                    \draw (23456) -- (123456);
                    \draw (123456) -- (3456);
                    \draw (3456) -- (6);
                    \draw (4) -- (234);
                    \draw (234) -- (1234);
                    \draw (1234) -- (34);
                    \draw (23) -- (123);
                    \draw (123) -- (3);
					
					\draw (5) -- (567);
                    \draw (567) -- (56);
                    \draw (45) -- (4567);
                    \draw (4567) -- (456);
                    \draw (456) -- (4);
                    \draw (234567) -- (23456);
                    \draw (23456) -- (234);
                    \draw (234) -- (23);
                    \draw (12345) -- (1234567);
                    \draw (1234567) -- (123456);
                    \draw (123456) -- (1234);
                    \draw (1234) -- (123);
                    \draw (123) -- (1);
                    \draw (345) -- (34567);
                    \draw (34567) -- (3456);
                    \draw (3456) -- (34);
                    \draw (34) -- (3);
                    \draw (67) -- (6);	
				\end{scope}
    \end{tikzpicture}
    \caption[fragile]{Example of a tilting representation $T = \oplus\ \protect\begin{tikzpicture}[baseline={(0,-.1)}]
        \node[rectangle, line width=0.2mm, double, rounded corners, fill=orange!20,minimum size=1em,draw] at (0,0){$\ \ $};
    \end{tikzpicture}$ in the maximal canonically Jordan recoverable subcategory given in \cref{fig:MaxCJR1}.}
    \label{fig:MaxCJR2}
\end{figure}

\begin{prop}
\label{propGSCJR}
 Let $n \in \mathbb{N}^*$ and $Q$ be an $A_n$ type quiver. Consider $T \in \rep(Q)$ a rigid representation. Then $\GS_{\mathcal{E_\diamond}}(T)$ is canonically Jordan recoverable.
\end{prop}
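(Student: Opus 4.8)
The plan is to reduce the statement to the combinatorial characterisation of canonical Jordan recoverability and to exploit the fact that $\mathcal{E}_\diamond$-admissibility forces any non-split short exact sequence between indecomposables to have a decomposable middle term.

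First I would record that, since here $R=\varnothing$, the category $\rep(Q)$ is hereditary, and that $\mathcal{E}_\diamond$ is an exact structure on it by \cref{prop:EdiamondExactStructureTypeA}. Thus \cref{cor:rigidEadapt} applies: because $T$ is rigid, the subcategory $\mathscr{C}:=\GS_{\mathcal{E}_\diamond}(T)$ is $\mathcal{E}_\diamond$-adapted (we will not even need its extension-closedness here); that is, $\Ext^1(X,Y)\subseteq\mathcal{E}_\diamond$ for all $X,Y\in\mathscr{C}$, by \cref{def:Eadpted}.

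Next, by \cref{thm:typeACJRcombi} it suffices to prove that $\Int(\mathscr{C})$ is adjacency-avoiding. Suppose, for contradiction, that there are $K,L\in\Int(\mathscr{C})$ that are adjacent. By \cref{lem:adj=arrow}, up to swapping $K$ and $L$, there is a non-split short exact sequence $0\to X_K\to E\to X_L\to 0$ with $E\in\Ind(Q)$; by \cref{thm:exttypeA}$(a)$ necessarily $E\cong X_{K\cup L}$, which is indecomposable. This sequence represents a non-zero class $\eta\in\Ext^1(X_L,X_K)$. Since a diamond exact sequence has a decomposable middle term by \cref{thm:exttypeA}$(b)$, and $\eta$ is non-split, the class $\eta$ is neither split nor of diamond type, hence $\eta\notin\mathcal{E}_\diamond(X_L,X_K)$ by \cref{def:Ediamond}, and so $\Ext^1(X_L,X_K)\not\subseteq\mathcal{E}_\diamond$. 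But $X_K$ and $X_L$ are objects of $\mathscr{C}$ (as $K,L\in\Int(\mathscr{C})$), which contradicts $\mathscr{C}$ being $\mathcal{E}_\diamond$-adapted. Therefore $\Int(\mathscr{C})$ is adjacency-avoiding and $\mathscr{C}=\GS_{\mathcal{E}_\diamond}(T)$ is canonically Jordan recoverable.

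There is no genuinely hard step: the argument is an assembly of \cref{cor:rigidEadapt}, \cref{thm:typeACJRcombi}, \cref{lem:adj=arrow} and \cref{thm:exttypeA}. The only point requiring care is the verification that $\mathcal{E}_\diamond$-adaptedness of $\mathscr{C}$ really rules out adjacent intervals in $\Int(\mathscr{C})$, which reduces to the elementary observation that an $\mathcal{E}_\diamond$-admissible sequence between two indecomposables is either split or of diamond type, and a diamond exact sequence is never indecomposable in the middle. Alternatively, one could run the whole proof through \cref{thm:CJRreptheory} together with \cref{rem:TwoThings}, observing that "$\mathcal{E}_\diamond$-adapted" is, by definition, precisely the requirement that every short exact sequence with both extremities in $\mathscr{C}$ lie in $\mathcal{E}_\diamond$; but the route via $\Int(\mathscr{C})$ keeps everything at the level of indecomposable representations and thereby avoids any bookkeeping with the additive bifunctor $\mathcal{E}_\diamond(-,-)$.
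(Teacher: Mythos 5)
Your proof is correct and takes essentially the same route as the paper's, which simply cites \cref{prop:EAdapt} and \cref{thm:CJRreptheory}: rigidity of $T$ gives $\mathcal{E}_\diamond$-adaptedness of $\GS_{\mathcal{E}_\diamond}(T)$, and $\mathcal{E}_\diamond$-adaptedness is exactly the representation-theoretic criterion for canonical Jordan recoverability. Your detour through the combinatorial characterisation (\cref{thm:typeACJRcombi} together with \cref{lem:adj=arrow}) merely unwinds \cref{thm:CJRreptheory} one step further, and the alternative you sketch in your last paragraph is precisely the paper's argument.
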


\begin{proof}
It follows from \cref{prop:EAdapt} and \cref{thm:CJRreptheory}.
\end{proof}

We can finally prove our main results.

\begin{theorem}
\label{th:mCJR=GS}
Let $Q$ be an $A_n$ type quiver for some $n \in \mathbb{N}^*$. A subcategory $\mathscr{C} \subset \rep(Q)$ is a maximal canonically Jordan recoverable subcategory if, and only if, there exists $T \in \Tilt(Q)$ such that $\mathscr{C} = \GS_{\mathcal{E_\diamond}}(T)$.
\end{theorem}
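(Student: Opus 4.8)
The plan is to reduce the statement to the identification between ``canonically Jordan recoverable'' and ``$\mathcal{E}_\diamond$-adapted'' and then feed the structural results already established into \cref{thm:maximal}. First I would record that, for $Q$ of type $A_n$, a subcategory $\mathscr{C}\subseteq\rep(Q)$ is canonically Jordan recoverable exactly when it is $\mathcal{E}_\diamond$-adapted in the sense of \cref{def:Eadpted}: by \cref{thm:CJRreptheory} together with the first bullet of \cref{rem:TwoThings}, being CJR means that every short exact sequence with end terms in $\mathscr{C}$ lies in $\mathcal{E}_\diamond$; since $\mathscr{C}$ is additive, $\Ext^1$ is biadditive, and, by \cref{thm:exttypeA}, a nonsplit extension of indecomposables is either a diamond sequence (so in $\mathcal{E}_\diamond$) or has an indecomposable middle term (so not in $\mathcal{E}_\diamond$), this is equivalent to $\Ext^1(X,Y)\subseteq\mathcal{E}_\diamond$ for all $X,Y\in\mathscr{C}$. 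In particular, maximal CJR subcategories coincide with maximal $\mathcal{E}_\diamond$-adapted subcategories, and all of them are extension-closed by \cref{prop:maxCJRextclos}.

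For the forward direction I would start from a maximal CJR subcategory $\mathscr{C}$: \cref{prop:existtiltmaxCJR} produces a tilting object $T\in\Tilt(Q)$ with $T\in\mathscr{C}$, \cref{prop:maxCJRextclos} makes $\mathscr{C}$ extension-closed, and the identification above makes it $\mathcal{E}_\diamond$-adapted. Then $\mathscr{C}$ is an $\mathcal{E}_\diamond$-adapted, extension-closed subcategory of $\rep(Q)$ containing $T$, so \cref{thm:maximal} (applied with $\mathscr{A}=\rep(Q)$ and $\mathcal{E}=\mathcal{E}_\diamond$, which is an exact structure by \cref{prop:EdiamondExactStructureTypeA}) gives $\mathscr{C}\subseteq\GS_{\mathcal{E}_\diamond}(T)$; since $\GS_{\mathcal{E}_\diamond}(T)$ is itself CJR by \cref{propGSCJR} ($T$ being rigid), maximality of $\mathscr{C}$ forces $\mathscr{C}=\GS_{\mathcal{E}_\diamond}(T)$.

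For the converse direction I would fix $T\in\Tilt(Q)$, set $\mathscr{C}=\GS_{\mathcal{E}_\diamond}(T)$ --- which is CJR by \cref{propGSCJR} --- and show it is maximal among CJR subcategories. Given any CJR subcategory $\mathscr{D}\supseteq\mathscr{C}$, I would use that $\rep(Q)$ has only finitely many indecomposables (\cref{thm:indecandmorphtypeA}) to embed $\mathscr{D}$ in a maximal CJR subcategory $\mathscr{D}'$; by \cref{prop:maxCJRextclos} $\mathscr{D}'$ is extension-closed, by the identification it is $\mathcal{E}_\diamond$-adapted, and it contains $T\in\mathscr{C}\subseteq\mathscr{D}'$, so \cref{thm:maximal} yields $\mathscr{D}'\subseteq\GS_{\mathcal{E}_\diamond}(T)=\mathscr{C}$. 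Chasing $\mathscr{C}\subseteq\mathscr{D}\subseteq\mathscr{D}'\subseteq\mathscr{C}$ gives $\mathscr{C}=\mathscr{D}$, so $\mathscr{C}$ is a maximal CJR subcategory.

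I expect the substantive work to already be contained in \cref{prop:maxCJRextclos,prop:existtiltmaxCJR,thm:maximal}, so the only points that will need care are making the equivalence between being CJR and being $\mathcal{E}_\diamond$-adapted watertight --- this genuinely rests on the extension classification of \cref{thm:exttypeA} via \cref{thm:CJRreptheory} --- and, in the converse, passing to a maximal CJR overcategory before invoking \cref{thm:maximal}. I do not anticipate a real obstacle beyond this bookkeeping.
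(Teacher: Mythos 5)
Your proof is correct and follows essentially the same route as the paper's: extract a tilting object $T\in\mathscr{C}$ via \cref{prop:existtiltmaxCJR}, use \cref{thm:CJRreptheory} and \cref{prop:maxCJRextclos} to see that a maximal canonically Jordan recoverable subcategory is $\mathcal{E}_\diamond$-adapted and extension-closed, and conclude with \cref{thm:maximal} and \cref{propGSCJR}. The only difference is that you spell out the converse carefully (passing to a maximal CJR overcategory before invoking \cref{thm:maximal}), whereas the paper dismisses that direction as obvious.
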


\begin{proof}
Let $\mathscr{C}$ be a maximal canonically Jordan recoverable subcategory of $\rep(Q)$. By \cref{prop:existtiltmaxCJR}, there exists a tilting representation $T \in \rep(Q)$ such that $T \in \mathscr{C}$. By \cref{thm:CJRreptheory}, the category $\mathscr{C}$ is $\mathcal{E}_\diamond$-adapted. Moreover, $\mathscr{C}$ is closed under extensions by \cref{prop:maxCJRextclos}. Therefore, we have $\GS_{\mathcal{E}_{\diamond}}(T) \subseteq \mathscr{C}$. By \cref{thm:maximal}, we get $\GS_{\mathcal{E}_\diamond}(T) = \mathscr{C}$. The converse is obvious by \cref{thm:maximal} and \cref{thm:CJRreptheory}.
\end{proof}

\begin{cor} \label{cor:CJRGSlink}
Let $Q$ be an $A_n$ type quiver for some $n \in \mathbb{N}^*$. Let $\mathscr{C} \subseteq \rep (Q)$ be a maximal canonically Jordan recoverable subcategory. Then there exists $T \in \Tilt(Q)$ such that \[\mathscr{C} = \add \left( \bigoplus_{T' \in [T]_{\approx_{\mathcal{E}_\diamond}}} T' \right). \]
\end{cor}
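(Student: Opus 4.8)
The plan is to deduce this corollary directly by combining the two main theorems just proved. By \cref{th:mCJR=GS}, since $\mathscr{C}$ is a maximal canonically Jordan recoverable subcategory of $\rep(Q)$, there exists a tilting representation $T \in \Tilt(Q)$ such that $\mathscr{C} = \GS_{\mathcal{E}_\diamond}(T)$. Now I would simply invoke \cref{th:Treaching} with the exact structure $\mathcal{E} = \mathcal{E}_\diamond$, which gives
\[
\GS_{\mathcal{E}_\diamond}(T) = \add\left(\bigoplus_{T' \in [T]_{\approx_{\mathcal{E}_\diamond}}} T'\right).
\]
Chaining these two equalities yields exactly the claimed description of $\mathscr{C}$.

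The only small point to check is that \cref{th:Treaching} applies in this setting: it is stated for $Q$ an $ADE$ Dynkin type quiver endowed with an exact structure $\mathcal{E}$ (\cref{conv:typeA}), and $A_n$ type quivers are $ADE$ Dynkin type quivers, while $\mathcal{E}_\diamond$ is an exact structure on $\rep(Q)$ by \cref{prop:EdiamondExactStructureTypeA}. So the hypotheses of \cref{th:Treaching} are met, and the equivalence class $[T]_{\approx_{\mathcal{E}_\diamond}}$ is well-defined.

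Since there is essentially no obstacle here — the corollary is a formal consequence of \cref{th:mCJR=GS} and \cref{th:Treaching} — the proof is a single short paragraph. I would write:

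\begin{proof}
By \cref{th:mCJR=GS}, there exists $T \in \Tilt(Q)$ such that $\mathscr{C} = \GS_{\mathcal{E}_\diamond}(T)$. Applying \cref{th:Treaching} with $\mathcal{E} = \mathcal{E}_\diamond$, we obtain
\[
\mathscr{C} = \GS_{\mathcal{E}_\diamond}(T) = \add\left(\bigoplus_{T' \in [T]_{\approx_{\mathcal{E}_\diamond}}} T'\right),
\]
as desired.
\end{proof}

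Note that \cref{thm:main1} in the introduction is precisely the conjunction of \cref{th:mCJR=GS} and this corollary, so this also completes the proof of that stated main result.
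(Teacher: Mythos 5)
Your proof is correct and is exactly the paper's argument: the authors also deduce the corollary directly by combining \cref{th:mCJR=GS} with \cref{th:Treaching}. The additional check that \cref{th:Treaching} applies to $(\rep(Q),\mathcal{E}_\diamond)$ is a reasonable bit of care but raises no issue.
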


\begin{proof}
It follows directly from \cref{th:mCJR=GS,th:Treaching}.
\end{proof}

\section{To go further}
\label{sec:further}
\pagestyle{plain}

This section provides an overview of a few directions we are interested in investigating in the near future. The reader is invited to have a look at these various or related problems.

\subsection{Cokernel-Kernel operators}

This section introduces a closure operator on subcategories of $(\mathscr{A},\mathcal{E})$ that is slightly different from the Gen-Sub operator. More precisely, given a subcategory $\mathscr{C}$, we will construct the smallest subcategory $\mathscr{D} \supseteq \mathscr{C}$ such that:
\begin{enumerate}[label=$\bullet$,itemsep=1mm]
    \item $\mathscr{D}$ is closed under cokernels of $\mathcal{E}$-monomorphisms in $\mathscr{D}$; and,
    \item  $\mathscr{D}$ is closed under kernels of $\mathcal{E}$-epimorphisms in $\mathscr{D}$.
\end{enumerate}

The \new{$\Coker_\mathcal{E}$-operator} on subcategories of $\mathscr{A}$ is defined as follows: given a subcategory $\mathscr{C} \subseteq \mathscr{A}$, we define $\Coker_\mathcal{E}(\mathscr{C})$ as the full subcategory, closed under sums and summands, of $\mathscr{A}$ containing the cokernel of any $\mathcal{E}$-monomorphism $f : X \longrightarrow Y$ such that $X,Y \in \mathscr{C}$. The \new{$\Ker_\mathcal{E}$-operator} on subcategories of $\mathscr{A}$ is defined analogously: given a subcategory $\mathscr{C} \subseteq \mathscr{A}$, we define $\Ker_\mathcal{E}(\mathscr{C})$ as the full subcategory, closed by sums and summands, of $\mathscr{A}$ containing the kernel of any $\mathcal{E}$-epimorphism $g : Y \longrightarrow X$  such that $Y,X \in \mathscr{C}$. Given an object $M \in \mathscr{C}$, by abusing of notation, we set $\Coker_\mathcal{E}(M) = \Coker_\mathcal{E}(\add(M))$ and $\Ker_\mathcal{E}(M) = \Ker_\mathcal{E}(\add(M))$.

Given a subcategory $\mathscr{C} \subseteq \mathscr{A}$, we define a sequence of subcategories $(\CK_{\mathcal{E}}^i(\mathscr{C}))_{i\in \mathbb{N}}$ as it follows:
\begin{enumerate}[label=$\bullet$, itemsep=1mm]
    \item we set $\CK_{\mathcal{E}}^0(\mathscr{C}) = \mathscr{C}$; and,
    \item for all $i \geqslant 1$, we define $\CK^{i}_{\mathcal{E}}(\mathscr{C})$ to be the subcategory of $\mathscr{A}$ additively generated by objects in both $\Coker_{\mathcal{E}}(\CK^{i-1}_{\mathcal{E}}(\mathscr{C}))$ and $\Ker_{\mathcal{E}}(\CK^{i-1}_{\mathcal{E}}(\mathscr{C}))$.
\end{enumerate}

By the previous remark, $(\CK_{\mathcal{E}}^i(\mathscr{C}))_{i\in \mathbb{N}}$ is an increasing sequence of additive subcategories of $\mathscr{A}$. Therefore, as $\mathscr{A}$ is abelian, this sequence of subcategories admits a colimit.

\begin{definition} \label{def:CKop}
    The \new{$\CK_\mathcal{E}$-operator} on subcategories of $\mathscr{A}$ is defined as follows: given a subcategory $\mathscr{C} \subseteq \mathscr{A}$, we define $\CK_{\mathcal{E}}(\mathscr{C})$ as the colimit of the sequence of subcategories $(\CK_\mathcal{E}^i(\mathscr{C}))_{i\in \mathbb{N}}$. As before, we set $\CK_\mathcal{E}(M) = \CK_\mathcal{E}(\add(M))$ by abuse of notations.
\end{definition}

\begin{remark}
It is clear that $\CK_{\mathcal{E}}(\mathscr{C}) \subseteq \GS_{\mathcal{E}}(\mathscr{C})$. Therefore, by~\cref{prop:EAdapt}, $\CK_{\mathcal{E}}(T)$ is $\mathcal{E}$-adapted for any rigid object $T$.

\end{remark}

\begin{ex} \label{ex:A7typeCK} In \cref{fig:CKCalc1}, we calculate $\CK_{\mathcal{E}_\diamond}(\mathscr{C})$ for a subcategory $\mathscr{C}$ of $\rep(Q)$ and a fixed quiver $Q$ of $A_7$ type. This example illustrates the slight difference between $\CK_{\mathcal{E}_\diamond}(\mathscr{C})$ and $\GS_{\mathcal{E}_\diamond}(\mathscr{C})$.

\begin{figure}[!ht]
    \leavevmode\\[-0.5ex]  
    \centering
    \begin{tikzpicture}
				\begin{scope}[line width=.5mm,->, >= angle 60]
                    \node at (-.6,0){$Q=$};
					\node (a) at (0,0){$1$};
					\node (b) at (1,0){$2$};
					\node (c) at (2,0){$3$};
					\node (d) at (3,0){$4$};
                    \node (e) at (4,0){$5$};
                    \node (f) at (5,0){$6$};
                    \node (g) at (6,0){$7$};
					\draw (a) -- (b);
					\draw (c) -- (b);
					\draw (c) -- (d);
                    \draw (d) -- (e);
                    \draw (f) -- (e);
                    \draw (f) -- (g);
				\end{scope}
                \begin{scope}[yshift=-2cm, xshift=-.5cm, line width=.3mm, ->, >= angle 60]
					\node (2) at (0,0){\scalebox{.7}{$\llrr{2}$}};
					\node (12) at (1,1){\scalebox{.7}{$\llrr{1,2}$}};
					\node (2345) at (1,-1){\scalebox{.7}{$\llrr{2,5}$}};
					\node (45) at (0,-2){\scalebox{.7}{$\llrr{4,5}$}};
                    \node[circle,line width=0.2mm,double,fill=lava!20,draw] (5) at (-1,-3){\scalebox{.7}{$\llrr{5}$}};
                    \node[circle,line width=0.2mm,double,fill=lava!20,draw] (567) at (0,-4){\scalebox{.7}{$\llrr{5,7}$}};
                    \node (7) at (-1,-5){\scalebox{.7}{
							$\llrr{7}$}};
                    \node (56)[circle,line width=0.2mm,double,fill=lava!20,draw] at (1,-5){\scalebox{.7}{$\llrr{5,6}$}
                                };
                     \node (4) at (3,-5){\scalebox{.7}{$\llrr{4}$}};
                     \node[circle,line width=0.2mm,double,fill=lava!20,draw] (23) at (5,-5){\scalebox{.7}{$\llrr{2,3}$}};
                    \node (1) at (7,-5){\scalebox{.7}{$\llrr{1}$}};
                    \node (456) at (2,-4){\scalebox{.7}{$\llrr{4,6}$}};
                    \node (234) at (4,-4){\scalebox{.7}{$\llrr{2,4}$}};
                     \node (123) at (6,-4){\scalebox{.7}{$\llrr{1,3}$}};
                     \node (4567) at (1,-3){\scalebox{.7}{$\llrr{4;7}$}};
                     \node(23456) at (3,-3){\scalebox{.7}{$\llrr{2,6}$}};
                     \node (1234) at (5,-3){\scalebox{.7}{$\llrr{1,4}$}};
                     \node (3) at (7,-3){\scalebox{.7}{$\llrr{3}$}};
					\node[circle,line width=0.2mm, double,fill=lava!20,draw] (12345) at (2,0){\scalebox{.7}{$\llrr{1,5}$}};
					\node (234567) at (2,-2){\scalebox{.7}{$\llrr{2,7}$}};
                    \node[circle,line width=0.2mm,fill=darkgreen!20,draw] (123456) at (4,-2){\scalebox{.7}{$\llrr{1,6}$}};
					\node (34) at (6,-2){\scalebox{.7}{$\llrr{3,4}$}};
					\node[circle,line width=0.2mm,fill=darkgreen!20,draw] (1234567) at (3,-1){\scalebox{.7}{$\llrr{1,7}$}};
                    \node[circle,line width=0.2mm,fill=darkgreen!20,draw] (3456) at (5,-1){\scalebox{.7}{$\llrr{3,6}$}};
                    \node[circle,line width=0.2mm,fill=darkgreen!20,draw] (34567) at (4,0){\scalebox{.7}{$\llrr{3,7}$}};
                    \node (6) at (6,0){\scalebox{.7}{$\llrr{6}$}};
                    \node[circle,line width=0.2mm,double,fill=lava!20,draw] (345) at (3,1){\scalebox{.7}{$\llrr{3,5}$}};
                    \node (67) at (5,1){\scalebox{.7}{$\llrr{6,7}$}};
					\draw (2) -- (12);
					\draw (2) -- (2345);
					\draw (45) -- (2345);
					\draw (12) -- (12345);
					\draw (2345) -- (12345);
					\draw (2345) -- (234567);
                    \draw (12345) -- (345);

                    \draw (5) -- (45);
                    \draw (7) -- (567);
                    \draw (567) -- (4567);
                    \draw (4567) -- (234567);
                    \draw (234567) -- (1234567);
                    \draw (1234567) -- (34567);
                    \draw (34567) -- (67);
                    \draw (56) -- (456);
                    \draw (456) -- (23456);
                    \draw (23456) -- (123456);
                    \draw (123456) -- (3456);
                    \draw (3456) -- (6);
                    \draw (4) -- (234);
                    \draw (234) -- (1234);
                    \draw (1234) -- (34);
                    \draw (23) -- (123);
                    \draw (123) -- (3);
					
					\draw (5) -- (567);
                    \draw (567) -- (56);
                    \draw (45) -- (4567);
                    \draw (4567) -- (456);
                    \draw (456) -- (4);
                    \draw (234567) -- (23456);
                    \draw (23456) -- (234);
                    \draw (234) -- (23);
                    \draw (12345) -- (1234567);
                    \draw (1234567) -- (123456);
                    \draw (123456) -- (1234);
                    \draw (1234) -- (123);
                    \draw (123) -- (1);
                    \draw (345) -- (34567);
                    \draw (34567) -- (3456);
                    \draw (3456) -- (34);
                    \draw (34) -- (3);
                    \draw (67) -- (6);	
				\end{scope}
    \end{tikzpicture}
    \caption[fragile]{Calculation of $\CK_{\mathcal{E}_\diamond}(\mathscr{C})$ where $\mathscr{C} = \add \left( \protect\begin{tikzpicture}[baseline={(0,-.1)}]
        \node[circle, line width=0.2mm, double, fill=lava!20,minimum size=1em,draw] at (0,0){$ $};
    \end{tikzpicture}\right)$. We get $\CK_{\mathcal{E}_\diamond}^1(\mathscr{C}) = \add \left(\protect\begin{tikzpicture}[baseline={(0,-.1)}]
        \node[circle, line width=0.2mm, double, fill=lava!20,minimum size=1em,draw] at (0,0){$ $};
    \end{tikzpicture} + \protect\begin{tikzpicture}[baseline={(0,-.1)}]
        \node[circle, line width=0.2mm, fill=darkgreen!20,minimum size=1em,draw] at (0,0){$ $};
    \end{tikzpicture}\right)$. Here, we obtain that $\CK_{\mathcal{E}_\diamond}(\mathscr{C}) = \CK_{\mathcal{E}_\diamond}^1(\mathscr{C})$.}
    \label{fig:CKCalc1}
\end{figure}
\end{ex}

For the same subcategory $\mathscr{C}$, $\GS_{\mathcal{E}_\diamond}(\mathscr{C})$ still corresponds to the subcategory shown in Figure~\ref{fig:GSCalc1}. By adding the object $\llrr{1,3}$ to $\mathscr{C}$, thereby making it maximally rigid, we obtain $\GS_{\mathcal{E}_\diamond}(\mathscr{C}) = \CK_{\mathcal{E}_\diamond}(\mathscr{C})$. We observe that, in general, a minimal basic object $T$ for which $\GS_{\mathcal{E}}(T)$ is a maximally $\mathcal{E}$-adapted extension-closed subcategory does not necessarily have to be tilting. This suggests that studying the $\CK_\mathcal{E}$-operator could be valuable.

\begin{remark}
By minimal, we mean that no direct summand can be removed from $T$ basic without losing the property that $\GS_{\mathcal{E}}(T)$ is a maximally $\mathcal{E}$-adapted extension-closed subcategory.
\end{remark}

\begin{definition}
We recall that a subcategory $\mathscr{D}$ is called \new{$\mathcal{E}$-wide} if, for any short $\mathcal{E}$-exact sequence
\[
\begin{tikzcd}
	\xi:\quad 0 \arrow[r] & E \arrow[r, tail] & F \arrow[r, two heads] & G \arrow[r] & 0,
\end{tikzcd}
\]
whenever two of the three objects $E$, $F$, or $G$ belong to $\mathscr{D}$, then so does the third.
\end{definition}

\begin{prop}
Let $(\mathscr{A}, \mathcal{E})$ be an exact category with $\mathscr{A}$ hereditary. Then the following statements hold:
\begin{enumerate}[label=$(\alph*)$, itemsep=1mm]
    \item \label{aCKGS} if $T \in \mathscr{A}$ is rigid, then $\CK_{\mathcal{E}}(T)$ is an $\mathcal{E}$-adapted and $\mathcal{E}$-wide subcategory;
    \item \label{bCKGS} if $T \in \mathscr{A}$ is tilting, then $\CK_{\mathcal{E}}(T) = \GS_{\mathcal{E}}(T)$. 
\end{enumerate}
\end{prop}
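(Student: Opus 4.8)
The plan is to dispatch the formal parts first and then isolate the two genuinely homological points. The starting observation (already recorded in the remark before the statement) is that $\Coker_{\mathcal{E}}(\mathscr{C})\subseteq\Gen_{\mathcal{E}}(\mathscr{C})$ and $\Ker_{\mathcal{E}}(\mathscr{C})\subseteq\Sub_{\mathcal{E}}(\mathscr{C})$ for every $\mathscr{C}$ — the $\Coker/\Ker$-versions only impose the extra demand that the \emph{source} (resp.\ target) of the $\mathcal{E}$-mono (resp.\ $\mathcal{E}$-epi) also lie in $\mathscr{C}$ — whence $\CK^i_{\mathcal{E}}(\mathscr{C})\subseteq\GS^i_{\mathcal{E}}(\mathscr{C})$ for all $i$, and so $\CK_{\mathcal{E}}(\mathscr{C})\subseteq\GS_{\mathcal{E}}(\mathscr{C})$. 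For part~\ref{aCKGS}, $\mathcal{E}$-adaptation of $\CK_{\mathcal{E}}(T)$ is immediate: since $T$ is rigid, $\add(T)$ is $\mathcal{E}$-adapted (\cref{lem:TiltEadapt}), hence $\GS_{\mathcal{E}}(T)$ is $\mathcal{E}$-adapted by \cref{prop:EAdapt}, and $\mathcal{E}$-adaptation is a condition on pairs of objects, so it passes to the subcategory $\CK_{\mathcal{E}}(T)$. For $\mathcal{E}$-wideness, consider a short $\mathcal{E}$-exact sequence $0\to E\to F\to G\to 0$: if $E,F\in\CK_{\mathcal{E}}(T)$ then both lie in some $\CK^i_{\mathcal{E}}(T)$ and $G=\Coker(E\hookrightarrow F)\in\Coker_{\mathcal{E}}(\CK^i_{\mathcal{E}}(T))\subseteq\CK^{i+1}_{\mathcal{E}}(T)$; symmetrically, if $F,G\in\CK_{\mathcal{E}}(T)$ then $E=\Ker(F\twoheadrightarrow G)\in\Ker_{\mathcal{E}}(\CK^i_{\mathcal{E}}(T))\subseteq\CK^{i+1}_{\mathcal{E}}(T)$. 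The only remaining case, $E,G\in\CK_{\mathcal{E}}(T)\Rightarrow F\in\CK_{\mathcal{E}}(T)$, is exactly closure of $\CK_{\mathcal{E}}(T)$ under $\mathcal{E}$-extensions.

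I would prove this last assertion by induction on the sum of the $\CK_{\mathcal{E}}$-stages of $E$ and $G$, imitating \cref{lemmaExtclosedinduction} and \cref{prop:Extclosed}. In the base case $E,G\in\add(T)$, so $\Ext^1(G,E)=0$ by rigidity and $F\cong E\oplus G\in\add(T)$. For the inductive step, if the subobject $E$ occurs (up to summand) as a kernel of an $\mathcal{E}$-epi between objects of a strictly lower stage, one forms the pushout of the given sequence along the relevant $\mathcal{E}$-mono and applies the $3\times 3$-lemma (\cref{lem:3x3}) twice to realize $F$ as the kernel of an $\mathcal{E}$-epi whose source lies in a lower stage (hence in $\CK_{\mathcal{E}}(T)$ by the induction hypothesis) and whose cokernel is again in $\CK_{\mathcal{E}}(T)$; dually, if the quotient $G$ occurs as such a cokernel, one uses a pullback instead. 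The delicate point — and the main obstacle of part~\ref{aCKGS} — is that, unlike $\Gen_{\mathcal{E}}$ and $\Sub_{\mathcal{E}}$, the operators $\Coker_{\mathcal{E}}$ and $\Ker_{\mathcal{E}}$ require \emph{both} ends of an $\mathcal{E}$-admissible sequence to lie in the subcategory, so one must carefully track the stages of all objects appearing in the pushout/pullback diagrams and, in the mixed sub-case, pivot between the sub-side and the quotient-side so that the total stage genuinely decreases.

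For part~\ref{bCKGS}, one inclusion is the general observation above. For $\GS_{\mathcal{E}}(T)\subseteq\CK_{\mathcal{E}}(T)$, I would first record that for a tilting $T$ one has $\CK^1_{\mathcal{E}}(T)=\GS^1_{\mathcal{E}}(T)$: by \cref{prop:GenSubT}~$(i)$ every $M\in\Gen_{\mathcal{E}}(T)$ fits into an $\mathcal{E}$-exact sequence $0\to T_1\to T_0\to M\to 0$ with $T_0,T_1\in\add(T)$, so $M\in\Coker_{\mathcal{E}}(\add(T))\subseteq\CK^1_{\mathcal{E}}(T)$, and dually $\Sub_{\mathcal{E}}(T)\subseteq\Ker_{\mathcal{E}}(\add(T))\subseteq\CK^1_{\mathcal{E}}(T)$ by \cref{prop:GenSubT}~$(ii)$. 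The cleanest way to conclude, in the type $ADE$ setting of this section, is via \cref{th:Treaching}: it gives $\GS_{\mathcal{E}}(T)=\add\!\bigl(\bigoplus_{T'\in[T]_{\approx_{\mathcal{E}}}}T'\bigr)$, and one checks by induction on the length of a sequence of $\mathcal{E}$-mutations that every $T'\in[T]_{\approx_{\mathcal{E}}}$ lies in $\CK_{\mathcal{E}}(T)$ — a single $\mathcal{E}$-mutation of a tilting $T''\in\CK_{\mathcal{E}}(T)$ replaces a summand $U$ by the cokernel (resp.\ kernel) of an $\mathcal{E}$-mono (resp.\ $\mathcal{E}$-epi) between summands of $T''$, which all lie in $\CK_{\mathcal{E}}(T)$ since that subcategory is closed under summands, so the new tilting lies in $\CK_{\mathcal{E}}(T)$ as it is also closed under sums. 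Since $\CK_{\mathcal{E}}(T)$ is closed under sums and summands it then contains $\add\!\bigl(\bigoplus_{T'}T'\bigr)=\GS_{\mathcal{E}}(T)$, giving equality. For a proof valid over an arbitrary hereditary $\mathscr{A}$ one instead upgrades the $\mathcal{E}$-wideness of $\CK_{\mathcal{E}}(T)$ from part~\ref{aCKGS} to $\mathcal{E}$-Serreness — it suffices to show $\CK_{\mathcal{E}}(T)$ is closed under $\mathcal{E}$-subobjects, since that together with $\mathcal{E}$-wideness forces closure under $\mathcal{E}$-quotients too — by an induction on the $\CK_{\mathcal{E}}$-stage using \cref{prop:GenSubT} at the base and the same diagram calculus as in part~\ref{aCKGS} at the step, and then concludes by the uniqueness of the $\mathcal{E}$-adapted $\mathcal{E}$-Serre subcategory containing $T$ in \cref{cor:GS2=GSanduniqueESerre}~$(b)$. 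I expect this $\mathcal{E}$-Serre upgrade (equivalently, closure of $\CK_{\mathcal{E}}(T)$ under arbitrary $\mathcal{E}$-subobjects for tilting $T$) to be the technical heart of part~\ref{bCKGS}, running parallel to the obstacle flagged in part~\ref{aCKGS}; the type $ADE$ route via \cref{th:Treaching} sidesteps it.
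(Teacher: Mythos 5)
Your part~\ref{aCKGS} follows the paper's own route: the authors likewise obtain $\mathcal{E}$-wideness by rerunning the diagram calculus of \cref{prop:Extclosed}, observing that the only closure properties actually used are kernels of $\mathcal{E}$-epimorphisms and cokernels of $\mathcal{E}$-monomorphisms \emph{within} the subcategory; your isolation of extension-closure as the one nontrivial case, and your warning that both ends of every admissible sequence must be tracked through the pushout/pullback diagrams, is exactly the point they flag. For part~\ref{bCKGS} you diverge. The paper adapts the proof of \cref{thm:maximal}: rerunning that argument shows that \emph{every} $\mathcal{E}$-adapted extension-closed subcategory $\mathscr{C}$ containing $T$ lies in $\CK_{\mathcal{E}}(T)$, because each kernel or cokernel formed there --- $V=\Ker(\Theta_0\twoheadrightarrow\Theta_1)$, $W=\Ker(\Theta_1\oplus\Theta_2\twoheadrightarrow\Theta_3)$, then $V=\Coker(\Theta_0\rightarrowtail W)$ --- already has both ends in $\add(T)$ or in $\CK^1_{\mathcal{E}}(T)$; taking $\mathscr{C}=\GS_{\mathcal{E}}(T)$ (legitimate by \cref{cor:rigidEadapt}) yields $\GS_{\mathcal{E}}(T)\subseteq\CK^2_{\mathcal{E}}(T)$ outright, with no need to prove closure of $\CK_{\mathcal{E}}(T)$ under arbitrary $\mathcal{E}$-subobjects. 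Your first route via \cref{th:Treaching} is correct and attractively explicit, but it only establishes the statement for $\rep(Q)$ with $Q$ of $ADE$ type, not for an arbitrary hereditary $\mathscr{A}$ as claimed (you do acknowledge this). Your second, general route --- upgrade $\mathcal{E}$-wideness to $\mathcal{E}$-Serreness and invoke the uniqueness of \cref{cor:GS2=GSanduniqueESerre}~$(b)$ --- is sound in principle, and your reduction to $\mathcal{E}$-subobject closure is valid; but that closure is strictly stronger than what the paper's adaptation requires, and it is precisely the step you leave as a sketch. In short: same proof for $(a)$; for $(b)$ the paper's maximality argument is the more economical proof in full generality, your mutation argument buys concreteness at the cost of generality, and your $\mathcal{E}$-Serre route works but front-loads the hardest lemma.
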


\begin{proof}
The proof of \ref{aCKGS} follows as in \cref{prop:Extclosed}, noting that the subcategory is no longer necessarily closed under arbitrary $\mathcal{E}$-subobjects and $\mathcal{E}$-quotients, but it remains closed under taking kernels of $\mathcal{E}$-epimorphisms and cokernels of $\mathcal{E}$-monomorphisms within itself.

The proof of \cref{thm:maximal} can be adapted to the setting of $\CK_{\mathcal{E}}(T)$ for showing \ref{bCKGS}. Together with the result from \ref{aCKGS}, this implies that both $\CK_{\mathcal{E}}(T)$ and $\GS_{\mathcal{E}}(T)$  are maximal $\mathcal{E}$-adapted extension-closed subcategories with $\CK_{\mathcal{E}}(T) \subseteq \GS_{\mathcal{E}}(T)$, which yields the desired equality.
\end{proof}

\begin{conj} \label{conj:TiltasmingenCK}
Let $(\mathscr{A},\mathcal{E})$ be an exact category with $\mathscr{A}$ hereditary. Then, $T \in \mathscr{A}$ is minimal such that $\CK_{\mathcal{E}}(T)$ forms a maximally $\mathcal{E}$-adapted extension-closed subcategory if and only if $T$ is tilting.  
\end{conj}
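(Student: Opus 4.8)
The plan is to prove the equivalence by pairing a $K_{0}$-theoretic invariant of the operator $\CK_{\mathcal{E}}$ with the structural results already obtained for $\GS_{\mathcal{E}}$. The invariant I would isolate first is the following: for every $M\in\mathscr{A}$, the subgroup of $K_{0}(\mathscr{A})$ generated by the classes of the objects of $\CK_{\mathcal{E}}(M)$ equals the subgroup generated by the classes of the indecomposable summands of $M$. Indeed, passing from $\CK_{\mathcal{E}}^{i}(\mathscr{C})$ to $\CK_{\mathcal{E}}^{i+1}(\mathscr{C})$ only adjoins cokernels $C$ of $\mathcal{E}$-monomorphisms $A\hookrightarrow B$ with $A,B$ already present, so that $[C]=[B]-[A]$ lies in the subgroup already generated, and, symmetrically, kernels of $\mathcal{E}$-epimorphisms; the claim follows by induction since the $\CK_{\mathcal{E}}^{i}$ are increasing. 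I shall also use two standard facts over a hereditary abelian category $\mathscr{A}$ with $n$ non-isomorphic indecomposable projective objects: the classes of the indecomposable summands of a tilting object form a $\mathbb{Z}$-basis of $K_{0}(\mathscr{A})$, and every rigid object has at most $n$ non-isomorphic indecomposable summands.

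For the implication $(\Leftarrow)$, assume $T$ is tilting. By part \ref{bCKGS} of the Proposition preceding the present statement, $\CK_{\mathcal{E}}(T)=\GS_{\mathcal{E}}(T)$, and this subcategory is the unique maximal $\mathcal{E}$-adapted extension-closed subcategory containing $T$ by \cref{thm:maximalmain}; in particular it forms one. For minimality, write $T=T'\oplus S$ with $S$ a nonzero indecomposable summand. Then $T'$ is rigid, so by part \ref{aCKGS} the category $\CK_{\mathcal{E}}(T')$ is $\mathcal{E}$-adapted and $\mathcal{E}$-wide, hence (being $\mathcal{E}$-adapted, all its extensions are $\mathcal{E}$-exact, and $\mathcal{E}$-wide subcategories are closed under $\mathcal{E}$-extensions) $\mathcal{E}$-adapted and extension-closed. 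Since $\CK_{\mathcal{E}}(T')\subseteq\CK_{\mathcal{E}}(T)$, were $\CK_{\mathcal{E}}(T')$ maximal it would equal $\CK_{\mathcal{E}}(T)$, forcing $S\in\CK_{\mathcal{E}}(T')$; by the invariant $[S]$ would then lie in the subgroup spanned by the classes of the summands of $T'$, contradicting that those $n-1$ classes together with $[S]$ form a $\mathbb{Z}$-basis of $K_{0}(\mathscr{A})$. Hence no summand is removable and $T$ is minimal with the property.

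For the implication $(\Rightarrow)$, let $T$ be minimal with $\mathscr{D}:=\CK_{\mathcal{E}}(T)$ maximal $\mathcal{E}$-adapted and extension-closed. The first step is to produce a tilting object $T_{0}\in\mathscr{D}$. In the type $A$ setting with $\mathcal{E}=\mathcal{E}_{\diamond}$ this is available from \cref{sec:TypeA}: $\mathscr{D}$ is then a maximal canonically Jordan recoverable subcategory by \cref{thm:CJRreptheory}, hence of the form $\GS_{\mathcal{E}_{\diamond}}(T_{0})$ for a tilting representation $T_{0}$ by \cref{th:mCJR=GS}, made explicit by \cref{prop:existtiltmaxCJR}. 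Granted such a $T_{0}$, uniqueness in \cref{thm:maximalmain} gives $\mathscr{D}=\GS_{\mathcal{E}}(T_{0})$, which is $\mathcal{E}$-Serre by \cref{cor:ESerre}; since $\GS_{\mathcal{E}}(T)$ is the smallest subcategory containing $T$ closed under $\mathcal{E}$-subobjects and $\mathcal{E}$-quotients (by \cref{lem:proponGS}\,\ref{GSb}) and $\mathscr{D}$ is such a subcategory while $\CK_{\mathcal{E}}(T)\subseteq\GS_{\mathcal{E}}(T)$, we get $\CK_{\mathcal{E}}(T)=\GS_{\mathcal{E}}(T)=\mathscr{D}$. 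The invariant now forces the classes of the summands of $T$ to generate $K_{0}(\mathscr{A})\cong\mathbb{Z}^{n}$, so $T$ has at least $n$ non-isomorphic indecomposable summands; it then suffices to prove $T$ rigid, as it will then have exactly $n$ of them and be tilting by \cref{prop:numberindectilt}. If not, pick summands $T_{i}\not\cong T_{j}$ with $\Ext^{1}(T_{i},T_{j})\neq0$; a non-split extension $0\to T_{j}\to E\to T_{i}\to0$ is automatically $\mathcal{E}$-exact (as $\mathscr{D}$ is $\mathcal{E}$-adapted and extension-closed), so $T_{j}=\Ker_{\mathcal{E}}(E\twoheadrightarrow T_{i})$ and $T_{i}=\Coker_{\mathcal{E}}(T_{j}\hookrightarrow E)$ with $E\in\mathscr{D}$. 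One checks that $(T\ominus T_{j})\oplus E$ still $\CK_{\mathcal{E}}$-generates $\mathscr{D}$, and — choosing $E$ as the universal extension of $T_{i}$ along $\Ext^{1}(T_{i},T_{j})$, in the spirit of Riedtmann--Schofield and of the approximation theory of \cref{ss:Tiltmut} — that this replacement strictly decreases the rigidity defect $\sum_{k\neq\ell}\dim_{\mathbb{K}}\Ext^{1}(T_{k},T_{\ell})$; iterating (and re-minimising) yields a descent that should contradict the existence of a non-rigid minimal generator.

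The hard part will be precisely this last rigidity step of $(\Rightarrow)$: the reduction moves readily produce a rigid — hence tilting — minimal generator of $\mathscr{D}$, but upgrading that to ``\emph{every} minimal generator is rigid'' demands a monovariant on the collection of all minimal generators of $\mathscr{D}$ that the reduction strictly decreases, and it is here that the universal-extension choice of $E$ and, in the $ADE$ case, the finiteness of $[T_{0}]_{\approx_{\mathcal{E}}}$ from \cref{th:Treaching} would have to be exploited. A second, more foundational obstacle — invisible in the Dynkin setting but unavoidable at the stated level of generality — is the very first step of $(\Rightarrow)$: whether every maximal $\mathcal{E}$-adapted extension-closed subcategory of an arbitrary hereditary abelian category contains a tilting object, equivalently spans $K_{0}(\mathscr{A})$. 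Outside type $ADE$ this is not delivered by the present results, and it is the main reason the statement is recorded as a conjecture.
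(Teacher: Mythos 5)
The statement you are addressing is recorded in the paper as \cref{conj:TiltasmingenCK}, i.e.\ as an open conjecture: the authors give no proof, so there is no argument of theirs to compare yours against, and your text must stand on its own. It does not, and you say as much: the two steps you defer --- producing a tilting object inside an arbitrary maximal $\mathcal{E}$-adapted extension-closed subcategory of a general hereditary $\mathscr{A}$ (you only have \cref{prop:existtiltmaxCJR}, which lives in type $A$ with $\mathcal{E}_\diamond$), and the rigidity descent that is supposed to close the implication $(\Rightarrow)$ --- are precisely the content of the conjecture, not technical afterthoughts. What you have is a plausible programme, with the $(\Leftarrow)$ direction and the reduction $\CK_{\mathcal{E}}(T)=\GS_{\mathcal{E}}(T)=\mathscr{D}$ in $(\Rightarrow)$ correctly assembled from \cref{thm:maximal}, \cref{cor:ESerre}, \cref{lem:proponGS} and the proposition preceding the conjecture.

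There is, moreover, a gap you do not flag: the $K_0$-invariant that carries both the minimality half of $(\Leftarrow)$ and the ``spans $K_0(\mathscr{A})$'' step of $(\Rightarrow)$. The operators $\Coker_{\mathcal{E}}$ and $\Ker_{\mathcal{E}}$ are by definition closed under direct summands, so when an adjoined cokernel $C$ of an $\mathcal{E}$-monomorphism $A\rightarrowtail B$ decomposes as $C_1\oplus C_2$, the objects $C_1$ and $C_2$ enter the category, and only $[C_1]+[C_2]=[B]-[A]$ --- not $[C_1]$ and $[C_2]$ separately --- is guaranteed to lie in the subgroup already generated; your induction therefore does not close. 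As stated (``for every $M\in\mathscr{A}$'') the invariant is in fact false: over $Q\colon 1\rightarrow 2\leftarrow 3$ with $\mathcal{E}=\mathcal{E}_{\max}$ and $M=X_{\llrr{2}}\oplus X_{\llrr{1,3}}$, the monomorphism $X_{\llrr{2}}\rightarrowtail X_{\llrr{1,3}}$ has cokernel $X_{\llrr{1}}\oplus X_{\llrr{3}}$, and $[X_{\llrr{1}}]$ does not lie in the subgroup generated by $[X_{\llrr{2}}]$ and $[X_{\llrr{1,3}}]$. To salvage the argument you would need to show that for the objects you actually feed into the invariant (rigid, or tilting) every kernel and cokernel arising in the $\CK_{\mathcal{E}}$-tower has all of its indecomposable summands' classes in the lattice spanned by the summands of $T$, which is an additional statement requiring proof. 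Finally, the background facts you invoke --- $K_0(\mathscr{A})\cong\mathbb{Z}^n$, the summands of a tilting object forming a $\mathbb{Z}$-basis, rigid objects having at most $n$ summands, \cref{prop:numberindectilt} --- all presuppose finiteness hypotheses that the conjecture, as stated for an arbitrary hereditary abelian category, does not grant you.
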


\subsection{Congruences on the tilting lattice} \label{ss:tiltposet} The equivalence relation $\approx_{\mathcal{E}}$ seems to hide other interesting properties in the lattice $(\Tilt(Q), \Tleq)$. Considering the equivalence relation $\approx_{\mathcal{E}}$, we can ask ourselves if it defines well-behaved lattice quotients, usually called \emph{lattice congruences}. 
\begin{definition} \label{def:latticecongr}
    Let $(L, \leqslant, \vee, \wedge)$. A \new{congruence} of $L$ is an equivalence relation $\sim$ on $L$ such that, for any $a,b,x,y \in L$, if $a \sim x$ and $b \sim y$, then both $a \vee b \sim x \vee y$ and $a \wedge b \sim x \wedge y$. The \new{quotient lattice} of $L$ on $\sim$ is the lattice $(L / \sim, \leqslant, \vee, \wedge)$ induced by $L$.
\end{definition}
N. Reading gives a criterion on equivalence relations that are congruences on a lattice without using join and meet operations \cite{R04}.

For any $n \in \mathbb{N}$, denote by $\mathcal{B}_n$ the Boolean lattice of order $n$. In \cref{fig:posettiltEdiamond}, we can check that $\approx_{\mathcal{E}_\diamond}$ defines a congruence on $(\Tilt(Q), \Tleq)$, and the lattice congruence obtained is isomorphic to the boolean lattice $\mathcal{B}_3$. In light of this example, we made a stronger conjecture that encapsulates the general behavior of $\approx_{\mathcal{E}_\diamond}$.

\begin{conj} \label{conj:latticequotient} Let $Q$ be an $A_n$ type quiver for some $n \in \mathbb{N}^*$. The equivalence relation $\approx_{\mathcal{E}_\diamond}$ defines a congruence on $(\Tilt(Q), \Tleq)$, and the obtained lattice congruence quotient is isomorphic to the Boolean lattice $\mathcal{B}_{n-1}$  
\end{conj}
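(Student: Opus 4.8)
The plan is to construct a surjection $\phi\colon(\Tilt(Q),\Tleq)\twoheadrightarrow\mathcal{B}_{n-1}$ whose fibres are exactly the $\approx_{\mathcal{E}_\diamond}$-classes, and to verify via N. Reading's order-theoretic criterion \cite{R04} that $\approx_{\mathcal{E}_\diamond}$ is a lattice congruence in the sense of \cref{def:latticecongr}. By \cref{th:mCJR=GS} together with \cref{cor:propUniquetilting} and \cref{prop:samesamebutdifferent}, sending $[T]_{\approx_{\mathcal{E}_\diamond}}$ to the pair $(\B,\E)$ with $\GS_{\mathcal{E}_\diamond}(T)=\opC(\B,\E)$ (unique by \cref{thm:CJRmaxcomb}) is a well-defined bijection from $\Tilt(Q)/\approx_{\mathcal{E}_\diamond}$ onto the set of pairs with $(\B,\E[1])$ a set partition of $\{1,\dots,n+1\}$. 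Since $1\notin\E[1]$ and $n+1\notin\B$, one always has $1\in\B$ and $n\in\E$, so such a pair amounts to the free choice of the subset $S:=\B\setminus\{1\}\subseteq\{2,\dots,n\}$; there are thus exactly $2^{n-1}$ classes, and we obtain $\phi\colon\Tilt(Q)\twoheadrightarrow 2^{\{2,\dots,n\}}=\mathcal{B}_{n-1}$, $\phi(T)=S$, with $\phi^{-1}(S)$ equal to the corresponding class.

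The crux is the following dichotomy: for every cover $T'\lessdot T$ in $(\Tilt(Q),\Tleq)$, realised by a mutation at an indecomposable $U$ with exchange sequence $\xi$ classified by \cref{thm:exttypeA}, exactly one of the following holds — (i) $\xi$ is a diamond exact sequence (hence $\xi\in\mathcal{E}_\diamond$; it cannot split), the mutation is an $\mathcal{E}_\diamond$-mutation, so $T\approx_{\mathcal{E}_\diamond}T'$ and $\phi(T)=\phi(T')$; or (ii) $\xi$ has indecomposable middle term, the mutation is not an $\mathcal{E}_\diamond$-mutation, and $\phi(T),\phi(T')$ differ in exactly one element $k\in\{2,\dots,n\}$, with a fixed sign (say $S_{\phi(T')}=S_{\phi(T)}\sqcup\{k\}$). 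I would prove this using the $(n+2)$-gon triangulation model of $\Tilt(A_n)$, in which mutation is a diagonal flip, combined with the description of $\opJ(\B,\E)$ as the maximal adjacency-avoiding interval set through $\Int(\add T)$ that underlies \cref{prop:existtiltmaxCJR}: a ``diamond'' flip preserves $(\B,\E)$, whereas a non-diamond flip toggles precisely one of the $n-1$ free coordinates, always upward. Granting this, $\phi$ is order-preserving; each $\approx_{\mathcal{E}_\diamond}$-class is order-convex (a $\Tleq$-path cannot monotonically leave and re-enter a fibre of $\phi$) and, being bounded below by \cref{prop:MinimalT} and above by \cref{cor:maxEreach}, is an interval $[T^\flat,T^\sharp]$ of $(\Tilt(Q),\Tleq)$; finally (i)--(ii) show that the bottom-of-class and top-of-class maps send $\Tleq$-covers to $\Tleq$-comparable pairs. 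These are exactly the conditions of Reading's criterion \cite{R04}, so $\approx_{\mathcal{E}_\diamond}$ is a congruence.

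On the quotient, (i)--(ii) show that the cover relations are precisely ``adjoin one element to $S$'', i.e. the Hasse diagram of $\Tilt(Q)/\approx_{\mathcal{E}_\diamond}$ is that of the Boolean lattice on $\{2,\dots,n\}$; since the underlying set has cardinality $2^{n-1}$, the quotient is isomorphic to $\mathcal{B}_{n-1}$ as a lattice. (Equivalently: the quotient is a finite lattice with $2^{n-1}$ elements and at most $n-1$ join-irreducibles — one per ``surviving'' Hasse-edge, which by (i)--(ii) are the $n-1$ non-diamond covers up from the atoms — and a lattice with $k$ join-irreducibles and $2^{k}$ elements is Boolean.)

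The main obstacle is entirely in the dichotomy (i)--(ii), specifically the claim that a non-$\mathcal{E}_\diamond$ cover toggles exactly one coordinate and always in the same direction. This requires a precise dictionary between (a) the shapes of mutation exchange sequence in \cref{thm:exttypeA}, (b) how to recover $(\B,\E)$ from an arbitrary tilting object — that is, inverting the construction $\pmb{\mathscr{T}}(\B,\E)$ of \cref{prop:existtiltmaxCJR}, not merely evaluating it — and (c) diagonal flips in the $(n+2)$-gon; fixing the orientation of $\phi$ and checking order-preservation of the bottom-of-class map are the remaining, more routine, bookkeeping. An alternative to the polygon combinatorics is to pass through the brick labelling of the lattice of torsion classes \cite{IRTT15}: one would show the $\mathcal{E}_\diamond$-mutation edges are exactly those whose brick label is not a length-$2$ interval module $X_{\llrr{i,i+1}}$, and that the complementary set of labels is closed under Reading's edge-forcing — but this merely relocates the same delicate computation to the identification of brick labels of exchange sequences.
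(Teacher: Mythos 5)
The statement you are proving is \cref{conj:latticequotient}, which the paper leaves explicitly as a conjecture: the authors give only a ``rough plan'' (use Reading's criterion to show $\approx_{\mathcal{E}_\diamond}$ is a congruence, count the classes, then build the isomorphism explicitly) together with the class count of \cref{lem:numberofeqclassEdiamond} and the remark that the linearly-oriented case follows from the Loday--Ronco canopy description of the Tamari lattice. So there is no proof in the paper to compare against; your proposal is an elaboration of the same outline the authors sketch, and your construction of the fibre map $\phi$ via $\GS_{\mathcal{E}_\diamond}(T)=\opC(\B,\E)$ and the count $2^{n-1}$ correctly reproduce \cref{lem:numberofeqclassEdiamond}.

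However, as written your proposal is not a proof, and the gap is exactly where you say it is --- but it is worth being precise about why it is not a routine verification. Everything in your argument (order-preservation of $\phi$, order-convexity of the fibres, the interval structure $[T^\flat,T^\sharp]$, the identification of the quotient Hasse diagram with that of $\mathcal{B}_{n-1}$, and the hypotheses of Reading's criterion) is deduced from the cover dichotomy (i)--(ii), and in particular from the claim that a non-diamond flip changes $(\B,\E)$ by toggling \emph{exactly one} of the $n-1$ free coordinates, \emph{always in the same direction}. Nothing in the paper gives you this: \cref{thm:exttypeA} tells you the exchange sequence of a cover is either a diamond or has indecomposable middle term, and \cref{prop:existtiltmaxCJR} only constructs \emph{one} tilting object inside a given $\opC(\B,\E)$; to run your argument you need the inverse dictionary, i.e.\ how $(\B,\E)$ is read off from an \emph{arbitrary} tilting object and how it transforms under a single non-$\mathcal{E}_\diamond$ exchange. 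A priori a non-diamond flip could move $T$ to a tilting object lying in a maximal canonically Jordan recoverable subcategory whose pair $(\B,\E)$ differs in more than one position, or in a direction depending on $T$, and either failure would break order-convexity of the fibres and hence the congruence property. Note also that \cref{prop:MinimalT} and \cref{cor:maxEreach} give extremal elements of a class for the restricted order $\TleqE$ only; that these are also the endpoints of an interval of $(\Tilt(Q),\Tleq)$ again requires the dichotomy. Your two suggested routes (the $(n+2)$-gon flip model, or brick labels and edge-forcing via \cite{IRTT15}) are plausible ways to establish the dichotomy, but until one of them is carried out the argument establishes nothing beyond what \cref{lem:numberofeqclassEdiamond} already gives.
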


\begin{example} \label{ex:latticequotientEdiamond}
  In \cref{fig:posettiltEdiamond}, the equivalence relation $\approx_{\mathcal{E}_{\diamond}}$ gives a lattice congruence, given by identifying gathered tilting representations, and the quotient is isomorphic to $\mathcal{B}_3$.  
\end{example}

A rough plan to prove this conjecture is as follows. First, we show that $\approx_{\mathcal{E}_\diamond}$ defines a congruence on $(\Tilt(Q), \Tleq)$ by using Reading's criterion. Then, we can easily count the number of equivalence classes of $\approx_{\mathcal{E}_\diamond}$ in $\Tilt(Q)$.

\begin{lemma} \label{lem:numberofeqclassEdiamond}
    Let $Q$ be an $A_n$ type quiver for some $n \in \mathbb{N}^*$. There are exactly $2^{n-1}$ equivalence classes for $\approx_{\mathcal{E}_\diamond}$ in $\Tilt(Q)$.
\end{lemma}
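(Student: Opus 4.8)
The plan is to translate the question into a count of maximal canonically Jordan recoverable subcategories, for which the excerpt already provides a complete combinatorial parametrization, and then to carry out an elementary enumeration of the parametrizing data.

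First I would record the key observation that the assignment $T \mapsto \GS_{\mathcal{E}_\diamond}(T)$ on $\Tilt(Q)$ has fibers that are exactly the equivalence classes of $\approx_{\mathcal{E}_\diamond}$. This is immediate from \cref{prop:samesamebutdifferent}, which states that $T \approx_{\mathcal{E}_\diamond} T'$ if and only if $T \sim_{\mathcal{E}_\diamond} T'$, i.e. $\GS_{\mathcal{E}_\diamond}(T) = \GS_{\mathcal{E}_\diamond}(T')$ (see \cref{def:GSErelation}). Consequently the number of equivalence classes equals the cardinality of the set $\{\GS_{\mathcal{E}_\diamond}(T) \mid T \in \Tilt(Q)\}$. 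By \cref{th:mCJR=GS}, this set is precisely the collection of maximal canonically Jordan recoverable subcategories of $\rep(Q)$. So it remains to count those.

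Next I would apply \cref{thm:CJRmaxcomb}: a subcategory is a maximal canonically Jordan recoverable subcategory exactly when it is of the form $\opC(\B,\E) = \Cat(\opJ(\B,\E))$ for a pair $\B, \E \subseteq \{1,\ldots,n\}$ with $(\B, \E[1])$ a set partition of $\{1,\ldots,n+1\}$. To turn this classification into an exact count I must check that $(\B,\E) \mapsto \opC(\B,\E)$ is injective on such pairs. Since $\Int \circ \Cat$ is the identity on interval sets, $\opC(\B,\E)$ determines $\opJ(\B,\E) = \{K \in \mathcal{I}_n \mid b(K)\in\B,\ e(K)\in\E\}$; and from $\opJ(\B,\E)$ one recovers $\B$ as its set of left endpoints and $\E$ as its set of right endpoints, using that $1 \in \B$ and $n \in \E$ are forced (indeed $\E[1]\subseteq\{2,\ldots,n+1\}$ forces $1\in\B$, and $\B\subseteq\{1,\ldots,n\}$ forces $n+1\in\E[1]$, i.e. $n\in\E$), so that each $b\in\B$ occurs as $b(\llrr{b,n})$ and each $e\in\E$ as $e(\llrr{1,e})$. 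Finally I would enumerate the admissible pairs: with $1\in\B$ and $n+1\in\E[1]$ forced, each of the remaining $n-1$ elements of $\{1,\ldots,n+1\}$ lies in exactly one of $\B$ or $\E[1]$, every such choice produces a legitimate set partition with both parts nonempty, and distinct choices give distinct pairs; hence there are $2^{n-1}$ admissible pairs, and therefore $2^{n-1}$ maximal canonically Jordan recoverable subcategories and $2^{n-1}$ equivalence classes of $\approx_{\mathcal{E}_\diamond}$.

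The main thing to be careful about is the injectivity of the parametrization $(\B,\E)\mapsto\opC(\B,\E)$: \cref{thm:CJRmaxcomb} by itself yields only an inequality between the two counts, and the precise value $2^{n-1}$ hinges on the forced memberships $1\in\B$ and $n\in\E$, which simultaneously guarantee that the parametrization is a bijection and that the free binary choices are exactly $n-1$ in number. Everything else is a direct chaining of results already proved in the excerpt.
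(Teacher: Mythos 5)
Your proof is correct and follows essentially the same route as the paper's: both reduce the count of $\approx_{\mathcal{E}_\diamond}$-classes to the count of maximal canonically Jordan recoverable subcategories via \cref{th:mCJR=GS} and \cref{thm:CJRmaxcomb}, and then enumerate the admissible pairs $(\B,\E)$, obtaining $2^{n-1}$. Your write-up is somewhat more careful than the paper's (which is terse on the injectivity of $(\B,\E)\mapsto\opC(\B,\E)$ and on why the classes biject with the subcategories via \cref{prop:samesamebutdifferent}), but the underlying argument is the same.
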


\begin{proof}
    By \cref{thm:CJRmaxcomb,th:mCJR=GS}, those equivalence classes are parametrized by pairs $(\B,\E)$ such that $\{\B,\E[1]\}$ is a bipartition of $\{1,\ldots,n+1\}$. Then there is a one-to-one correspondence $\phi$ from subsets of $\{2,\ldots,n\}$ to such pairs given by \[\forall \A \subseteq \{2,\ldots,n\},\ \phi(\A) = (\{1\} \cup \A,   \{2,\ldots, n+1\}) \setminus \A.\]
    We got the desired result.
\end{proof}

Finally, we can construct the lattice isomorphism explicitly. Note that the order of subsets of $\{2, \ldots, n\}$ induced by $(\Tilt(Q) / \approx_{\mathcal{E}_\diamond}, \Tleq)$ does not coincide with the usual inclusion order on those subsets. 

\begin{remark} \label{rem:trueforlinear}
   If $Q$ is a linearly-oriented $A_n$ type quiver, then $(\Tilt(Q), \Tleq)$ is isomorphic to the classical order on \emph{binary trees} with $n$ internal nodes, and $\approx_{\mathcal{E}_\diamond}$ identifies trees by their \emph{canopy} \cite{V09}. In this special case, the conjecture was proved by J.-L. Loday and M. O. Racia  \cite{LR98,L04}. 
\end{remark}

We aim to prove this conjecture soon. We can also try to characterize exact structures $\mathcal{E}$ on a type $ADE$ quiver $Q$ such that $\approx_\mathcal{E}$ defines a congruence on $\Tilt(Q)$. It could provide a deeper understanding of a subfamily of well-known lattice congruences within the Cambrian lattice.

\section*{Acknowledgements}

The authors extend their gratitude to the organizers of the \emph{CHARMS Summer School}, where our discussions on this project began, namely J. Besson, P. Bodin, E.D. Børve, R. Canesin, M. Garcia, E. Gupta, M. Schoonheere, and V. Soto. They would like to express their particular appreciation to M. Garcia and M. Schoonheere for the numerous further discussions they had on this project. They thank T. Brüstle and Y. Palu for their helpful advice. They also acknowledge \textit{CHARMS program grant (ANR-19CE40-0017-02)} for their funding support. 

B.D. would like to acknowledge the \textit{Engineering and Physical Sciences Research Council (EP/W007509/1)} for their partial funding support. 

S.R. is supported by the B2X FRQNT grant.

\bibliography{Article1}
\bibliographystyle{alpha}

\end{document}